\numberwithin{equation}{section}
\renewcommand{\AA}{\mathbb A}
\newcommand{\FF}{\mathbb F}
\newcommand{\GG}{\mathbb G}
\newcommand{\PP}{\mathbb P}
\newcommand{\QQ}{\mathbb Q}
\newcommand{\ZZ}{\mathbb Z} 
\newcommand{\Zhat}{\widehat\ZZ}
\newcommand{\calB}{\mathcal B}
\newcommand{\calC}{\mathcal C}
\newcommand{\OO}{\mathcal O}
\newcommand{\calP}{\mathcal P}
\newcommand{\calU}{\mathcal U}
\newcommand{\calR}{\mathcal R}
\newcommand{\calS}{\mathcal S}
\newcommand{\calT}{\mathcal T}
\newcommand{\hH}{\mathfrak h}
\newcommand{\gG}{\mathfrak g}
\newcommand{\m}{\mathfrak m}
\newcommand{\aA}{\mathfrak a}
\newcommand{\q}{\mathfrak q}
\newcommand{\p}{\mathfrak p}
\renewcommand{\P}{\mathfrak P}
\def\Spec{\operatorname{Spec}} 
 \def\Gal{\operatorname{Gal}}
\def\End{\operatorname{End}}
\def\Spec{\operatorname{Spec}}
\def\tr{\operatorname{tr}}
\def\Gal{\operatorname{Gal}}
\def\sl{\mathfrak{sl}}
\def\gl{\mathfrak{gl}}
\def \GL{\operatorname{GL}}  
\def \SL{\operatorname{SL}}
\def\Aut{\operatorname{Aut}} 
\def\End{\operatorname{End}}
\def\Frob{\operatorname{Frob}}
\def\tr{\operatorname{tr}}
 \def \Aut {\operatorname{Aut}}
\definecolor{purple}{rgb}{1,0,1}
\def\bbar#1{\setbox0=\hbox{$#1$}\dimen0=.2\ht0 \kern\dimen0 
\overline{\kern-\dimen0 #1}}
\newcommand{\Qbar}{{\overline{\mathbb Q}}} 
\newcommand{\Kbar}{\bbar{K}} 
\newcommand{\FFbar}{\overline{\FF}}
\def\sep{{\operatorname{sep}}}
\def\un{{\operatorname{un}}}
\newcommand{\defi}[1]{\textsf{#1}} 
\newtheorem{thm}{Theorem}[section]
\newtheorem{lemma}[thm]{Lemma}
\newtheorem{prop}[thm]{Proposition}
\theoremstyle{definition}
\theoremstyle{remark}
\newtheorem{remark}[thm]{Remark}
\newenvironment{romanenum}{\hfill \begin{enumerate} }{\end{enumerate}}
\newenvironment{alphenum}{\hfill \begin{enumerate} }{\end{enumerate}}
\definecolor{webcolor}{rgb}{0.8,0,0.2}
\definecolor{webbrown}{rgb}{.6,0,0}
\begin{document}
\title{Drinfeld modules with maximal Galois action}
\subjclass[2020]{Primary 11G09; Secondary 11F80, 11R58} 
\author{David Zywina}
\address{Department of Mathematics, Cornell University, Ithaca, NY 14853, USA}
\email{zywina@math.cornell.edu}

\begin{abstract}

With a fixed prime power $q>1$, define the ring of polynomials $A=\FF_q[t]$ and its fraction field $F=\FF_q(t)$.   For each pair $a=(a_1,a_2) \in A^2$ with $a_2$ nonzero, let $\phi(a)\colon A\to F\{\tau\}$ be the Drinfeld $A$-module of rank $2$ satisfying $t\mapsto t+a_1\tau+a_2\tau^2$.    The Galois action on the torsion of $\phi(a)$ gives rise to a Galois representation $\rho_{\phi(a)}\colon \Gal(F^\sep/F)\to \GL_2(\widehat{A})$, where $\widehat{A}$ is the profinite completion of $A$.   We show that the image of $\rho_{\phi(a)}$ is large for random $a$.  More precisely, for all $a\in A^2$ away from a set of density $0$, we prove that the index $[\GL_2(\widehat{A}):\rho_{\phi(a)}(\Gal(F^\sep/F))]$ divides $q-1$ when $q>2$ and divides $4$ when $q=2$.   We also show that the representation $\rho_{\phi(a)}$ is surjective for a positive density set of $a\in A^2$.
\end{abstract}

\maketitle

\section{Introduction}

Throughout we fix a finite field $\FF_q$ with $q$ elements.  Define the polynomial ring $A=\FF_q[t]$ and its fraction field $F=\FF_q(t)$.

\subsection{Background}
We now recall some notions concerning Drinfeld modules.  For an introduction see~\cite{MR1423131,MR902591,MR0384707}.   
Let $K$ be an \defi{$A$-field}, i.e., a field $K$ with a fixed ring homomorphism $\iota\colon A\to K$.  Using $\iota$, we can view $K$ as a field extension of $\FF_q$.  

Let $K\{\tau\}$ be the ring of skew polynomials over $K$, i.e., the ring of polynomials in the indeterminate $\tau$ with coefficients in $K$ that satisfy the commutation rule $\tau c = c^q \tau$ for all $c\in K$.  We can identify $K\{\tau\}$ with a subring of $\End(\GG_{a,\, K})$ by identifying $\tau$ with the Frobenius map $X\mapsto X^q$.    Let $\partial_0 \colon K\{\tau\} \to K$ be the ring homomorphism $\sum_{i} a_i \tau^i \mapsto a_0$.   

A \defi{Drinfeld $A$-module} over $K$ is a ring homomorphism 
\[
\phi\colon A \to K\{\tau\}, \quad a \mapsto \phi_a
\]
such that $\partial_0\circ \phi=\iota$ and $\phi(A)\not\subseteq K$.    The \defi{characteristic} of $\phi$ is the kernel $\p_0$ of $\iota$; equivalently, the kernel of $\partial_0 \circ \phi \colon A \to K$.  If $\p_0=(0)$, then we say that $\phi$ has \defi{generic characteristic} and we may use $\iota$ to view $K$ as a field extension of $F$.  The Drinfeld module $\phi$ is determined by 
$\phi_t = \sum_{i=0}^r a_i \tau^i$ where we have $a_i\in K$ with $a_r\neq 0$; the positive integer $r$ is called the \defi{rank} of $\phi$.  

Fix a separable closure $K^\sep$ of $K$. The Drinfeld module $\phi$ endows $K^\sep$ with an $A$-module structure.  More precisely, $a\cdot x := \phi_a(x)$ for $a\in A$ and $x\in K^\sep$, where we are using our identification of $K\{\tau\}$ with a subring of $\End(\GG_{a,K})$.   We shall write ${}^\phi \!K^\sep$ if we wish to emphasize $K^\sep$ with this particular $A$-module structure.  For a nonzero ideal $\aA$ of $A$, the \defi{$\aA$-torsion} of $\phi$ is the $A$-module
\[
\phi[\aA] :=\{x\in {}^\phi\!K^\sep : a \cdot x = 0 \text{ for all } a\in \aA \} =  \{x \in K^\sep :  \phi_a(x) = 0 \text{ for all } a\in \aA \}.
\]

Suppose that $\aA$ is relatively prime to the characteristic $\p_0$.  Then $\phi[\aA]$ is a free $A/\aA$-module of rank $r$.    The absolute Galois group $\Gal_K:=\Gal(K^\sep/K)$ acts on $\phi[\aA]$ and respects the $A$-module structure.  This action can be expressed in terms of a Galois representation
\[
\bbar\rho_{\phi, \aA} \colon \Gal_K \to \Aut(\phi[\aA]) \cong \GL_r(A/\aA).
\]

For the rest of the section, assume that $\phi$ has generic characteristic. 
By choosing bases compatibly and taking the inverse limit, we obtain a single representation 
\[
\rho_\phi\colon \Gal_K \to \GL_r(\widehat{A})
\]
that encodes the Galois action on the torsion submodule of ${}^\phi \!K^\sep$, where $\widehat{A}$ is the profinite completion of $A$.    The representation $\rho_\phi$ is continuous when the groups are endowed with their profinite topologies. 

For a nonzero prime ideal $\lambda$ of $A$, let $\rho_{\phi,\lambda}\colon \Gal_K\to \GL_r(A_\lambda)$ be the representation obtained by composing $\rho_\phi$ with the quotient map $\GL_r(\widehat{A})\to \GL_r(A_\lambda)$, where $A_\lambda$ is the inverse limit of the rings $A/\lambda^i$  with $i\geq 1$.  The representation $\rho_{\phi,\lambda}$ encodes the Galois action on the $\lambda$-power torsion of $\phi$.   We can identify $\rho_\phi$ with $\prod_\lambda \rho_{\phi,\lambda}$ by using the natural isomorphism $\GL_r(\widehat{A})=\prod_\lambda\GL_r(A_\lambda)$, where the product is over the nonzero prime ideals of $A$.

Pink and R\"utsche \cite{MR2499412} have described the image of $\rho_\phi$ up to commensurability when $K$ is finitely generated.  For simplicity, we only state the version for which $\phi$ has no extra endomorphisms.  Recall that the ring $\End_{\Kbar}(\phi)$ of \defi{endomorphisms} is the centralizer of $\phi(A)$ in $\Kbar\{\tau\}$, where $\Kbar\supseteq K^\sep$ is an algebraic closure of $K$.

\begin{thm}[Pink-R\"utsche] \label{T:MT}
Let $\phi$ be a Drinfeld $A$-module of rank $r$ over a finitely generated field $K$.   Assume that $\phi$ has generic characteristic and that $\End_{\Kbar}(\phi)=\phi(A)$.   Then $\rho_\phi(\Gal_K)$ is an open subgroup of $\GL_r(\widehat{A})$.  Equivalently, $\rho_\phi(\Gal_K)$ has finite index in $\GL_r(\widehat{A})$.   
\end{thm}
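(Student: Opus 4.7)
My plan is to imitate Serre's open image theorem for elliptic curves, transposed to the Drinfeld-module setting. Openness of $\rho_\phi(\Gal_K)$ in $\GL_r(\widehat A)=\prod_\lambda \GL_r(A_\lambda)$ is equivalent to having finite index, and naturally decomposes into three pieces: (i) openness of each $\lambda$-adic image $\rho_{\phi,\lambda}(\Gal_K)$ in $\GL_r(A_\lambda)$; (ii) surjectivity of $\bar\rho_{\phi,\lambda}$ onto $\GL_r(A/\lambda)$ for all but finitely many nonzero primes $\lambda$ of $A$; and (iii) a Goursat-type assembly that glues the $\lambda$-adic components into an open image in the full product.

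For (i), I would invoke Pink's theorem describing the Zariski closure $G_\lambda$ of $\rho_{\phi,\lambda}(\Gal_K)$ inside $\GL_{r,F_\lambda}$, where $F_\lambda=\Frac A_\lambda$. That theorem identifies $G_\lambda$ as the algebraic group naturally associated to the pair $(\phi,\End_{\Kbar}\phi)$: it is reductive, and the commutant of its tautological representation equals $\End_{\Kbar}(\phi)\otimes_A F_\lambda$. Under the hypothesis $\End_{\Kbar}(\phi)=\phi(A)$ this commutant is scalar, and together with the surjectivity of the determinant of $\rho_{\phi,\lambda}$ (which is a Carlitz-type cyclotomic character) this forces $G_\lambda=\GL_{r,F_\lambda}$. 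Since $\rho_{\phi,\lambda}(\Gal_K)$ is a compact $\lambda$-adic analytic subgroup that is Zariski dense in $\GL_r$, it is automatically open in $\GL_r(A_\lambda)$.

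For (ii), I would combine the density of Frobenius elements (Chebotarev-type density for the finitely generated field $K$) with a Larsen--Pink type classification of subgroups of $\GL_r$ over a finite field. The idea is that the characteristic polynomials of Frobenius at places of good reduction lie in $A$, have bounded degree, and are jointly not contained in any proper closed subgroup of $\GL_{r,A/\lambda}$ once $|A/\lambda|$ is sufficiently large. The Larsen--Pink theorem then forces the mod-$\lambda$ image to contain $\SL_r(A/\lambda)$, and surjectivity of the determinant character onto $(A/\lambda)^\times$ fills in the rest to give $\bar\rho_{\phi,\lambda}(\Gal_K)=\GL_r(A/\lambda)$ for every $\lambda$ outside a finite set $S$.

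Finally (iii) assembles these. For $\lambda\notin S$, residual surjectivity lifts to $\lambda$-adic surjectivity: the kernel of $\GL_r(A_\lambda)\to\GL_r(A/\lambda)$ is pro-$p$ with $p=\ch(A/\lambda)$, while $\SL_r(A/\lambda)$ is perfect and has order prime to $p$ once $\lambda$ is large, so a standard cohomological argument shows the full $\lambda$-adic image is $\GL_r(A_\lambda)$. Goursat's lemma then glues these into joint surjectivity on $\prod_{\lambda\notin S}\GL_r(A_\lambda)$, using that the nonabelian composition factors $\PSL_r(A/\lambda)$ at distinct large $\lambda$ are pairwise nonisomorphic; combined with the finite-index image on the $S$-part coming from (i), this yields openness of $\rho_\phi(\Gal_K)$ in $\GL_r(\widehat A)$. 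The main obstacle is step (ii): residual surjectivity at almost every $\lambda$ is precisely the content that goes beyond Pink's earlier single-prime openness theorems, and it relies essentially on the Larsen--Pink classification of finite subgroups of $\GL_r(\FF_{q^d})$ in characteristic $p$. In the rank-$2$ case Dickson's classification of subgroups of $\PGL_2$ over a finite field would suffice, but for general $r$ the Larsen--Pink input appears to be unavoidable.
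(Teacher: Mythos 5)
The paper does not actually prove Theorem~\ref{T:MT}: it is quoted from Pink--R\"utsche \cite{MR2499412} (see also \cite{MR2499411}), so your sketch has to be measured against their proof rather than against anything in this article. The overall shape you propose --- openness at each $\lambda$, residual largeness at almost all $\lambda$, then a Goursat-type gluing --- is indeed the right one, but at each step you import an argument that works over number fields and fails in equal characteristic, and these failures are exactly the difficulties Pink and Pink--R\"utsche had to overcome, so they are genuine gaps rather than omitted routine details. The most serious one is in step (i): for a local field of positive characteristic the implication ``compact, analytic and Zariski dense $\Rightarrow$ open'' is false. For instance $\GL_r(\FF_\lambda[\![\pi^p]\!])$, with $p=\ch(\FF_q)$, is a compact, Zariski-dense subgroup of $\GL_r(\FF_\lambda[\![\pi]\!])\cong\GL_r(A_\lambda)$ of infinite index, and Galois images coming from Frobenius twists or from models over proper subfields exhibit the same phenomenon. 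Excluding these possibilities is the actual content of Pink's single-prime openness theorem; its proof requires the Taguchi--Tamagawa Tate conjecture together with Pink's structure theory of compact Zariski-dense subgroups in positive characteristic, and knowing that the Zariski closure is $\GL_r$ is far from sufficient.

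Your step (iii) is also broken at two points. The lifting claim is false as stated: $|\SL_r(\FF_\lambda)|$ is divisible by $p$ (it contains unipotents), and, more fundamentally, residual surjectivity does not imply $\lambda$-adic surjectivity in equal characteristic, since $\SL_r(\FF_\lambda[\![\pi^2]\!])$ is a proper closed subgroup of $\SL_r(\FF_\lambda[\![\pi]\!])$ that surjects modulo $\pi$. This is precisely why \cite[Proposition~4.1]{MR2499412}, and Proposition~\ref{P:full GL2R criterion} of this paper, require extra information modulo $\lambda^2$ (an element $I+\pi B$ with $B$ nonscalar mod $\lambda$) beyond surjectivity mod $\lambda$. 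Likewise the Goursat step cannot run on ``pairwise nonisomorphic composition factors'': over $A=\FF_q[t]$ there are many primes of each degree, so distinct $\lambda_1\neq\lambda_2$ very often satisfy $\FF_{\lambda_1}\cong\FF_{\lambda_2}$ and hence give isomorphic factors $\PSL_r(\FF_{\lambda_1})\cong\PSL_r(\FF_{\lambda_2})$; ruling out graph subgroups for such pairs needs real arithmetic input (compare conditions (b) and (c) of Theorem~\ref{T:criterion for computing commutator}, which this paper feeds with inertia at primes of stable reduction, and the adelic group-ring/irreducibility results of \cite{MR2499411} used by Pink--R\"utsche). Finally, in step (ii) the appeal to Chebotarev plus Larsen--Pink is only a gesture: one still has to exclude subfield, imprimitive and normalizer-of-torus subgroups, which is where the work lies, and Pink--R\"utsche in fact obtain residual largeness from semisimplicity and the Tate conjecture together with their group-ring theorem rather than from the Larsen--Pink classification.
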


Theorem~\ref{T:MT}, especially with $r\geq 2$, is a Drinfeld module analogue of Serre's open image theorem for non-CM elliptic curves, cf.~\cite{MR0387283}.  

In this article, we are interested in producing Galois representations $\rho_\phi$ with largest possible image.   We shall focus our attention on the most immediate case which is $r=2$ and $K=F$.   We shall show that there are infinitely many nonisomorphic Drinfeld modules $\phi$ over $F$ of rank $2$ with $\rho_\phi(\Gal_F)=\GL_2(\widehat{A})$.

\subsection{Density} \label{SS:density}

For a fixed integer $n\geq 1$, we will want to talk about properties holding for ``most'' $a\in A^n$.  To make this precise, we introduce the notion of density.  For any subset $S\subseteq A^n$ and positive integer $d$, we let $S(d)$ be the set of $(a_1,\ldots, a_n) \in S$ with $\deg(a_i) \leq d$ for all $1\leq i \leq n$.   
Define
\[
\overline{\delta}(S):=\limsup_{d\to +\infty} \frac{|S(d)|}{|A^n(d)|} \quad \text{ and } \quad \underline{\delta}(S):=\liminf_{d\to +\infty} \frac{|S(d)|}{|A^n(d)|};
\]
these are the \defi{upper density} and \defi{lower density} of $S$, respectively.  Note that $|A^n(d)|=q^{n(d+1)}$.  When $\overline\delta(S)=\underline\delta(S)$, we call the common value the \defi{density} of $S$ and denote it by $\delta(S)$.  Of course, $\delta(A^n)=1$.   
\subsection{Main result}

We shall always view $F$ as an $A$-field via the inclusion $A\subseteq F$.  For each pair $a=(a_1,a_2) \in A^2$ with $a_2\neq 0$,  let 
\[
\phi(a)\colon A\to F\{\tau\}, \quad \alpha\mapsto \phi(a)_\alpha
\] 
be the Drinfeld $A$-module over $F$ for which $\phi(a)_t = t + a_1\tau + a_2\tau^2$.   The Drinfeld module $\phi$ has rank $2$ and generic characteristic.  Associated to $\phi$, we have a Galois representation $\rho_{\phi(a)} \colon \Gal_F \to \GL_2(\widehat{A})$ that is uniquely determined up to isomorphism.  

We now define the following sets which consist of pairs $a\in A^2$ for which $\rho_{\phi(a)}$ has especially large image:
\begin{itemize}
\item
Let $S_1$ be the set of $a\in A^2$ with $a_2\neq 0$ for which $\rho_{\phi(a)}(\Gal_F)= \GL_2(\widehat{A})$.
\item
When $q\neq 2$, let $S_2$ be the set of $a\in A^2$ with $a_2\neq 0$ for which $\rho_{\phi(a)}(\Gal_F)\supseteq \SL_2(\widehat{A})$ and $[\GL_2(\widehat{A}): \rho_{\phi(a)}(\Gal_F)]$ divides $q-1$.
\item
When $q= 2$, let $S_2$ be the set of $a\in A^2$ with $a_2\neq 0$ for which $\rho_{\phi(a)}(\Gal_F)$ contains the commutator subgroup of $\GL_2(\widehat{A})$ and $[\GL_2(\widehat{A}): \rho_{\phi(a)}(\Gal_F)]$ divides $4$.
\item 
Let $S_3$ be the set of $a\in A^2$ with $a_2\neq 0$ for which $\rho_{\phi(a),\lambda}(\Gal_F)= \GL_2(A_\lambda)$ for all nonzero prime ideals $\lambda$ of $A$.  
\end{itemize}

Our main theorem shows that the sets $S_1$, $S_2$ and $S_3$ are large.

\begin{thm} \label{T:main new} 
\begin{romanenum}
\item \label{T:main new i}
There is a subset of $S_1$ with positive density.
\item \label{T:main new ii}
The set $S_2$ has density $1$.
\item \label{T:main new iii}
The set $S_3$ has density $1$.
\end{romanenum}
\end{thm}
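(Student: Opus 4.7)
The plan is to reduce all three parts of the theorem to information about the mod-$\lambda$ representations $\bar\rho_{\phi(a),\lambda}\colon \Gal_F \to \GL_2(A/\lambda)$ for individual primes $\lambda$ of $A$, combined with a Goursat-type control of entanglement between distinct primes.

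For part (iii), for each nonzero prime $\lambda$ of $A$ one classifies the proper maximal subgroups of $\GL_2(A/\lambda)$---Borel, normalizer of a split or non-split Cartan, and the short finite list of exceptional subgroups---and bounds the density of $a \in A^2$ for which $\bar\rho_{\phi(a),\lambda}(\Gal_F)$ lies in a conjugate of one of them. The key geometric input is that the Drinfeld modular variety $Y(\lambda)$ classifying rank-$2$ Drinfeld modules with level-$\lambda$ structure is a finite \'etale cover of $Y(1)$ whose Galois group is essentially $\GL_2(A/\lambda)$; combined with an effective Chebotarev density estimate in the function field setting, one expects a density bound on each exceptional locus of the form $O(|\lambda|^{-c})$ for a fixed $c>0$. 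Summing over all primes $\lambda$ (with convergent tail) and handling finitely many small $\lambda$ directly shows $A^2 \setminus S_3$ has density $0$.

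For part (ii), I additionally rule out entanglement between $\bar\rho_{\phi(a),\lambda}$ and $\bar\rho_{\phi(a),\lambda'}$ at distinct primes. By Goursat's lemma, any failure of $\rho_{\phi(a)}(\Gal_F)$ to contain $\SL_2(\widehat{A})$ (or the commutator subgroup of $\GL_2(\widehat{A})$, in the case $q=2$) forces a common nontrivial simple quotient of two mod-$\lambda$ images. Since $\PSL_2(A/\lambda)$ is simple for $|\lambda| \geq 4$, any common nonabelian simple quotient is ruled out for large $\lambda$, and any common nontrivial abelian quotient corresponds via class field theory to a shared abelian subextension of $F$ sitting inside both torsion fields. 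The only such subextensions of $F(\phi(a)[\lambda])$ arise from the constant field extension and from the torsion field of the rank-$1$ Drinfeld module $\wedge^2\phi(a)$. The density of $a$ yielding unexpected identifications is again controlled by the sieve of the previous paragraph. The index $q-1$ (resp.\ $4$ when $q=2$) in the theorem is exactly the generic index of the image of $\det\rho_{\phi(a)}$ in $\widehat{A}^*$.

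The hard part is (i). Beyond what is proved for (ii), one must force $\det\rho_{\phi(a)}\colon\Gal_F\to\widehat{A}^*$ to be surjective. The associated rank-$1$ Drinfeld module $\wedge^2\phi(a)$ is, up to $\FF_q^*$-twist, the Carlitz module twisted by $-a_2$, so the image of $\det\rho_{\phi(a)}$ equals all of $\widehat{A}^*$ precisely when $-a_2\in A$ is not a $d$-th power for any prime $d\mid q-1$. This cuts out a positive density subset of $a_2\in A$; on it, combining the sieve of (iii) with the entanglement analysis of (ii) should produce a positive density subfamily on which $\rho_{\phi(a)}(\Gal_F)=\GL_2(\widehat{A})$. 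The main obstacle throughout is establishing the mod-$\lambda$ density bound uniformly in $\lambda$ so that it sums over all primes---the Pink--R\"utsche theorem (Theorem~\ref{T:MT}) is ineffective and must be replaced by explicit counts on the Drinfeld modular tower.
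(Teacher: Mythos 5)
Your strategy has genuine gaps at the points where the actual work lies. First, your central quantitative input --- a bound of the form $O(|\lambda|^{-c})$, uniform in $\lambda$, for the density of $a$ with non-maximal mod-$\lambda$ image, obtained from ``effective Chebotarev'' on the tower of Drinfeld modular covers --- is exactly what you do not prove, and as you concede, neither Pink--R\"utsche nor plain Hilbert irreducibility supplies it (the exceptional density-$0$ set in Theorem~\ref{T:HIT Drinfeld} depends on the level). The paper avoids needing any such uniform bound: the sieve in \S\ref{S:sieving} forces, for almost all $a$, a prime $\p$ with $v_\p(a_1)=0$, $v_\p(a_2)=1$, hence semistable bad reduction with $v_\p(j_{\phi(a)})=-1$; the Tate uniformization (Proposition~\ref{P:main Tate new}) then puts a full unipotent subgroup of every level $\lambda^i$ inside the inertia image, irreducibility of $\bbar\rho_{\phi(a),\lambda}$ for $\deg\lambda$ large is proved by the Frobenius-polynomial resultant bound of \S\ref{S:irreducibility}, and Hilbert irreducibility is invoked only for the finitely many small $\lambda$. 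Second, part (\ref{T:main new iii}) demands $\rho_{\phi(a),\lambda}(\Gal_F)=\GL_2(A_\lambda)$, not merely surjectivity mod $\lambda$; in equal characteristic, mod-$\lambda$ surjectivity does \emph{not} imply $\lambda$-adic surjectivity (e.g.\ $\GL_2(\FF_\lambda[\![\pi^2]\!])$ surjects mod $\pi$), so one needs the extra lifting input of Proposition~\ref{P:full GL2R criterion}(c)--(e), which the paper again extracts from the deep unipotent inertia elements at the semistable prime. Your proposal never addresses this lifting step.

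Third, your treatment of the determinant and of $q=2$ is incorrect. By Gekeler's theorem (Theorem~\ref{T:det image}), the index of $\det(\rho_{\phi(a)}(\Gal_F))$ in $\widehat{A}^\times$ is $\gcd(\deg(a_2)-1,\,(q-1)/e)$, where $e$ is the order of the leading coefficient of $(-1)^{\deg(a_2)+1}a_2$; it is governed by the degree and leading coefficient, not by whether $-a_2$ is a $d$-th power (e.g.\ $q=3$, $a_2=t^3$: $-a_2$ is not a square, yet the index is $2$). The paper gets part (\ref{T:main new i}) for $q\neq 2$ by intersecting $\calB$ with the positive-density set where the leading coefficient of $(-1)^{\deg a_2+1}a_2$ generates $\FF_q^\times$. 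For $q=2$ the determinant is \emph{always} surjective, so the index $4$ in $S_2$ does not come from $\det$ at all but from the two extra $\{\pm1\}$ factors of $\GL_2(\widehat{A})^{\mathrm{ab}}$ at the primes of norm $2$; achieving full image then requires the separate wild-ramification-at-$\infty$ argument of \S\ref{S:wild} (Proposition~\ref{P:final char 2 conditions}), comparing the wildly ramified quadratic resolvent extensions with the tamely ramified Carlitz-type determinant field. Nothing in your outline produces this, and this is precisely the delicate case (it is where \cite{MR4415990} went wrong).
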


Loosely, Theorem~\ref{T:main new}(\ref{T:main new ii}) says that for a ``randomly chosen'' $a\in A^2$ the index of $\rho_{\phi(a)}(\Gal_F)$ in $\GL_2(\widehat{A})$ is finite and divides $q-1$ or $4$ when $q\neq 2$ or $q=2$, respectively.  Theorem~\ref{T:main new}(\ref{T:main new i})  shows that $\rho_{\phi(a)}(\Gal_F)= \GL_2(\widehat{A})$ holds for many $a\in A^2$.

\begin{remark}
\begin{romanenum}
\item
Assume $q\neq 2$.  Take any $(a_1,a_2)\in A^2$ with $a_2$ monic and $\deg(a_2)\equiv 1 \pmod{q-1}$.   We have $[\widehat{A}^\times:\det(\rho_{\phi(a)}(\Gal_F))] = q-1$ and hence $[\GL_2(\widehat{A}):\rho_{\phi(a)}(\Gal_F)] \geq q-1$, cf.~Theorem~\ref{T:det image}.   This shows that the set $S_1$ does not have density $1$ and that the integer $q-1$ occurring in the definition of $S_2$ is optimal for Theorem~\ref{T:main new}(\ref{T:main new ii}) to hold.
\item
The commutator subgroup of $\GL_2(\widehat{A})$ is $\SL_2(\widehat{A})$ when $q\neq 2$.  A group theoretic complication that arises when $q=2$ is that the commutator subgroup of $\GL_2(\widehat{A})$ is a proper subgroup of $\SL_2(\widehat{A})$; in fact, it is a subgroup of index $4$.  This is the underlying reason why definition of $S_2$ is different when $q=2$.  
\item
Theorem~\ref{T:main new}(\ref{T:main new i}) gives counterexamples to \cite[Theorem~4.4]{MR4415990} which would imply that $S_1=\emptyset$ when $q=2$.  There seem to be issues when working with wildly ramified quadratic extensions of $F$ in their proof.  Also, Theorem~\ref{T:main new}(\ref{T:main new i}) gives counterexamples to \cite[Theorem~5.4]{MR4415990} which would imply that $S_1$ has density $0$ when $q=3$.
\end{romanenum}
\end{remark}

\subsection{Explicit examples}

For each prime power $q>1$, we also give an example of a rank $2$ Drinfeld module whose Galois representation is surjective.

\begin{thm} \label{T:example}
 Let $\phi\colon A\to F\{\tau\}$ be the Drinfeld module for which 
\[
\phi_t = 
\begin{cases}
t + \tau -t^{q-1} \tau^2 & \text{ if $q\neq 2$,}\\
t+t^3\tau +(t^2+t+1)\tau^2 & \text{ if $q=2$.}
\end{cases}
\]
Then $\rho_\phi(\Gal_F)=\GL_2(\widehat{A})$.   
\end{thm}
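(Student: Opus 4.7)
The plan is to verify that the specific $\phi$ in the statement satisfies the criteria used in the proof of Theorem~\ref{T:main new}(\ref{T:main new i}) that force membership in $S_1$.  As a first step, one checks that $\phi$ has no extra endomorphisms: the $j$-invariant $j(\phi)=a_1^{q+1}/a_2$ equals $-t^{1-q}$ for $q\neq 2$ and $t^9/(t^2+t+1)$ for $q=2$; in both cases $j\notin \FFbar_q$, so $\phi$ is non-isotrivial and $\End_{\Kbar}(\phi)=\phi(A)$.  Theorem~\ref{T:MT} then shows that $\rho_\phi(\Gal_F)$ is open in $\GL_2(\widehat{A})$, reducing the problem to verifying surjectivity of $\det\rho_\phi$ together with $\bbar\rho_{\phi,\lambda}(\Gal_F)=\GL_2(A/\lambda)$ for each of a computable finite list of small primes $\lambda$.

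For the determinant, Theorem~\ref{T:det image} identifies $\det\rho_\phi$ in terms of the leading coefficient and degree of $a_2$.  One checks that $a_2=-t^{q-1}$ (degree $q-1$, leading unit $-1$) when $q\neq 2$, and $a_2=t^2+t+1$ (monic of degree $2$) when $q=2$, both fall in the regime where this formula yields a surjection onto $\widehat{A}^\times$.  For mod-$\lambda$ surjectivity I would exploit the places of bad reduction.  The module $\phi$ has stable reduction of rank one at $\p=(t)$ when $q\neq 2$, since $v_\p(a_2)=q-1>0$ while $v_\p(a_1)=0$, and at $\p=(t^2+t+1)$ when $q=2$, since $v_\p(a_2)=1>0$ while $a_1=t^3\equiv 1\pmod \p$.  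The Tate uniformization at $\p$ then produces a nontrivial unipotent element in $\bbar\rho_{\phi,\lambda}(I_\p)$ for every $\lambda\neq\p$; combining this transvection with the image of Frobenius at one or two explicit auxiliary primes of good reduction, and with the surjectivity of $\det\rho_\phi$, forces $\bbar\rho_{\phi,\lambda}(\Gal_F)=\GL_2(A/\lambda)$ for $\lambda\neq\p$.  The remaining prime $\lambda=\p$ is treated symmetrically via a second auxiliary bad place coming from a specialization $t\mapsto c$.  A standard lifting lemma then promotes each mod-$\lambda$ surjectivity to $\lambda$-adic surjectivity, and a Goursat-type argument, made possible by the already established surjectivity of $\det\rho_\phi$, assembles these into $\rho_\phi(\Gal_F)=\GL_2(\widehat{A})$.

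The main obstacle is the small-prime base case.  For $q=2$ there are only a few primes of small degree over $\FF_2$, which limits the pool of Frobenius elements available for the generation argument; moreover, the commutator subgroup of $\GL_2(\FF_2)$ and of $\GL_2(\FF_4)$ is a proper subgroup of $\SL_2$, so one cannot reduce surjectivity to $\SL_2$-surjectivity at small levels and must carry the full determinant character through the whole argument.  The particular choices $a_1=t^3$ and $a_2=t^2+t+1$ in the $q=2$ case appear to be engineered precisely so that the inertial contribution at $(t^2+t+1)$ and the Frobenius images at one or two small good primes are compatible enough to generate $\GL_2(A/\lambda)$ for each problematic $\lambda$; verifying this compatibility by hand is where the bulk of the work lies.
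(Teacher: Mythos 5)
There is a genuine gap, and it is concentrated exactly where the paper has to work hardest. First, invoking Theorem~\ref{T:MT} does not reduce the problem to checking $\det\rho_\phi$ and $\bbar\rho_{\phi,\lambda}$ at ``a computable finite list of small primes'': Pink--R\"utsche openness is not effective, so it yields no such list, and even granting surjectivity of $\det\rho_\phi$ and of $\rho_{\phi,\lambda}$ for \emph{every} $\lambda$, this does not imply $\rho_\phi(\Gal_F)=\GL_2(\widehat{A})$. For $q=2$ there is a concrete counterexample to your proposed Goursat step: let $\gamma_i\colon \GL_2(\widehat{A})\to \GL_2(\FF_2)\to\{\pm1\}$ be reduction mod $(t+i)$ followed by the abelianization of $\GL_2(\FF_2)\cong\Sym_3$. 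Then $\ker(\gamma_0\gamma_1)$ is a proper closed subgroup of $\GL_2(\widehat{A})$ with surjective determinant and surjective projection to $\GL_2(A_\lambda)$ for every $\lambda$. So ``carrying the full determinant character'' cannot close the argument when $q=2$: the abelianization of $\GL_2(\widehat{A})$ is strictly larger than $\widehat{A}^\times$, and one must separately prove surjectivity onto the two $\{\pm1\}$ quotients. The paper does this in \S\ref{S:wild} by computing the quadratic resolvents of $\phi_{t+i}$ and showing they are \emph{wildly} ramified at $\infty$ (using that $v_\infty(j_\phi)=-7$ is odd and $\leq -5$), while $\det\rho_\phi$ is tame at $\infty$; this ingredient, Proposition~\ref{P:final char 2 conditions}, is entirely absent from your plan, and without it (or a substitute) the $q=2$ case does not go through.

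Two further steps are asserted but would fail as stated. The ``standard lifting lemma'' from mod-$\lambda$ to $\lambda$-adic surjectivity is false when $N(\lambda)\in\{2,3\}$, which is unavoidable here since degree-one primes occur for every $q$; the paper supplies the missing mod-$\lambda^2$ input via inertia at the bad prime (Proposition~\ref{P:main Tate new} feeding conditions (\ref{P:full GL2R criterion c})--(\ref{P:full GL2R criterion e}) of Proposition~\ref{P:full GL2R criterion}), and for $q=2$, $\deg\lambda=1$ it again needs the $\infty$-argument to get $\bbar\rho_{\phi,\lambda^2}$ surjective. Likewise, the cross-prime entanglement for $\lambda_1\neq\lambda_2$ of equal norm is not ruled out by determinants; the paper handles it in Theorem~\ref{T:criterion for computing commutator} using unipotent inertia subgroups of order $N(\lambda_1)N(\lambda_2)$ modulo $\lambda_1\lambda_2$, again from Tate uniformization at the bad prime. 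Your sketch of the mod-$\lambda$ surjectivity itself (transvection from the bad prime plus a few Frobenius elements) is in the right spirit, but the actual irreducibility argument requires the unramified-character/Riemann--Hurwitz analysis of Lemmas~\ref{L:basic unramified chi new}--\ref{L:riemann-hurwitz applied} together with the explicit Frobenius polynomials of Lemma~\ref{L:traces for degree 1 primes new}; as written, the proposal does not identify how reducibility is excluded.
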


\subsection{Overview}

We give a brief overview of the paper.    Consider a Drinfeld module $\phi\colon A\to F\{\tau\}$ of rank $2$ with $\End_{\bbar{F}}(\phi)=\phi(A)$.   

In \S\ref{S:group theory lambda-adic}, we give a criterion that will allow us to show that a subgroup of $\GL_2(A_\lambda)$ is equal to the full group.  In \S\ref{S:group theory adelic}, we give a criterion that will allow us to show that a subgroup of $\GL_2(\widehat{A})$ contains the commutator subgroup of $\GL_2(\widehat{A})$.     We will try to apply these results to the subgroup $\rho_\phi(\Gal_F)$ of $\GL_2(\widehat{A})$.   It is these group theoretic results that motivate the structure of the paper.

Galois representations for Drinfeld modules defined over local fields will be studied in \S\ref{S:inertia}.   This will be used in our proofs to understand the action of inertia subgroups on the torsion of $\phi$ at primes for which our Drinfeld modules have semistable reduction.   In particular, this will give a way to construct subgroups of $\bbar\rho_{\phi,\aA}(\Gal_F)$ that we have some control over.

In \S\ref{S:Frobenius polynomials}, we recall that the representations $\bbar\rho_{\phi,\aA}$ are compatible and give rise to Frobenius polynomials.  These polynomials have coefficients in $A$ and are computable.  In \S\ref{S:determinant}, we recall a theorem of Gekeler that will give an explicit expression for the index of $\det(\rho_\phi(\Gal_F))$ in $\widehat{A}^\times$.   

An important step in showing that $\rho_\phi$ has large image is to prove that the representations $\bbar\rho_{\phi,\lambda}\colon \Gal_F\to \GL_2(\FF_\lambda)$ are irreducible for all nonzero prime ideals $\lambda$.  In \S\ref{S:irreducibility}, we prove that this holds for all but finitely many $\lambda$ and give an explicit bound on the norms of any possible exceptions.

In \S\ref{S:Hilbert irreducibility}, we prove a version of Hilbert's irreducibility theorem.   We use it to show that for a fixed nonzero ideal $\aA$ of $A$, we have $\rho_{\phi(a),\aA}(\Gal_F)=\GL_2(A/\aA)$ for all $a\in A^2$ away from a set of density $0$ (for future reference, we will give a version that holds for arbitrary rank $r\geq 2$).    The set of density $0$ will depend on $\aA$, so Hilbert's irreducibility theorem cannot be used by itself to prove our main theorems.

In \S\ref{S:sieving}, we use all the above ingredients and some careful sieving to get information on the image of $\rho_{\phi(a)}$ for all $a\in A^2$ away from a set of density $0$.   In particular, for all $a\in A^2$ away from a set of density $0$, we show that $\rho_{\phi(a),\lambda}(\Gal_F)=\GL_2(A_\lambda)$ for all nonzero prime ideals $\lambda$ of $A$, and also show that $\rho_{\phi(a)}(\Gal_F)$ and $\GL_2(\widehat{A})$ have the same commutator subgroup.  The proof of Theorem~\ref{T:main new} in the case $q\neq 2$ will then be quickly proved in 
\S\ref{SS:main new proof -main}.

Suppose that $q=2$.  In \S\ref{S:wild}, we give a condition on $\phi$ that ensures that the homomorphism $\Gal_F \to \GL_2(\widehat{A})/[\GL_2(\widehat{A}),\GL_2(\widehat{A})]$ obtained by composing $\rho_\phi$ with the quotient map is surjective.   This is achieved by considering the ramification at the place $\infty$ of $F$.  In \S\ref{SS:main new proof -special}, we prove the remaining case of Theorem~\ref{T:main new}.

Finally, \S\ref{S:proof of examples} is dedicated to the computation of the Galois images of the explicit Drinfeld modules from Theorem~\ref{T:example}.

\subsection{Some earlier results}

In the unpublished preprint \cite{1110.4365} the author proved Theorem~\ref{T:example} when $q\geq 5$ is odd.   This was extended to $q=3$ and $q=2^e\geq 4$ in \cite{MR4415990}.   The original goal of this work was to reprove this in a manner that could readily generalize to most Drinfeld modules like as in Theorem~\ref{T:main new}.   When $q=p^e$ with $p\geq 5$ and $p\equiv 1 \pmod{3}$, Chen gave an example of a rank $3$ Drinfeld $A$-module $\phi\colon A\to F\{\tau\}$ for which $\rho_\phi(\Gal_F)=\GL_3(\widehat{A})$, cf.~\cite{MR4410021}.  There are also some recent papers proving Hilbert irreducibility like results, cf.~\cite{MR4755197, 2407.14264,2403.15109}. 

\subsubsection{Elliptic curves} Let us briefly mention the analogous case of elliptic curves over a fixed number field $K$.   Consider an elliptic curve $E$ over $K$.  For each integer $n\geq 1$, the Galois action on the $n$-torsion points of $E$ gives rise to a continuous representation $\bbar\rho_{E,n}\colon \Gal_K:=\Gal(\Kbar/K)\to \GL_2(\ZZ/n\ZZ)$.   By choosing compatible bases, these combine to give a single Galois representation $\rho_E\colon \Gal_K\to \GL_2(\Zhat)$.

Serre observed that for an elliptic curve over $\QQ$, we can never have $\rho_E(\Gal_\QQ)\supseteq \SL_2(\Zhat)$, cf.~\cite[Prop.~22]{MR0387283}.  One ingredient of this obstruction is that $\det\circ \rho_E \colon \Gal_\QQ\to \Zhat^\times$ is the cyclotomic character and hence the fixed field in $\Qbar$ of its kernel is the maximal abelian extension of $\QQ$ by the Kronecker--Weber theorem.   When $K\neq \QQ$ there is no such obstruction and we have $\rho_E(\Gal_K)\supseteq \SL_2(\Zhat)$ for a ``random'' elliptic curve $E$ over $K$, cf.~\cite{1110.4365}.   Jones \cite{MR2563740} prove that we have $[\GL_2(\Zhat):\rho_E(\Gal_\QQ)]=2$ for a ``random'' elliptic curve $E/\QQ$.

\subsection{Notation}

For a nonzero ideal $\aA$ of $A$, we let $A_\aA$ be the inverse limit of the rings $A/\aA^i$ with $i\geq 1$.   We have natural isomorphisms
\[
A_\aA = \prod_{\p\supseteq \aA} A_\p \quad \text{ and }\quad \widehat{A}=\prod_\p A_\p,
\]
where the product is over the nonzero prime ideals $\p$ of $A$.  Each ring $A_\p$ is a complete discrete valuation ring.   

Consider a nonzero prime ideal $\p$ of $A$.   Define the residue field $\FF_\p:=A/\p$ and denote its cardinality by $N(\p)$.   We let $\deg(\p)$ be the degree of the field extension $\FF_\p/\FF_q$.  We have $N(\p)=q^{\deg(\p)}$ and $\deg(\p)$ is also the degree of any polynomial $\pi\in A$ with $\p=(\pi)$.   Let $F_\p$ be the completion of $F$ at $\p$; it is a local field with valuation ring $A_\p$.  Let $v_\p\colon F_\p^\times \to \ZZ$ be the corresponding valuation normalized so that $v_\p(F_\p^\times)=\ZZ$ and we set $v_\p(0)=+\infty$.

For a field $K$, let $K^\sep$ be a separable closure of $K$ and define the absolute Galois group $\Gal_K=\Gal(K^\sep/K)$.

Let $\phi\colon A\to K\{\tau\}$ be a Drinfeld module of rank $2$.  The \defi{$j$-invariant} of $\phi$ is $j_\phi:=a_1^{q+1}/a_2 \in K$, where $\phi_t=t+a_1\tau+a_2\tau^2$.

\section{Group theoretic criterion for large $\lambda$-adic image} \label{S:group theory lambda-adic}

Throughout this section, we fix a finite field $\FF$ and define the ring of formal power series $R:=\FF[\![\pi]\!]$.  The ring $R$ is a complete discrete valuation ring with maximal ideal $\p$ generated by $\pi$ and has residue field $\FF$.   

The following proposition gives a criterion to check if a subgroup of $\GL_2(R)$ is actually the full group.  
Note that for a nonzero prime ideal $\lambda$ of $A=\FF_q[t]$, the ring $A_\lambda$ is of the form $\FF_\lambda[\![\pi]\!]$, cf.~\cite[Chapter II \S4 Theorem~2]{MR450380}.  In particular, Proposition~\ref{P:full GL2R criterion} gives a group theoretic criterion to check if $\rho_{\phi,\lambda}(\Gal_F)$ is equal to the full group $\GL_2(A_\lambda)$ for a Drinfeld $A$-module $\phi\colon A\to F\{\tau\}$.

\begin{prop} \label{P:full GL2R criterion}
Let $G$ be a closed subgroup of $\GL_2(R)$ that satisfies the following conditions:
\begin{alphenum}
\item \label{P:full GL2R criterion a}
$\det(G)=R^\times$,
\item \label{P:full GL2R criterion b}
the image of $G$ modulo $\p$ is $\GL_2(\FF)$, 
\item \label{P:full GL2R criterion c}
if $|\FF|>2$, then there is an element $I+\pi B$ of $G$ with $B\in M_2(R)$ so that $B$ modulo $\p$ is a nonscalar matrix in $M_2(\FF)$,
\item \label{P:full GL2R criterion d}
if $|\FF|=2$, then the image of $G$ modulo $\p^2$ is $\GL_2(R/\p^2)$,
\item \label{P:full GL2R criterion e}
if $|\FF|=2$, then $G \cap \SL_2(R)$ contains an element whose reduction modulo $\p$ in $\SL_2(\FF)$ has order $2$.
\end{alphenum}
Then $G=\GL_2(R)$.  
\end{prop}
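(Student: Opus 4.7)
Since $G$ is closed in the profinite group $\GL_2(R) = \varprojlim_n \GL_2(R/\p^n)$, it suffices to show that for each $n\ge 1$ the reduction $G_n$ of $G$ modulo $\p^n$ equals $\GL_2(R/\p^n)$, and then take inverse limits. The plan is to induct on $n$. The case $n=1$ is hypothesis (b). For $n=2$: when $|\FF|=2$ this is exactly (d); when $|\FF|>2$ one identifies the kernel $V_1 := \ker(G_2 \twoheadrightarrow G_1)$ with an additive subgroup of $M_2(\FF)$ via $I+\pi M\leftrightarrow M$, observes that it is stable under $\GL_2(\FF)$-conjugation (lifted through $G_2 \twoheadrightarrow G_1$), uses (c) to place a non-scalar element in $V_1$, and invokes the classification of $\GL_2(\FF)$-stable subspaces of $M_2(\FF)$ (namely $0$, $\FF\cdot I$, $\sl_2(\FF)$, $M_2(\FF)$ when $|\FF|>2$) to obtain $V_1 \supseteq \sl_2(\FF)$. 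Hypothesis (a), combined with $\det G_1 = \FF^\times$, forces $\det V_1 = 1+\pi\FF$, equivalently $\tr V_1 = \FF$, so $V_1$ contains an element outside $\sl_2(\FF)$ and thus equals $M_2(\FF)$.

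For the inductive step ($n \ge 2 \Rightarrow n+1$), set $V_n := \ker(G_{n+1}\twoheadrightarrow G_n) \subseteq M_2(\FF)$; again $\GL_2(\FF)$-stable under conjugation through $G_n \twoheadrightarrow \GL_2(\FF)$. The main engine is the commutator identity
\[
\bigl[I + \pi X + O(\pi^2),\ I + \pi^{n-1} Y + O(\pi^n)\bigr] \equiv I + \pi^n [X,Y] \pmod{\pi^{n+1}},
\]
which transports non-scalarity from low levels into $V_n$. Take $X \in V_1$ non-scalar (from (c) or (d)) and let $Y$ range over $V_{n-1} = M_2(\FF)$, which is available because the induction hypothesis $G_n = \GL_2(R/\p^n)$ is equivalent to $V_{n-1}=M_2(\FF)$; the image $\ad(X)(M_2(\FF))$ is nonzero, producing nonzero elements of $V_n$. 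The same $\GL_2(\FF)$-subspace classification forces $V_n \supseteq \sl_2(\FF)$, and the surjection $\det G = R^\times$ combined with $\det G_n = (R/\p^n)^\times$ yields $\det V_n = 1+\pi^n\FF$, hence $\tr V_n = \FF$. Together these give $V_n = M_2(\FF)$, closing the induction.

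The main obstacle is the case $|\FF|=2$: here $\GL_2(\FF_2)\cong S_3$ is too small for the conjugation action on $M_2(\FF_2)$ to cleanly isolate $\sl_2(\FF_2)$ from $M_2(\FF_2)$, and moreover $\FF_2\cdot I \subset \sl_2(\FF_2)$ so the trace splitting of scalars from $\sl_2$ degenerates. Hypotheses (d) and (e) are tailored to this. (d) secures the base case $n=2$ directly, supplying $V_1 = M_2(\FF_2)$ as a starting point for the commutator argument. (e), an element of $G\cap\SL_2(R)$ whose mod-$\p$ reduction is an involution in $\SL_2(\FF_2)$, furnishes the additional handle needed to generate the right $\GL_2(\FF_2)$-invariant subspaces during the induction step; without it one cannot force the strict containment $\sl_2(\FF_2) \supsetneq \FF_2\cdot I$ at each level from the commutator output alone. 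I expect the $\FF_2$ representation-theoretic bookkeeping to be the most delicate part of the proof.
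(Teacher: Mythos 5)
Your overall scheme (induct on $n$, identify $V_n=\ker(G_{n+1}\to G_n)$ with a conjugation-stable subgroup of $M_2(\FF)$, and feed it with the commutator identity) is a legitimate reorganization, but as written it has a genuine gap, and it sits exactly where the proposition is delicate. The recurring step ``$\det(G)=R^\times$ together with $\det G_n=(R/\p^n)^\times$ yields $\det V_n=1+\pi^n\FF$'' is asserted, not proved, and it is not a formal consequence of those two equalities. The only route to it is an index count: $R^\times/\det(G^n)$ is an abelian quotient of $G/G^n\cong\GL_2(R/\p^n)$, so you need to know the abelianization of $\GL_2(R/\p^n)$, i.e.\ that its commutator subgroup is $\SL_2(R/\p^n)$. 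For $|\FF|>2$ this is true (elementary matrices are commutators of diagonal and unipotent elements), but you must say and use it. For $|\FF|=2$ it is false: the abelianization is $(R/\p^n)^\times\times\{\pm1\}$, with the extra factor coming from the sign character $\epsilon$ of $\GL_2(\FF_2)$, so the count only bounds the index of $\det(G^n)$ in $1+\p^nR$ by $2$ and the step collapses. Concretely, for odd $n$ the subgroup $\ker\bigl((\psi\circ\det)\cdot\epsilon\bigr)\subseteq\GL_2(R/\p^{n+1})$, where $\psi$ is a quadratic character of $(R/\p^{n+1})^\times$ nontrivial on $1+\p^n$, has full determinant, surjects onto $\GL_2(R/\p^n)$, yet its congruence kernel is only $\sl_2(\FF_2)$; it is precisely hypothesis (\ref{P:full GL2R criterion e}) (an element of determinant $1$ reducing to an involution, hence of sign $-1$) that excludes this configuration, and you never carry out that argument. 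Your stated role for (\ref{P:full GL2R criterion e}) --- helping ``generate the right invariant subspaces'' --- is not the issue; the issue is ruling out an index-$2$ correlation between $\det$ and the sign character, and you explicitly defer this (``I expect the $\FF_2$ bookkeeping to be the most delicate part''), which is the heart of the matter rather than bookkeeping. A second, smaller defect in the same case: the classification you invoke (nonscalar stable subgroup contains $\sl_2$) fails over $\FF_2$, since the three involutions together with $0$ form a stable subgroup of order $4$ inside $\sl_2(\FF_2)$; this part is repairable, because (\ref{P:full GL2R criterion d}) gives $V_1=M_2(\FF_2)$ and brackets $[X,Y]$ with $X,Y$ arbitrary already span $\sl_2(\FF_2)$, but it needs to be said. (Also, in your step ``$\ad(X)(M_2(\FF))$ is nonzero'' you need a nonscalar, not merely nonzero, element to apply the classification; this is automatic but should be checked.)

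For comparison, the paper avoids the level-by-level determinant problem altogether: it studies the commutator subgroup $H=[G,G]$, whose graded pieces $\hH_i$ automatically lie in $\sl_2(\FF)$, proves $\gG_1=\gl_2(\FF)$ once (this is where (\ref{P:full GL2R criterion a}), (\ref{P:full GL2R criterion c}), (\ref{P:full GL2R criterion d}) enter, with a separate argument at $|\FF|=3$), shows $\hH_i=\sl_2(\FF)$ for all $i$ by the same commutator identity, and only then uses (\ref{P:full GL2R criterion e}) once at the residue level and (\ref{P:full GL2R criterion a}) once at the end to pass from $H$ to $\SL_2(R)$ and then to $\GL_2(R)$. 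If you want to keep your induction on full congruence quotients, you must supply the abelianization computation for $|\FF|>2$ and, for $|\FF|=2$, an actual argument using (\ref{P:full GL2R criterion e}) to eliminate the sign--determinant twist at every level.
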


We will prove the proposition in \S\ref{SS:proof of full GL2R criterion}.  When $|\FF|>3$, Proposition~\ref{P:full GL2R criterion} also follows from \cite[Proposition~4.1]{MR2499412}.   

\subsection{Groups over a finite field}

\begin{prop} \label{P:group contains SL2 condition}
Let $G$ be a subgroup of $\GL_2(\FF)$ that acts irreducibly on $\FF^2$ and contains a subgroup of cardinality $|\FF|$.  Then $G\supseteq\SL_2(\FF)$.
\end{prop}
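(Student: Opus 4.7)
The plan is to exploit the fact that any subgroup of $\GL_2(\FF)$ of cardinality $|\FF|$ must be a full Sylow $p$-subgroup, and then to use irreducibility to produce a second Sylow which, together with the first, generates $\SL_2(\FF)$.

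Write $q = |\FF|$ and $p = \ch(\FF)$. The first step is to check that $|\GL_2(\FF)| = q(q-1)^2(q+1)$, whose $p$-part is exactly $q$; hence a subgroup of $G$ of order $q$ is automatically a Sylow $p$-subgroup $S_1$ of $\GL_2(\FF)$. Since all such Sylows are $\GL_2(\FF)$-conjugate to the standard upper unipotent group
\[
U^+ = \left\{ \begin{pmatrix} 1 & a \\ 0 & 1 \end{pmatrix} : a \in \FF \right\},
\]
the group $S_1$ fixes a unique line $L_1 \subseteq \FF^2$. By irreducibility, $L_1$ is not stabilized by all of $G$, so some $g \in G$ satisfies $gL_1 \neq L_1$; then $S_2 := gS_1g^{-1} \subseteq G$ is a second Sylow $p$-subgroup, fixing the distinct line $gL_1$, and in particular $S_2 \neq S_1$.

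The key geometric input is the observation that \emph{any two distinct Sylow $p$-subgroups of $\GL_2(\FF)$ generate $\SL_2(\FF)$}. To prove this I would choose a basis $\{v_1, v_2\}$ with $v_i$ spanning the line fixed by each Sylow; in that basis the first Sylow appears as $U^+$ and the second as the lower unipotent group
\[
U^- = \left\{ \begin{pmatrix} 1 & 0 \\ b & 1 \end{pmatrix} : b \in \FF \right\},
\]
and a short explicit computation (writing the Weyl element as a product of three elementary matrices, then producing an arbitrary diagonal matrix of determinant one as a further product) yields $\SL_2(\FF) = \langle U^+, U^- \rangle$. Applying this to our $S_1, S_2 \subseteq G$ gives $\SL_2(\FF) \subseteq G$, as required.

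I expect no serious obstacle: the argument is purely elementary once the Sylow count is in hand. The only mild care needed is the degenerate case $|\FF| = 2$, where $|U^+| = 2$ and the identity $\SL_2(\FF) = \langle U^+, U^- \rangle$ reduces to the observation that the only subgroup of $\SL_2(\FF_2) \cong S_3$ containing the two distinct order-two subgroups $U^+$ and $U^-$ is $S_3$ itself.
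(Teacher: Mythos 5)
Your proposal is correct and follows essentially the same route as the paper: you identify the order-$|\FF|$ subgroup as a Sylow $p$-subgroup of $\GL_2(\FF)$ fixing a unique line, use irreducibility to produce a second Sylow inside $G$ with a different fixed line (by conjugating $S_1$ with an element moving the line, where the paper instead notes $P_1$ cannot be normal and invokes Sylow), and finish with the standard fact that the two opposite unipotent subgroups in an adapted basis generate $\SL_2(\FF)$. The only divergence is the bookkeeping in that final generation step (Weyl element plus diagonal torus versus the paper's explicit three-factor products), which is immaterial.
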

\begin{proof} 
Let $P_1$ be a subgroup of $G$ of order $|\FF|$; it is a $p$-Sylow subgroup of $\GL_2(\FF)$, where $p$ is the characteristic of $\FF$.   There is a unique $1$-dimensional $\FF$-subspace $W_1$ of $\FF^2$ that is fixed by every element of $P_1$.  If $P_1$ is a normal subgroup of $G$, then $W_1$ would be stable under the action of $G$ which would contradict our irreducibility assumption.  Therefore, there is a second subgroup $P_2\neq P_1$ of $G$ with cardinality $|\FF|$.  Let $W_2$ be the unique $1$-dimensional $\FF$-subspace of $\FF^2$ that is fixed by every element of $P_2$.  We have $W_1\neq W_2$.   After conjugating $G$ in $\GL_2(\FF)$, we may assume that $(1,0)\in W_1$ and $(0,1)\in W_2$, and hence 
\[
P_1=\left\{ \left(\begin{smallmatrix}1 & x \\0 & 1\end{smallmatrix}\right) : x \in \FF \right\} \quad \text{ and }\quad 
P_2=\left\{ \left(\begin{smallmatrix}1 &0 \\x & 1\end{smallmatrix}\right) : x \in \FF \right\} .
\]
Now take any matrix $M=\left(\begin{smallmatrix}A & B \\C & D\end{smallmatrix}\right) \in \SL_2(\FF)$.   First suppose that $B\neq 0$.  For $a,b,c\in \FF$, we have
\[
\left(\begin{smallmatrix}1 & 0 \\a & 1\end{smallmatrix}\right)
\left(\begin{smallmatrix}1 & b \\0 & 1\end{smallmatrix}\right)
\left(\begin{smallmatrix}1 & 0 \\c & 1\end{smallmatrix}\right)
=\left(\begin{smallmatrix}1+bc & b \\a+c+abc & 1+ab\end{smallmatrix}\right).
\]
So setting $b=B$ and solving $1+bc=A$ and $1+ab=D$ for $a$ and $c$ (recall that $B\neq0$), we find an expression for $M$ as a product of matrices in $P_1$ and $P_2$ (that $a+c+abc=C$ is automatic since our matrices have determinant $1$ and $b=B\neq 0$).  Therefore $M\in G$.   An analogous argument shows that $M\in G$ when $C\neq 0$.   Finally in the case $B=C=0$, we simply note that 
$\left(\begin{smallmatrix}-1 & 0 \\0 & -1\end{smallmatrix}\right)=\left(\begin{smallmatrix}1 & 0 \\1 & 1\end{smallmatrix}\right)\left(\begin{smallmatrix}1 & -2 \\0 & 1\end{smallmatrix}\right)\left(\begin{smallmatrix}1 & 0 \\1 & 1\end{smallmatrix}\right)\left(\begin{smallmatrix}1 & -2 \\0 & 1\end{smallmatrix}\right) \in G.$
\end{proof}

\begin{lemma} \label{L:finite field facts}
\begin{romanenum}
\item  \label{L:finite field facts i}
If $|\FF|>3$, then the group $\SL_2(\FF)/\{\pm I\}$ is nonabelian and simple.
\item \label{L:finite field facts ii}
If $|\FF|>3$, then $[\SL_2(\FF),\SL_2(\FF)]=\SL_2(\FF)$, i.e., $\SL_2(\FF)$ is perfect.
\item \label{L:finite field facts iii}
If $|\FF|>2$, then $[\GL_2(\FF),\GL_2(\FF)]=\SL_2(\FF)$.
\end{romanenum}
\end{lemma}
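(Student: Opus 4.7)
The plan is to treat part (i) as a classical input and deduce parts (ii) and (iii) from explicit commutator computations that realize transvections as commutators.

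Part (i) is the Jordan--Dickson theorem that $\PSL_2(\FF)$ is a nonabelian simple group whenever $|\FF|\geq 4$. I would simply cite a standard group theory reference; the usual proof applies the Iwasawa criterion to the doubly-transitive action of $\SL_2(\FF)$ on $\PP^1(\FF)$. Nonabelianness is evident from any pair of noncommuting elements of $\SL_2(\FF)$.

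For part (iii), the inclusion $[\GL_2(\FF),\GL_2(\FF)]\subseteq \SL_2(\FF)$ is automatic because $\det$ is a homomorphism into the abelian group $\FF^\times$. For the reverse inclusion, since $|\FF|>2$ I can fix any $\alpha\in \FF^\times$ with $\alpha\neq 1$. A direct computation yields
\[
\left[\begin{pmatrix}\alpha & 0 \\ 0 & 1\end{pmatrix},\ \begin{pmatrix}1 & c \\ 0 & 1\end{pmatrix}\right]
= \begin{pmatrix}1 & c(\alpha-1) \\ 0 & 1\end{pmatrix},
\]
which lies in $[\GL_2(\FF),\GL_2(\FF)]$ for every $c\in \FF$. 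As $c$ varies over $\FF$, the entry $c(\alpha-1)$ sweeps out all of $\FF$, so every upper-triangular unipotent transvection is a commutator; the transposed computation produces the lower-triangular ones. Since upper and lower transvections together generate $\SL_2(\FF)$, this gives $\SL_2(\FF)\subseteq [\GL_2(\FF),\GL_2(\FF)]$.

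Part (ii) uses exactly the same strategy, but the outer conjugator must itself lie in $\SL_2(\FF)$. When $|\FF|>3$ there exists $\alpha\in \FF^\times$ with $\alpha^2\neq 1$, and then
\[
\left[\begin{pmatrix}\alpha & 0 \\ 0 & \alpha^{-1}\end{pmatrix},\ \begin{pmatrix}1 & c \\ 0 & 1\end{pmatrix}\right]
= \begin{pmatrix}1 & c(\alpha^2-1) \\ 0 & 1\end{pmatrix}
\]
realizes every upper transvection as an element of $[\SL_2(\FF),\SL_2(\FF)]$; the symmetric computation handles the lower ones, and these again generate $\SL_2(\FF)$. There is no real obstacle — the whole proof is essentially three short calculations. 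The only subtlety is matching the hypotheses to the case analysis on $|\FF|$: the hypothesis $|\FF|>2$ in (iii) is exactly what guarantees $\alpha\neq 1$, while the hypothesis $|\FF|>3$ in (ii) is exactly what guarantees the existence of $\alpha$ with $\alpha^2\neq 1$, so the bounds stated in the lemma are sharp for this approach.
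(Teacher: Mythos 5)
Your proposal is correct, but it handles parts (ii) and (iii) differently from the paper. The paper simply cites Wilson, \emph{The finite simple groups}, \S 3.3.2 for (i) and (ii), and then obtains (iii) from (ii) when $|\FF|>3$ (via $[\GL_2(\FF),\GL_2(\FF)]\supseteq[\SL_2(\FF),\SL_2(\FF)]=\SL_2(\FF)$, the reverse inclusion coming from the determinant) together with a direct check in the single remaining case $|\FF|=3$. You instead prove (ii) and (iii) by the explicit commutator identities $[\operatorname{diag}(\alpha,1),E_{12}(c)]=E_{12}(c(\alpha-1))$ and $[\operatorname{diag}(\alpha,\alpha^{-1}),E_{12}(c)]=E_{12}(c(\alpha^2-1))$, plus the standard fact that upper and lower transvections generate $\SL_2(\FF)$; both computations check out, the choice of $\alpha$ is available exactly under the stated hypotheses ($\alpha\neq1$ needs $|\FF|>2$, $\alpha^2\neq1$ needs $|\FF|>3$), and your (iii) is proved uniformly for all $|\FF|>2$ without any special case at $|\FF|=3$ and without relying on (ii). What each approach buys: the paper's treatment is shorter because it outsources (ii) to a reference and tolerates one small case check, while yours is self-contained (modulo the generation of $\SL_2$ by transvections), explains why the bounds $|\FF|>2$ and $|\FF|>3$ are exactly what the argument needs, and makes (ii) and (iii) logically independent; for (i) both you and the paper ultimately rely on a citation for the simplicity of $\PSL_2(\FF)$, which is the right call since that is a genuinely deeper fact.
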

\begin{proof}
Parts (\ref{L:finite field facts i}) and (\ref{L:finite field facts ii}) are shown in \cite[\S3.3.2]{MR2562037}.  Part (\ref{L:finite field facts iii}) follows from (\ref{L:finite field facts ii}) when $|\FF|>3$ and can be checked directly when $|\FF|=3$.
\end{proof}

We define $\gl_2(\FF):=M_2(\FF)$ and we let $\sl_2(\FF)$ be the subgroup consisting of matrices with trace $0$.  These $\FF$-vector spaces are Lie algebras under the pairing $[x,y]=xy-yx$.

\begin{lemma} \label{L:PR adjoint}
If $|\FF|>2$, then any subgroup of $\gl_2(\FF)$ that is invariant under conjugation by $\GL_2(\FF)$ either contains $\sl_2(\FF)$ or consists only of scalar matrices. 
\end{lemma}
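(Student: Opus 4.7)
The plan is to prove the contrapositive: a $\GL_2(\FF)$-conjugation-invariant subgroup $H\subseteq \gl_2(\FF)$ containing a non-scalar matrix $M$ must contain all of $\sl_2(\FF)$. The first step is to pass to $H_0 := H\cap \sl_2(\FF)$, using the key observation that for every $X\in H$ and $g\in \GL_2(\FF)$ the difference $gXg^{-1}-X$ lies in $H$ and has trace zero, hence lies in $H_0$. I would then check that for any non-scalar $M\in H$ one can choose $g$ so that $gMg^{-1}-M$ is a non-zero, non-scalar element: in odd characteristic this is immediate since $gMg^{-1}\equiv M\pmod{\FF\cdot I}$ for all $g$ forces $M$ central; in characteristic $2$ a short case analysis (according as $M$ is diagonal, has a non-zero off-diagonal entry, or has irreducible characteristic polynomial) shows that one of $g=\bigl(\begin{smallmatrix} 1 & 1 \\ 0 & 1\end{smallmatrix}\bigr)$, $\bigl(\begin{smallmatrix} 1 & 0 \\ 1 & 1\end{smallmatrix}\bigr)$, or $\mathrm{diag}(\alpha,1)$ with $\alpha\neq 1$ suffices. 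The problem thus reduces to showing that every $\GL_2(\FF)$-invariant subgroup of $\sl_2(\FF)$ containing a non-scalar element equals $\sl_2(\FF)$.

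For this reduced problem I fix a non-scalar $X\in H_0$ and, after replacing it by a $\GL_2(\FF)$-conjugate still in $H_0$, place it in one of the standard orbit representatives on $\sl_2(\FF)\setminus\FF\cdot I$: (i) the nilpotent $X=e_{12}$; (ii) the split semisimple $X=\mathrm{diag}(\lambda,-\lambda)$ with $\lambda\in\FF^\times$ (only in odd characteristic); or (iii) the companion matrix $\bigl(\begin{smallmatrix} 0 & -\mu \\ 1 & 0\end{smallmatrix}\bigr)$ with $T^2+\mu$ irreducible over $\FF$. In odd characteristic, cases (ii) and (iii) each reduce to (i) by a single $u_+(t)$-conjugation difference that produces a non-zero scalar multiple of $e_{12}$ in $H_0$; in case (i), conjugating $e_{12}$ by $\mathrm{diag}(\alpha,1)$ with $\alpha\in\FF^\times\setminus\{1\}$ (available because $|\FF|>2$) immediately gives $\FF\cdot e_{12}\subseteq H_0$, and conjugation by $u_-(t)$ yields $-t\,\mathrm{diag}(1,-1)-t^2 e_{21}\in H_0$; taking $t=\pm 1$ and adding/subtracting (using $2\neq 0$) produces non-zero multiples of $e_{21}$ and of $\mathrm{diag}(1,-1)$, which bootstrap via further diagonal conjugation and subtraction to $\FF\cdot e_{21}\subseteq H_0$ and $\FF\cdot \mathrm{diag}(1,-1)\subseteq H_0$, so $\sl_2(\FF)\subseteq H_0$.

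The main obstacle is characteristic $2$, where $\mathrm{diag}(1,-1)=I$ is scalar and many of the odd-characteristic difference computations collapse because $2=0$. Here case (ii) is vacuous since every trace-zero diagonal matrix is scalar. In case (i) with $X=e_{12}$, the crucial identity is $u_-(t)e_{12}u_-(t)^{-1}-e_{12}=tI+t^2 e_{21}$; conjugating this element by $\mathrm{diag}(\alpha,1)$ and subtracting yields $(\alpha^{-1}-1)t^2 e_{21}\in H_0$, which is non-zero for $\alpha\neq 1$ and $t\neq 0$, both of which are available precisely because $|\FF|>2$. A further diagonal conjugation then scales this up to $\FF\cdot e_{21}\subseteq H_0$; subtracting $t^2 e_{21}$ from $tI+t^2 e_{21}$ deposits $\FF\cdot I$ in $H_0$, and the symmetric computation with $u_+(t)$ gives $\FF\cdot e_{12}$, so $H_0\supseteq \FF\cdot e_{12}+\FF\cdot e_{21}+\FF\cdot I=\sl_2(\FF)$. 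Case (iii) in characteristic $2$ follows the same blueprint applied to $\bigl(\begin{smallmatrix} 0 & \mu \\ 1 & 0\end{smallmatrix}\bigr)$: the analogous computations $u_+(t)Xu_+(t)^{-1}-X=tI+t^2 e_{12}$ and $u_-(s)Xu_-(s)^{-1}-X=s\mu I+s^2\mu e_{21}$ permit the same two-step isolation of $\FF\cdot e_{12}$, $\FF\cdot e_{21}$, and $\FF\cdot I$ inside $H_0$, completing the proof.
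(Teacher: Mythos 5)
Your argument is correct in substance, and it takes a genuinely different route from the paper: the paper simply quotes Proposition~2.1 of Pink--R\"utsche for $|\FF|\geq 4$ and asserts that a direct computation settles $|\FF|=3$, whereas you give a uniform, self-contained computation valid for every $|\FF|>2$, working only with additive subgroups closed under conjugation (no $\FF$-linearity assumed) and treating characteristic $2$ explicitly. Your mechanism --- pass to $H_0=H\cap\sl_2(\FF)$ via the differences $gXg^{-1}-X$, normalize a nonscalar element of $H_0$ to an orbit representative, and then isolate $\FF e_{12}$, $\FF e_{21}$ and the remaining diagonal (or scalar) line by unipotent-conjugation differences followed by diagonal rescaling --- is the kind of argument the cited reference carries out; what your version buys is transparency, an explicit treatment of the $|\FF|=3$ case the paper leaves to the reader, and a proof that handles characteristic $2$ on the same footing, at the cost of a page of matrix computations the paper avoids by citation.

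Two small slips, both easily repaired. First, in odd characteristic your case (iii): for $X=\left(\begin{smallmatrix}0&-\mu\\1&0\end{smallmatrix}\right)$ the difference $u_+(t)Xu_+(t)^{-1}-X$ equals $t\,\mathrm{diag}(1,-1)-t^2e_{12}$, not a scalar multiple of $e_{12}$, so a \emph{single} conjugation difference does not land you in case (i). Adding the differences for $t$ and $-t$ gives $-2t^2e_{12}\in H_0$, which is what you wanted (alternatively, the difference is trace zero with determinant $-t^2$, hence conjugate to $\mathrm{diag}(t,-t)$, i.e.\ case (ii)). Second, in characteristic $2$ the polynomial $T^2+\mu$ is never irreducible (every element of $\FF$ is a square), so as literally stated your case list omits the nonscalar trace-zero matrices with nonzero determinant, such as $\lambda I+e_{12}$ with $\lambda\neq0$; but any such matrix is nonderogatory, hence conjugate to the companion matrix $\left(\begin{smallmatrix}0&\mu\\1&0\end{smallmatrix}\right)$ with $\mu=\det X\neq 0$, and your ``case (iii) in characteristic $2$'' computation uses only this companion form and never the irreducibility, so the argument goes through verbatim once the hypothesis ``irreducible'' is dropped there.
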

\begin{proof}
This is Proposition~2.1 of \cite{MR2499412} when $|\FF|\geq 4$.  A direct computation shows that this also holds when $|\FF|=3$.
\end{proof}

\subsection{Filtration of a closed subgroup} \label{SS:filtration}

Consider a closed subgroup $G$ of $\GL_2(R)$.  For each $i\geq 0$, define the open subgroup
\[
G^i:=\{g\in G: g\equiv I \pmod{\p^i}\}
\]
of $G$.  For each $i\geq 0$, we have a quotient group $G^{[i]}:=G^i/G^{i+1}$.   Reduction modulo $\p$ induces an injective homomorphism $\nu_0\colon G^{[0]} \hookrightarrow \GL_2(\FF)$ whose image we will denote by $\bbar{G}$.  For $i\geq 1$, we have an injective homomorphism $\nu_i\colon G^{[i]}\hookrightarrow M_2(\FF)=\gl_2(\FF)$ that takes the coset $[1+\pi^{i} B]$ to $B$ modulo $\p$; we denote its image by $\gG_i$.

Take any $i\geq 1$.  For $g=I+\pi^i B\in G^i$, we have $\det(g)\equiv 1+\pi^i \tr(B) \pmod{\p^{i+1}}$.  So for $g\in G^i$, we have  $\det(g)\equiv 1 \pmod{\p^{i+1}}$ if and only if $\nu_i([g])$ lies in $\sl_2(\FF)$.

Let $H$ be the commutator subgroup of $G$.  With notation as above, we define $\bbar{H}\subseteq \GL_2(\FF)$ and $\hH_i$ for $i\geq 1$.  Using that $H\subseteq \SL_2(R)$, we find that $\bbar{H}\subseteq \SL_2(\FF)$ and that $\hH_i\subseteq \sl_2(\FF)$ for all $i\geq 1$.

The vector spaces $\gG_i$ and $\hH_i$ are invariant under conjugation by $\bbar{G}$; this follows by considering the conjugation action of $G$ on $G^i$ and $H^i$.  The commutator map $(g,h)\mapsto ghg^{-1}h^{-1}$ induces a function $G^{[0]}\times G^{[i]} \to H^{[i]}$ that corresponds to the function
\begin{align} \label{E:comm Ggh}
\bbar{G} \times \gG_i \to \hH_i, \quad (g,x)\mapsto gxg^{-1} - x
\end{align}
via $\nu_0$ and $\nu_i$.  The commutator map also induces a function $G^{[1]}\times G^{[i]} \to H^{[i+1]}$ that corresponds to the function
\begin{align} \label{E:comm ggh}
\gG_1 \times \gG_i \to \hH_{i+1}, \quad (x,y)\mapsto [x,y]=xy-yx
\end{align}
via $\nu_0$, $\nu_i$ and $\nu_{i+1}$. 

\begin{lemma} 
With notation as above, assume that $\gG_1=\gl_2(\FF)$ and $\hH_1=\sl_2(\FF)$.  Then $H$ is the subgroup of $\SL_2(R)$ consists of those matrices whose image modulo $\lambda$ lies in $[\bbar{G},\bbar{G}]\subseteq \SL_2(\FF)$.
\end{lemma}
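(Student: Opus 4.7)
The inclusion ``$H$ is contained in the stated set'' is immediate: a commutator of matrices in $\GL_2(R)$ has determinant $1$, so $H \subseteq \SL_2(R)$, and reduction modulo $\p$ sends $[G,G]$ into $[\bar G, \bar G]$. Thus the content of the lemma is the reverse inclusion, and my plan is to prove it by first upgrading the filtration data $\hH_i$ to their maximal possible values, and then using successive approximation.

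The key technical step is to show, by induction on $i \geq 1$, that $\hH_i = \sl_2(\FF)$. The case $i = 1$ is part of the hypothesis. For the inductive step, suppose $\hH_{i-1} = \sl_2(\FF)$ for some $i \geq 2$. Since $H \subseteq G$ we have $\gG_{i-1} \supseteq \hH_{i-1} = \sl_2(\FF)$, so applying the commutator map (\ref{E:comm ggh}) with first factor $\gG_1 = \gl_2(\FF)$ (our other hypothesis) and second factor $\gG_{i-1}$ yields
\[
\hH_i \supseteq [\gl_2(\FF),\sl_2(\FF)].
\]
A direct calculation — the brackets $[E_{11},E_{12}] = E_{12}$, $[E_{11},E_{21}] = -E_{21}$, $[E_{12},E_{21}] = E_{11}-E_{22}$ already span $\sl_2(\FF)$ over any field, and in characteristic $2$ the last one gives the scalar element needed — shows $[\gl_2(\FF),\sl_2(\FF)] = \sl_2(\FF)$. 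Combined with the a priori inclusion $\hH_i \subseteq \sl_2(\FF)$ from \S\ref{SS:filtration}, this gives $\hH_i = \sl_2(\FF)$.

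To finish, take any $g \in \SL_2(R)$ whose image $\bar g \in \SL_2(\FF)$ lies in $[\bar G,\bar G]$. Writing $\bar g$ as a product of commutators of elements of $\bar G$, lifting each entry to $G$, and forming the corresponding product of commutators in $H$ gives $h \in H$ with $g h^{-1} \equiv I \pmod{\p}$. It therefore suffices to show that the principal congruence kernel $\{M \in \SL_2(R) : M \equiv I \pmod{\p}\}$ is contained in $H$. Given such an $M$, I would construct inductively $h_n \in H$ with $M h_n^{-1} \equiv I \pmod{\p^{n+1}}$: writing $M h_{n-1}^{-1} = I + \pi^n C$ with $C \in M_2(R)$, the condition $\det(Mh_{n-1}^{-1}) = 1$ forces $\tr(C) \equiv 0 \pmod{\pi}$, so the reduction of $C$ lies in $\sl_2(\FF) = \hH_n$, and we can correct by a suitable element of $H^n$. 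The sequence $(h_n)$ converges $\pi$-adically to $M$, and since $G$ is closed in the profinite group $\GL_2(R)$ and $H$ is understood as the closed commutator subgroup of $G$, we conclude $M \in H$.

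The main obstacle is the inductive step showing $\hH_i$ fills up all of $\sl_2(\FF)$; once this is in hand, the successive approximation is routine. The only subtlety to flag is the convention that $H$ denotes the closure of the algebraic commutator subgroup of $G$ (which is the natural interpretation in the profinite setting and is needed for the conclusion to hold as stated).
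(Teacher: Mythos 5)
Your proposal is correct and takes essentially the same route as the paper: an induction showing $\hH_i=\sl_2(\FF)$ for all $i\geq 1$ via the bracket map \eqref{E:comm ggh} (using $\hH_{i}\subseteq \gG_{i}$ and the same three standard commutators spanning $\sl_2(\FF)$), followed by successive approximation and closedness of $H$ to show $H$ contains all of $\{M\in \SL_2(R): M\equiv I \pmod{\p}\}$, and finally the observation that $\bbar H=[\bbar G,\bbar G]$. Your flag that $H$ must be read as the closed (topological) commutator subgroup agrees with the paper's implicit convention.
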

\begin{proof}
We will prove that $\hH_i=\sl_2(\FF)$ for all $i\geq 1$ by induction on $i$.  The base case $\hH_1=\sl_2(\FF)$ is true by assumption so suppose that $\hH_i=\sl_2(\FF)$ for some fixed $i\geq 1$.   From the map (\ref{E:comm ggh}), we find that $\hH_{i+1} \subseteq \sl_2(\FF)$ contains the $\FF$-subspace spanned by $[x,y]$ with $x\in \gG_1=\gl_2(\FF)$ and $y\in \hH_i=\sl_2(\FF)$.  We thus have $\hH_{i+1}=\sl_2(\FF)$ since $\sl_2(\FF)$ is spanned by the vectors
\[
[\left(\begin{smallmatrix} 1 & 0 \\0 & 0\end{smallmatrix}\right), \left(\begin{smallmatrix} 0 & 1 \\0 & 0\end{smallmatrix}\right) ]=\left(\begin{smallmatrix} 0 & 1 \\0 & 0\end{smallmatrix}\right),\quad [\left(\begin{smallmatrix} 1 & 0 \\0 & 0\end{smallmatrix}\right), \left(\begin{smallmatrix} 0 & 0 \\1 & 0\end{smallmatrix}\right) ]=-\left(\begin{smallmatrix} 0 & 0 \\1 & 0\end{smallmatrix}\right) \quad \text{ and }\quad
[\left(\begin{smallmatrix} 0 & 0 \\1 & 0\end{smallmatrix}\right), \left(\begin{smallmatrix} 0 & 1 \\0 & 0\end{smallmatrix}\right) ]=\left(\begin{smallmatrix} -1 & 0 \\0 & 1\end{smallmatrix}\right).
\]

Since $H$ is a closed subgroup of $\SL_2(R)$ with $\hH_i=\sl_2(\FF)$ for all $i\geq 1$, we find that $H$ contains all the $A\in \SL_2(R)$ with $A\equiv I \pmod{\lambda}$.   The lemma is now immediate since $\bbar{H}=[\bbar G,\bbar G]$.
\end{proof}

\subsection{Proof of Proposition~\ref{P:full GL2R criterion}} \label{SS:proof of full GL2R criterion}

Let $H$ be the commutator subgroup of $G$ and fix notation as in \S\ref{SS:filtration}.   

\begin{lemma}\label{L:gl1}
We have $\gG_1=\gl_2(\FF)$.
\end{lemma}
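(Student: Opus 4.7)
The plan is to split into two cases based on $|\FF|$, using hypothesis (c) when $|\FF|>2$ and the stronger hypothesis (d) when $|\FF|=2$. Throughout, $\gG_1$ is an additive subgroup of $\gl_2(\FF)$ invariant under $\bbar{G}$-conjugation (from \S\ref{SS:filtration}), and by hypothesis (b) the latter equals $\GL_2(\FF)$.

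When $|\FF|=2$, hypothesis (d) says that $G$ surjects onto $\GL_2(R/\p^2)$, so the image of $G^1$ modulo $\p^2$ is the entire kernel of $\GL_2(R/\p^2)\to\GL_2(\FF)$; via $\nu_1$ this identifies with all of $\gl_2(\FF)$, giving $\gG_1=\gl_2(\FF)$. When $|\FF|>2$, hypothesis (c) places a nonscalar matrix in $\gG_1$, so Lemma~\ref{L:PR adjoint} forces $\gG_1\supseteq\sl_2(\FF)$. Since $\gl_2(\FF)/\sl_2(\FF)\cong\FF$ via the trace, it remains to show that $\tr(\gG_1)=\FF$.

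For the case $|\FF|>2$, using $\det(I+\pi B)\equiv 1+\pi\tr(B)\pmod{\p^2}$, the surjectivity of $\tr|_{\gG_1}$ is equivalent to $D:=\det(G^1)$ surjecting onto $(1+\p)/(1+\p^2)$, and it would follow at once from the stronger claim $D=1+\p$. To prove this, I would observe that the rule $g\bmod\p \mapsto \det(\tilde g)\bmod D$ (for any lift $\tilde g\in G$) defines a group homomorphism $\bbar{G}\to R^\times/D$: independence of the lift uses the definition of $D$ together with normality of $G^1$ in $G$. By hypothesis (a), $\det(G)=R^\times$, so this homomorphism is surjective. Since $R^\times/D$ is abelian, the map factors through the abelianization of $\bbar{G}$, which by Lemma~\ref{L:finite field facts}(iii) equals $\FF^\times$ via the determinant (this step is precisely where $|\FF|>2$ is used). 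Therefore $|R^\times/D|\leq q-1$, and since $D\subseteq 1+\p$ we also have $|R^\times/D|\geq [R^\times:1+\p]=q-1$, forcing $D=1+\p$ and finishing the proof.

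The main subtlety is this final abelianization argument: one must verify carefully that the proposed map $\bbar{G}\to R^\times/D$ is both well-defined and a group homomorphism, and then invoke $[\GL_2(\FF),\GL_2(\FF)]=\SL_2(\FF)$. The failure of this last identity when $|\FF|=2$ is exactly the reason the stronger hypothesis (d) is assumed in that regime; the $|\FF|=2$ case itself is then essentially bookkeeping.
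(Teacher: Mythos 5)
Your proof is correct, but it takes a somewhat different route from the paper's. The paper disposes of $|\FF|>3$ by citing Pink--R\"utsche (Proposition~4.1 of their paper) and then treats $|\FF|=3$ separately by contradiction: assuming $\gG_1=\sl_2(\FF)$ (the only possibility strictly between, using that $|\FF|$ is prime), it forms the normal subgroup $W$ generated by $G^1$ and the commutator subgroup $H$, notes $\det(W)\subseteq 1+\p^2$ while $\det(G)=R^\times$, and compares the quotient $(R/\p^2)^\times$ with $G/W\cong \FF^\times$ to get a contradiction. Your argument replaces both steps by a single uniform computation for all $|\FF|>2$: after getting $\gG_1\supseteq\sl_2(\FF)$ from (b), (c) and Lemma~\ref{L:PR adjoint} exactly as the paper does for $|\FF|=3$, you show directly that $\det(G^1)=1+\p$ by pushing the surjection $\det(G)=R^\times$ through the abelianization $\GL_2(\FF)/\SL_2(\FF)\cong\FF^\times$ (Lemma~\ref{L:finite field facts}(\ref{L:finite field facts iii})) and comparing indices, whence $\tr(\gG_1)=\FF$ and $\gG_1=\gl_2(\FF)$. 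This is the same determinant-counting idea underlying the paper's $|\FF|=3$ case, but organized as a direct proof; it buys you independence from the external citation and from the primality of $|\FF|$, at the cost of a slightly more careful well-definedness check for the map $\bbar{G}\to R^\times/\det(G^1)$ (which you do address). All the individual steps check out: the $|\FF|=2$ case from (d), the identification of the image of $\det(G^1)$ in $(1+\p)/(1+\p^2)$ with $1+\pi\,\tr(\gG_1)$, and the index comparison forcing $\det(G^1)=1+\p$. One cosmetic point: in this section $\FF$ is an arbitrary finite residue field, so the bound $q-1$ in your index count should read $|\FF|-1$ (the paper reserves $q$ for the order of the base field $\FF_q$); this does not affect the argument.
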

\begin{proof}
The lemma holds if $|\FF|=2$ by (\ref{P:full GL2R criterion d}), so we may assume that $|\FF|>2$.  When $|\FF|>3$, the lemma holds from Proposition~4.1 of \cite{MR2499412} .   So we may assume that $|\FF|=3$. Using (\ref{P:full GL2R criterion c}), we find that $\gG_1$ contains a nonscalar matrix.   We have $\bbar G=\GL_2(\FF)$ by (\ref{P:full GL2R criterion b}) and hence the space $\gG_1$ is invariant under conjugation by $\GL_2(\FF)$.  We thus have $\gG_1\supseteq \sl_2(\FF)$ by Lemma~\ref{L:PR adjoint}.   

We now suppose that $\gG_1\neq \gl_2(\FF)$ and hence $\gG_1=\sl_2(\FF)$ since $|\FF|$ is prime.  So for all $g\in G^1$, we have $\det(g)\equiv 1 \pmod{\p^2}$.  Let $W$ be the subgroup of $\GL_2(R)$ generated by $G^1$ and $H$; it is a normal subgroup of $G$.  Note that $\det(g)\equiv 1 \pmod{\p^2}$ for all $g\in W$ since this is true for all $g\in G^1$ and we have $H\subseteq \SL_2(R)$.  Since $\det(G)=R^\times$ by (\ref{P:full GL2R criterion a}) and $\det(W)\subseteq 1 + \p^2 R$, we find that $(R/\p^2)^\times$ is a quotient of $G/W$.

Let $\bbar{W}$ be the image of $W$ modulo $\p$.   We have 
\[
\bbar{W}=\bbar{H}=[\bbar G,\bbar G] = [\GL_2(\FF),\GL_2(\FF)]=\SL_2(\FF),
\]
where the last equality uses Lemma~\ref{L:finite field facts}(\ref{L:finite field facts iii}).  Since $W\supseteq G^1$ and $\bbar{W}=\SL_2(\FF)$, we find that the group $W$ is normal in $G$ and $G/W\cong \GL_2(\FF)/\SL_2(\FF)\cong \FF^\times$.  This is a contradiction since $(R/\p^2)^\times$ is a quotient of $G/W$ and has cardinality strictly larger than $\FF^\times=(R/\p)^\times$.  Therefore, $\gG_1=\gl_2(\FF)$.
\end{proof}

\begin{lemma} \label{L:hHi eq sl2}
We have $\hH_i=\sl_2(\FF)$ for all $i\geq 1$.
\end{lemma}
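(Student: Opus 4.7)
The plan is to reduce to the case $i = 1$ and then invoke the inductive argument already given in the proof of the preceding lemma. That induction uses the bracket pairing (\ref{E:comm ggh}) to pass from $\gG_1 = \gl_2(\FF)$ and $\hH_i = \sl_2(\FF)$ to $\hH_{i+1} = \sl_2(\FF)$. Since $\gG_1 = \gl_2(\FF)$ is supplied by Lemma~\ref{L:gl1}, the sole remaining task is the base case $\hH_1 = \sl_2(\FF)$.

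For the base case, I would apply the commutator pairing (\ref{E:comm Ggh}) at $i = 1$, which presents $\hH_1$ as a $\bbar G$-stable $\FF$-subspace of $\sl_2(\FF)$ that contains every element of the form $gxg^{-1} - x$ with $g \in \bbar G$ and $x \in \gG_1$. Under the hypotheses of Proposition~\ref{P:full GL2R criterion} one has $\bbar G = \GL_2(\FF)$ by~(b) and $\gG_1 = \gl_2(\FF)$ by Lemma~\ref{L:gl1}. A direct calculation, for instance with $g = \bigl(\begin{smallmatrix} 1 & 1 \\ 0 & 1 \end{smallmatrix}\bigr)$ and $x = e_{11}$, yields the non-scalar element $-e_{12} \in \hH_1$.

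When $|\FF| > 2$, Lemma~\ref{L:PR adjoint} concludes the argument: $\hH_1$ is $\GL_2(\FF)$-invariant and not contained in the scalars, so $\hH_1 \supseteq \sl_2(\FF)$, and hence equality holds. The main obstacle is the case $|\FF| = 2$, where Lemma~\ref{L:PR adjoint} is not stated and the scalar matrices $\FF_2 \cdot I$ sit inside $\sl_2(\FF_2)$, enlarging the ``scalar'' alternative to a nonzero subspace. To deal with this, I would classify the $\GL_2(\FF_2)$-stable $\FF_2$-subspaces of $\sl_2(\FF_2)$ by hand; the only options are $\{0\}$, $\FF_2 \cdot I$, and $\sl_2(\FF_2)$ itself, using that $\sl_2(\FF_2)/\FF_2 \cdot I$ is the irreducible natural $\GL_2(\FF_2)$-representation and that the short exact sequence $0 \to \FF_2 \cdot I \to \sl_2(\FF_2) \to \sl_2(\FF_2)/\FF_2 \cdot I \to 0$ does not split as $\GL_2(\FF_2)$-modules (a putative equivariant splitting would produce a fixed complementary line, which a direct conjugation check rules out). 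The non-scalar element $e_{12} \in \hH_1$ then forces $\hH_1 = \sl_2(\FF_2)$, completing the base case.
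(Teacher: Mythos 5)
Your reduction to the base case, the inductive step (deferred to the induction already carried out in \S\ref{SS:filtration}), and the case $|\FF|>2$ are all fine; for $|\FF|>2$ you invoke Lemma~\ref{L:PR adjoint} where the paper instead exhibits enough explicit elements $gxg^{-1}-x$ to span $\sl_2(\FF)$, and either route works. The genuine problem is your treatment of $|\FF|=2$: the key claim there is false. The sequence $0\to\FF_2\cdot I\to\sl_2(\FF_2)\to\sl_2(\FF_2)/\FF_2\cdot I\to 0$ \emph{does} split as $\GL_2(\FF_2)$-modules. Indeed, the subspace
\[
W:=\Big\{\,0,\ \left(\begin{smallmatrix}1&1\\0&1\end{smallmatrix}\right),\ \left(\begin{smallmatrix}1&0\\1&1\end{smallmatrix}\right),\ \left(\begin{smallmatrix}0&1\\1&0\end{smallmatrix}\right)\Big\}\subseteq \sl_2(\FF_2),
\]
whose nonzero elements are exactly the three involutions of $\GL_2(\FF_2)$, is an additive subgroup, is stable under conjugation by $\GL_2(\FF_2)$, and satisfies $\sl_2(\FF_2)=\FF_2\cdot I\oplus W$. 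So the $\GL_2(\FF_2)$-stable subspaces of $\sl_2(\FF_2)$ are $0$, $\FF_2\cdot I$, $W$ and $\sl_2(\FF_2)$, not just the three on your list; moreover the parenthetical justification is off target, since a splitting produces an invariant complementary \emph{plane}, not a ``fixed complementary line'', so no fixed-vector check can rule it out.

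Your intended conclusion can still be rescued: the nonscalar element $-\left(\begin{smallmatrix}0&1\\0&0\end{smallmatrix}\right)=\left(\begin{smallmatrix}0&1\\0&0\end{smallmatrix}\right)$ you produced lies in none of $0$, $\FF_2\cdot I$, $W$, so the corrected classification still forces $\hH_1=\sl_2(\FF_2)$; but as written the $|\FF|=2$ case rests on a false non-splitting claim, which is a real gap. The cleaner repair is the paper's: in addition to $g=\left(\begin{smallmatrix}1&1\\0&1\end{smallmatrix}\right)$, take $g=\left(\begin{smallmatrix}0&-1\\1&0\end{smallmatrix}\right)$ and $x\in\big\{\left(\begin{smallmatrix}0&0\\0&1\end{smallmatrix}\right),\left(\begin{smallmatrix}0&0\\1&0\end{smallmatrix}\right)\big\}$, which yields $\left(\begin{smallmatrix}1&0\\0&-1\end{smallmatrix}\right)$ and $\left(\begin{smallmatrix}0&-1\\-1&0\end{smallmatrix}\right)$ in $\hH_1$; together with $\left(\begin{smallmatrix}0&1\\0&0\end{smallmatrix}\right)$ these span $\sl_2(\FF)$ for every finite field $\FF$, including $\FF_2$, so all cases are handled uniformly with no classification of invariant subspaces needed.
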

\begin{proof}
We have $\bbar G=\GL_2(\FF)$ and $\gG_1=\gl_2(\FF)$ by Lemma~\ref{L:gl1}.   By (\ref{E:comm Ggh}), $\hH_1\subseteq \sl_2(\FF)$ contains the $\FF$-subspace spanned by $gxg^{-1}-x$ with $g\in \GL_2(\FF)$ and $x\in \gl_2(\FF)$.     After computing $gxg^{-1}-x$ with  $g\in \{\left(\begin{smallmatrix} 1 & 1 \\0 & 1\end{smallmatrix}\right),\left(\begin{smallmatrix} 0 & -1 \\1 & 0\end{smallmatrix}\right)\}$ and $x\in \{\left(\begin{smallmatrix} 0 & 0 \\0 & 1\end{smallmatrix}\right),\left(\begin{smallmatrix} 0 & 0 \\1 & 0\end{smallmatrix}\right)\}$, we deduce that $\hH_1\supseteq \sl_2(\FF)$ and hence $\hH_1=\sl_2(\FF)$.

We now prove the lemma by induction on $i\geq 1$.  We have already proved the base case so suppose that $\hH_i=\sl_2(\FF)$ for some $i\geq 1$.  From the map (\ref{E:comm ggh}), we find that $\hH_{i+1} \subseteq \sl_2(\FF)$ contains the $\FF$-subspace spanned by $[x,y]$ with $x\in \gG_1=\gl_2(\FF)$ and $y\in \hH_i=\sl_2(\FF)$.  We thus have $\hH_{i+1}=\sl_2(\FF)$ since $\sl_2(\FF)$ is spanned by the vectors
\[
[\left(\begin{smallmatrix} 1 & 0 \\0 & 0\end{smallmatrix}\right), \left(\begin{smallmatrix} 0 & 1 \\0 & 0\end{smallmatrix}\right) ]=\left(\begin{smallmatrix} 0 & 1 \\0 & 0\end{smallmatrix}\right),\quad [\left(\begin{smallmatrix} 1 & 0 \\0 & 0\end{smallmatrix}\right), \left(\begin{smallmatrix} 0 & 0 \\1 & 0\end{smallmatrix}\right) ]=-\left(\begin{smallmatrix} 0 & 0 \\1 & 0\end{smallmatrix}\right) \quad \text{ and }\quad
[\left(\begin{smallmatrix} 0 & 0 \\1 & 0\end{smallmatrix}\right), \left(\begin{smallmatrix} 0 & 1 \\0 & 0\end{smallmatrix}\right) ]=\left(\begin{smallmatrix} -1 & 0 \\0 & 1\end{smallmatrix}\right).\qedhere
\]
\end{proof}

\begin{lemma} \label{L:commutator H G}
The commutator subgroup $H$ of $G$ agrees with the subgroup of matrices in $\SL_2(R)$ whose image modulo $\p$ lies in $[\GL_2(\FF),\GL_2(\FF)]$.  If $|\FF|>2$, then $H=\SL_2(R)$.   
\end{lemma}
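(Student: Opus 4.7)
The plan is to assemble this lemma directly from the ingredients already in place: the unnumbered lemma at the end of \S\ref{SS:filtration}, Lemmas~\ref{L:gl1} and~\ref{L:hHi eq sl2}, and Lemma~\ref{L:finite field facts}(\ref{L:finite field facts iii}). There is essentially no new group-theoretic content to extract, only a matter of verifying hypotheses and simplifying.

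First, I would dispose of the first sentence. By hypothesis~(\ref{P:full GL2R criterion b}), the reduction of $G$ modulo $\p$ is $\bbar{G}=\GL_2(\FF)$. Lemma~\ref{L:gl1} gives $\gG_1=\gl_2(\FF)$, and Lemma~\ref{L:hHi eq sl2} (specialized to $i=1$) gives $\hH_1=\sl_2(\FF)$. These are exactly the hypotheses of the unnumbered lemma preceding \S\ref{SS:proof of full GL2R criterion}, so that lemma applies and identifies $H$ with the set of matrices in $\SL_2(R)$ whose reduction modulo $\p$ lies in $[\bbar G,\bbar G]=[\GL_2(\FF),\GL_2(\FF)]$. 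This proves the first claim.

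Second, assume $|\FF|>2$. By Lemma~\ref{L:finite field facts}(\ref{L:finite field facts iii}) we have $[\GL_2(\FF),\GL_2(\FF)]=\SL_2(\FF)$, so the mod-$\p$ condition from the previous paragraph is automatic: every element of $\SL_2(R)$ reduces into $\SL_2(\FF)$. Hence the first part yields $H\supseteq \SL_2(R)$. The reverse inclusion is immediate since $H$ is the commutator subgroup of $G\subseteq \GL_2(R)$ and $\det$ kills commutators, so $H\subseteq \SL_2(R)$. Combining these gives $H=\SL_2(R)$.

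There is no serious obstacle here; the lemma is really just a clean restatement of what the filtration analysis in \S\ref{SS:filtration} and \S\ref{SS:proof of full GL2R criterion} has already proved. The only thing to watch is that the unnumbered lemma is stated with a slightly different typographical convention (writing $\lambda$ where $\p$ is meant), so I would simply note that the reduction map in question is the one modulo the maximal ideal of $R$.
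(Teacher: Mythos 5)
Your proposal is correct and rests on exactly the same filtration analysis as the paper: the paper's own proof of this lemma does not cite the unnumbered lemma at the end of \S\ref{SS:filtration}, but instead re-derives that saturation step directly, comparing the graded pieces $H^{i}/H^{i+1}$ of $H$ with those of the congruence-saturated group $H'$ and using Lemma~\ref{L:hHi eq sl2} (i.e.\ $\hH_i=\sl_2(\FF)$ for \emph{all} $i\geq 1$) to rule out $H\subsetneq H'$. Since the unnumbered lemma you invoke is proved by the very same induction, your packaging (verify $\gG_1=\gl_2(\FF)$ and $\hH_1=\sl_2(\FF)$ via Lemmas~\ref{L:gl1} and~\ref{L:hHi eq sl2}, then apply it with $\bbar G=\GL_2(\FF)$ and finish with Lemma~\ref{L:finite field facts}(\ref{L:finite field facts iii})) is a valid and essentially equivalent argument.
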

\begin{proof}
Let $H'$ be the group of matrices in $\SL_2(R)$ whose image modulo $\p$ lies in the group $[\GL_2(\FF),\GL_2(\FF)]$.  Since $\bbar G=\GL_2(\FF)$, the image of $H'$ modulo $\p$ is equal to $[\bbar G,\bbar G]=\bbar H$.   For each $i\geq 1$, let $H'^{i}$ be the group of $g\in H'$ for which $g\equiv I \pmod{\p^i}$.   The inclusion $H\subseteq H'$ induces an injective homomorphism $H^{i}/H^{i+1} \hookrightarrow H'^{i}/H'^{i+1}$ that we view as an inclusion.

Suppose that $H\neq H'$.  The group $H'$ is open in $\SL_2(R)$ and contains $H$.  Since $H$ is a proper closed subgroup of $H'$ that has the same image modulo $\p$, we must have $H^{i}/H^{i+1} \subsetneq H'^{i+1}/H'^{i+1}$ for some $i\geq 1$.   Since $H'\subseteq \SL_2(R)$, this implies that $\hH_i\neq \sl_2(\FF)$ which contradicts Lemma~\ref{L:hHi eq sl2}.  Therefore, $H=H'$.  If $|\FF|>2$, we have $H=H'=\SL_2(R)$ by  Lemma~\ref{L:finite field facts}(\ref{L:finite field facts iii}).
\end{proof}

We claim that $G\supseteq \SL_2(R)$.   If $|\FF|>2$, then Lemma~\ref{L:commutator H G} implies that $G\supseteq H=\SL_2(R)$.  Now suppose that $|\FF|=2$.  The group $[\GL_2(\FF),\GL_2(\FF)]$ has cardinality $3$ and has index $2$ in $\SL_2(\FF)$.  By Lemma~\ref{L:commutator H G}, $H$ is the index $2$ subgroup of $\SL_2(R)$ consisting of matrices whose image modulo $\p$ lies in $[\GL_2(\FF),\GL_2(\FF)]$.   By (\ref{P:full GL2R criterion e}), there is an element $g\in G \cap \SL_2(R)$ whose image in $\SL_2(\FF)$ has order $2$.  Therefore, $g$ represents the nonidentity coset of $H$ in $\SL_2(R)$.   Since $g\in G$ and $H\subseteq G$, we have $\SL_2(R) \subseteq G$.  This completes the proof of the claim.

We thus have $G=\GL_2(R)$ since $G\supseteq \SL_2(R)$ and $\det(G)=R^\times$ by (\ref{P:full GL2R criterion a}).   The proposition follows from this and Lemma~\ref{L:commutator H G}.

\subsection{Commutator subgroups of $\GL_2(R)$ and $\SL_2(R)$}

The following gives some information on commutator subgroups that will be useful later.

\begin{prop} \label{P:commutator GL2(R)}
\begin{romanenum}
\item \label{P:commutator GL2(R) i}
If $|\FF|>2$, then the commutator subgroup of $\GL_2(R)$ is $\SL_2(R)$.  
\item  \label{P:commutator GL2(R) ii}
If $|\FF|=2$, then the commutator subgroup of $\GL_2(R)$ is
\[
\{ B \in \SL_2(R): B \text{ modulo $\p$ lies in }[\GL_2(\FF),\GL_2(\FF)]\}.
\]
In particular, $[\SL_2(R): [\GL_2(R),\GL_2(R)]]=2$.
\end{romanenum}
\end{prop}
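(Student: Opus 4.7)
The plan is to apply Lemma~\ref{L:commutator H G} directly to $G = \GL_2(R)$ itself. First I would verify that the hypotheses needed in the chain of lemmas (Lemmas~\ref{L:gl1}, \ref{L:hHi eq sl2}, \ref{L:commutator H G}) are satisfied for this choice of $G$. In fact the only properties actually used in that chain are $\bbar{G} = \GL_2(\FF)$ and $\gG_1 = \gl_2(\FF)$. For $G = \GL_2(R)$, both are immediate: the reduction map $\GL_2(R) \to \GL_2(R/\p^i)$ is surjective for every $i$, so $\bbar G$ fills all of $\GL_2(\FF)$, and for every $B \in M_2(R)$ the element $I + \pi B$ lies in $G$, giving $\gG_1 = \gl_2(\FF)$.

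With these established, Lemma~\ref{L:hHi eq sl2} yields $\hH_i = \sl_2(\FF)$ for all $i \geq 1$, and then the proof of Lemma~\ref{L:commutator H G} applies without change to show that $H := [\GL_2(R), \GL_2(R)]$ equals the full preimage in $\SL_2(R)$ of $[\GL_2(\FF), \GL_2(\FF)] \subseteq \SL_2(\FF)$ under reduction modulo $\p$.

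Part~(\ref{P:commutator GL2(R) i}) then follows: when $|\FF| > 2$, Lemma~\ref{L:finite field facts}(\ref{L:finite field facts iii}) gives $[\GL_2(\FF), \GL_2(\FF)] = \SL_2(\FF)$, so the preimage is all of $\SL_2(R)$. Part~(\ref{P:commutator GL2(R) ii}) is the above description of $H$ specialized to $|\FF| = 2$. For the index claim, surjectivity of reduction $\SL_2(R) \twoheadrightarrow \SL_2(\FF)$ induces an isomorphism $\SL_2(R)/H \cong \SL_2(\FF_2)/[\GL_2(\FF_2), \GL_2(\FF_2)]$; since $\GL_2(\FF_2) = \SL_2(\FF_2) \cong S_3$ has commutator subgroup $A_3$ of index $2$, we conclude $[\SL_2(R) : H] = 2$.

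There is no real obstacle — the proposition is essentially a corollary of the work already done in \S\ref{SS:proof of full GL2R criterion}, repackaged for the group $G = \GL_2(R)$. The only thing worth flagging is that one must observe that the conditions (a)--(e) of Proposition~\ref{P:full GL2R criterion} are not all needed here; only the weaker hypotheses on $\bbar G$ and $\gG_1$ feed into Lemma~\ref{L:commutator H G}, and these are trivial for $G = \GL_2(R)$.
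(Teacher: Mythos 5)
Your proposal is correct and is essentially the paper's own argument: the paper also proves the proposition by applying Lemma~\ref{L:commutator H G} (from the proof of Proposition~\ref{P:full GL2R criterion}) to $G=\GL_2(R)$, after noting that $\GL_2(R)$ trivially satisfies the hypotheses, and then deduces part~(\ref{P:commutator GL2(R) i}) from Lemma~\ref{L:finite field facts}(\ref{L:finite field facts iii}). Your observation that only $\bbar G=\GL_2(\FF)$ and $\gG_1=\gl_2(\FF)$ are actually needed, and your explicit index computation via $\GL_2(\FF_2)\cong S_3$, are fine but only minor repackagings of the same route.
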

\begin{proof}
Define $G:=\GL_2(R)$.  Note that $G$ satisfies all the conditions of Proposition~\ref{P:full GL2R criterion}.   Lemma~\ref{L:commutator H G} in the proof of Proposition~\ref{P:full GL2R criterion} shows that $[G,G]$ is the group consisting of all $B \in \SL_2(R)$ for which $B$ modulo $\p$ lies in $[\GL_2(\FF),\GL_2(\FF)]$.   This proves (\ref{P:commutator GL2(R) ii}).   Part (\ref{P:commutator GL2(R) i}) follows from Lemma~\ref{L:finite field facts}(\ref{L:finite field facts iii}).  
\end{proof}

\begin{prop} \label{P:SL2 commutator}
Suppose $|\FF|>3$,
\begin{romanenum}
\item \label{P:SL2 commutator i}
The group $\SL_2(R)$ is equal to its own commutator subgroup.
\item \label{P:SL2 commutator ii}
The only closed normal subgroup of $\SL_2(R)$ with simple quotient is the group consisting of those matrices $A \in \SL_2(R)$ for which $A\equiv \pm I \bmod{\p}$.
\end{romanenum}
\end{prop}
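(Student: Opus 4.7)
The plan is to establish (i) first, since perfectness is the key input to (ii), and then deduce (ii) by a short case analysis on the reduction map $\SL_2(R)\to\SL_2(\FF)$. Throughout the argument, let $K_i = \{A\in \SL_2(R): A\equiv I\pmod{\p^i}\}$ denote the congruence subgroups.

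For (i), the strategy is to show every elementary matrix lies in the commutator subgroup $H := [\SL_2(R),\SL_2(R)]$, and then to invoke the standard fact that $\SL_2$ of a local ring is generated by elementary matrices. The key identity is
\[
[h_t, u(x)] = u\bigl((t^2-1)x\bigr) \quad \text{with } h_t := \bigl(\begin{smallmatrix} t & 0 \\ 0 & t^{-1}\end{smallmatrix}\bigr), \ u(x):=\bigl(\begin{smallmatrix} 1 & x \\ 0 & 1\end{smallmatrix}\bigr),
\]
valid for any $t\in R^\times$ and $x\in R$, together with its transpose giving the analogous identity for $\ell(x):=\bigl(\begin{smallmatrix}1&0\\x&1\end{smallmatrix}\bigr)$. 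Since $|\FF|>3$, the group $\FF^\times$ contains an element $\bar t_0\notin\{\pm 1\}$; any lift $t_0\in R^\times$ then satisfies $t_0^2-1\in R^\times$, so $(t_0^2-1)x$ sweeps out all of $R$ as $x$ does, and hence $u(y),\ell(y)\in H$ for every $y\in R$. This forces $H=\SL_2(R)$.

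For (ii), let $N$ be a closed normal subgroup of $\SL_2(R)$ with simple quotient $Q$, and let $\bbar N\subseteq\SL_2(\FF)$ denote its image under reduction modulo $\p$. By Lemma~\ref{L:finite field facts}(\ref{L:finite field facts i}), the only normal subgroups of $\SL_2(\FF)$ are $\{I\}$, $\{\pm I\}$, and all of $\SL_2(\FF)$. If $\bbar N = \SL_2(\FF)$, then $NK_1=\SL_2(R)$, so $Q\cong K_1/(K_1\cap N)$ is a quotient of the pro-$p$ group $K_1$ (with $p=\ch(\FF)$); but a topologically simple pro-$p$ group is cyclic of order $p$ and hence abelian, so part~(i) forces $N\supseteq[\SL_2(R),\SL_2(R)] = \SL_2(R)$, making $Q$ trivial — a contradiction. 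Hence $\bbar N\subseteq\{\pm I\}$, and the image $K_1 N/N$ of $K_1$ in $Q$ is a proper closed normal subgroup, so by simplicity of $Q$ it must be trivial; thus $K_1\subseteq N$ and $N$ is the full preimage of $\bbar N$ under reduction. For $Q=\SL_2(\FF)/\bbar N$ to be simple and nontrivial, $\bbar N$ must be the unique maximal normal subgroup of $\SL_2(\FF)$, and in every case this gives exactly $N = \{A\in\SL_2(R): A\equiv\pm I\pmod\p\}$ (when $\ch(\FF)=2$ this coincides with $K_1$, since $\{\pm I\}=\{I\}$, and $\SL_2(\FF)=\PSL_2(\FF)$ is itself simple).

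The main obstacle is part (i) in characteristic $2$: the Lie-bracket filtration argument used earlier in Lemmas~\ref{L:gl1} and~\ref{L:hHi eq sl2} breaks down there, because $[\sl_2(\FF),\sl_2(\FF)]$ is no longer all of $\sl_2(\FF)$ when $\ch(\FF)=2$. The elementary-matrix commutator identity above sidesteps this cleanly, and the hypothesis $|\FF|>3$ enters at exactly one point — producing a unit $t\in R^\times$ with $t^2-1$ also a unit — which fails precisely for $|\FF|\in\{2,3\}$. Once (i) is in hand, (ii) is a straightforward combination of the normal-subgroup lattice of $\SL_2(\FF)$ with perfectness, the latter ruling out any pro-$p$ simple quotient.
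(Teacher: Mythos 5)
Your proof is correct, but it takes a genuinely different route from the paper, especially for part (i). For (i), the paper runs the congruence filtration of \S\ref{SS:filtration}: it gets $\bbar H=\SL_2(\FF)$ from perfectness of $\SL_2(\FF)$ and then shows $\hH_i=\sl_2(\FF)$ for every $i$ using the conjugation map (\ref{E:comm Ggh}) together with Lemma~\ref{L:PR adjoint} (the adjoint action of $\GL_2(\FF)$), not the Lie bracket map (\ref{E:comm ggh}); so the characteristic-$2$ failure of $[\sl_2(\FF),\sl_2(\FF)]=\sl_2(\FF)$ that you flag as "the main obstacle" is not actually an issue for the paper's argument. Your alternative — the identity $[\left(\begin{smallmatrix} t & 0\\ 0 & t^{-1}\end{smallmatrix}\right),\left(\begin{smallmatrix} 1 & x\\ 0 & 1\end{smallmatrix}\right)]=\left(\begin{smallmatrix} 1 & (t^2-1)x\\ 0 & 1\end{smallmatrix}\right)$ with $t^2-1\in R^\times$ (possible exactly because $|\FF|>3$), plus elementary generation of $\SL_2$ over a local ring — is classical and clean; it even gives the slightly stronger statement that the abstract commutator subgroup is all of $\SL_2(R)$ (each elementary matrix is a single commutator), whereas the filtration argument naturally produces the closed commutator subgroup. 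You do lean on the unproved-but-standard fact that $\SL_2$ of a local ring is generated by elementaries (row reduction using that one entry of each column is a unit), which the paper never needs. For (ii), your argument and the paper's are close in substance: the paper shows $W:=\{A\in\SL_2(R):A\equiv\pm I\pmod{\p}\}$ lies in $N$ because $W$ is prosolvable while the simple quotient is nonabelian (by perfectness), then concludes $W=N$ from simplicity of $\SL_2(\FF)/\{\pm I\}$; you instead do a case analysis on $\bbar N$ using that the congruence subgroup $K_1$ is pro-$p$. One small citation point: your claim that the only normal subgroups of $\SL_2(\FF)$ are $\{I\}$, $\{\pm I\}$ and $\SL_2(\FF)$ needs perfectness of $\SL_2(\FF)$ (Lemma~\ref{L:finite field facts}(\ref{L:finite field facts ii})) in addition to simplicity of the quotient (part (\ref{L:finite field facts i})), to rule out an index-$2$ normal subgroup; both facts are available in the paper, so this is a matter of bookkeeping rather than a gap.
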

\begin{proof}
Define the group $G=\SL_2(R)$ and let $H$ be its commutator subgroup.  With notation as in \S\ref{SS:filtration}, we have $\bbar G=\SL_2(\FF)$ and subgroups $\hH_i\subseteq \gG_i=\sl_2(\FF)$ for all $i\geq 1$.     The image of $H$ modulo $\p$ is $\bbar H = [\bbar G, \bbar G]=[\SL_2(\FF),\SL_2(\FF)]=\SL_2(\FF)$, where the last equality uses Lemma~\ref{L:finite field facts}(\ref{L:finite field facts ii}).

Take any $i\geq 1$.   With $g=\left(\begin{smallmatrix} 0 & -1 \\1 & 0\end{smallmatrix}\right) \in \bbar G$ and $x=\left(\begin{smallmatrix} 0 & -1 \\0 & 0\end{smallmatrix}\right) \in \gG_i$, the matrix $gxg^{-1}-x=\left(\begin{smallmatrix} 0 & 1 \\1 & 0\end{smallmatrix}\right)$ lies in $\hH_i$ by (\ref{E:comm Ggh}).  In particular, $\hH_i\subseteq \sl_2(\FF)$ contains a nonscalar matrix.   The group $\hH_i$ is stable under conjugation by $\GL_2(\FF)$ since $G$ is a normal subgroup of $\GL_2(R)$.  Therefore, $\hH_i=\sl_2(\FF)$ by Lemma~\ref{L:PR adjoint}.  

We have shown that $H$ is a closed subgroup of $\SL_2(R)$ for which $\bbar H=\SL_2(\FF)$ and $\hH_i=\sl_2(\FF)$ for all $i\geq 1$.   Therefore, $H=\SL_2(R)$.  This proves (\ref{P:SL2 commutator i}).

Let $N$ be a closed normal subgroup of $\SL_2(R)$ for which $S:=\SL_2(R)/N$ is simple.  The group $S$ is finite since it is simple and profinite.   Let $\varphi\colon \SL_2(R)\to S$ be the quotient map.  The group $S$ is nonabelian since $\SL_2(R)$ is equal to its own commutator subgroup.  
Let $W$ be the closed normal subgroup of $\SL_2(R)$ consisting of all $A\in \SL_2(R)$ for which $A\equiv \pm I \pmod{\p}$.   The group $W$ is pro-solvable and hence $\varphi(W)$ is a solvable normal subgroup of $S$.  We have $\varphi(W)=1$ since $S$ is nonabelian and simple.   Therefore, $W\subseteq N$.    We have $\SL_2(R)/W \cong \SL_2(\FF)/\{\pm I\}$ which is simple by Lemma~\ref{L:finite field facts}(\ref{L:finite field facts i}).    Therefore, $W=N$ which proves (\ref{P:SL2 commutator ii}).
\end{proof}

\section{Group theoretic criterion for large adelic image}
\label{S:group theory adelic}

Let $G$ be a subgroup of $\GL_2(\widehat{A})$.  The goal of this section is to give conditions that ensure that $G$ and $\GL_2(\widehat{A})$ have the same commutator subgroup.   For a nonzero ideal $\aA$ of $A$, we will denote by $G_\aA$ the image of $G$ under the projection map $\GL_2(\widehat{A})\to \GL_2(A_\aA)$.

Let $\Lambda$ be the set of nonzero prime ideals of $A$.  

\begin{thm} \label{T:criterion for computing commutator}
Let $G$ be a closed subgroup of $\GL_2(\widehat{A})$ such that the following hold:
\begin{alphenum}
\item \label{T:criterion for computing commutator a}
For all $\lambda\in \Lambda$, we have $G_\lambda\supseteq \SL_2(A_\lambda)$.
\item \label{T:criterion for computing commutator b}
For all distinct $\lambda_1,\lambda_2\in \Lambda$ with $N(\lambda_1)=N(\lambda_2)>3$, 
$G$ modulo $\lambda_1\lambda_2$ has a subgroup of cardinality $N(\lambda_1)^2$.
\item \label{T:criterion for computing commutator c}
For all distinct $\lambda_1,\lambda_2\in \Lambda$ with $N(\lambda_1)=N(\lambda_2)=2$, the group $G_{\lambda_1\lambda_2} \cap \SL_2(A_{\lambda_1\lambda_2})$ contains a subgroup that is conjugate in $\GL_2(A_{\lambda_1\lambda_2})$ to  $\left\{ \left(\begin{smallmatrix}1 & b \\0 & 1\end{smallmatrix}\right) : b \in A_{\lambda_1\lambda_2} \right\}.$

\item \label{T:criterion for computing commutator d}
Suppose $q\in \{2,3\}$ and let $\aA$ be the ideal that is the product of the prime ideals of $A$ of norm $q$.    Then $\det(G_\aA)=A_\aA^\times$.
\end{alphenum}
Then $[G,G]=[\GL_2(\widehat{A}),\GL_2(\widehat{A})]$.  In particular, $[G,G]=\SL_2(\widehat{A})$ when $q>2$.  
\end{thm}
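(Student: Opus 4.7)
The plan is to show $H := [G,G]$ equals $C := [\GL_2(\widehat{A}), \GL_2(\widehat{A})]$. By Proposition~\ref{P:commutator GL2(R)}, $C = \prod_\lambda C_\lambda$ where $C_\lambda = [\GL_2(A_\lambda), \GL_2(A_\lambda)]$ equals $\SL_2(A_\lambda)$ for $N(\lambda) > 2$ and an explicit index-$2$ subgroup of $\SL_2(A_\lambda)$ for $N(\lambda) = 2$. The inclusion $H \subseteq C$ is automatic. Write $\tilde C_\lambda \subseteq C$ for the image of $C_\lambda$ under the inclusion into the single $\lambda$-coordinate. Since $H$ is closed and $C$ is the closure of the subgroup generated by the $\tilde C_\lambda$'s, it suffices to prove $\tilde C_\lambda \subseteq H$ for every $\lambda$.

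First I would establish single-prime surjectivity: $H_\lambda = C_\lambda$ for every $\lambda$. When $N(\lambda) > 3$, this follows from condition (\ref{T:criterion for computing commutator a}) and the perfectness of $\SL_2(A_\lambda)$ from Proposition~\ref{P:SL2 commutator}(\ref{P:SL2 commutator i}), since then $[G_\lambda, G_\lambda] \supseteq [\SL_2(A_\lambda), \SL_2(A_\lambda)] = \SL_2(A_\lambda) = C_\lambda$. When $N(\lambda) \in \{2,3\}$, only possible for $q \in \{2,3\}$ with $\deg\lambda = 1$, condition (\ref{T:criterion for computing commutator d}) gives $\det G_\lambda = A_\lambda^\times$; combined with (\ref{T:criterion for computing commutator a}) this forces $G_\lambda = \GL_2(A_\lambda)$, and then $H_\lambda = [\GL_2(A_\lambda), \GL_2(A_\lambda)] = C_\lambda$.

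For the key step, fix $\lambda_0$ and set $K := H \cap \tilde C_{\lambda_0}$, viewed as a closed subgroup of $C_{\lambda_0}$; note $K$ is normal in $\SL_2(A_{\lambda_0})$ because $G_{\lambda_0} \supseteq \SL_2(A_{\lambda_0})$ acts on $\tilde C_{\lambda_0}$ by conjugation. Suppose for contradiction $K \neq C_{\lambda_0}$. Goursat's lemma applied to the image of $H$ in $C_{\lambda_0} \times \prod_{\mu \neq \lambda_0} C_\mu$ (surjective on each side by the previous step) produces an isomorphism $C_{\lambda_0}/K \cong (\prod_{\mu \neq \lambda_0} C_\mu)/K'$ of nontrivial profinite groups. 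Selecting a finite simple quotient of this common group, and using that a nonabelian finite simple quotient of a profinite product must factor through a single factor, I obtain a simple group that is simultaneously a quotient of $C_{\lambda_0}$ and of $C_\mu$ for some $\mu \neq \lambda_0$. When $N(\lambda_0) > 3$, Proposition~\ref{P:SL2 commutator}(\ref{P:SL2 commutator ii}) identifies this simple quotient as $\PSL_2(\FF_{\lambda_0})$, and comparing orders forces $N(\mu) = N(\lambda_0)$; the abelian simple quotients that arise in the small-norm cases (from non-perfectness of $\SL_2(\FF)$ when $|\FF| \leq 3$) are ruled out using the determinant-surjectivity in (\ref{T:criterion for computing commutator d}).

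With a partner $\mu$ of equal norm $N$ in hand, condition (\ref{T:criterion for computing commutator b}) (if $N > 3$) or (\ref{T:criterion for computing commutator c}) (if $N = 2$) forces $G \bmod \lambda_0 \mu$ to contain the full $p$-Sylow $P_1 \times P_2$ of $\GL_2(\FF_{\lambda_0}) \times \GL_2(\FF_\mu)$. Combined with (\ref{T:criterion for computing commutator a}) and the fact that the normal closure of a $p$-Sylow inside $\SL_2(\FF_\lambda)$ is all of $\SL_2(\FF_\lambda)$, this yields $G \bmod \lambda_0 \mu \supseteq \SL_2(\FF_{\lambda_0}) \times \SL_2(\FF_\mu)$, so $H \bmod \lambda_0\mu$ contains $\SL_2(\FF_{\lambda_0}) \times \SL_2(\FF_\mu)$ via Lemma~\ref{L:finite field facts}(\ref{L:finite field facts ii}); this contradicts the proper graph-of-isomorphism structure imposed by Goursat. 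Hence $K = C_{\lambda_0}$ for every $\lambda_0$, giving $H = C$. The ``in particular'' follows because $q > 2$ makes every $N(\lambda) \geq q > 2$, so $C_\lambda = \SL_2(A_\lambda)$ throughout and $C = \SL_2(\widehat{A})$. The main obstacle will be the $q = 3$ case for pairs of norm-$3$ primes, where neither (\ref{T:criterion for computing commutator b}) nor (\ref{T:criterion for computing commutator c}) applies; there one must lean on (\ref{T:criterion for computing commutator d}) together with a careful analysis of the composition factors of $\SL_2(A_\lambda)$ at $N(\lambda) = 3$ to defeat the Goursat obstruction.
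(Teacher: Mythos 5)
Your high-level strategy (Goursat comparisons plus Proposition~\ref{P:SL2 commutator} and the order-$N^2$ subgroup from (\ref{T:criterion for computing commutator b})) is the same engine the paper uses, but two steps in your ``one coordinate versus all the rest'' organization have real gaps. First, after Goursat gives $C_{\lambda_0}/K\cong B/K'$ with $B$ the image of $H$ in $\prod_{\mu\neq\lambda_0}C_\mu$, you invoke ``a nonabelian finite simple quotient of a profinite product must factor through a single factor'' to produce the partner prime $\mu$. That principle is valid for the \emph{full} product, but $B$ is only a closed subgroup of it (with surjective projections), and a simple quotient of such a subgroup need not a priori be a quotient of any single factor; to salvage this you must use the structure of closed normal subgroups of $\SL_2(A_\mu)$ (its congruence kernel is pro-$p$, proper closed normal subgroups reduce into $\{\pm I\}$, etc.) and run an induction in the style of Goursat/Ribet — essentially redoing the paper's pairwise Lemma together with its Lemma on products of perfect groups. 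This is fixable with the tools in \S\ref{S:group theory lambda-adic}, but as written it is asserted, not proved, and it only ever applies when the simple quotient is nonabelian, i.e.\ when $N(\lambda_0)>3$.

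Second, and more seriously, the small-norm primes are not handled, and your proposed fix cannot work in the form stated. Your Goursat comparison lives inside $H=[G,G]\subseteq\SL_2(\widehat{A})$, where the determinant is identically $1$, so hypothesis (\ref{T:criterion for computing commutator d}) — a statement about $\det(G)$ — cannot ``rule out the abelian simple quotients'' of $C_{\lambda_0}/K$ as you claim; similarly your chain ``contains the full $p$-Sylow $\Rightarrow$ contains $\SL_2\times\SL_2$ $\Rightarrow$ passes to $H$ by perfectness'' breaks for $N\leq 3$, since $\SL_2(\FF)$ is not perfect for $|\FF|\leq 3$ (and for $N(\lambda)=2$ the correct target is the index-$2$ subgroup $[\GL_2(A_\lambda),\GL_2(A_\lambda)]\subsetneq\SL_2(A_\lambda)$, not $\SL_2$). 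You concede the $q=3$ case is an unresolved ``obstacle,'' but the $q=2$ pair of norm-$2$ primes has exactly the same unresolved issue. The paper gets around this by working with $G$ itself, not $H$, on the set of primes of norm at most $3$: it proves that $G$ surjects onto $\prod_{N(\lambda)\leq 3}\GL_2(A_\lambda)$ using (\ref{T:criterion for computing commutator a}), (\ref{T:criterion for computing commutator c}), (\ref{T:criterion for computing commutator d}), the prosolvability of these $\GL_2(A_\lambda)$, and Proposition~\ref{P:commutator GL2(R)}(\ref{P:commutator GL2(R) ii}), and only then takes commutators; the large-prime block and the small-prime block are finally glued by observing that one is perfect and the other prosolvable, so they share no simple quotient. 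Some restructuring along these lines (bringing $G$ and its determinant back into play at the small primes) is needed before your argument closes.
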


\subsection{Proof of Theorem~\ref{T:criterion for computing commutator}}

Define $H:=[G,G]$; it is a closed subgroup of $\SL_2(\widehat{A})$.

\begin{lemma} \label{L:independence of H at distinct places}
For any distinct nonzero prime ideals $\lambda_1$ and $\lambda_2$ of $A$ with norm at least $4$, we have $H_{\lambda_1\lambda_2}=\SL_2(A_{\lambda_1})\times \SL_2(A_{\lambda_2})$.
\end{lemma}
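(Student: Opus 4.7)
The plan is to set $H := [G,G]$ (contained in $\SL_2(\widehat A)$ since the determinant kills commutators) and prove that $N := H_{\lambda_1\lambda_2}$ equals the full product $\SL_2(A_{\lambda_1}) \times \SL_2(A_{\lambda_2})$ by a Goursat-style argument carried out first at the residue-field level and then lifted. Hypothesis (a) together with Proposition~\ref{P:SL2 commutator}(i) (perfectness of $\SL_2(A_{\lambda_i})$ for $N(\lambda_i) > 3$) gives $H_{\lambda_i} = [G_{\lambda_i}, G_{\lambda_i}] \supseteq [\SL_2(A_{\lambda_i}), \SL_2(A_{\lambda_i})] = \SL_2(A_{\lambda_i})$, so $N$ projects onto each factor $\SL_2(A_{\lambda_i})$. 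Setting $K_1 := \{g \in \SL_2(A_{\lambda_1}) : (g,I) \in N\}$ and $K_2$ analogously, Goursat's lemma presents $N$ as the fiber product of $\SL_2(A_{\lambda_1})$ and $\SL_2(A_{\lambda_2})$ over a common profinite quotient $Q \cong \SL_2(A_{\lambda_1})/K_1 \cong \SL_2(A_{\lambda_2})/K_2$. The lemma therefore reduces to proving $Q = 1$.

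Suppose $Q \neq 1$. Being a nontrivial profinite group, $Q$ admits a nontrivial finite simple quotient $S$, which is also a simple quotient of each $\SL_2(A_{\lambda_i})$. Proposition~\ref{P:SL2 commutator}(ii) then forces $S \cong \PSL_2(\FF_{\lambda_i})$ for both $i = 1,2$. When $N(\lambda_1) \neq N(\lambda_2)$ these two simple groups have distinct orders, giving an immediate contradiction; this disposes of the easier sub-case.

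The remaining sub-case $N(\lambda_1) = N(\lambda_2) = n$ is where hypothesis (b) enters. The strategy is first to show that $\bar H$, the reduction of $H$ modulo $\lambda_1\lambda_2$ in $\SL_2(\FF_{\lambda_1}) \times \SL_2(\FF_{\lambda_2})$, is already the full product. Writing $p$ for the characteristic, the subgroup $P$ of order $n^2$ in $\bar G := G \bmod \lambda_1\lambda_2$ given by (b) has $p$-power order; since the $p$-part of $|\GL_2(\FF_{\lambda_i})| = n(n-1)^2(n+1)$ is exactly $n$, a quick divisibility count forces each projection of $P$ to have order exactly $n$, whence $P = P_1 \times P_2$ with each $P_i$ a $p$-Sylow of $\GL_2(\FF_{\lambda_i})$ (hence unipotent, hence contained in $\SL_2(\FF_{\lambda_i})$). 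Applying Goursat to $\bar G$, its Goursat kernels in each factor contain these non-central $P_i$; since the only normal subgroups of $\SL_2(\FF_{\lambda_i})$ for $n \geq 4$ are $\{I\}$, $\{\pm I\}$, and the full $\SL_2(\FF_{\lambda_i})$ (Lemma~\ref{L:finite field facts}(i)), those Goursat kernels must equal $\SL_2(\FF_{\lambda_i})$. Thus the common Goursat quotient of $\bar G$ is abelian, so $\bar G \supseteq \SL_2(\FF_{\lambda_1}) \times \SL_2(\FF_{\lambda_2})$, and perfectness (Lemma~\ref{L:finite field facts}(ii)) then yields $\bar H = \SL_2(\FF_{\lambda_1}) \times \SL_2(\FF_{\lambda_2})$.

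To conclude, observe that the reduction of $K_1$ modulo $\lambda_1$ equals $\{g \in \SL_2(\FF_{\lambda_1}) : (g,I) \in \bar H\} = \SL_2(\FF_{\lambda_1})$, so $K_1 \cdot \SL_2(A_{\lambda_1})^1 = \SL_2(A_{\lambda_1})$ and $Q \cong \SL_2(A_{\lambda_1})/K_1$ is a quotient of the first congruence subgroup $\SL_2(A_{\lambda_1})^1$, which is pro-$p$. Any nontrivial finite simple quotient of $Q$ would then be simultaneously a $p$-group and, by Proposition~\ref{P:SL2 commutator}(ii), isomorphic to $\PSL_2(\FF_{\lambda_1})$; but $|\PSL_2(\FF_n)|$ is not a prime power for $n \geq 4$, contradicting $Q \neq 1$. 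The main obstacle is coordinating the two levels of Goursat; hypothesis (b) is used precisely to kill the ``diagonal'' graph-type subgroup that would otherwise arise from the isomorphism $\PSL_2(\FF_{\lambda_1}) \cong \PSL_2(\FF_{\lambda_2})$ when the two norms agree.
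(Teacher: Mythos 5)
Your overall strategy is the same as the paper's: project $H=[G,G]$ onto the two factors, use perfectness of $\SL_2(A_{\lambda_i})$ (Proposition~\ref{P:SL2 commutator}(\ref{P:SL2 commutator i})) to get surjectivity, apply Goursat, and use Proposition~\ref{P:SL2 commutator}(\ref{P:SL2 commutator ii}) to force any common simple quotient to be $\PSL_2(\FF_{\lambda_i})$, which disposes of the case $N(\lambda_1)\neq N(\lambda_2)$. Your exploitation of hypothesis (\ref{T:criterion for computing commutator b}) in the equal-norm case is a genuinely different, and essentially correct, variant: splitting the order-$N(\lambda_1)^2$ subgroup as $P_1\times P_2$, showing the Goursat kernels of $G$ modulo $\lambda_1\lambda_2$ contain $\SL_2(\FF_{\lambda_i})$ (note: ``contain'', not ``equal'' --- these kernels are normal in the projections of $G$ inside $\GL_2(\FF_{\lambda_i})$ and need not lie in $\SL_2$, but intersecting with $\SL_2$ and using that $P_i$ is non-central makes your argument go through), and concluding that $H$ reduces onto $\SL_2(\FF_{\lambda_1})\times\SL_2(\FF_{\lambda_2})$. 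The paper instead uses (\ref{T:criterion for computing commutator b}) to produce an element of $G$ mapping to $(I,g_2)$ with $g_2$ of $p$-power order and contradicts graph-ness by conjugation, using normality of $H$ in $G$; your route avoids that and yields a stronger statement at the finite level.

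The gap is in your final paragraph. You assert that the reduction of $K_1=\{g:(g,I)\in H_{\lambda_1\lambda_2}\}$ modulo $\lambda_1$ equals $\{\bar g\in\SL_2(\FF_{\lambda_1}):(\bar g,I)\in \text{(image of $H$ mod $\lambda_1\lambda_2$)}\}$. Only the inclusion $\subseteq$ is formal: an element $(\bar g,I)$ of the reduction lifts to some $(g,h)\in H_{\lambda_1\lambda_2}$ with $h\equiv I\pmod{\lambda_2}$, not to an element with $h=I$ exactly, so you cannot conclude that $K_1$ surjects onto $\SL_2(\FF_{\lambda_1})$; reduction does not commute with taking this fiber, and the whole ``$Q$ is a quotient of a pro-$p$ group'' conclusion rests on this unjustified identity. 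The repair is short and uses only what you already proved: if $Q\neq 1$ with simple quotient $S$, then by Proposition~\ref{P:SL2 commutator}(\ref{P:SL2 commutator ii}) the kernels of the induced surjections $\SL_2(A_{\lambda_i})\to S$ are $\{B:B\equiv\pm I\pmod{\lambda_i}\}$, so for every $(g,h)\in H_{\lambda_1\lambda_2}$ the pair of images in $\PSL_2(\FF_{\lambda_1})\times\PSL_2(\FF_{\lambda_2})$ lies on the graph of a fixed isomorphism, a proper subgroup; but your mod-$\lambda_1\lambda_2$ computation shows this image is all of $\PSL_2(\FF_{\lambda_1})\times\PSL_2(\FF_{\lambda_2})$, a contradiction. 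Hence $Q=1$ and $H_{\lambda_1\lambda_2}=\SL_2(A_{\lambda_1})\times\SL_2(A_{\lambda_2})$. With that replacement your proof is complete and parallels the paper's, reaching the graph contradiction through your stronger reduction statement rather than through the paper's conjugation argument.
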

\begin{proof}
For each $1\leq i \leq 2$, we have $G_{\lambda_i}\supseteq \SL_2(A_{\lambda_i})$ by (\ref{T:criterion for computing commutator a}).  Since $\SL_2(A_{\lambda_i})$ is perfect by Proposition~\ref{P:SL2 commutator}(\ref{P:SL2 commutator i}), we deduce that $H_{\lambda_i}=[G_{\lambda_i},G_{\lambda_i}]$ equals $\SL_2(A_{\lambda_i})$.  We thus have an inclusion of groups $H_{\lambda_1\lambda_2} \subseteq H_{\lambda_1}\times H_{\lambda_2}=\SL_2(A_{\lambda_1})\times \SL_2(A_{\lambda_2})$ such that each projection $p_i\colon H_{\lambda_1 \lambda_2}\to \SL_2(A_{\lambda_i})$ is surjective.   Let $N_1$ and $N_2$ be the kernel of $p_2$ and $p_1$, respectively.   We may identify $N_i$ with a closed normal subgroup of $\SL_2(A_{\lambda_i})$ and hence have an inclusion $N_1\times N_2 \subseteq H_{\lambda_1\lambda_2}$.  By Goursat's lemma (\cite[Lemma 5.2.1]{MR457455}), the inclusion $H_{\lambda_1\lambda_2} \subseteq \SL_2(A_{\lambda_1})\times \SL_2(A_{\lambda_2})$ induces a homomorphism
\[
H_{\lambda_1\lambda_2}/(N_1\times N_2) \hookrightarrow \SL_2(A_{\lambda_1})/N_1 \times \SL_2(A_{\lambda_2})/N_2
\]
whose image is the graph of an isomorphism $\SL_2(A_{\lambda_1})/N_1 \xrightarrow{\sim} \SL_2(A_{\lambda_2})/N_2$.

Suppose the group $\SL_2(A_{\lambda_1})/N_1$ is trivial.  We then have $N_i=\SL_2(A_{\lambda_i})$ for each $1\leq i \leq 2$.   Therefore, $\SL_2(A_{\lambda_1})\times \SL_2(A_{\lambda_2}) =N_1\times N_2 \subseteq H_{\lambda_1\lambda_2} \subseteq \SL_2(A_{\lambda_1})\times \SL_2(A_{\lambda_2})$
and the lemma follows.    

We may now assume that $\SL_2(A_{\lambda_1})/N_1$ is nontrivial and hence each $N_i$ is a proper closed normal subgroup of $\SL_2(A_{\lambda_i})$.  By Proposition~\ref{P:SL2 commutator}(\ref{P:SL2 commutator ii}), we find that $N_i \subseteq \{B\in \SL_2(A_{\lambda_i}): B \equiv \pm I \pmod{\lambda_i}\}$.   Therefore, the homomorphism
\[
H_{\lambda_1\lambda_2} \to \SL_2(\FF_{\lambda_1})/\{\pm I\} \times\SL_2(\FF_{\lambda_2})/\{\pm I\} 
\]
obtained by composing reduction modulo $\lambda_1\lambda_2$ with the obvious quotient map has image equal to the graph of an isomorphism $\SL_2(\FF_{\lambda_1})/\{\pm I\} \xrightarrow{\sim}\SL_2(\FF_{\lambda_2})/\{\pm I\}$ of finite simple groups.  By comparing cardinalities of these simple groups, we have $N(\lambda_1)=N(\lambda_2)$.  

Now consider the homomorphism
\[
\varphi\colon G_{\lambda_1 \lambda_2} \to \GL_2(\FF_{\lambda_1})/\{\pm I\} \times \GL_2(\FF_{\lambda_2})/\{\pm I\}
\]
obtained by reducing modulo $\lambda_1\lambda_2$ and composing with the obvious quotient map.  From (\ref{T:criterion for computing commutator b}), $\varphi(G_{\lambda_1\lambda_2})$ contains a group of order $N(\lambda_1)N(\lambda_2)$; it is a $p$-Sylow subgroup of $\GL_2(\FF_{\lambda_1})/\{\pm I\} \times \GL_2(\FF_{\lambda_2})/\{\pm I\}$ where $p$ is the prime dividing $q$.  In particular, there is a $g\in G_{\lambda_1\lambda_2}$ such that $\varphi(g)=(I,g_2)$, where $g_2 \in \GL_2(\FF_{\lambda_2})/\{\pm I\}$ has order a positive power of $p$.
We have already shown that $\varphi(H_{\lambda_1\lambda_2})$ is the graph of an isomorphism $\SL_2(\FF_{\lambda_1})/\{\pm I\} \xrightarrow{\sim}\SL_2(\FF_{\lambda_2})/\{\pm I\}$.  So there is an $(h_1,h_2) \in \varphi(H_{\lambda_1\lambda_2})$ for which $h_2$ and $g_2$ do not commute (an element in $\GL_2(\FF_{\lambda_2})/\{\pm I\}$ that commutes with $\SL_2(\FF_{\lambda_2})/\{\pm I\}$ will be represented by a scalar matrix and hence has order relatively prime to $p$).  Since $H$ is normal in $G$, we deduce that 
\[
(I,g_2) (h_1,h_2) (I, g_2)^{-1} = (h_1,g_2h_2g_2^{-1})
\]
 is also in $\varphi(H_{\lambda_1\lambda_2})$.  Since $(h_1,g_2h_2g_2^{-1})$ and $(h_1,h_2)$ are distinct elements of $\varphi(H_{\lambda_1\lambda_2})$, this contradicts that the group $\varphi(H_{\lambda_1\lambda_2})$ is the graph of a function.
\end{proof}

\begin{lemma} \label{L:Goursat with perfect groups}
Let $S_1,\ldots, S_r$ be profinite groups that are all perfect with $r>1$.    Let $H$ be a closed subgroup of $S_1\times \cdots \times S_r$ such that the projection $H\to S_i\times S_j$ is surjective for all $1\leq i < j \leq r$.  Then $H=S_1\times \cdots \times S_r$.
\end{lemma}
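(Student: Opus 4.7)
I will induct on $r$, with the base case $r=2$ already contained in the hypothesis; so fix $r\geq 3$ and assume the conclusion for $r-1$ factors.

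For each index $k$, the image of $H$ under the projection to $\prod_{i\neq k} S_i$ inherits surjective projections onto every pair $S_i\times S_j$ with $i,j\neq k$, so the inductive hypothesis forces this image to equal all of $\prod_{i\neq k} S_i$. Taking $k = r$ yields a short exact sequence
\[
1\longrightarrow N\longrightarrow H\longrightarrow S_1\times\cdots\times S_{r-1}\longrightarrow 1,
\]
in which $N$ is identified via the $r$th coordinate with a closed subgroup $\widetilde N\subseteq S_r$. Conjugation of $(1,\ldots,1,n)\in N$ by $(g_1,\ldots,g_r)\in H$ gives $(1,\ldots,1,g_r n g_r^{-1})$, and since the last coordinate $g_r$ ranges over all of $S_r$ (using the hypothesised surjection $H\to S_1\times S_r$), the subgroup $\widetilde N$ is normal in $S_r$. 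The composition $H\to S_r\to S_r/\widetilde N$ kills $N$ and descends to a continuous surjection
\[
\phi\colon S_1\times\cdots\times S_{r-1}\twoheadrightarrow S_r/\widetilde N,
\]
so the lemma reduces to showing $\phi\equiv 1$.

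The crucial step is to show that for each $1\leq j\leq r-1$ the restriction $\phi_j:=\phi|_{S_j}\colon S_j\to S_r/\widetilde N$ is \emph{itself} surjective. Given $s_r\in S_r$, the surjection $H\twoheadrightarrow\prod_{i\neq j}S_i$ supplies an $h\in H$ whose $i$th coordinates vanish for $i\neq j,r$, whose $r$th coordinate is $s_r$, and whose $j$th coordinate is some $s_j\in S_j$; projecting $h$ into $S_1\times\cdots\times S_{r-1}$ and applying $\phi$ gives $\phi_j(s_j)=s_r\widetilde N$. Now pick any two distinct indices $i,j\in\{1,\ldots,r-1\}$, possible because $r\geq 3$. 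Since $S_i$ and $S_j$ commute factorwise in the product, the subgroups $\phi_i(S_i)$ and $\phi_j(S_j)$ commute pointwise in $S_r/\widetilde N$; as both equal the whole group $S_r/\widetilde N$, the quotient $S_r/\widetilde N$ is abelian. But $S_r/\widetilde N$ is also perfect as a quotient of the perfect group $S_r$, and a perfect abelian group is trivial. Hence $\widetilde N=S_r$, and together with the short exact sequence this gives $H=S_1\times\cdots\times S_r$.

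The main conceptual point is the individual surjectivity of each $\phi_j$: it upgrades the mere ``generation'' assertion that the $\phi_j(S_j)$ together span $S_r/\widetilde N$ into the equality ``each $\phi_j(S_j)=S_r/\widetilde N$,'' which is what lets the perfect-plus-abelian dichotomy close out the argument. Everything else---the inductive setup, the normality of $\widetilde N$, and the existence and continuity of $\phi$---is routine Goursat-style bookkeeping in the profinite category.
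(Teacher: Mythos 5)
Your proof is correct, but it takes a genuinely different route from the paper. The paper disposes of the lemma in two lines: for finite groups it simply cites Ribet's Lemma 5.2.2 (which is proved via Goursat's lemma), and the profinite case is then deduced from the finite case using that $H$ is closed (pass to the finite quotients $\prod_i S_i/U_i$, which are again products of perfect groups, and take the limit). You instead give a self-contained induction carried out directly in the profinite category: identify the kernel $N$ of $H\to S_1\times\cdots\times S_{r-1}$ with a closed normal subgroup $\widetilde N\trianglelefteq S_r$, descend to a surjection $\phi\colon S_1\times\cdots\times S_{r-1}\twoheadrightarrow S_r/\widetilde N$, show that each restriction $\phi_j$ is already surjective by lifting elements of $\prod_{i\neq j}S_i$ that are trivial away from the $r$th slot, and then conclude that $S_r/\widetilde N$ is simultaneously abelian (two commuting subgroups each equal to the whole group, available because $r\geq 3$) and perfect, hence trivial; the exact sequence then forces $H$ to be the full product. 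All steps check out; the only points left implicit are routine, e.g.\ that the image of $H$ in $\prod_{i\neq k}S_i$ is closed (it is, being the continuous image of a compact group), so the induction hypothesis applies, and the continuity of $\phi$ is never actually needed. What your approach buys is a complete argument with no external citation and no reduction to finite quotients; what the paper's buys is brevity by outsourcing the group theory to Ribet's lemma.
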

\begin{proof}
When the $S_i$ are finite, this is \cite[Lemma 5.2.2]{MR457455} and follows from Goursat's lemma.  The general case follows directly from the finite group case since $H$ is closed.
\end{proof}

\begin{lemma} \label{L:surjectivity of SL2 via H}
Let $\Lambda_1$ be the set of nonzero prime ideals of $A$ of norm at least $4$.  Then the projection $H \to \prod_{\lambda\in \Lambda_1} \SL_2(A_\lambda)$ is surjective.
\end{lemma}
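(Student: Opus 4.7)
The plan is to reduce this to a direct application of Lemma~\ref{L:independence of H at distinct places} combined with Lemma~\ref{L:Goursat with perfect groups}, passing from finite subproducts to the full profinite product by a closedness argument.

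First, I would observe that for every $\lambda \in \Lambda_1$ we have $N(\lambda) \geq 4$, and hence $\SL_2(A_\lambda)$ is a perfect profinite group by Proposition~\ref{P:SL2 commutator}(\ref{P:SL2 commutator i}). This is precisely the hypothesis needed to feed into Lemma~\ref{L:Goursat with perfect groups}.

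Next, fix any finite subset $T = \{\lambda_1, \ldots, \lambda_r\} \subseteq \Lambda_1$ with $r \geq 2$. For each pair of distinct $\lambda_i, \lambda_j \in T$, Lemma~\ref{L:independence of H at distinct places} gives
\[
H_{\lambda_i \lambda_j} = \SL_2(A_{\lambda_i}) \times \SL_2(A_{\lambda_j}),
\]
so in particular the projection of $H$ to $\SL_2(A_{\lambda_i}) \times \SL_2(A_{\lambda_j})$ is surjective. Writing $H_T$ for the image of $H$ in $\prod_{\lambda \in T} \SL_2(A_\lambda)$, these pairwise surjectivity conditions together with the fact that each $\SL_2(A_{\lambda_i})$ is perfect let me apply Lemma~\ref{L:Goursat with perfect groups} to conclude $H_T = \prod_{\lambda \in T} \SL_2(A_\lambda)$. (The case $r=1$ is covered directly by the argument in the proof of Lemma~\ref{L:independence of H at distinct places}, which shows $H_\lambda = \SL_2(A_\lambda)$ for every $\lambda \in \Lambda_1$.)

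Finally, to pass from finite subproducts to the full product indexed by the infinite set $\Lambda_1$, I use that $H$ is closed in $\GL_2(\widehat{A})$ and hence its image $H'$ in the profinite group $P := \prod_{\lambda \in \Lambda_1} \SL_2(A_\lambda)$ is closed. The topology on $P$ is the product topology, so a closed subgroup is determined by its images under the projections to all finite-indexed subproducts. Since each such projection surjects by the previous step, $H'$ is dense in $P$; being closed, $H' = P$. This gives exactly the surjectivity claimed. The only delicate point is the application of Lemma~\ref{L:Goursat with perfect groups}, which requires the perfectness of each $\SL_2(A_\lambda)$ and is precisely why the lemma is restricted to $\Lambda_1$ rather than all of $\Lambda$.
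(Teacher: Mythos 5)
Your proof is correct and follows essentially the same route as the paper: perfectness of each $\SL_2(A_\lambda)$ for $N(\lambda)\geq 4$, pairwise surjectivity from Lemma~\ref{L:independence of H at distinct places}, Lemma~\ref{L:Goursat with perfect groups} for finite subproducts, and then closedness (compactness) of $H$ to pass to the full product. No issues.
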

\begin{proof}
Let $I$ be any finite nonempty subset of $\Lambda_1$ with cardinality at least $2$.  The group $\SL_2(A_\lambda)$ is perfect for all $\lambda\in \Lambda_1$ by Proposition~\ref{P:SL2 commutator}(\ref{P:SL2 commutator i}).   For any two distinct $\lambda_1,\lambda_2\in I$, the projection $H \to \SL_2(A_{\lambda_1}) \times \SL_2(A_{\lambda_2})$ is surjective by Lemma~\ref{L:independence of H at distinct places}.  Therefore, the projection $H\to \prod_{\lambda\in I} \SL_2(A_{\lambda})$ is surjective by Lemma~\ref{L:Goursat with perfect groups}.   The lemma follows by increasing the set $I$ and using that $H$ is a closed subgroup of $\SL_2(\widehat{A})=\prod_\lambda \SL_2(A_\lambda)$.
\end{proof}

\begin{lemma} \label{L:surjectivity of SL2 via H 2 and 3}
Let $\Lambda_2$ be the set of nonzero prime ideals of $A$ of norm at most $3$.  Then the projection $H \to \prod_{\lambda\in \Lambda_2} [\GL_2(A_\lambda),\GL_2(A_\lambda)]$ is surjective.
\end{lemma}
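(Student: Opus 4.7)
The statement is vacuous when $q \geq 4$, since then every nonzero prime of $A$ has norm $q^d \geq q \geq 4$ and $\Lambda_2 = \emptyset$. So I restrict to $q \in \{2,3\}$, in which case $\Lambda_2$ consists of exactly the $q$ degree-one primes. Set $\aA := \prod_{\lambda \in \Lambda_2} \lambda$, so that $A_\aA = \prod_{\lambda \in \Lambda_2} A_\lambda$ and $\GL_2(A_\aA) = \prod_\lambda \GL_2(A_\lambda)$. The surjection $G \to G_\aA$ reduces the lemma to proving that $[G_\aA, G_\aA] = \prod_{\lambda\in\Lambda_2} [\GL_2(A_\lambda), \GL_2(A_\lambda)]$, and conditions (\ref{T:criterion for computing commutator a}) and (\ref{T:criterion for computing commutator d}) immediately yield $G_\lambda = \GL_2(A_\lambda)$ for every $\lambda \in \Lambda_2$.

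The first step would be to analyze the residue-level image $\bbar G := G_\aA \bmod \aA$ inside $\prod_\lambda \GL_2(\FF_\lambda)$. Each single projection is $\GL_2(\FF_\lambda)$, so Goursat's lemma pins down any proper inclusion via nontrivial isomorphisms between quotients of $\GL_2(\FF_q)$ by normal subgroups---a short list for $q \in \{2,3\}$. Condition (\ref{T:criterion for computing commutator d}) forces $\det(\bbar G) = \prod_\lambda \FF_\lambda^\times$, and when $q=2$ condition (\ref{T:criterion for computing commutator c}) places a unipotent subgroup of order $4$ inside $\bbar G \cap \SL_2(\FF_2)^2$. A case-by-case Goursat check---eliminating each proper possibility via either an order-divisibility failure (when $q=2$, since the candidate orders $6$ and $18$ are not divisible by $4$) or the determinant image missing a coordinate of $(\FF_3^\times)^2$ or $(\FF_3^\times)^3$ (when $q=3$)---yields $\bbar G = \prod_\lambda \GL_2(\FF_\lambda)$. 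Lemma~\ref{L:finite field facts}(\ref{L:finite field facts iii}) then gives $\bbar H_\aA = \prod_\lambda [\GL_2(\FF_\lambda), \GL_2(\FF_\lambda)]$.

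I would then lift to $A_\aA$ by running the filtration machinery of \S\ref{SS:filtration} with $A_\aA$ in place of $R$ and $\aA$ in place of $\p$: the graded pieces $G_\aA^i/G_\aA^{i+1}$ embed in $\prod_\lambda \gl_2(\FF_\lambda)$ and are $\bbar G$-invariant under conjugation. For $q=3$, Lemma~\ref{L:PR adjoint} applies factorwise (since $|\FF_\lambda|=3>2$) and, combined with the inductive argument of Lemmas~\ref{L:gl1}--\ref{L:hHi eq sl2}, forces every commutator graded piece to be the full $\prod_\lambda \sl_2(\FF_\lambda)$. For $q=2$, Lemma~\ref{L:PR adjoint} fails, and instead I would use condition (\ref{T:criterion for computing commutator c}) explicitly: the unipotent parameter $b$ runs over $A_\aA = A_{\lambda_1} \times A_{\lambda_2}$, so choosing $b$ supported at a single prime and bracketing with lifts of $\GL_2(\FF_2)^2$-elements isolates generators at each prime separately. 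The outcome is that $H_\aA$ contains the product of first principal congruence subgroups $\prod_\lambda \SL_2(A_\lambda)[1]$; combined with the residue-level equality and the explicit description of the commutator subgroup of $\GL_2$ over a local ring in Proposition~\ref{P:commutator GL2(R)}, this yields the desired equality.

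The principal obstacle will be the $q=2$ lifting, since neither Lemma~\ref{L:PR adjoint} (which requires $|\FF|>2$) nor the perfectness of $\SL_2(A_\lambda)$ from Proposition~\ref{P:SL2 commutator} (which requires $|\FF_\lambda|>3$) is available. Progress depends on exploiting the product decomposition $A_\aA = A_{\lambda_1} \times A_{\lambda_2}$ within the unipotent subgroup of condition (\ref{T:criterion for computing commutator c}) to separate the contributions of the two degree-one primes in the graded-piece commutator computations.
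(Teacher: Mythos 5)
Your residue-level step is essentially sound: for $q=2$ the order-$4$ unipotent image coming from (\ref{T:criterion for computing commutator c}) does rule out the proper subdirect products of $\GL_2(\FF_2)\times\GL_2(\FF_2)$ (orders $6$ and $18$), and for $q=3$ any proper Goursat subgroup is cut out by a common simple quotient, necessarily $C_2$ since $\GL_2(\FF_3)$ is solvable with abelianization $C_2$, so its determinant image is a proper \emph{index-two} subgroup of $(\FF_3^\times)^k$ (not a subgroup ``missing a coordinate'', but still incompatible with (\ref{T:criterion for computing commutator d}))). The genuine gap is in the lifting step, and it is not only the $q=2$ difficulty you flag. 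Running the filtration of \S\ref{SS:filtration} with $\aA$ in place of $\p$, the inductive machinery of Lemmas~\ref{L:gl1} and~\ref{L:hHi eq sl2} requires, for each $\lambda\in\Lambda_2$ separately, a nonscalar element in the $\lambda$-component of $\gG_1$ (or full image modulo $\aA^2$); in the one-prime setting this is exactly what hypotheses (\ref{P:full GL2R criterion c}) and (\ref{P:full GL2R criterion d}) of Proposition~\ref{P:full GL2R criterion} supply, but nothing available here provides it. Knowing $G_\lambda=\GL_2(A_\lambda)$ for every $\lambda$ and $\bbar{G}$ full modulo $\aA$ does not: the projection to $\GL_2(A_\lambda)$ of the elements of $G_\aA$ congruent to $I$ modulo $\aA$ need not be the full congruence subgroup, and excluding a cross-prime entanglement at depth $\geq 1$ (a Goursat linking of the $\GL_2(A_\lambda)$'s through a common pro-$p$ quotient invisible modulo $\aA$, e.g.\ one tying the depth-one determinant units at two primes) is precisely the content of the lemma; your sketch contains no argument against it, and ``Lemma~\ref{L:PR adjoint} applied factorwise'' only bites after such per-prime nonscalar graded elements are in hand. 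For $q=2$ the proposed bracketing demonstrably stalls: the classes in $\gG_i$ supported at one prime that (\ref{T:criterion for computing commutator c}) gives you are multiples of $e:=\left(\begin{smallmatrix}0&1\\0&0\end{smallmatrix}\right)$; the span of $geg^{-1}-e$ over $g\in\GL_2(\FF_2)$ is only a $2$-dimensional subspace of $\sl_2(\FF_2)$ containing none of $e,f,h$, and $[e,\cdot]$ kills these classes, so you never reach $\prod_\lambda\sl_2(\FF_\lambda)$ without first manufacturing further graded elements---which is the heart of the matter.

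By contrast, the paper avoids filtrations entirely for this lemma: it proves the stronger statement that $G\to\prod_{\lambda\in\Lambda_2}\GL_2(A_\lambda)$ is surjective by a minimal-counterexample Goursat argument on the profinite groups themselves. Prosolvability of $\GL_2(A_\lambda)$ forces any common simple quotient to be cyclic of prime order; an abelian quotient kills $[\GL_2(A_\lambda),\GL_2(A_\lambda)]$, which equals $\SL_2(A_\lambda)$ when $q=3$, while for $q=2$ the element from (\ref{T:criterion for computing commutator c}) supported at a single prime lies in the Goursat kernel and, by Proposition~\ref{P:commutator GL2(R)}(\ref{P:commutator GL2(R) ii}), generates $\SL_2(A_\lambda)$ together with the commutator subgroup. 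Hence the obstruction factors through determinants, contradicting (\ref{T:criterion for computing commutator d}), and the lemma follows by taking commutator subgroups. If you wish to keep your two-step structure, the lifting step should be replaced by such a Goursat argument for closed subgroups of $\prod_{\lambda\in\Lambda_2}\GL_2(A_\lambda)$; at that point the separate residue-level analysis becomes unnecessary.
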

\begin{proof}
We may assume that  $\Lambda_2$ is nonempty and hence $q \in \{2,3\}$.    We claim that the projection
\[
G\to \prod_{\lambda\in \Lambda_2} \GL_2(A_\lambda)
\]
is surjective.   The lemma follow immediately by taking commutator subgroups.

Suppose on the contrary that the claim fails. 
 Then there is a minimal nonempty set $\Lambda_2'\subseteq \Lambda_2$ for which $G \to \prod_{\lambda\in \Lambda_2'} \GL_2(A_\lambda)$ is not surjective and we denote its image by $B$.   For any $\lambda\in \Lambda_2'$, we have $G_{\lambda}=\GL_2(A_\lambda)$ by (\ref{T:criterion for computing commutator a}) and (\ref{T:criterion for computing commutator d}), and hence $|\Lambda_2'|\geq 2$.  
 
 Fix a place $\lambda_1\in \Lambda_2'$ and define the groups $B_1:=\GL_2(A_{\lambda_1})$ and $B_2:=\prod_{\lambda\in \Lambda_2'-\{\lambda_1\}}\GL_2(A_\lambda)$.  We can view $B$ as a subgroup of $B_1 \times B_2$.  The projections $p_i\colon B\to B_i$ are surjective by the minimality of $\Lambda_2'$.  Let $N_1$ and $N_2$ be the kernels of $p_2$ and $p_1$, respectively.  We can view $N_i$ as a subgroup of $B_i$ and hence $N_1\times N_2 \subseteq B$.  The image of the quotient map $G\to B_1/N_1 \times B_2/N_2$ is the graph of an isomorphism $B_1/N_1 \xrightarrow{\sim} B_2/N_2$ by Goursat's lemma (\cite[Lemma 5.2.1]{MR457455}).
 
 If $B_1/N_1$ or $B_2/N_2$ is trivial, then $N_1=B_1$ and $N_2=B_2$ and hence $B\supseteq B_1 \times B_2= \prod_{\lambda\in \Lambda_2'} \GL_2(A_\lambda)$ which contradicts our choice of $\Lambda_2'$.   
 
So we may assume each $N_i$ is a proper closed normal subgroup of $B_i$.  There are thus proper closed normal subgroups $M_i$ of $B_i$ with $M_i\supseteq N_i$ such that the image of $G\to B_1/M_1 \times B_2/M_2$ is the graph of an isomorphism $B_1/M_1 \xrightarrow{\sim} B_2/M_2$ of finite simple groups.  The group $B_1$ is prosolvable (this uses that $\GL_2(\FF_2)$ and $\GL_2(\FF_3)$ are solvable).  Therefore, $B_1/M_1$ is a cyclic group of prime order.

Suppose that $q=3$.  Using that each $B_i/M_i$ is abelian and Proposition~\ref{P:SL2 commutator}(\ref{P:SL2 commutator i}), we find that $M_1 \supseteq \SL_2(A_{\lambda_1})$ and $M_2 \supseteq \prod_{\lambda\in \Lambda_2'-\{\lambda_1\}} \SL_2(A_\lambda)$.  Since the homomorphism $G\to B_1/M_1 \times B_2/M_2$ is not surjective, we deduce that the projection $\det(G) \to \prod_{\lambda\in \Lambda_2'} A_\lambda^\times$ is not surjective.  This contradicts (\ref{T:criterion for computing commutator d}).

Finally suppose that $q=2$.  We have $\Lambda_2=\Lambda_2'=\{\lambda_1,\lambda_2\}$ for a unique $\lambda_2$.  Since each $B_i/M_i$ is abelian and $B_i = \GL_2(A_{\lambda_i})$, we have $M_i \supseteq [\GL_2(A_{\lambda_i}),\GL_2(A_{\lambda_i})]$.   By (\ref{T:criterion for computing commutator d}), there is a $g\in G_{\lambda_1\lambda_2} \cap \SL_2(A_{\lambda_1\lambda_2})$ whose projection $g_1$ in $\GL_2(A_{\lambda_1})$ has order $2$ modulo $\lambda_1$ and whose projection in $\GL_2(A_{\lambda_2})$ is the identity matrix.  We have $g_1 \in N_1 \subseteq M_1$.  Using Proposition~\ref{P:commutator GL2(R)}(\ref{P:commutator GL2(R) ii}), the group $\SL_2(A_{\lambda_1})$ is generated by $g_1$ and $[\GL_2(A_{\lambda_1}),\GL_2(A_{\lambda_1})]$.  Therefore, $M_1\supseteq \SL_2(A_{\lambda_1})$.  A similar argument shows that $M_2\supseteq \SL_2(A_{\lambda_2})$.  Since the homomorphism $G\to B_1/M_1 \times B_2/M_2$ is not surjective, we deduce that the projection $\det(G) \to \prod_{\lambda\in \Lambda_2'} A_\lambda^\times$ is not surjective.  This contradicts (\ref{T:criterion for computing commutator d}).
\end{proof}

Define $B_1:=\prod_{\lambda \in \Lambda_1} \SL_2(A_{\lambda})$ and $B_2:=\prod_{\lambda\in \Lambda_2} [\GL_2(A_\lambda),\GL_2(A_\lambda)]$.   We have a natural inclusion
\[
H \subseteq [\GL_2(\widehat{A}),\GL_2(\widehat{A})]= \prod_{\lambda\in \Lambda_1 \cup \Lambda_2}  [\GL_2(A_\lambda),\GL_2(A_\lambda)] = B_1 \times B_2,
\]
where we have used Proposition~\ref{P:SL2 commutator}(\ref{P:SL2 commutator i}).   The projections $H\to B_1$ and $H\to B_2$ are surjective by Lemmas~\ref{L:surjectivity of SL2 via H} and \ref{L:surjectivity of SL2 via H 2 and 3}.   

Suppose $H$ is a proper subgroup of $B_1\times B_2$.  By Goursat's lemma (\cite[Lemma 5.2.1]{MR457455}), there are closed proper normal subgroups $N_i$ of $B_i$ for which we have an isomorphism $B_1/N_1\cong B_2/N_2$.   This implies that there is a finite simple group $Q$ that shows up as a quotient of both $B_1$ and $B_2$.  The group $B_1$ is perfect by Proposition~\ref{P:SL2 commutator}(\ref{P:SL2 commutator i}) so $Q$ is nonabelian.   However, the group $B_2$ is prosolvable (since $GL_2(\FF_2)$ and $\GL_2(\FF_3)$ are solvable) and hence $Q$ is cyclic.  This gives a contradiction and thus $H=B_1\times B_2=[\GL_2(\widehat{A}),\GL_2(\widehat{A})]$.  Finally when $q>2$, we have $[\GL_2(\widehat{A}),\GL_2(\widehat{A})]=\SL_2(\widehat{A})$ by Theorem~\ref{P:commutator GL2(R)}(\ref{P:commutator GL2(R) i}).

\section{Local fields and the image of inertia} \label{S:inertia}

Fix a nonzero prime ideal $\p$ of $A$.  Let $K$ be a finite separable extension of $F_\p$ which we consider as an $A$-field via the inclusions $A\subseteq F_\p\subseteq K$.   The integral closure of $A_\p$ in $K$ is a complete discrete valuation ring $\OO$ whose maximal ideal we will denote by $\m$.  Define the residue field $\FF:=\OO/\m$.

Let $v \colon K^\times \to \ZZ$ be the discrete valuation corresponding to $\OO$ normalized so that $v(K^\times)=\ZZ$ and we set $v(0)=+\infty$.    We will also denoted by $v$ the corresponding $\QQ$-valued extension of $v$ to a fixed separable closure $K^\sep$ of $K$.   Let $I_K$ be the inertia subgroup of $\Gal_K=\Gal(K^\sep/K)$ and let $K^\un$ be the maximal unramified extension of $K$ in $K^\sep$.  

Let $\phi\colon A \to K\{\tau\}$ be a Drinfeld $A$-module of rank $r$.  We  shall say that $\phi$ is \defi{defined over $\OO$} if $\phi_a\in \OO\{\tau\}$ for all $a\in A$.   The Drinfeld module $\phi$ has \defi{stable reduction} (of rank $r'$) if there exists a Drinfeld module $\phi'\colon A \to K\{\tau\}$ defined over $\OO$ such that $\phi'$ and $\phi$ are isomorphic over $K$ and the reduction of $\phi'$ modulo $\m$ is a Drinfeld module $A\to \FF\{\tau\}$ of rank $r'\geq 1$.  Recall that $\phi$ has \defi{good reduction} if it has stable reduction of rank $r$.

Suppose that $\phi$ has rank $2$.  The \defi{$j$-invariant} of $\phi$ is $j_\phi := a_1^{q+1}/a_2 \in K$, where $\phi_t = t + a_1 \tau + a_2 \tau^2$.  The Drinfeld module $\phi$ has {potentially good reduction} if and only if $v(j_\phi)\geq 0$, cf.~\cite{MR2020270}*{Lemma~5.2}.

\subsection{Image of inertia}

Let $\phi\colon A \to K\{\tau\}$ be a Drinfeld $A$-module of rank $2$.    For a nonzero ideal $\aA\subseteq A$, the Galois action on the $\aA$-torsion of $\phi$ gives rise to a Galois representation $\bbar\rho_{\phi,\aA}\colon \Gal_K \to \GL_2(A/\aA)$.  If $\phi$ has good reduction and $\p\nmid \aA$, then $\bbar\rho_{\phi,\aA}(I_K)=1$.   We now study the group $\bbar\rho_{\phi,\p}(I_K)$ when $\phi$ has good reduction.

\begin{prop} \label{P:good inertia}
Assume that $K/F_\p$ is unramified.
Let $\phi\colon A\to K\{\tau\}$ be a Drinfeld module of rank $2$ that has good reduction.  Then one of the following hold:
\begin{alphenum}
\item \label{P:good inertia a}
$\bbar\rho_{\phi,\p}(I_K)$ is conjugate in $\GL_2(\FF_\p)$ to a subgroup of $\left\{ \left(\begin{smallmatrix} a & b \\0 & 1\end{smallmatrix}\right) : a\in \FF_\p^\times, b\in \FF_\p \right\}$,
\item \label{P:good inertia b}
$\bbar\rho_{\phi,\p}(I_K)$ is a cyclic subgroup of $\GL_2(\FF_\p)$ of order $q^{2\deg \p}-1$. 
\end{alphenum}
\end{prop}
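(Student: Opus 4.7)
The plan is to analyze $\phi[\p]$ using the connected--étale sequence of its associated finite flat group scheme.  After replacing $\phi$ by an isomorphic model, I may assume $\phi$ is defined over $\OO$, with reduction $\bbar\phi\colon A\to\FF\{\tau\}$ of rank $2$; since $K/F_\p$ is unramified, $\bbar\phi$ has characteristic $\p$ and hence some height $h\in\{1,2\}$.  Let $\pi$ be a uniformizer of $A_\p$.  The scheme $\mathcal G:=\ker(\phi_\pi\colon \GG_{a,\OO}\to\GG_{a,\OO})$ is a finite flat commutative group scheme over $\OO$ with $K^\sep$-points $\phi[\p]$, and its connected--étale decomposition $0\to \mathcal G^\circ\to\mathcal G\to\mathcal G^{\mathrm{et}}\to 0$ cuts out a $\Gal_K$-stable filtration on $\phi[\p]$, in which $\mathcal G^\circ(K^\sep)$ has $\FF_\p$-dimension $h$ and $\mathcal G^{\mathrm{et}}(K^\sep)$ has $\FF_\p$-dimension $2-h$; because $\mathcal G^{\mathrm{et}}$ is étale over $\OO$, $I_K=\Gal(K^\sep/K^{\un})$ acts trivially on $\mathcal G^{\mathrm{et}}(K^\sep)$.

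In case $h=1$, each of $\mathcal G^\circ(K^\sep)$ and $\mathcal G^{\mathrm{et}}(K^\sep)$ is one-dimensional over $\FF_\p$.  Choosing an $\FF_\p$-basis of $\phi[\p]$ whose first vector lies in $\mathcal G^\circ(K^\sep)$, the images $\bbar\rho_{\phi,\p}(\sigma)$ for $\sigma\in I_K$ are upper triangular with lower-right entry $1$, giving conclusion~(\ref{P:good inertia a}).

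In case $h=2$, $\mathcal G=\mathcal G^\circ$, so $\phi[\p]$ is the $\pi$-torsion of the formal $A$-module $\hat\phi$ over $\OO$, and $\hat\phi$ has (formal) height $2$.  By the classical classification of formal $A_\p$-modules, after base change to the strictly henselian ring $\widehat{\OO^{\un}}$, $\hat\phi$ is isomorphic to a Lubin--Tate formal module for the unramified quadratic extension $F_{\p^2}/F_\p$; this equips $\phi[\p]$ with the structure of a one-dimensional vector space over $\FF_{\p^2}$ (the residue field of $F_{\p^2}$), and because the isomorphism is defined over an unramified extension of $\OO$, the $\FF_{\p^2}$-action commutes with $I_K$.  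Hence $\bbar\rho_{\phi,\p}(I_K)\subseteq \Aut_{\FF_{\p^2}}(\phi[\p])=\FF_{\p^2}^\times$, which after choice of $\FF_\p$-basis embeds in $\GL_2(\FF_\p)$ as a cyclic subgroup of order $q^{2\deg\p}-1$.

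For the matching lower bound I would invoke the Newton polygon.  Writing $\phi_\pi=\sum_i a_i\tau^i\in\OO\{\tau\}$, the supersingular hypothesis $\bbar\phi_\pi=\bbar a_{2\deg\p}\tau^{2\deg\p}$ translates into $v(a_i)\geq 1$ for $0\leq i<2\deg\p$ and $v(a_{2\deg\p})=0$, and $v(a_0)=v(\pi)=1$ since $K/F_\p$ is unramified.  The Newton polygon of the additive polynomial $\phi_\pi(x)=\sum_i a_i x^{q^i}$ is then a single segment from $(1,1)$ to $(q^{2\deg\p},0)$, so every nonzero $x\in\phi[\p]$ satisfies $v(x)=1/(q^{2\deg\p}-1)$.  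Thus $K(\phi[\p])/K$ has ramification index divisible by $q^{2\deg\p}-1$, forcing $|\bbar\rho_{\phi,\p}(I_K)|\geq q^{2\deg\p}-1$; combined with the containment above, equality holds.  The main obstacle is the height-$2$ case: the formal-module classification must be invoked to realize $\bbar\rho_{\phi,\p}(I_K)$ inside $\FF_{\p^2}^\times$ (the upper bound), and the Newton polygon computation must be carried out to match it (the lower bound).  The case $h=1$ is essentially formal once the connected--étale sequence is in hand.
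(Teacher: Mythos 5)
Your overall structure matches the paper's proof: the connected--\'etale sequence of the finite flat model of $\phi[\p]$ over $\OO$, the case division by the height $h$ of the reduction, and the height-one case are all correct and are exactly what the paper does. Your Newton polygon computation in the height-two case is also correct and gives the right lower bound: every nonzero $x\in\phi[\p]$ has $v(x)=1/(q^{2\deg\p}-1)$, so $|\bbar\rho_{\phi,\p}(I_K)|$ is divisible by $q^{2\deg\p}-1$. The gap is your upper bound. There is no ``classical classification'' asserting that a formal $A_\p$-module over the strictly henselian ring $\widehat{\OO^{\un}}$ whose special fibre has height $2$ becomes isomorphic to a Lubin--Tate module for the unramified quadratic extension of $F_\p$. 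Classification by height is a statement over separably closed \emph{fields}; over a complete discrete valuation ring the height-$2$ module has, by Drinfeld's deformation theory, a nontrivial one-parameter deformation space, and only the quasi-canonical deformations admit an action of the unramified quadratic order. A general deformation has endomorphism ring $A_\p$ and carries no $\FF_{\p^2}$-structure over $\widehat{\OO^{\un}}$; the analogous assertion already fails for supersingular elliptic curves over $\ZZ_p^{\un}$, whose formal groups need not have formal CM by $\ZZ_{p^2}$ (if they always did, inertia would always act on the full Tate module through an abelian group, which is false). So the key containment $\bbar\rho_{\phi,\p}(I_K)\subseteq\FF_{\p^2}^\times$ is precisely the point that remains unproved in your argument.

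The step can be repaired without that claim. Your valuation computation shows that for each nonzero $x\in\phi[\p]$ the extension $K(x)/K$ has ramification index divisible by $q^{2\deg\p}-1$, while its degree is at most $\deg\big(\phi_\pi(X)/X\big)=q^{2\deg\p}-1$; hence $K(x)/K$ is totally and tamely ramified of degree exactly $q^{2\deg\p}-1$. A compositum of tamely ramified extensions is tamely ramified, so $K(\phi[\p])/K$ is tame and $\bbar\rho_{\phi,\p}(I_K)$ is a quotient of the procyclic tame inertia group, hence cyclic. Since no element of $\GL_2(\FF_\p)$ has order a proper multiple of $q^{2\deg\p}-1$, a cyclic subgroup of order divisible by $q^{2\deg\p}-1$ has order exactly $q^{2\deg\p}-1$, which is conclusion (\ref{P:good inertia b}). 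Alternatively, do what the paper does and invoke \cite[Proposition~2.7(ii)]{MR2499411}, where Pink and R\"utsche prove that $I_K$ acts on $\phi[\p]^0(K^\sep)$ through a fundamental character with cyclic image of order $q^{2\deg\p}-1$.
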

\begin{proof}
We summarize material from \S2 of \cite{MR2499411}.  After replacing $\phi$ by an isomorphic Drinfeld module, we may assume that $\phi$ is defined over $\OO$ and that reducing modulo $\m$ gives a Drinfeld module of rank $2$.    Then $\phi[\p]$ extends to a finite flat group scheme over $\OO$.  The connected and \'etale components of $\phi[\p]$ give an exact sequence $0 \to \phi[\p]^0 \to \phi[\p]\to \phi[\p]^{\text{\'et}} \to 0$ of finite flat group schemes.  Taking $K^\sep$-points gives a short exact sequence 
\begin{align} \label{E:connnected-etale}
0 \to \phi[\p]^0(K^\sep) \to \phi[\p](K^\sep)\to \phi[\p]^{\text{\'et}}(K^\sep) \to 0
\end{align}
of $\FF_\p$-vector spaces that is $\Gal_K$-equivariant.   Let $h$ be the height of the Drinfeld module $\phi$ modulo $\p$.  The $\FF_\p$-vector space $\phi[\p]^\circ(K^\sep)$ has dimension $h$.

The action of $I_K$ on $\phi[\p]^{\text{\'et}}(K^\sep)$ is trivial from the definition of an \'etale group scheme.  So when $h=1$, (\ref{P:good inertia a}) follows from the exact sequence (\ref{E:connnected-etale}).    We may now suppose that $h=2$ and hence $\phi[\p]^{\text{\'et}}(K^\sep)=0$.  Property (\ref{P:good inertia b}) then follows from \cite[Proposition~2.7(ii)]{MR2499411} which shows that $I_K$ acts on $\phi[\p]^0(K^\sep)$ via a fundamental character whose image is cyclic of order $q^{2\deg \p} -1$. 
\end{proof}

The following proposition, which we prove in \S\ref{SS:main Tate new proof}, gives constraints on $\bbar\rho_{\phi,\aA}(I_K)$ when $\phi$ has stable and bad reduction.  

\begin{prop} \label{P:main Tate new} 
Let $\phi \colon A \to K\{\tau\}$ be a Drinfeld module of rank $2$ that has stable reduction of rank $1$.  Consider an ideal $\aA=\p^e \aA'$, where $e\geq 0$ is an integer and $\aA'$ is a nonzero ideal of $A$ that is relatively prime to $\p$. 
\begin{romanenum}
\item \label{P:main Tate new i} 
 The group $\bbar\rho_{\phi,\aA}(\Gal_K)$ is conjugate in $\GL_2(A/\aA)$ to a subgroup of 
\begin{align} \label{E:Tate Borel}
\left\{ \left(\begin{smallmatrix} a & b \\0 & c\end{smallmatrix}\right) : a\in (A/\aA)^\times \text{ with } a\equiv 1 \pmod{\aA'},\,  b\in A/\aA,\, c\in \FF_q^\times\right\}.
\end{align}
\item \label{P:main Tate new ii} 
The cardinality of $\bbar\rho_{\phi,\aA}(I_K)$ is divisible by the denominator of $\tfrac{v(j_\phi)}{N(\aA)}\in \QQ$ in lowest terms.
\item \label{P:main Tate new iii} 
If $\gcd(v(j_\phi),q)=1$ and $e\leq 1$, then $\bbar\rho_{\phi,\aA}(I_K)$ contains a subgroup that is conjugate in $\GL_2(A/\aA)$ to $\left\{ \left(\begin{smallmatrix} 1 & b \\0 & 1\end{smallmatrix}\right) : b\in A/\aA\right\}$.
\end{romanenum}
\end{prop}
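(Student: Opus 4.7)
The plan is to exploit the Tate uniformization available for Drinfeld modules with stable bad reduction. Since $\phi$ has rank $2$ and stable reduction of rank $1$, after replacing $\phi$ by a $K$-isomorphic Drinfeld module we may assume there exists a rank-$1$ Drinfeld $A$-module $\psi\colon A\to \OO\{\tau\}$ with good reduction and an element $\omega\in K^\sep$ with $v(\omega)<0$ such that $\Lambda:=A\cdot\omega$ is a Galois-stable $A$-lattice in ${}^\psi K^\sep$, and the associated exponential $e_\Lambda\colon {}^\psi K^\sep\to {}^\phi K^\sep$ is surjective with kernel $\Lambda$. Fixing a generator $a_0$ of $\aA$, the identification $\phi[\aA]=\psi_{a_0}^{-1}(\Lambda)/\Lambda$ yields a short exact sequence of $A[\Gal_K]$-modules
\begin{equation*}
0\longrightarrow \psi[\aA]\longrightarrow \phi[\aA]\longrightarrow \Lambda/a_0\Lambda\longrightarrow 0
\end{equation*}
in which both the subgroup and the quotient are free of rank $1$ over $A/\aA$. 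All three parts will be read off from this sequence.

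For part~(\ref{P:main Tate new i}) I would choose a basis of $\phi[\aA]$ compatible with this filtration, so that $\bbar\rho_{\phi,\aA}(\Gal_K)$ lies in the upper-triangular subgroup with diagonal entries the two characters arising from the sequence. The character on the quotient is $u\colon\Gal_K\to A^\times=\FF_q^\times$, defined by $\sigma(\omega)=u(\sigma)\omega$; it takes values in $\FF_q^\times$ because $\Lambda$ has $A$-rank $1$, which yields the bottom-right constraint $c\in \FF_q^\times$. The character $\chi\colon\Gal_K\to(A/\aA)^\times$ on the subgroup encodes the action on $\psi[\aA]$, and to obtain $a\equiv 1\pmod{\aA'}$ I would use that $\psi[\aA']$ is étale over $\OO$ (since $\aA'$ is coprime to $\p$ and $\psi$ has good reduction), so the $\aA'$-part of $\chi$ factors through the unramified quotient; comparing this unramified character with the explicit Frobenius action on the rank-$1$ reduction $\bar\phi$ and exploiting the flexibility in the choice of Tate data $(\psi,\omega)$ within an isogeny class forces it to be trivial.

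For parts~(\ref{P:main Tate new ii}) and~(\ref{P:main Tate new iii}) I would fix $\tilde\omega\in K^\sep$ with $\psi_{a_0}(\tilde\omega)=\omega$. A Newton polygon analysis of the separable additive polynomial $\psi_{a_0}(X)-\omega$---whose leading coefficient is a unit (good reduction of $\psi$) and whose constant term has valuation $v(\omega)$---gives $v(\tilde\omega)=v(\omega)/N(\aA)$, so $K(\tilde\omega)/K$ is totally ramified of degree divisible by the denominator $d$ of $v(\omega)/N(\aA)$ in lowest terms. Unwinding the Tate formula relating the coefficient $a_2$ of $\tau^2$ in $\phi_t$ to $\omega$ and $\psi$ shows $v(j_\phi)$ and $v(\omega)$ differ by a factor prime to the characteristic, so $d$ coincides with the denominator of $v(j_\phi)/N(\aA)$. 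The cocycle $z\colon\sigma\mapsto \sigma(\tilde\omega)-u(\sigma)\tilde\omega$ takes values in $\psi[\aA]$ (from $\psi_{a_0}(\tilde\omega)=\omega$ and Galois equivariance one gets $\psi_{a_0}(z(\sigma))=0$), and its restriction to $I_K$---on which $u$ is trivial---is a group homomorphism $I_K\to \psi[\aA]\cong A/\aA$ whose image has order equal to the ramification index of $K(\tilde\omega)/K$. This image records the upper-right entry in the matrix of $\bbar\rho_{\phi,\aA}(I_K)$, which proves~(\ref{P:main Tate new ii}). For~(\ref{P:main Tate new iii}), the hypothesis $\gcd(v(j_\phi),q)=1$ combined with $N(\aA)$ being a power of $q$ forces $d=N(\aA)$, whence $z|_{I_K}$ surjects onto $A/\aA$; the hypothesis $e\leq 1$ ensures that there is still a large enough subgroup of $I_K$ on which the diagonal characters vanish (the $\chi$-ramification on $\psi[\p]$ is tame of order $q^{\deg\p}-1$, which one can intersect controllably with the kernel of $z$), so inertia contains the entire unipotent subgroup.

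The main obstacle is the $\chi\equiv 1\pmod{\aA'}$ assertion in part~(\ref{P:main Tate new i}): the Tate uniformization by itself only gives the upper-triangular shape, and normalizing the unramified $\aA'$-character to the trivial one requires invoking the specific arithmetic of the rank-$1$ reduction $\bar\phi$ over $\FF_\p$ and of rank-$1$ Drinfeld modules over local $A$-fields. The ramification calculations for~(\ref{P:main Tate new ii}) and~(\ref{P:main Tate new iii}) are then essentially routine consequences of the Newton polygon analysis once the cocycle-theoretic translation is in place.
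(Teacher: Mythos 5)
Your overall route is the paper's: the (unique, normalized) Tate uniformization $(\psi,\Lambda)$ of $\phi$, the Galois-equivariant sequence $0\to\psi[\aA]\to\phi[\aA]\to\Lambda/a_0\Lambda\to 0$, and a valuation computation for a $\psi_{a_0}$-division point of the lattice generator. The genuine problem is your treatment of the congruence $a\equiv 1\pmod{\aA'}$ in part (i). The step you flag as the main obstacle --- forcing the \emph{unramified} $\aA'$-part of $\chi_1=\bbar\rho_{\psi,\aA}$ to be trivial on all of $\Gal_K$ by ``comparing with the Frobenius action on the rank-one reduction and exploiting the flexibility in the choice of Tate data within an isogeny class'' --- cannot be carried out. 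The pair $(\psi,u)$ is unique (not merely unique up to isogeny), so the sub-representation $\psi[\aA]\subseteq\phi[\aA]$ and hence $\chi_1\bmod\aA'$ is an invariant of $\phi/K$ with no normalization available; and Frobenius genuinely acts nontrivially on $\psi[\aA']$ in general (for a Carlitz-type reduction it acts by a unit multiple of the monic generator of $\p$ modulo $\aA'$). So the congruence on the full decomposition group is false as a general claim; what the paper's own proof establishes, and all that is ever used later, is the congruence for $\sigma\in I_K$. For that, your first observation is already the whole argument and is exactly the paper's: $\psi_t\equiv\phi_t\pmod{\m}$, so $\psi$ has good reduction, hence the action on $\psi[\aA']$ (with $\aA'$ prime to $\p$) is unramified. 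You should drop the Frobenius-comparison step and prove the inertia form of the congruence only.

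Two smaller repairs are needed in (ii)/(iii). The character $u\colon\Gal_K\to\FF_q^\times$ on the lattice need not be trivial on $I_K$: it is tame but can be ramified (the generator $\omega$ has valuation $v(j_\phi)/(q-1)$, typically fractional), so $z|_{I_K}$ is not automatically a homomorphism and its image need not equal the ramification index of $K(\tilde\omega)/K$. The paper sidesteps this by working over the maximal tamely ramified extension $K^t$, where the lattice action is trivial, and by showing $K^t(\tilde\omega)\subseteq K^t(\phi[\aA])$ via a valuation comparison (any $\sigma$ fixing $e_\Lambda(\tilde\omega)$ moves $\tilde\omega$ by an element of $\Lambda$, which is impossible for valuation reasons unless it fixes $\tilde\omega$); the $p$-power denominator then divides the inertia order. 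Also, the relation between $v(\omega)$ and $v(j_\phi)$ that you invoke by ``unwinding the Tate formula'' is precisely $v(\omega)=v(j_\phi)/(q-1)$ (the paper cites Rosen's lemma for this and you would need to prove it); consequently the denominator of $v(\omega)/N(\aA)$ agrees with that of $v(j_\phi)/N(\aA)$ only up to a factor dividing $q-1$, so ``coincides'' is wrong, though harmless since the denominator in the statement is a $p$-power and divides yours. With these fixes, your Newton-polygon computation of $v(\tilde\omega)=v(\omega)/N(\aA)$ and the use of $e\le 1$ to keep the diagonal part prime to $p$ in (iii) reproduce the paper's $p$-Sylow argument.
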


\subsection{Proof of Proposition~\ref{P:main Tate new}} \label{SS:main Tate new proof}
After possibly replacing $\phi$ with a $K$-isomorphic Drinfeld module, we may assume that $\phi$ is defined over $\OO$ and that the reduction of $\phi$ modulo $\m$ is a Drinfeld module of rank $1$.    We have $j_\phi\neq 0$ since $\phi$ has stable reduction of rank $1$.

For a Drinfeld module $\psi\colon A\to K\{\tau\}$, a \defi{$\psi$-lattice} is a finitely generated projective $A$-submodule $\Gamma$ of ${}^\psi\!K^\sep$ that is discrete and is stable under the action of $\Gal_K$.    By discrete we mean that any disk of finite radius in $K^\sep$, with respect to the valuation $v$, contains only finitely many elements of $\Gamma$.  

Associated to $\phi$, we now recall the construction of its \emph{Tate uniformization}; it is a pair $(\psi,\Gamma)$ which consists of a Drinfeld module $\psi\colon A \to K\{\tau\}$ of rank $1$ defined over $\OO$ and a $\psi$-lattice $\Gamma$ of rank $1$.  For details see Proposition 7.2 of \cite{MR0384707} and its proof; for further details see \cite[Chapter~4 \S3]{MR2488548}.  There exists a unique Drinfeld module $\psi\colon  A\to K\{\tau\}$ defined over $\OO$ of rank $1$ and a unique series
$u= \tau^0 + \sum_{i=1}^\infty a_i \tau^i \in \OO\{\!\{\tau\}\!\}$
with $a_i\in \m$ and $a_i \to 0$ in $K$, such that 
\begin{equation} \label{E:Tate uniformization 1}
u \psi_a=\phi_au
\end{equation}
for all $a\in A$.  We can identify $u$ with the power series $u(x)=x+\sum_{i=1}^\infty a_i x^{q_i}$ and one can show that $u(z)$ converges for all $z\in K^\sep$.  By considering the analytic properties of $u$ and (\ref{E:Tate uniformization 1}), one shows that the map
\begin{align} \label{E:u map}
{}^\psi\!K^\sep \to  {}^\phi\!K^\sep, \quad z\mapsto u(z)
\end{align}
is a surjective homomorphism of $A$-modules whose kernel $\Gamma$ is a $\psi$-lattice.   Since $\phi$ has rank $2$ and has stable reduction of rank $1$, the $A$-module $\Gamma$ has rank $2-1=1$.   

Fix an $a \in A$ for which $\aA=(a)$.  From the homomorphism (\ref{E:u map}) of $A$-modules, we obtain an isomorphism 
\begin{equation} \label{E:Tate uniformization torsion}
\psi_a^{-1}(\Gamma)/\Gamma \xrightarrow{\sim} \phi[a]=\phi[\aA], \quad z + \Gamma \mapsto u(z)
\end{equation}
of $A$-modules that is $\Gal_K$-equivariant.  We also have a $\Gal_K$-equivariant short exact sequence of $A$-modules:  
\begin{equation} \label{E:SES Tate}
0 \to \psi[a]=\psi_a^{-1}(0) \to \psi_a^{-1}(\Gamma)/\Gamma \xrightarrow{\psi_a} \Gamma/a\Gamma \to 0.
\end{equation}
So by combining (\ref{E:Tate uniformization torsion}) and (\ref{E:SES Tate}), we obtain a short exact sequence 
\begin{align} \label{E:nice ses}
0 \to \psi[\aA] \to \phi[\aA] \to \Gamma/\aA \Gamma \to 0
\end{align}
of $A$-modules that is $\Gal_K$-equivariant.  

The $A/\aA$-module $\psi[\aA]$ is free of rank $1$ since $\psi$ has rank $1$.  Define the character $\chi_1:=\bbar\rho_{\psi,\aA}\colon \Gal_K \to \Aut_A(\psi[\aA])=(A/\aA)^\times$.  Since $A$ is a PID, $\Gamma$ is a free $A$-module of rank $1$.  The Galois action on $\Gamma$ is thus given by a character $\chi_2\colon \Gal_K \to \Aut_A(\Gamma)=A^\times=\FF_q^\times$.   The character $\chi_2$ also describes the Galois action on the quotient $\Gamma/\aA\Gamma$.   From (\ref{E:nice ses}) we may assume, after making an appropriate choice of basis of $\phi[\aA]$, that
\[
\bbar\rho_{\phi,\aA}(\sigma) = \left(\begin{smallmatrix} \chi_1(\sigma) & * \\0 & \chi_2(\sigma)\end{smallmatrix}\right)
\]
holds for all $\sigma\in \Gal_K$.

To complete the proof of (\ref{P:main Tate new i}), it remains to show that $\chi_1(\sigma)\equiv 1 \pmod{\aA'}$ for all $\sigma\in I_K$.  Equivalently, we need to show that the action of $I_K$ on $\psi[\aA']$ is trivial.  It thus suffices to prove that $\psi$ has good reduction.  We have $\psi_t \equiv \phi_t \pmod{\m}$ by reducing (\ref{E:Tate uniformization 1}) modulo $\m$.   This proves that $\psi$ has good reduction since $\phi$ modulo $\m$ is a Drinfeld module of rank $1$ and $\psi$ has rank $1$.

We now prove (\ref{P:main Tate new ii}).   Fix a generator $\gamma$ of the $A$-module $\Gamma$ and choose a $z\in K^\sep$ for which $\psi_a(z)=\gamma$.   

\begin{lemma} \label{L:vz}
We have $v(z)=\tfrac{v(j_\phi)}{(q-1)N(\aA)}$.
\end{lemma}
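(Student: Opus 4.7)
The plan is to prove the lemma in two steps: first, show $v(z) = v(\gamma)/N(\aA)$ via the Newton polygon of $\psi_a(X) - \gamma$; second, compute $v(\gamma) = v(j_\phi)/(q-1)$ using the intertwining $u\psi_t = \phi_t u$. Throughout, I use that $v(\gamma) < 0$, which follows from the discreteness of the infinite lattice $\Gamma = A\gamma$ together with the good reduction of $\psi$ (already established in the proof of part (i)): were $|\gamma| \leq 1$, the containment $\psi_a \in \OO\{\tau\}$ would force $|\psi_a(\gamma)| \leq 1$ for all $a \in A$, placing $\Gamma$ in the compact set $\OO$.

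For the first step, write $\psi_a(X) = aX + \beta_1 X^q + \cdots + \beta_{\deg a}X^{N(\aA)}$ with $\beta_i \in \OO$ and $v(\beta_{\deg a}) = 0$. Because $v(\gamma) < 0$ while $v(a), v(\beta_i) \geq 0$, each intermediate point of the Newton polygon of $\psi_a(X) - \gamma$ lies strictly above the line from $(0, v(\gamma))$ to $(N(\aA), 0)$, so this line is the entire Newton polygon; hence each of the $N(\aA)$ roots, in particular $z$, has valuation $v(\gamma)/N(\aA)$.

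For the second step, apply the first step with $a = t$ to produce $z_0 \in K^\sep$ with $\psi_t(z_0) = \gamma$ and $v(z_0) = v(\gamma)/q$. Then $\phi_t(u(z_0)) = u(\psi_t(z_0)) = u(\gamma) = 0$, so $u(z_0)$ is a nonzero root of $\phi_t(X) = tX + a_1 X^q + a_2 X^{q^2}$, where $v(a_1) = 0$ and $v(a_2) = -v(j_\phi) > 0$ by stable-of-rank-one reduction. A Newton polygon computation on $\phi_t$ exhibits two classes of nonzero roots: the $q-1$ elements of $u(\psi[t] \setminus \{0\})$ have valuation $\geq 0$, while $q^2 - q$ further roots have valuation $v(j_\phi)/(q(q-1)) < 0$. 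On the other hand, the analytic product expansion $u(X) = X\prod_{\gamma' \in \Gamma \setminus \{0\}}(1 - X/\gamma')$, combined with $v(z_0) = v(\gamma)/q > v(\gamma) = \max_{\gamma' \neq 0} v(\gamma')$, yields $v(u(z_0)) = v(z_0) = v(\gamma)/q$. Since this is negative, $u(z_0)$ must belong to the second class, forcing $v(\gamma)/q = v(j_\phi)/(q(q-1))$, i.e., $v(\gamma) = v(j_\phi)/(q-1)$. Combining with the first step yields $v(z) = v(j_\phi)/((q-1)N(\aA))$.

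The main obstacle is in the second step: one must correctly identify which of the two Newton-polygon classes contains $u(z_0)$, which requires both the product expansion of $u$ to pin down $v(u(z_0))$ and the fact that $v(\gamma)$ is maximal among valuations of nonzero lattice points (a consequence of $v(\psi_a(\gamma)) = q^{\deg a} v(\gamma)$ once $v(\gamma) < 0$).
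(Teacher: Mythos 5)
Your argument is correct, but at the decisive point it takes a genuinely different route from the paper. The parts you share: ruling out $v(\gamma)\geq 0$ (you use discreteness of the infinite lattice; the paper instead uses $u(\gamma)=0$ together with $a_i\in\m$ --- both work, though note $\Gamma$ need not lie in $\OO$ itself, so what your argument really uses is that the unit disk of $K^\sep$ is a finite-radius disk containing only finitely many lattice points), and the relation $v(\gamma)=N(\aA)\,v(z)$ (your Newton polygon of $\psi_a(X)-\gamma$ is the same computation as the paper's dominant-term estimate, both resting on the top coefficient of $\psi_a$ being a unit because $\psi$ has good reduction). Where you diverge is the input $v(\gamma)=v(j_\phi)/(q-1)$: the paper simply cites Rosen (\cite{MR2020270}, Lemma~5.3), whereas you reprove it by taking $z_0$ with $\psi_t(z_0)=\gamma$, computing $v(u(z_0))=v(z_0)=v(\gamma)/q$ from the product expansion $u(X)=X\prod_{\gamma'\in\Gamma\setminus\{0\}}(1-X/\gamma')$ together with the maximality of $v(\gamma)$ among valuations of nonzero lattice points, and then matching against the Newton polygon of $\phi_t$, whose negative-slope segment forces $v(\gamma)/q=v(j_\phi)/(q(q-1))$; the normalization made at the start of the proof of Proposition~\ref{P:main Tate new} indeed gives $v(a_1)=0$ and $v(a_2)=-v(j_\phi)>0$, so that step is fine. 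The one imported ingredient is the identification of $u$ with the lattice exponential $e_\Gamma$ (equivalently the product expansion): this is standard Tate-uniformization theory, but it is not among the facts the paper records about $u$ (functional equation, entirety, $\ker u=\Gamma$), so it deserves a citation or a short deduction via nonarchimedean Weierstrass factorization --- without it, the series $u(X)=X+\sum a_iX^{q^i}$ alone does not give $v(u(z_0))=v(z_0)$ in the region $v(z_0)<0$. In exchange, your version is self-contained and makes the valuation bookkeeping explicit (you are in effect reproving Rosen's lemma), while the paper's citation keeps the proof shorter; the parenthetical identification of the nonnegative-valuation roots of $\phi_t$ with $u(\psi[t]\setminus\{0\})$ is correct but not needed for the conclusion.
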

\begin{proof}
First suppose that $v(z)\geq 0$.  Since $\psi_a$ has coefficients in $\OO$, we have $v(\gamma)=v(\psi_a(z))\geq 0$.  Since $\gamma$ is nonzero and $0=u(\gamma)=\gamma + \sum_{i=1}^\infty a_i \gamma^{q^i}$, we must have $v(\gamma)\geq v(a_i \gamma^{q^i})$ for some $i\geq 1$ with $a_i\neq 0$. So $v(\gamma)\geq v(a_i)+q^i v(\gamma)> q^i v(\gamma)$ which is impossible since $v(\gamma)\geq 0$.   Therefore, $v(z)<0$ and hence
\[
v(\gamma) = v(\psi_a(z))=v(z^{q^{\deg a}}) = q^{\deg a} v(z) = N(\aA) v(z).
\]
By \cite{MR2020270}*{Lemma~5.3}, we have $v(\gamma) = v(j_\phi)/(q-1)$ and the lemma follows.
\end{proof}

Let $d$ be the denominator of $v(j_\phi)/N(\aA)\in \QQ$.   Let $K^t$ be the maximal tamely ramified extension of $K$ in $K^\sep$.    Let $L$ be the minimal extension of $K^t$ in $K^\sep$ for which $\Gal(K^\sep/L)$ fixes $z+\Gamma$.  The Galois group $\Gal(K^\sep/K^t)$ acts trivially on $\Gamma$ since the Galois action on $\Gamma$ is given by $\chi_2$.  Therefore, $L=K^t(z)$.      From the isomorphism (\ref{E:Tate uniformization torsion}), we find that $K^t(z)\subseteq K^t(\phi[\aA])$.  

A rational number occurs as $v(\alpha)$ for some nonzero $\alpha\in K^t$ if and only if its denominator is relatively prime to $q$.  By Lemma~\ref{L:vz}, we find that $d$ is the minimal positive integer for which $d v(z) \in v(K^t-\{0\})$.  Therefore, $[K^t(z):K^t]$ is divisible by $d$.   So $d$ divides $[K^t(\phi[\aA]):K^t]$ and hence also divides $|\bbar\rho_{\phi,\aA}(I_K)|$.  This completes the proof of (\ref{P:main Tate new ii}).

It remains to prove (\ref{P:main Tate new iii}), so assume $\gcd(v(j_\infty),q)=1$ and $e\leq 1$.  Let $G\subseteq \GL_2(A/\aA)$ be the subgroup (\ref{E:Tate Borel}).   Since $e \leq 1$, the group $B:=\left\{ \left(\begin{smallmatrix} 1 & b \\0 & 1\end{smallmatrix}\right) : b\in A/\aA\right\}$ is a $p$-Sylow subgroup of $G$, where $p$ is the prime dividing $q$.   We have $\bbar\rho_{\phi,\aA}(I_\p) \subseteq G$ from (\ref{P:main Tate new i}), so it suffices to show that $\bbar\rho_{\phi,\aA}(I_\p)$ contains a subgroup of order $N(\aA)$ (since this will be a $p$-Sylow subgroup of $G$ and hence conjugate to $B$).   The group $\bbar\rho_{\phi,\aA}(I_\p)$ contains a subgroup of order $N(\aA)$ by (\ref{P:main Tate new i}) and our assumption  $\gcd(v(j_\infty),q)=1$.

\section{Frobenius polynomials}
\label{S:Frobenius polynomials}

Consider a Drinfeld $A$-module $\phi\colon A \to F\{\tau\}$ of rank $r$.    Take any nonzero prime ideal $\p$ of $A$ for which $\phi$ has good reduction.  So after replacing $\phi$ by an isomorphic Drinfeld, we may assume that the coefficients of $\phi$ are integral at $\p$ and that reducing modulo $\p$ gives a Drinfeld module $\bbar\phi\colon A \to \FF_\p\{\tau\}$ of rank $r$.   

Let $P_{\phi,\p}(x)\in A[x]$ be the characteristic polynomial of the Frobenius endomorphism $\pi_\p :=\tau^{\deg \p} \in \End_{\FF_\p}(\bbar\phi)$; it is the degree $r$ polynomial that is a power of the minimal polynomial of $\pi_\p$ over $F$.  The following is an easy consequence of \cite[Theorem~4.12.12]{MR1423131}.

\begin{prop}
Let $\p$ be a nonzero prime ideal of $A$ for which $\phi$ has good reduction.  For any nonzero ideal $\aA$ of $A$ that is relatively prime to $\p$, $\bbar\rho_{\phi,\aA}$ is unramified at $\p$ and 
\[
P_{\phi,\p}(x)\equiv \det(xI - \bbar\rho_{\phi,\aA}(\Frob_\p)) \pmod{\aA}.
\]
\end{prop}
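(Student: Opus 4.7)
The plan is to interpret the $\aA$-torsion of $\phi$ scheme-theoretically over the local ring $A_\p$ and then transfer the resulting Galois action on the special fibre to the Frobenius endomorphism of the reduction. First I would replace $\phi$ by an $F$-isomorphic Drinfeld module so that its coefficients lie in $A_\p$ and reduction modulo $\p$ yields a rank $r$ Drinfeld module $\bbar\phi\colon A\to \FF_\p\{\tau\}$. Pick any nonzero $a\in \aA$; since $\aA$ is coprime to $\p$, the constant term $\partial_0(\phi_a)=\iota(a)$ is a unit modulo $\p$, so the finite flat $A_\p$-group scheme $\phi[\aA]$ is \'etale. This is the standard source of good behaviour in the tame situation.

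Second, \'etaleness immediately implies that the inertia subgroup $I_\p\subseteq \Gal_F$ acts trivially on $\phi[\aA](F^\sep)$, so $\bbar\rho_{\phi,\aA}$ is unramified at $\p$. The valuative criterion / reduction map on $A/\aA$-valued points of the \'etale scheme $\phi[\aA]$ produces a $\Gal(\FF_\p^\sep/\FF_\p)$-equivariant isomorphism
\[
\phi[\aA](F^\sep) \xrightarrow{\sim} \bbar\phi[\aA](\FF_\p^\sep),
\]
under which the action of a geometric Frobenius $\Frob_\p\in \Gal_F/I_\p$ on the left corresponds to the action of the $q^{\deg\p}$-power Frobenius on the right.

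Third, this geometric Frobenius acts on $\bbar\phi[\aA](\FF_\p^\sep)$ as the Frobenius endomorphism $\pi_\p=\tau^{\deg\p}\in \End_{\FF_\p}(\bbar\phi)$, because $\tau^{\deg\p}$ is literally the $q^{\deg\p}$-power map on $\GG_{a,\FF_\p}$. Invoking Theorem~4.12.12 of \cite{MR1423131}, which identifies $P_{\phi,\p}(x)$ with the characteristic polynomial of $\pi_\p$ acting on each finite torsion module coprime to $\p$, one obtains
\[
\det\bigl(xI-\bbar\rho_{\phi,\aA}(\Frob_\p)\bigr) \equiv \det\bigl(xI-\pi_\p\mid \bbar\phi[\aA]\bigr)\equiv P_{\phi,\p}(x)\pmod{\aA}.
\]

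The only nontrivial input is the \'etaleness of $\phi[\aA]$ at a prime of good reduction coprime to $\aA$; once that is granted, unramifiedness of $\bbar\rho_{\phi,\aA}$ and compatibility of Frobenius with the reduced module are formal, and the congruence of characteristic polynomials is exactly the content of Gekeler's theorem cited in the paper. I would expect the main care to be in phrasing the $A_\p$-integral model and the reduction isomorphism cleanly, rather than in any substantive calculation.
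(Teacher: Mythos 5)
Your argument is correct and in substance identical to the paper's, which offers no proof beyond citing Theorem~4.12.12 of Goss's book (the result you attribute to Gekeler is in fact Goss's, reference \cite{MR1423131}); the \'etale-kernel argument for unramifiedness and the identification of the $\Frob_\p$-action on reduced torsion with the endomorphism $\pi_\p$ is exactly the standard reasoning behind that citation. Two small slips to fix: take $a$ to be a generator of $\aA$ rather than an arbitrary nonzero element (for general $a\in\aA$ the constant term $\iota(a)$ need not be a unit modulo $\p$), and the $q^{\deg\p}$-power map is the arithmetic, not the geometric, Frobenius.
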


We will need to explicitly know some Frobenius polynomials in order to prove Theorem~\ref{T:example}.
\begin{lemma} \label{L:traces for degree 1 primes new}
With $q$ fixed, let $\phi\colon A\to F\{\tau\}$ be the rank $2$ Drinfeld module from Theorem~\ref{T:example}.
\begin{romanenum}
\item \label{L:traces for degree 1 primes new i}
If $q\neq 2$, then
\[
P_{\phi,(t-c)}(x)= x^2-x+(t-c)
\]
for all nonzero $c\in \FF_q$.
\item \label{L:traces for degree 1 primes new ii}
If $q=3$, then $P_{\phi,(t^2+t+2)}(x)=x^2 + 2x + t^2 + t + 2$.
\item \label{L:traces for degree 1 primes new iii}
If $q=2$, then $P_{\phi,(t)}(x)=x^2 + t$ and $P_{\phi,(t+1)}(x)=x^2 + x + t + 1$.
\end{romanenum}
\end{lemma}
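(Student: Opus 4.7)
The plan is to verify directly that each proposed polynomial $P(x) = x^2 + c_1 x + c_0 \in A[x]$ satisfies $P(\pi_\p) = 0$ in $\FF_\p\{\tau\}$, where $\pi_\p = \tau^{\deg \p}$ is the Frobenius endomorphism of the reduction $\bbar\phi$ of $\phi$ modulo $\p$. For a rank-$2$ Drinfeld module with good reduction at $\p$, the coefficient $c_1$ has degree at most $\tfrac12 \deg \p$ and the constant term $c_0$ generates $\p$ up to a unit of $\FF_q^\times$, so there are only finitely many candidate polynomials and the identity $P(\pi_\p) = 0$ in $\FF_\p\{\tau\}$ pins down the answer uniquely.

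For part~\ref{L:traces for degree 1 primes new i}, reducing $\phi_t = t + \tau - t^{q-1}\tau^2$ modulo $(t-c)$ with $c \in \FF_q^\times$ yields $\bbar\phi_t = c + \tau - \tau^2$ in $\FF_q\{\tau\}$, using $c^{q-1} = 1$. Rearranging gives $\tau^2 - \tau + \bbar\phi_{t-c} = 0$, which is precisely $P(\pi_\p) = 0$ for $P(x) = x^2 - x + (t-c)$.

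For part~\ref{L:traces for degree 1 primes new iii}, with $q = 2$: at $\p = (t)$ the coefficients $t$, $t^3$, and $t^2+t+1$ of $\phi_t$ specialize to $0$, $0$, $1$, so the reduction is $\bbar\phi_t = \tau^2$, giving $\tau^2 + \bbar\phi_t = 0$ in characteristic $2$. At $\p = (t+1)$ the reduction is $\bbar\phi_t = 1 + \tau + \tau^2$, which rearranges to $\tau^2 + \tau + \bbar\phi_{t+1} = 0$ using $\bbar\phi_{t+1} = \bbar\phi_t + 1$.

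Part~\ref{L:traces for degree 1 primes new ii} is the only case requiring a substantive computation. The main step is to expand $\bbar\phi_{t^2+t+2} = \bbar\phi_t^2 + \bbar\phi_t + 2$ explicitly in $\FF_9\{\tau\}$, using the skew-commutation $\tau c = c^3 \tau$ and the identity $\bar t^2 = 2\bar t + 1$ in $\FF_9 = \FF_3[t]/(t^2+t+2)$. Once $\bbar\phi_{t^2+t+2}$ is written as an explicit polynomial in $\tau$ with coefficients in $\FF_9$, the identity $\tau^4 + \bbar\phi_{c_1}\tau^2 + \bbar\phi_{t^2+t+2} = 0$ becomes a coefficient-by-coefficient check in each power of $\tau$ that determines the middle coefficient. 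The main obstacle is purely the bookkeeping in the expansion of $\bbar\phi_t^2$, where many cross terms appear from the commutation rules before collapsing upon reduction via $\bar t^2 = 2\bar t + 1$.
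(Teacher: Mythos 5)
Your verifications for parts (\ref{L:traces for degree 1 primes new i}) and (\ref{L:traces for degree 1 primes new iii}) are correct, and they take a genuinely different (more self-contained) route than the paper, which deduces (i) from Gekeler's explicit trace/norm formulas and obtains (ii) and (iii) from Gekeler's algorithm. Two caveats, though. First, showing that a monic quadratic $P$ satisfies $P(\pi_\p)=0$ only identifies $P$ with $P_{\phi,\p}$ once you know the minimal polynomial of $\pi_\p$ over $F$ is quadratic (otherwise $P_{\phi,\p}$ is the square of a linear polynomial and need not equal $P$); your ``finitely many candidates'' remark does not address this. For (i) and (iii) it is immediate, since $\pi_\p=\tau$ has odd $\tau$-degree while any $\bbar\phi_m$ has $\tau$-degree $2\deg m$; for (ii) one can note that $\tau^2$ has vanishing $\tau$-coefficient, whereas $\bbar\phi_m$ with $\deg m=1$ has $\tau$-coefficient $m_1\neq 0$.

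The genuine gap is part (\ref{L:traces for degree 1 primes new ii}), the only substantive case, which you defer as ``bookkeeping'' and never carry out --- and when carried out as you describe, it does not confirm the stated polynomial. Writing $a=\bbar t$ (so $a^2=2a+1$) and $b=-a^2=a+2$ in $\FF_9$, one finds $\bbar\phi_t=a+\tau+b\tau^2$ and, expanding $(\bbar\phi_t)^2+\bbar\phi_t+2$, that the coefficients of $\tau^0,\tau^1,\tau^3$ all vanish while those of $\tau^2$ and $\tau^4$ are $2ab+1+b=2$ and $b^2=2$; thus $\bbar\phi_{t^2+t+2}=2\tau^2+2\tau^4$. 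Consequently $\tau^4+c_1\tau^2+\bbar\phi_{t^2+t+2}=0$ forces $c_1=1$, i.e.\ the monic quadratic annihilating $\pi_\p$ is $x^2+x+(t^2+t+2)=x^2-2x+(t^2+t+2)$, whereas with the printed middle coefficient $2$ one is left with $\tau^4+2\tau^2+\bbar\phi_{t^2+t+2}=\tau^2\neq 0$. A cross-check independent of this expansion: the $t$-action on $\FF_9$ is $x\mapsto (2a+2)x+x^3$, and one computes $(t-1)\cdot 1=2a+2\neq 0$ while $(t-1)\cdot(2a+2)=0$, so $\FF_9\cong A/((t-1)^2)$ as an $A$-module via $\bbar\phi$ and $P_{\phi,\p}(1)$ must generate $((t-1)^2)=(t^2+t+1)$; this holds for $x^2+x+(t^2+t+2)$ but fails for $x^2+2x+(t^2+t+2)$, which would force $t^2+t+2$ to act as $0$ on $\FF_9$. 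So the deferred computation is exactly where your proposal currently fails: you must either locate an error in the expansion above or reconcile the sign of the trace with the statement as printed before part (ii) can be considered proved.
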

\begin{proof}
We first assume that $q\neq 2$ and prove (\ref{L:traces for degree 1 primes new i}).   Define the prime ideal $\q:=(t-c)$ of $A$; note that $\phi$ has good reduction at $\q$ since $c$ is nonzero.    The image of $t$ in $\FF_\q$ is $c$, so the reduction of $\phi$ modulo $\q$ is the Drinfeld $A$-module $\bbar\phi\colon A\to \FF_\q\{\tau\}=\FF_q\{\tau\}$ for which $t\mapsto c + \tau - c^{q-1} \tau^2=c+\tau - \tau^2$.   The $j$-invariant $j_{\bbar\phi}$ of $\bbar\phi$ is $-1$. By \cite[Proposition~2.11]{MR2366959}, the constant term of $P_{\phi,(t-c)}(x)$ is equal to $- (-1)^{-1} (t-c)=t-c$.  We have $P_{\phi,(t-c)}(x)=x^2-a x + (t-c)$ for a unique $a\in A$.  By \cite[Proposition~2.14(ii)]{MR2366959} (with $n=1$), we have $a\in \FF_q$ and $a=- 1^{-1} j_{\bbar{\phi}}=1$.  Therefore, $P_{\phi,(t-c)}(x)=x^2- x + (t-c)$.

Parts (\ref{L:traces for degree 1 primes new ii}) and (\ref{L:traces for degree 1 primes new iii}) both were found using the algorithm outlined in \cite[\S3.4]{MR2366959} to compute $P_{\phi,\p}$.   
\end{proof}

\section{Determinant of the image of Galois}
\label{S:determinant}

Fix polynomials $a_1$ and $a_2$ in $A$ with $a_2\neq 0$, and let $d$ be the degree of $a_2$. Let $\phi\colon A\to F\{\tau\}$ be the Drinfeld $A$-module of rank $2$ for which $\phi_t=t+a_1 \tau + a_2\tau^2$.   The following result gives an explicit expression for the index of $\det(\rho_\phi(\Gal_F))$ in $\widehat{A}^\times$.  

\begin{thm} \label{T:det image}
  Let $\zeta\in \FF_q^\times$ be the leading coefficient of $(-1)^{d+1} a_2$ in $A=\FF_q[t]$ and let $e$ be the order of $\zeta$ in $\FF_q^\times$.    Then
\[
[\widehat{A}^\times : \det(\rho_\phi(\Gal_F))] = \gcd(d-1,(q-1)/e).
\]
\end{thm}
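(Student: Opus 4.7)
The plan is to apply Gekeler's theorem, which identifies $\det\rho_\phi$ with the Galois representation of a rank-$1$ Drinfeld module and then expresses the image of the latter explicitly in terms of data at $\infty$.

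First I would compute $\det\rho_\phi$ on Frobenius elements. For each nonzero prime $\p$ of $A$ with $\p\nmid a_2$, the Drinfeld module $\phi$ has good reduction at $\p$, and the formula for the constant term of the Frobenius polynomial $P_{\phi,\p}(x)\in A[x]$ given by \cite[Proposition 2.11]{MR2366959} yields
\[
\det\rho_\phi(\Frob_\p) = (-1)^{\deg\p}\, N_{\FF_\p/\FF_q}(a_2\bmod\p)^{-1}\,\pi_\p,
\]
interpreted in $A_\lambda^\times$ for any prime $\lambda\neq\p$, where $\pi_\p$ denotes the monic generator of $\p$.

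Next I would identify this as $\rho_\psi$, where $\psi\colon A\to F\{\tau\}$ is the rank-$1$ Drinfeld module with $\psi_t = t - a_2\tau$. A direct computation via Carlitz twists (using that $\psi$ becomes isomorphic to the Carlitz module over $F(\gamma)$ with $\gamma^{q-1} = -a_2$) shows $\rho_\psi(\Frob_\p) = N_{\FF_\p/\FF_q}(-a_2\bmod\p)^{-1}\pi_\p$, which agrees with the formula above since $N(-a_2)=(-1)^{\deg\p}N(a_2)$. By Chebotarev density, $\det\rho_\phi = \rho_\psi$. I would then invoke Gekeler's formula for the image of $\rho_\psi$ in $\widehat{A}^\times$ in terms of the $\infty$-adic invariants of the top coefficient $-a_2$: its degree $d$ and its leading coefficient $-\ell$. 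Hayes' sign-normalization incorporates the factor $(-1)^{d+1}$ that transforms $-\ell$ into the normalized sign $\zeta = (-1)^{d+1}\ell$; its order $e$ in $\FF_q^\times$ governs the constant-field contribution, and the combined formula collapses to $\gcd(d-1, (q-1)/e)$.

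The main obstacle is the careful accounting in this last step. The "$d-1$" (as opposed to "$d$") reflects that the Carlitz character $\rho_C$ already contributes one unit of $\infty$-adic degree in each Frobenius value $\pi_\p$, so the net degree-at-infinity contribution of the twist $\rho_\psi\cdot\rho_C^{-1}$ is $d-1$ rather than $d$; this is what appears in the first argument of the gcd. The factor $(-1)^{d+1}$ appearing in $\zeta$ comes from converting between the bare leading coefficient of $a_2$ and the Hayes-normalized sign at infinity of the relevant twist. Reconciling these two conventions, and verifying that the resulting obstruction to surjectivity is exactly the intersection of the image of the degree map with the image of the sign character, is the technical heart of the proof.
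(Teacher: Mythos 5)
Your proposal is correct and is essentially the paper's argument: both reduce the statement to the rank-one representation $\rho_\psi$ attached to $\psi_t=t-a_2\tau$ via the identity $\det\rho_\phi=\rho_\psi$, and then apply Gekeler's theorem on twisted Carlitz modules (Theorem~\ref{T:Gekeler}(\ref{T:Gekeler ii}) with $\Delta=-a_2$), whose sign normalization yields exactly $\zeta=(-1)^{d+1}\cdot(\text{leading coefficient of }a_2)$. The only real difference is that the paper gets $\det\rho_\phi=\rho_\psi$ by citing Hamahata's result \cite{MR1223025}, whereas you re-derive it from the constant term of the Frobenius polynomials (Gekeler's formula) together with Chebotarev, which is a valid substitute for that one step.
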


We also give a condition that can be used to show that the image of $\bbar\rho_{\phi,\aA}$ has maximal determinant.

\begin{prop} \label{P:det image}
Let $\aA$ be a nonzero ideal of $A$ and define 
\[
g:=\gcd\big(\{d-1,q-1\} \cup \{ v_\p(a_2) : \p\nmid \aA \text{ nonzero prime ideal of $A$}\}\big).
\]
If $g=1$, then $\det(\bbar\rho_{\phi,\aA}(\Gal_F))=(A/\aA)^\times$.  
\end{prop}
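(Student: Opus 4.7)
The plan is to show surjectivity of $\chi := \det\bbar\rho_{\phi,\aA}\colon \Gal_F \to (A/\aA)^\times$ under the hypothesis $g = 1$. Since $\chi$ factors as the reduction modulo $\aA$ of $\det\rho_\phi\colon \Gal_F \to \widehat A^\times$, Theorem~\ref{T:det image} gives that the index $i := [(A/\aA)^\times : \chi(\Gal_F)]$ divides $\gcd(d-1,(q-1)/e)$, hence divides $\gcd(d-1,q-1)$. I would argue by contradiction: suppose some prime $\ell$ divides $i$ and choose a nontrivial character $\psi\colon (A/\aA)^\times \to \ZZ/\ell\ZZ$ vanishing on $\chi(\Gal_F)$, then derive a contradiction via a case split on whether $\psi|_{\FF_q^\times}$ is trivial.

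Case 1 ($\psi|_{\FF_q^\times} = 1$): I would use Frobenius at primes of good reduction. For each monic prime $\pi \in A$ with $(\pi)\nmid\aA a_2$, Gekeler's formula for the constant term of the Frobenius polynomial (as applied in the proof of Lemma~\ref{L:traces for degree 1 primes new}) gives $\chi(\Frob_{(\pi)}) \equiv \lambda_\pi\cdot \pi \pmod\aA$ for some $\lambda_\pi\in\FF_q^\times$, and hence $\psi(\chi(\Frob_{(\pi)})) = \psi(\pi)$. Triviality of $\psi\circ\chi$ forces $\psi(\pi) = 1$ for all such $\pi$. By the function-field Dirichlet theorem (monic primes equidistribute in every class of $(A/\aA)^\times$), this gives $\psi = 1$, a contradiction.

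Case 2 ($\psi|_{\FF_q^\times}$ nontrivial, so $\ell\mid q-1$): I would use inertia at the bad primes $\p\nmid\aA$ with $\p\mid a_2$. After passing to a tamely ramified base change to reduce to stable rank-$1$ reduction (the potentially good case is handled transparently for characters of order $\ell\mid q-1$), Proposition~\ref{P:main Tate new} applies: part~(\ref{P:main Tate new i}) forces $\bbar\rho_{\phi,\aA}$ on the decomposition group at $\p$ into the upper-triangular subgroup~(\ref{E:Tate Borel}) with top-left entry forced to be $1$ (since $\p\nmid\aA$, so $\aA' = \aA$), and hence $\chi(I_\p)\subseteq\FF_q^\times$. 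Using the Tate uniformization from the proof of Proposition~\ref{P:main Tate new} together with part~(\ref{P:main Tate new ii}), the order of $\chi(I_\p)\subseteq\FF_q^\times$ can be computed in terms of $v_\p(a_2)\bmod q-1$, and the triviality of $\psi\circ\chi|_{I_\p}$ forces $\ell\mid v_\p(a_2)$. Since this holds for every such $\p$, and $\ell\mid\gcd(d-1,q-1)$, we obtain $\ell\mid g = 1$, a contradiction.

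The main technical obstacle is the Case~2 calculation: extracting the precise order of $\chi(I_\p)\subseteq\FF_q^\times$ as a function of $v_\p(a_2)$ modulo $q-1$, rather than the a priori more natural expression involving $v_\p(j_\phi) = (q+1)v_\p(a_1) - v_\p(a_2)$. This likely requires a preliminary $F$-isomorphic replacement of $\phi$ by a twist in which $v_\p(a_1)$ is controlled at each bad $\p\nmid\aA$, or an alternative computation via the rank-$1$ Drinfeld module associated to $\det\phi$, whose leading coefficient differs from $a_2$ by a unit at each bad $\p$.
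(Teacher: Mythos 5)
Your overall architecture (bound the index by $\gcd(d-1,q-1)$ via Theorem~\ref{T:det image}, then kill a hypothetical character $\psi$ of prime order $\ell$ by a case split on $\psi|_{\FF_q^\times}$) is coherent, and Case 1 would work: the constant term of $P_{\phi,\p}$ is an $\FF_q^\times$-multiple of a generator of $\p$, and Dirichlet's theorem for $\FF_q[t]$ then forces $\psi=1$. But Case 2, which you yourself flag as the ``main technical obstacle,'' is a genuine gap, and it is the heart of the proposition. What you need is the precise local statement that for $\p\nmid\aA$ the group $\det\bbar\rho_{\phi,\aA}(I_\p)$ is cyclic of order $(q-1)/\gcd(q-1,v_\p(a_2))$, so that triviality of $\psi$ on it forces $\ell\mid v_\p(a_2)$. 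Proposition~\ref{P:main Tate new} cannot deliver this: part~(\ref{P:main Tate new ii}) lower-bounds the size of the \emph{whole} inertia image of the rank-$2$ module in terms of $v_\p(j_\phi)=(q+1)v_\p(a_1)-v_\p(a_2)$, and that bound can be met entirely by unipotent (determinant-one) elements coming from the lattice in the Tate uniformization, so it says nothing about the order of the determinant on inertia; part~(\ref{P:main Tate new i}) only places $\det\bbar\rho_{\phi,\aA}(I_\p)$ inside $\FF_q^\times$. Moreover the proposition only applies after a tame base change $K/F_\p$ achieving stable rank-$1$ reduction, and the index of $I_K$ in $I_{F_\p}$ can share factors with $q-1$ --- exactly the factors you cannot afford to lose when $\ell\mid q-1$; finally, at primes $\p\mid a_2$ with $v_\p(j_\phi)\geq 0$ the reduction is potentially good, the proposition does not apply at all, and your assertion that this case is ``handled transparently'' is not justified.

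The repair you gesture at in your last sentence --- compute with the rank-$1$ module attached to $\det\phi$ --- is in fact the paper's entire proof, and it makes the rest of your apparatus unnecessary. By the identity~(\ref{E:Weil pairing consequence}) one has $\det\bbar\rho_{\phi,\aA}=\bbar\rho_{\psi,\aA}$, where now $\psi$ denotes the rank-$1$ Drinfeld module with $\psi_t=t-a_2\tau$, and Gekeler's Theorem~\ref{T:Gekeler}(\ref{T:Gekeler i}) (not just the adelic statement you quoted) gives the exact index: $[(A/\aA)^\times:\bbar\rho_{\psi,\aA}(\Gal_F)]$ is the gcd of $d-1$, $(q-1)/e$, and the integers $v_\p(a_2)$ for $\p\nmid\aA$, which divides $g$; so $g=1$ yields surjectivity immediately. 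In other words, the missing Case 2 computation is essentially the content of Gekeler's theorem on twisted Carlitz modules, and once you invoke it there is no need for the index bound of Theorem~\ref{T:det image}, the Frobenius/Dirichlet argument, or the case split.
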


We will prove Theorem~\ref{T:det image} and Proposition~\ref{P:det image} in \S\ref{SS:det proofs}.

\subsection{Galois image for rank $1$ Drinfeld modules}
Fix a nonzero $\Delta\in A$ and let $\psi\colon A\to F\{\tau\}$ be the Drinfeld $A$-module of rank $1$ for which $\psi_t=t+\Delta\tau$.   For each nonzero ideal $\aA$ of $A$, $\psi[\aA]$ is a free $A/\aA$-module of rank $1$ with a Galois action that is described by a Galois representation
\[
\bbar\rho_{\psi,\aA}\colon \Gal_F \to \Aut_{A}(\psi[\aA])=(A/\aA)^\times.
\]
Taking the inverse limit, we obtain an isomorphism $\rho_{\psi}\colon \Gal_F \to \widehat{A}^\times$.  We now state a theorem of Gekeler that describes the index of the images of $\bbar\rho_{\psi,\aA}$ and $\rho_{\psi}$.

\begin{thm} \label{T:Gekeler}
Let $d$ be the degree of $\Delta$.  Let $\zeta \in \FF_q^\times$ be the leading coefficient of $(-1)^d\Delta$ and let $e$ be its order in $\FF_q^\times$. 
\begin{romanenum}
\item \label{T:Gekeler i}
Take any nonzero ideal $\aA$ of $A$.   The integer $[(A/\aA)^\times: \bbar\rho_{\psi,\aA}(\Gal_F)]$ is the greatest common divisor of $d-1$, $(q-1)/e$, and the integers $v_\p(\Delta)$ for nonzero prime ideals $\p\nmid \aA$ of $A$.
\item \label{T:Gekeler ii}
The group $\rho_{\psi}(\Gal_F)$ has finite index in $\widehat{A}^\times$ and 
\[
[\widehat{A}^\times: \rho_{\psi}(\Gal_F)] = \gcd(d-1,(q-1)/e).
\]
\end{romanenum}
\end{thm}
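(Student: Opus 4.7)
The plan is to reduce to the Carlitz module $\psi^C\colon A \to F\{\tau\}$ defined by $\psi^C_t = t + \tau$, for which the surjectivity $\rho_{\psi^C}(\Gal_F) = \widehat{A}^\times$ is a classical result of Carlitz--Hayes, and then analyze the twist carrying $\psi^C$ to $\psi$. Choose $u \in F^\sep$ with $u^{q-1} = \Delta^{-1}$; the skew commutation $u\tau u^{-1} = u^{1-q}\tau$ in $F^\sep\{\tau\}$ gives $u\psi^C_t u^{-1} = t + u^{1-q}\tau = t + \Delta\tau = \psi_t$, so scalar multiplication by $u$ is an isomorphism $\psi^C \xrightarrow{\sim} \psi$ over $F^\sep$. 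This identifies $\psi[\aA]$ with $u\cdot \psi^C[\aA]$, and a short Galois-equivariance check, using that $\sigma(u)/u \in \FF_q^\times$ acts on $\psi[\aA]$ via the inclusion $\FF_q^\times \hookrightarrow A^\times \subseteq (A/\aA)^\times$, produces the central factorization
\[
\bbar\rho_{\psi,\aA}(\sigma) \;=\; \chi(\sigma)\cdot \bbar\rho_{\psi^C,\aA}(\sigma) \qquad (\sigma\in\Gal_F),
\]
where $\chi\colon \Gal_F \to \FF_q^\times$ is the Kummer character $\sigma \mapsto \sigma(u)/u$ attached to the extension $L:=F(u)$.

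Granted the Carlitz surjectivity $\bbar\rho_{\psi^C,\aA}(\Gal_F)=(A/\aA)^\times$, the problem becomes that of computing the image of the product character, equivalently the joint image of $(\chi,\bbar\rho_{\psi^C,\aA})$ in $\FF_q^\times\times(A/\aA)^\times$. By Goursat's lemma this joint image is determined by the intersection $L\cap K$, where $K:=F(\psi^C[\aA])$, together with the way the shared quotient $\Gal(L\cap K/F)$ embeds into $\FF_q^\times$ on the $L$-side and into $(A/\aA)^\times$ on the $K$-side. To pin down these data I would describe both $L/F$ and $K/F$ through their ramification. Kummer theory, applied to $\Delta^{-1}\in F^\times$ and the decomposition $F^\times/(F^\times)^{q-1}\cong \FF_q^\times \oplus \bigoplus_\pi \ZZ/(q-1)\ZZ$ (over monic irreducibles $\pi$), records $L/F$ by its degree and by the local data $\gcd(q-1,v_\pi(\Delta))$ at each finite $\pi$, together with a corresponding pair $(d,c)$ at $\infty$, where $c=(-1)^d\zeta$ is the leading coefficient of $\Delta$. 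On the Carlitz side, the cyclotomic theory shows that $K/F$ is ramified only at primes dividing $\aA$, and tamely at $\infty$ with inertia precisely $\FF_q^\times\subseteq(A/\aA)^\times$.

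The main technical step, and the one I expect to be the hardest, is to assemble these local pictures into the exact index formula. At finite primes $\p\nmid\aA$ the extension $L$ is ramified whenever $(q-1)\nmid v_\p(\Delta)$, and $L\cap K$ must be unramified at every such $\p$; tracking which subfield of $L$ remains after killing all such ramification produces the factor $\gcd\{v_\p(\Delta):\p\nmid\aA\}$ of the index. At $\infty$ the Kummer ramification index is $(q-1)/\gcd(q-1,d)$, the Carlitz inertia is the full $\FF_q^\times$, and one has to track not only the ramification indices but also how the two inertias embed compatibly into $\FF_q^\times\subseteq(A/\aA)^\times$. A careful place-by-place analysis at $\infty$, making essential use of the sign normalization $\zeta=(-1)^d c$ built into the statement, should account for both the factor $(q-1)/e$ (coming from the residue-field part, i.e.\ the order of $\zeta$) and the shift from $d$ to $d-1$ (coming from the interaction between the Kummer ramification and the Carlitz inertia at $\infty$). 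Combining these local contributions yields the gcd formula in~(i).

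Part~(ii) then follows at once by taking the inverse limit over $\aA$: as $\aA$ is taken divisible by every prime $\p$ with $v_\p(\Delta)>0$, the terms $v_\p(\Delta)$ for $\p\nmid\aA$ all disappear and the gcd collapses to $\gcd(d-1,(q-1)/e)$.
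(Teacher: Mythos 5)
Your opening reduction is sound: twisting by $u$ with $u^{q-1}=\Delta^{-1}$ does give $\bbar\rho_{\psi,\aA}=\chi\cdot\bbar\rho_{\psi^C,\aA}$ with $\chi(\sigma)=\sigma(u)/u\in\FF_q^\times$, and this is in effect the same normalization the paper performs (it observes, citing Gekeler, that $\bbar\rho_{\psi,\aA}$ depends only on $\aA$ and the class of $\Delta$ in $F^\times/(F^\times)^{q-1}$). But from that point on the paper and you diverge: the paper does not reprove the index formula at all — it quotes Gekeler's Theorem~3.13 and merely translates his exponents $k_0^*,k_i$ into the quantities $d-1$, $(q-1)/e$ and $v_\p(\Delta)$ — whereas you propose to rederive Gekeler's formula from Kummer theory and Goursat's lemma.

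That is where the genuine gap lies: your write-up stops exactly where the real content of the theorem begins. Three steps are asserted rather than carried out. First, you never actually determine $L\cap K$ with $L=F(u)$ and $K=F(\psi^C[\aA])$; ramification bookkeeping at the finite primes is necessary but not sufficient, since a subextension of $L$ can be a constant field extension, hence unramified at every place including $\infty$, and yet still not lie in $K$ (the constant field of the Carlitz cyclotomic field is $\FF_q$), so one must pin down the decomposition of $\infty$ in $K$ (ramification index $q-1$, residue degree $1$) and not just ramification indices. Second, the whole source of the terms $d-1$ and $(q-1)/e$ is the analysis at $\infty$ — how the Kummer ramification of $L$ at $\infty$, governed by $v_\infty(\Delta)=-d$ and the leading coefficient $(-1)^d\zeta$, meets the inertia $\FF_q^\times\subseteq(A/\aA)^\times$ of $K$ at $\infty$ — and your proposal only says a careful analysis ``should account for'' these factors; that computation is precisely Gekeler's theorem and is not done here. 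Third, even granted $L\cap K$ and the two embeddings of the common quotient, passing from the Goursat fiber product inside $\FF_q^\times\times(A/\aA)^\times$ to the index in $(A/\aA)^\times$ of the image of the product character still requires an explicit (if elementary) argument, which is also missing. As it stands the proposal is a plausible program for reproving the cited result, not a proof of the statement.
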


\begin{proof}
Before citing Gekeler's result (Theorem~3.13 of \cite{MR3459573}), we need to match his assumptions and notation.  The representation $\bbar\rho_{\psi,\aA}$ depends only on $\aA$ and the class of $\Delta$ in $F^\times/(F^\times)^{q-1}$; this can be deduced for example from \cite[Lemma 1.8]{MR3459573}.   So after dividing $\Delta$ by a suitable $(q-1)$-th power of a monic polynomial, we may assume that 
 \[
 \Delta=(-1)^{d} \zeta P_1^{k_1}\cdots P_s^{k_s}
 \]
 where the $P_i \in A$ are distinct monic irreducible polynomials with $1\leq k_i <q-1$.   Note that this does not change the gcds in (\ref{T:Gekeler i}) and (\ref{T:Gekeler ii}).
 
 We may order the irreducible polynomials so that $P_1,\ldots, P_r$ divide $\aA$ and $P_{r+1},\ldots, P_s$ are relatively prime to $\aA$.   Choose a generator $c$ of the cyclic group $\FF_q^\times$.  We have $(-1)^d \zeta = c^{k_0}$ and $\zeta=c^{k_0^*}$ for unique $0\leq k_0, k_0^* <q-1$.  We have $k_0^*=k_0$ when $(-1)^d=1$, i.e., when $q$ is even or $d$ is even.  When $q$ is odd and $d$ is odd, we have $k_0^*\equiv (q-1)/2+k_0 \pmod{q-1}$.   Theorem 3.13 of \cite{MR3459573} now says that 
\[
[(A/\aA)^\times: \bbar\rho_{\psi,\aA}(\Gal_F)] = \gcd(d-1,q-1,k_0^*,k_{r+1},\ldots, k_s).
\]
Since $\zeta=c^{k_0^*}$ and $c$ has order $q-1$ in $\FF_q^\times$, we have $e=(q-1)/\gcd(q-1,k_0^*)$.  So $\gcd(q-1,k_0^*)=(q-1)/e$ and hence 
\[
[(A/\aA)^\times: \bbar\rho_{\psi,\aA}(\Gal_F)] = \gcd(d-1,(q-1)/e,k_{r+1},\ldots, k_s).
\]
Part (\ref{T:Gekeler i}) now follows since the set $\{k_{r+1},\ldots, k_s\}$ consists of those nonzero integers of the form $v_\p(\Delta)$ for some nonzero prime ideal $\p\nmid \aA$ of $A$.

When $\aA$ is divisible by all the irreducible polynomials $P_1,\ldots, P_s$, we have simply $[(A/\aA)^\times: \bbar\rho_{\psi,\aA}(\Gal_F)] = \gcd(d-1,(q-1)/e)$.  Part (\ref{T:Gekeler ii}) is now immediate since $\rho_\psi(\Gal_F)$ is a closed subgroup of $\widehat{A}^\times$.
\end{proof}

\subsection{Proofs of Theorem~\ref{T:det image} and Proposition~\ref{P:det image}} \label{SS:det proofs}

Let $\psi\colon A\to F\{\tau\}$ be the rank $1$ Drinfeld module for which $\psi_t=t-a_2\tau$.  By Corollary 4.6 in \cite{MR1223025}, we have
\begin{align} \label{E:Weil pairing consequence}
\det \rho_{\phi} = \rho_{\psi}.
\end{align}
Therefore, $[\widehat{A}^\times: \det(\rho_\phi(\Gal_F))]=[\widehat{A}^\times: \rho_{\psi}(\Gal_F)]$ and hence Theorem~\ref{T:det image} follows from Theorem~\ref{T:Gekeler}(\ref{T:Gekeler ii}) with $\Delta:=-a_2$.

We now prove Proposition~\ref{P:det image}.  We have $\det \bbar\rho_{\phi,\aA} = \bbar\rho_{\psi,\aA}$ by (\ref{E:Weil pairing consequence}).   With $\Delta:=-a_2$, Theorem~\ref{T:Gekeler}(\ref{T:Gekeler i}) implies that $[(A/\aA)^\times: \det( \bbar\rho_{\phi,\aA}(\Gal_F))]$ divides $g$.  In particular, $\det(\bbar\rho_{\phi,\aA}(\Gal_F))=(A/\aA)^\times$ if $g=1$.

\section{Irreducibility}
\label{S:irreducibility}
Let $\phi\colon A\to F\{\tau\}$ be a Drinfeld $A$-module of rank $2$.    Suppose that $\lambda$ is a nonzero prime ideal of $A$ for which 
\begin{itemize}
\item
$\bbar\rho_{\phi,\lambda}\colon \Gal_F \to \GL_2(\FF_\lambda)$ is reducible,
\item
$\phi$ has stable reduction at $\lambda$.
\end{itemize}
After conjugating $\bbar\rho_{\phi,\lambda}$, we may assume that
\begin{align} \label{E:upper triangular chis}
\bbar\rho_{\phi,\lambda}(\sigma) = \left(\begin{smallmatrix} \chi_1(\sigma) & * \\ 0 & \chi_2(\sigma) \end{smallmatrix}\right)
\end{align}
for all $\sigma\in \Gal_F$, where $\chi_1,\chi_2\colon \Gal_F \to \FF_\lambda^\times$ are characters.   In this section, we will give a bound on the norm of $\lambda$. 

\begin{lemma} \label{L:basic unramified chi new}
Set $n:=(q-1)^2(q+1)$.
\begin{romanenum}
\item \label{L:basic unramified chi new i}
The characters $\chi_1^n$ and $\chi_2^n$ are both unramified at any nonzero prime ideal $\p\neq \lambda$ of $A$.
\item \label{L:basic unramified chi new ii}
One of the characters $\chi_1^n$ or $\chi_2^n$ is unramified at $\lambda$.
\end{romanenum}
\end{lemma}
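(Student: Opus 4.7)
My plan is to bound the orders of $\chi_1|_{I_\p}$ and $\chi_2|_{I_\p}$ prime-by-prime, using the sign of $v_\p(j_\phi)$ to separate potential good from potential bad reduction, and then invoking Propositions~\ref{P:good inertia} and~\ref{P:main Tate new} on a suitable tame extension of $F_\p$ over which $\phi$ becomes stable. Note that at every $\p \neq \lambda$ the group $\FF_\lambda^\times$ has order prime to the residue characteristic of $F_\p$, so each $\chi_i|_{I_\p}$ automatically factors through the tame quotient.

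For part (\ref{L:basic unramified chi new i}), fix $\p \neq \lambda$. If $v_\p(j_\phi) \geq 0$, the standard twisting formula $u\phi_t u^{-1} = t + a_1 u^{1-q}\tau + a_2 u^{1-q^2}\tau^2$ shows that a choice of $u$ with $(q^2-1) v(u) = v_\p(a_2)$ in $F_\p^\sep$ produces a Drinfeld module isomorphic to $\phi$ with good reduction over a tame extension $F'_\p/F_\p$ of degree dividing $q^2-1$. Over $F'_\p$, $\bbar\rho_{\phi,\lambda}$ is unramified at $\p$, so each $\chi_i|_{I_\p}$ factors through the cyclic group $I_\p/I'_\p$ of order dividing $q^2-1$; since $n = (q-1)(q^2-1)$, both $\chi_i^n|_{I_\p}$ are trivial. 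If instead $v_\p(j_\phi) < 0$, the analogous twist that normalizes $a_1$ gives a tame extension $F'_\p/F_\p$ of degree dividing $q-1$ over which $\phi$ has stable reduction of rank~$1$. Proposition~\ref{P:main Tate new}(\ref{P:main Tate new i}) applied with $\aA = \lambda$ (so $e=0$ and $\aA' = \lambda$) places $\bbar\rho_{\phi,\lambda}(\Gal_{F'_\p})$ inside a Borel whose diagonal characters form the multi-set $\{1,\ \text{a character valued in } \FF_q^\times\}$; thus each $\chi_i|_{I'_\p}$ has order dividing $q-1$, and combined with $[I_\p:I'_\p] \mid q-1$ this forces $\chi_i|_{I_\p}$ to have order dividing $(q-1)^2 \mid n$.

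For part (\ref{L:basic unramified chi new ii}), the standing stable-reduction hypothesis at $\lambda$ gives two sub-cases. If $\phi$ has good reduction at $\lambda$, Proposition~\ref{P:good inertia} applies to $K = F_\lambda$ (trivially unramified over itself). Of its two listed possibilities for $\bbar\rho_{\phi,\lambda}(I_\lambda)$, the cyclic case of order $N(\lambda)^2-1$ is ruled out by the global reducibility hypothesis, since its nontrivial elements have eigenvalues in $\FF_{\lambda^2}\smallsetminus \FF_\lambda$ and cannot appear in the image of an upper-triangular representation over $\FF_\lambda$. So we land in the Borel case of Proposition~\ref{P:good inertia}, in which one of the diagonal characters of $\bbar\rho_{\phi,\lambda}|_{I_\lambda}$ is trivial, and hence one of $\chi_1|_{I_\lambda}, \chi_2|_{I_\lambda}$ is trivial. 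If instead $\phi$ has stable bad reduction at $\lambda$, Proposition~\ref{P:main Tate new}(\ref{P:main Tate new i}) with $\p = \aA = \lambda$ (so $e=1$ and $\aA' = A$) produces a diagonal character of $\bbar\rho_{\phi,\lambda}|_{\Gal_{F_\lambda}}$ valued in $\FF_q^\times$, of order dividing $q-1 \mid n$; this character is killed on $I_\lambda$ by raising to the $n$th power.

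The one technical input not immediate from the cited propositions is the tame-extension-degree bound in part (\ref{L:basic unramified chi new i}): that potential good reduction of $\phi$ is attained over a tame extension of degree dividing $q^2-1$ and potential stable bad reduction over one of degree dividing $q-1$. Both follow from elementary divisibility analyses of $v_\p(a_1)$ and $v_\p(a_2)$ in the twist formula above, realized concretely by adjoining the appropriate fractional power of a uniformizer of $F_\p$.
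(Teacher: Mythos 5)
Your argument is correct and follows essentially the same route as the paper: at $\p\neq\lambda$ you twist over a tame extension obtained by extracting a $(q^2-1)$-th root of $a_2$ (potential good reduction) or a $(q-1)$-th root of $a_1$ (stable rank-one reduction) and then invoke good reduction, respectively Proposition~\ref{P:main Tate new}(\ref{P:main Tate new i}), exactly as the paper does (its case split by the index $j$ minimizing $v_\p(a_j)/(q^j-1)$ is equivalent to your split by the sign of $v_\p(j_\phi)$), and at $\lambda$ you use the standing stable-reduction hypothesis together with Propositions~\ref{P:good inertia} and~\ref{P:main Tate new} just as in the paper, with the same bookkeeping showing that $n=(q-1)^2(q+1)$ absorbs both the auxiliary tame ramification and the $\FF_q^\times$-valued character. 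One small inaccuracy in part (\ref{L:basic unramified chi new ii}): not every nontrivial element of the cyclic group of order $N(\lambda)^2-1$ has eigenvalues outside $\FF_\lambda$ (the nontrivial scalars do not), but a generator does, so your exclusion of case (\ref{P:good inertia b}) — which the paper instead achieves by a cardinality count against the Borel — still goes through.
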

\begin{proof}
Take any nonzero prime ideal $\p$ of $A$.   We shall view $\phi$ as being defined over $F_\p$ and hence $\bbar\rho_{\phi,\lambda}$, $\chi_1$ and $\chi_2$ are representations of $\Gal_{F_\p}$.  Let $I_\p$ be an inertia subgroup of $\Gal_{F_\p}$.

Suppose that $\p=\lambda$ and $\phi$ has good reduction.  
The cardinality of the group $\bbar\rho_{\phi,\lambda}(\Gal_F)$ divides $q^{\deg \lambda} (q^{\deg \lambda}-1)^2$ by (\ref{E:upper triangular chis}) and hence $\bbar\rho_{\phi,\lambda}(\Gal_F)$ cannot have a subgroup of cardinality $q^{2\deg \lambda }-1$.   Therefore, property (\ref{P:good inertia a}) of Proposition~\ref{P:good inertia} holds and this implies that $\chi_1$ or $\chi_2$ is unramified at $\lambda$.   Now suppose that $\p=\lambda$ and $\phi$ does not have good reduction.  By assumption on $\lambda$, the Drinfeld module $\phi$ has stable reduction of rank $1$.  Proposition~\ref{P:main Tate new}(\ref{P:main Tate new i}) implies that one of $\chi_1^{q-1}$ or $\chi_2^{q-1}$ is unramified at $\lambda$.  We have thus proved (\ref{L:basic unramified chi new ii}) since $q-1$ divides $n$.

We may now assume that $\p\neq \lambda$.    We have $\phi_t=t+a_1\tau+a_2\tau^2$ with $a_1\in F$ and $a_2\in F^\times$.  Define $m:=\min\{ v_\p(a_i)/(q^i-1): 1 \leq i \leq 2\}$ and take $1\leq j \leq 2$ maximal such that $v_\p(a_j)/(q^j-1)=m$.   

Let $K$ be the splitting field of $x^{q^{j}-1}=a_j$ over $F_\p$ and fix a root $b\in K$.  The extension $K/F_\p$ is finite and Galois.  The ramification index $e:=e(K/F_\p)$ of the extension $K/F_\p$ divides $q^j-1$.   Let $I_K$ be the inertia subgroup of $\Gal_K \subseteq \Gal_{F_\p}$.   For any $\sigma\in I_\p$, we have $\sigma^e \in I_K$.   It thus suffices to prove that $\chi_1^{n/e}(I_K)=1$ and $\chi_2^{n/e}(I_K)=1$.  

Let $\OO$ be the integral closure of $A_\p$ in $K$.  Let $\phi'\colon A\to K\{\tau\}$ be the Drinfeld module for which $\phi'_t=b \phi_t b^{-1}$.  The Drinfeld module $\phi'$ is isomorphic to $\phi$ over $K$.

Suppose $j=2$. Then $\phi'$ is defined over $\OO$ and has good reduction.  Since $\phi'$ has good reduction and $\p\neq \lambda$, we have $\bbar\rho_{\phi',\lambda}(I_K)=1$.   Therefore, $\chi_1(I_K)=1$ and $\chi_2(I_K)=1$.  Since $e$ divides $q^2-1$ and hence $n$, we have $\chi_1^{n/e}(I_K)=1$ and $\chi_2^{n/e}(I_K)=1$.

Suppose that $j=1$.  Then $\phi'$ is defined over $\OO$ and has stable reduction of rank $1$.    By Proposition~\ref{P:main Tate new}(\ref{P:main Tate new i}) and $\lambda\neq \p$, we have $\chi_1^{q-1}(I_K)=1$ and $\chi_2^{q-1}(I_K)=1$.  Since $n/e$ is divisible by $n/(q-1)=(q-1)(q+1)$, we have $\chi_1^{n/e}(I_K)=1$ and $\chi_2^{n/e}(I_K)=1$.
\end{proof}

For a monic polynomial $P(x)\in A[x]$ and a positive integer $n\geq 1$, we let $P^{(n)}(x)\in A[x]$ be the monic polynomial whose roots (with multiplicity), in some algebraic closure, are precisely the roots of $P(x)$ raised to the $n$-th power.

\begin{lemma} \label{L:riemann-hurwitz applied}
Let $n$ be a positive integer for which parts (\ref{L:basic unramified chi new i}) and (\ref{L:basic unramified chi new ii}) of Lemma~\ref{L:basic unramified chi new} hold.  Then there is a $\zeta\in \FF_\lambda^\times$ such that $P_{\phi,\p}^{(n)}(\zeta^{\deg \p})= 0$ in $\FF_\lambda$ for all nonzero prime ideals $\p\neq\lambda$ of $A$ for which $\phi$ has good reduction.
\end{lemma}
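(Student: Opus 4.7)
The plan is to combine the upper-triangular form (\ref{E:upper triangular chis}) of $\bbar\rho_{\phi,\lambda}$ with the unramifiedness of $\chi_i^n$ provided by Lemma~\ref{L:basic unramified chi new}. I would first pick an index $i\in\{1,2\}$ so that $\chi:=\chi_i^n$ is unramified at every nonzero prime of $A$; such an $i$ exists because, by the hypothesis on $n$, both $\chi_1^n$ and $\chi_2^n$ are unramified at every finite prime $\p\neq\lambda$, and at least one of them is additionally unramified at $\lambda$. The proof will then split naturally into two steps: interpreting $\chi(\Frob_\p)$ as a root of $P_{\phi,\p}^{(n)}\bmod\lambda$, and expressing it as $\zeta^{\deg\p}$ for a fixed $\zeta\in\FF_\lambda^\times$.

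For the first step, fix a nonzero prime $\p\neq\lambda$ of good reduction. The proposition of Section~\ref{S:Frobenius polynomials} yields that $\bbar\rho_{\phi,\lambda}$ is unramified at $\p$ and that $\det(xI-\bbar\rho_{\phi,\lambda}(\Frob_\p))\equiv P_{\phi,\p}(x)\pmod\lambda$. Since $\bbar\rho_{\phi,\lambda}(\Frob_\p)$ is upper triangular with diagonal entries $\chi_1(\Frob_\p),\chi_2(\Frob_\p)$, these two values are precisely the roots of $P_{\phi,\p}(x)\bmod\lambda$ in $\FF_\lambda$. By the definition of $P^{(n)}$, it follows that $\chi(\Frob_\p)=\chi_i(\Frob_\p)^n$ is a root of $P_{\phi,\p}^{(n)}(x)\bmod\lambda$.

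For the second step, I would invoke global class field theory for $F=\FF_q(t)$. The character $\chi$ has image in $\FF_\lambda^\times$, whose order $N(\lambda)-1$ is prime to the characteristic of $F$, so $\chi$ is tame at every place. Applying the product formula $\prod_v\chi_v(\pi)=1$ to the monic generator $\pi\in A$ of $\p$, and using that $\pi$ is a unit at every finite prime $\p'\neq\p$ where $\chi_{\p'}$ is unramified, collapses all the finite-place contributions and leaves $\chi(\Frob_\p)=\chi_\p(\pi)=\chi_\infty(\pi)^{-1}$. Writing $\pi=t^{\deg\p}u$ at $\infty$ with $u\in 1+\m_\infty$, and using that the tame character $\chi_\infty$ is trivial on $1+\m_\infty$, gives $\chi_\infty(\pi)=\chi_\infty(t)^{\deg\p}$. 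Setting $\zeta:=\chi_\infty(t)^{-1}\in\FF_\lambda^\times$ then provides the required identity, and combining with the first step finishes the proof.

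The main obstacle is the second step. It amounts to the structural fact that an everywhere-finitely-unramified tame character of $\Gal_F$ is determined on Frobenius classes by the degree of the prime, reflecting the triviality of $\Pic(\Spec A)$ together with $A^\times=\FF_q^\times$. The key auxiliary point is the tameness of $\chi$, which forces $\chi_\infty$ to kill $1+\m_\infty$ and thereby reduces the computation at $\infty$ to the $\deg\p$-th power of the single scalar $\chi_\infty(t)^{-1}$; getting the normalizations (uniformizer $1/t$ at $\infty$, monic $\pi$, direction of the product formula) right is where care is needed.
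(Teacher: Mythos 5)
Your proposal is correct, but your key step is carried out by a genuinely different method than the paper's. Step 1 (that $\chi_i(\Frob_\p)$ is a root of $P_{\phi,\p}\bmod\lambda$, hence $\chi_i^n(\Frob_\p)$ is a root of $P^{(n)}_{\phi,\p}\bmod\lambda$) matches the paper; note that you are implicitly using that the operation $P\mapsto P^{(n)}$ commutes with reduction modulo $\lambda$ (the coefficients of $P^{(n)}$ are universal polynomials in those of $P$), which is the same point the paper absorbs into the congruence $\det(xI-\bbar\rho_{\phi,\lambda}(\Frob_\p)^n)\equiv P^{(n)}_{\phi,\p}(x)\pmod{\lambda}$. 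For Step 2, the paper argues geometrically: it lets $L$ be the fixed field of $\ker(\chi_i^n)$ over $\FFbar_q(t)$, observes the corresponding cover $C\to\PP^1_{\FFbar_q}$ is tame and unramified outside $\infty$, and uses Riemann--Hurwitz to force the cover to be trivial, so that $\chi_i^n$ factors through a constant-field extension $\Gal(\FF_{q^d}F/F)$ and $\chi_i^n(\Frob_\p)=\zeta^{\deg\p}$. You instead use global class field theory for $\FF_q(t)$: unramifiedness at all finite places plus tameness at $\infty$ (automatic since the image lies in $\FF_\lambda^\times$, of order prime to $p$, while $1+\m_\infty$ is pro-$p$) and the product formula applied to the monic generator $\pi$ of $\p$ give $\chi(\Frob_\p)=\chi_\infty(t)^{\mp\deg\p}$, so $\zeta=\chi_\infty(t)^{\mp1}$ works. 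Both routes are sound and use the same essential inputs (everywhere-unramifiedness at finite primes and prime-to-$p$ image); the paper's argument is self-contained given Riemann--Hurwitz and directly exhibits the character as geometrically trivial, while yours makes explicit the reliance on the arithmetic of $A=\FF_q[t]$ (trivial class group, units $\FF_q^\times$) and even identifies $\zeta$ concretely in terms of the local component at $\infty$; the normalization ambiguity (arithmetic versus geometric Frobenius) only replaces $\zeta$ by $\zeta^{-1}$ and is harmless for the statement.
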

\begin{proof}
By assumption, there is an $i\in \{1,2\}$ such that $\chi_i^n$ is unramified at all nonzero prime ideals of $A$.

We claim that $\chi_i^n(\Gal(F^\sep/\FFbar_q(t))=1$.   Let $L$ be the minimal extension of $\FFbar_q(t)$ in $F^\sep$ for which $\chi_i^n(\Gal(F^\sep/L))=1$.   The extension $L/\FFbar_q(t)$ corresponds to a morphism $\pi\colon C\to \PP^1_{\FFbar_q}$ of smooth projective and irreducible curves over $\FFbar_q$.  From our assumptions on $\chi_i^n$, $\pi$ is unramified away from all points of $\PP^1_{\FFbar_q}$ except perhaps $\infty$.    The morphism $\pi$ has degree $N:=[L:\FFbar_q(t)]$ which is relatively prime to $q$.   Since $L/\FFbar_q(t)$ is tamely ramified, the Riemann--Hurwitz theorem implies that $2g-2=N(2\cdot 0-2)+ \sum_{i=1}^s (e_i-1)$, where $g$ is the genus of $C$ and the $e_i\geq 1$ are the ramification indices at the $s$ points of $C$ lying over $\infty$.  Since $\sum_{i=1}^s e_i=N$, we have $2g=2-N-s$.  Since $N$ and $s$ are positive integers and $g\geq 0$, we must have $N=1$.  This proves the claim.

From the claim $\chi_i^n$ factors through a cyclic Galois group $\Gal(\FF_{q^d} F/F)$ for some $d\geq 1$. So there is a $\zeta\in \FF_\lambda^\times$ for which $\chi_i^n(\Frob_\p)=\zeta^{\deg \p}$ for all nonzero prime ideals $\p$ of $A$.   If $\p\neq \lambda$ and $\phi$ has good reduction at $\p$, then $\chi_i^n(\Frob_\p)=\zeta^{\deg \p}$ is a root of $\det(xI- \bbar\rho_{\phi,\lambda}(\Frob_\p)^n)\equiv P^{(n)}_{\phi,\p}(x) \pmod{\lambda}$.
\end{proof}

\begin{prop} \label{L:irreducibility bound of deg lambda}
Let $n$ be a positive integer for which parts (\ref{L:basic unramified chi new i}) and (\ref{L:basic unramified chi new ii}) of Lemma~\ref{L:basic unramified chi new} hold.  Let $d\geq 1$ be an integer for which $\phi$ has good reduction at multiple nonzero prime ideals of $A$ with the same degree $d$.   Then $\deg \lambda\leq 2nd$.
\end{prop}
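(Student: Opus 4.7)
The plan is to apply Lemma~\ref{L:riemann-hurwitz applied} to two degree-$d$ primes of good reduction and then bound $\deg\lambda$ by a resultant. Lemma~\ref{L:riemann-hurwitz applied} produces a fixed $\zeta\in \FF_\lambda^\times$ with $\zeta^{\deg\p}$ a root of $P_{\phi,\p}^{(n)}$ modulo $\lambda$ for every good-reduction prime $\p\ne\lambda$. Since $d\le 2nd$, I may assume $\deg\lambda>d$, so that $\lambda$ is distinct from every degree-$d$ prime of $A$. By hypothesis I then choose two distinct degree-$d$ primes $\p_1,\p_2\ne \lambda$ of good reduction and set $f_i:=P_{\phi,\p_i}^{(n)}\in A[x]$. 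The element $\zeta^d$ is a common root modulo $\lambda$ of the monic quadratics $f_1,f_2$, so $\Res(f_1,f_2)\equiv 0\pmod{\lambda}$ in $A$, and the bound $\deg\lambda\le 2nd$ will follow once I verify $\deg_t\Res(f_1,f_2)\le 2nd$ and $\Res(f_1,f_2)\ne 0$.

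Writing $P_{\phi,\p_i}(x)=x^2-a_ix+b_i$ and setting $s_i:=\pi_{i,1}^n+\pi_{i,2}^n$ for the roots $\pi_{i,1},\pi_{i,2}$ of $P_{\phi,\p_i}$, one has $f_i(x)=x^2-s_ix+b_i^n$. The Weil bound for rank-$2$ Drinfeld modules gives $|\pi_{i,j}|_\infty=q^{d/2}$, whence $\deg_t b_i^n=nd$ and $\deg_t s_i\le nd/2$. A short computation via $\Res(f_1,f_2)=f_1(\beta_1)f_1(\beta_2)$ with $\beta_1,\beta_2$ the roots of $f_2$ yields
\[
\Res(f_1,f_2)=(b_1^n-b_2^n)^2+s_1^2b_2^n+s_2^2b_1^n-s_1s_2(b_1^n+b_2^n),
\]
and each summand has $t$-degree at most $2nd$, proving the degree bound.

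The main obstacle is showing this resultant is nonzero. Suppose instead that $f_1$ and $f_2$ share a common root $\alpha\in \bbar{F}$. The constant term $b_i$ differs from a monic generator of $\p_i$ by a unit in $\FF_q^\times$ (a fact underlying Lemma~\ref{L:traces for degree 1 primes new}), so $b_1^n$ and $b_2^n$ generate the distinct ideals $\p_1^n$ and $\p_2^n$; in particular $b_1^n\ne b_2^n$, forcing the linear polynomial $f_1-f_2=(s_2-s_1)x+(b_1^n-b_2^n)$ to be nonconstant. Thus $s_1\ne s_2$ and $\alpha=(b_1^n-b_2^n)/(s_2-s_1)\in F$; monicity of $f_i\in A[x]$ together with Gauss's lemma give $\alpha\in A$, and the factorization $f_i(x)=(x-\alpha)(x-b_i^n/\alpha)$ shows $\alpha\mid b_i^n$ for both $i$. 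Since $\gcd(b_1^n,b_2^n)\in \FF_q^\times$, this forces $\alpha\in \FF_q^\times$. But then $s_i=\alpha+b_i^n/\alpha$ has $t$-degree exactly $nd$, contradicting the Weil bound $\deg_t s_i\le nd/2$. Hence $\Res(f_1,f_2)\ne 0$, and $\lambda\mid \Res(f_1,f_2)$ yields $\deg\lambda\le 2nd$.
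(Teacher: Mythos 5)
Your proof is correct, and its skeleton is the one the paper uses: invoke Lemma~\ref{L:riemann-hurwitz applied} to get a common root of $P^{(n)}_{\phi,\p_1}$ and $P^{(n)}_{\phi,\p_2}$ modulo $\lambda$, pass to the resultant, bound its degree by $2nd$ via the Weil bound $|\pi|_\infty=q^{d/2}$, and show the resultant is nonzero. The genuine difference is in the nonvanishing step. The paper rules out a common root in characteristic $0$ by citing Goss, Theorem~4.12.8(1): each Frobenius $\pi$ has a unique zero in $F(\pi)$ and it lies over $\p_i$, so a common root would have to sit over two distinct primes. You instead use that the constant term $b_i$ of $P_{\phi,\p_i}$ is an $\FF_q^\times$-multiple of the monic generator of $\p_i$ (the Gekeler fact underlying Lemma~\ref{L:traces for degree 1 primes new}, or equally deducible from Goss 4.12.8(1),(5)), so $b_1^n\neq b_2^n$; a common root is then rational, integral by Gauss's lemma, divides both $b_1^n$ and $b_2^n$, hence is a unit, and the resulting $\deg s_i=nd$ contradicts the Weil bound $\deg s_i\le nd/2$. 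This buys a more elementary, self-contained elimination of the common-root case (plus an explicit formula for $\Res(f_1,f_2)$ in place of the paper's product-over-roots estimate), at the cost of invoking the constant-term fact; the paper's version is shorter given the cited theorem. Two trivial points: your formula $\alpha=(b_1^n-b_2^n)/(s_2-s_1)$ has the wrong sign (it should be $(b_1^n-b_2^n)/(s_1-s_2)$), which is immaterial, and one should note $\alpha\neq 0$ since $f_i(0)=b_i^n\neq 0$ before writing $b_i^n/\alpha$.
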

\begin{proof}
Let $\p_1$ and $\p_2$ be distinct prime ideals of $A$ with $\deg \p_1=\deg \p_2=d$ for which $\phi$ has good reduction.  We may assume that $\lambda\notin \{\p_1,\p_2\}$ since otherwise $\deg \lambda =d$ and the proposition is immediate.  For each $1\leq i \leq 2$, set $Q_i(x):= P^{(n)}_{\phi,\p_i}(x)\in A[x]$.

Let $r\in A$ be the resultant of the polynomials $Q_1(x)$ and $Q_2(x)$.  By our choice of $n$ and Lemma~\ref{L:riemann-hurwitz applied}, the polynomials $Q_1(x)$ and $Q_2(x)$ have a common root modulo $\lambda$ and hence $r\equiv 0 \pmod{\lambda}$.  

Let $L/F$ be the splitting field of $P_{\phi,\p_1}(x)$ and $P_{\phi,\p_2}(x)$.   Let $|\cdot|_\infty$ be an absolute value on $L$ that satisfies $|a|_\infty = q^{\deg a}$ for all nonzero $a\in A$.   For any root $\pi\in L$ of $P_{\phi,\p_i}(x)$, we have $|\pi|_\infty=N(\p_i)^{1/2}= q^{(\deg \p_i)/2}=q^{d/2}$, cf.~\cite[Theorem 4.12.8(5)]{MR1423131}.  So for any root $\pi\in L$ of $Q_i(x)$, we have $|\pi|_\infty=q^{nd/2}$.  Recall that the resultant $r$ is the product of $\pi_1-\pi_2$ as we vary over all roots $\pi_1$ and $\pi_2$ in $L$ of 
$Q_1(x)$ and $Q_2(x)$, respectively.  Therefore, $|r|_\infty\leq (q^{nd/2})^4=q^{2nd}$.

We claim that $Q_1(x)$ and $Q_2(x)$ do not have a common roots in $L$.   To the contrary suppose that $\pi \in L$ is a root of $Q_1(x)$ and $Q_2(x)$.   Take any $i \in \{1,2\}$.   From \cite[Theorem 4.12.8(1)]{MR1423131} applied to the reduction of $\phi$ modulo $\p_i$, there is a unique place of $F(\pi)$ for which $\pi$ has a zero and it lies over the place of $\p_i$.   We get a contradiction since $\p_1\neq \p_2$ and the claim follows.

From the claim, we have $r\neq 0$.  Since $r$ is a nonzero element of $A$ with $|r|_\infty\leq q^{2nd}$, we have $\deg r \leq 2nd$.  Since $r$ is nonzero and $r\equiv 0 \pmod{\lambda}$, we have $\deg \lambda \leq \deg r$.  Therefore, $\deg \lambda \leq \deg r \leq 2nd$.
\end{proof}

\section{Hilbert irreducibility}
\label{S:Hilbert irreducibility}

Fix an integer $r\geq 2$ and a nonzero ideal $\aA$ of $A$. For any $a=(a_1,\ldots, a_r) \in A^r$ with $a_r\neq 0$, we let $\phi(a)\colon A\mapsto F\{\tau\}$ be the Drinfeld $A$-module of rank $r$ for which $\phi(a)_t = t + a_1 \tau + \cdots + a_{r-1} \tau^{r-1} + a_r \tau^r$ and we let 
\[
\bbar\rho_{\phi(a),\aA}\colon \Gal_F\to\GL_r(A/\aA)
\]
be the corresponding Galois representation.   The goal of this section is to prove the following theorem which says that $\bbar\rho_{\phi(a),\aA}$ is surjective for ``most'' $a$.

\begin{thm} \label{T:HIT Drinfeld}
The set of $a\in A^r$ such that $a_r\neq 0$ and $\bbar\rho_{\phi(a),\aA}(\Gal_F)=\GL_r(A/\aA)$ has density $1$.
\end{thm}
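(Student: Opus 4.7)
The plan is to prove the theorem as a specialization of a surjectivity statement for the ``universal'' Drinfeld module of rank $r$. Introduce independent transcendentals $x_1,\ldots,x_r$ over $F$, set $L:=F(x_1,\ldots,x_r)$, and consider the generic Drinfeld module $\Phi\colon A\to L\{\tau\}$ with $\Phi_t=t+x_1\tau+\cdots+x_r\tau^r$. The map $(a_1,\ldots,a_r)\mapsto\phi(a)$ is the specialization of $\Phi$ under $x_i\mapsto a_i$, so the image $\bbar\rho_{\phi(a),\aA}(\Gal_F)$ is (up to conjugacy) contained in $\bbar\rho_{\Phi,\aA}(\Gal_L)$ whenever the specialization is unramified in the associated torsor. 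The strategy splits into two pieces: (i) prove $\bbar\rho_{\Phi,\aA}(\Gal_L)=\GL_r(A/\aA)$, and (ii) deduce the density-$1$ specialization conclusion by a function-field Hilbert irreducibility theorem.

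For (i), the representation $\bbar\rho_{\Phi,\aA}$ is the monodromy of the finite \'etale cover of Drinfeld moduli schemes $M^r(\aA)\to M^r$ with full level-$\aA$ structure, evaluated at the generic point. By Drinfeld's classical irreducibility result, the geometric connected components of $M^r(\aA)$ over $F^{\sep}$ are in bijection with the image of the determinant character, so that the geometric monodromy subgroup is $\SL_r(A/\aA)$; combined with Theorem~\ref{T:Gekeler}(\ref{T:Gekeler i}) applied to the generic leading coefficient $x_r$ (which has no nontrivial $q-1$ divisibility as a polynomial in the variables $x_1,\ldots,x_r$), the arithmetic determinant hits all of $(A/\aA)^\times$, and one gets the full $\GL_r(A/\aA)$. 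As a backup if the moduli-theoretic citation is inconvenient, one may replace this with a direct specialization argument: it suffices to produce a single $a^{(0)}\in A^r$ with $\bbar\rho_{\phi(a^{(0)}),\aA}(\Gal_F)=\GL_r(A/\aA)$, since then the containment of specialized images inside the generic one forces surjectivity at the generic point.

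For (ii), fix a primitive element $\theta$ for the splitting field of $\bbar\rho_{\Phi,\aA}$ over $L$ and clear denominators in $x_1,\ldots,x_r$ to obtain $f(T;x_1,\ldots,x_r)\in A[T,x_1,\ldots,x_r]$, separable and irreducible over $L[T]$, whose splitting extension realizes the full $\GL_r(A/\aA)$. Then for $a\in A^r$ with $a_r\neq0$ outside a ``thin'' Hilbertian set, $f(T;a_1,\ldots,a_r)$ has the same Galois group over $F$, which gives $\bbar\rho_{\phi(a),\aA}(\Gal_F)=\GL_r(A/\aA)$. The exceptional set is covered by finitely many proper subvarieties of $\AA^r$ (branch and splitting loci in the associated field tower), and an elementary function-field point-count --- of the flavor used in \cite{MR4755197,2407.14264,2403.15109} --- bounds the number of $a\in A^r(d)$ lying on any fixed proper subvariety by $O(d\cdot q^{(r-1)(d+1)})=o(q^{r(d+1)})$, yielding density $0$.

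The main obstacle is step (i): securing the generic surjectivity cleanly for every nonzero ideal $\aA$. The determinant surjectivity is handled by Gekeler's theorem, so the real content is the geometric irreducibility of the prime-to-determinant part, i.e., that the geometric monodromy contains $\SL_r(A/\aA)$ (equivalently, that $M^r(\aA)$ has the expected number of geometric components). Step (ii) is essentially routine once (i) is in hand and the density notion of \S\ref{SS:density} is matched with a Hilbertian counting estimate. Note that the density-$0$ exceptional set produced here depends on $\aA$, which is why this theorem alone is insufficient for Theorem~\ref{T:main new} and must be combined with the sieving of \S\ref{S:sieving}.
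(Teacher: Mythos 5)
Your two-step architecture (generic surjectivity over the function field of the parameter space, then a quantitative Hilbert irreducibility specialization) matches the paper's, but both steps have genuine gaps as written. In step (i), the substantive claim is that the geometric monodromy of the family $\phi_t=t+b_1\tau+\cdots+b_r\tau^r$ contains $\SL_r(A/\aA)$; attributing this to ``Drinfeld's classical irreducibility result'' is not a proof. One would need a precise reference for the number of geometric components of $M^r(\aA)$ in arbitrary rank, and then an argument passing from the moduli problem back to the naive family over $U\subseteq\AA^r_F$ (connectivity of the pulled-back level cover is not automatic from connectivity of $M^r(\aA)_{\bbar{F}}$). Moreover, Theorem~\ref{T:Gekeler}(\ref{T:Gekeler i}) is stated for a leading coefficient $\Delta\in A$ over $F$, so ``applying it to the generic leading coefficient $x_r$'' is not a licit use of that statement (the determinant part can be repaired by specialization, since every specialized image sits inside the generic one, but that is not what you wrote). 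Your fallback---``it suffices to produce a single $a^{(0)}$ with $\bbar\rho_{\phi(a^{(0)}),\aA}$ surjective''---is circular: producing such examples is exactly the hard problem this circle of papers addresses, and is only known in low rank or special cases. The paper avoids all of this by specializing $b_r=1$ and invoking Breuer's proof of Abhyankar's generalized iteration conjecture \cite{MR3425215}, which directly gives $\Gal(K(\psi[\aA])/K)\cong\GL_r(A/\aA)$ for $K=F(b_1,\ldots,b_{r-1})$, hence full image for $\pi_1(U)$.

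In step (ii) there is a more serious error: the exceptional set in Hilbert irreducibility is a \emph{thin} set, not a set covered by finitely many proper subvarieties. The locus of $a$ where the specialized polynomial has smaller Galois group is in general Zariski dense (already for $T^2-x$ the exceptional set is the set of squares), so your count of points on a fixed proper subvariety, $O(d\cdot q^{(r-1)(d+1)})$, does not bound it. Over $\FF_q(t)$ there are further characteristic-$p$ pitfalls (inseparable covers, constant-field extensions) that any honest argument must address. This is precisely why the paper proves its own quantitative statement, Theorem~\ref{T:new HIT}: it isolates the geometric monodromy group $G_g$ and the extension $F'/F$, observes that every specialization surjects onto $\Gal(F'/F)$, uses Jordan's lemma \cite{MR1997347} to find elements of $G_g$ avoiding all conjugates of a proper subgroup, and then sieves with primes split completely in $F'$ using a uniform explicit equidistribution bound \cite{MR4285725} to drive the upper density of the exceptional set to $0$. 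Gesturing at \cite{MR4755197,2407.14264,2403.15109} ``for flavor'' does not supply this; to make your route work you would either have to reproduce such a sieve or quote a precise quantitative Hilbert irreducibility theorem over $\FF_q(t)$ compatible with the density of \S\ref{SS:density} and verify its hypotheses for the cover in question.
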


\subsection{A version of Hilbert's irreducibility theorem} \label{S:new HIT}

Fix a positive integer $r$ and a nonempty open subscheme $U$ of $\AA^r_{F}$.   Consider a continuous and surjective representation
\[
\rho\colon \pi_1(U)\to G,
\]
where $\pi_1(U)$ is the \'etale fundamental group of $U$ and $G$ is a finite group.  Here we are suppressing the base point of our fundamental group and hence the representation $\rho$ is only determined up to conjugacy by an element in $G$.   

Take any point $u\in U(F)\subseteq F^r$.  Specialization by $u$ gives rise to a continuous representation
\[
\rho_u\colon \Gal_F \xrightarrow{u_*} \pi_1(U)\xrightarrow{\rho} G
\]
that is uniquely determined up to conjugacy in $G$.  In particular, the group $\rho_u(\Gal_F)\subseteq G$ is uniquely determined up to conjugacy in $G$.   We will show that $\rho_u(\Gal_F)=G$ for all $u\in U(F)\cap A^r$ away from a set of density $0$ after first proving an easy lemma.

\begin{lemma} \label{L:easy density}
Let $I$ be a nonzero ideal of $A$ and fix a subset $B \subseteq (A/I)^r$.  Then the set of $a\in A^r$ whose image modulo $I$ lies in $B$ has density $|B|/N(I)^r$.
\end{lemma}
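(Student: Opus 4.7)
The plan is to reduce the claim to an elementary count of polynomials of bounded degree lying in a prescribed residue class modulo $I$.

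First I would fix a monic generator $f \in A$ of $I$ and set $n := \deg f$, so that $N(I) = q^n$. Each residue class in $A/I$ has a unique representative of degree less than $n$ (with the convention $\deg 0 = -\infty$); for $b \in A/I$ I denote this lift by $\tilde b$.

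Next I would carry out the one-variable count. For any integer $d \geq n-1$ and any $b \in A/I$, an element $a \in A$ with $\deg a \leq d$ and $a \equiv b \pmod{I}$ is uniquely of the form $a = \tilde b + fg$ with $g \in A$. Since $\deg \tilde b < n \leq d+1$, the condition $\deg a \leq d$ is equivalent to $\deg g \leq d - n$, so the number of such $a$ is exactly $q^{d-n+1}$.

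Passing to $r$-tuples by taking a Cartesian product, for any fixed $b = (b_1,\ldots,b_r) \in (A/I)^r$ and any $d \geq n-1$ the number of $a \in A^r(d)$ with $a \equiv b \pmod{I}$ equals $q^{r(d-n+1)}$. Summing over $b \in B$, the set
\[ S_B := \{\, a \in A^r : a \bmod I \in B \,\} \]
satisfies
\[ |S_B(d)| \;=\; |B| \cdot q^{r(d-n+1)} \;=\; \frac{|B|}{N(I)^r}\cdot q^{r(d+1)} \;=\; \frac{|B|}{N(I)^r}\cdot |A^r(d)| \]
for every $d \geq n-1$. This identity is an equality, not merely asymptotic, so letting $d \to \infty$ yields $\delta(S_B) = |B|/N(I)^r$.

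There is essentially no obstacle here; the only minor subtlety is the range $d \geq n-1$ needed for the exact count, which is harmless since only the limit as $d\to\infty$ enters the definition of density.
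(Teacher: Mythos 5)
Your proof is correct and follows essentially the same route as the paper: both rest on the exact equidistribution of residue classes modulo $I$ among tuples of polynomials of degree at most $d$ once $d$ is large enough. The paper gets the equal fiber sizes by noting that reduction $A^r(d)\to (A/I)^r$ is a surjective homomorphism of finite additive groups, while you compute each fiber explicitly as $q^{r(d-n+1)}$ via the division algorithm; this is only a cosmetic difference.
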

\begin{proof}
Consider a positive integer $d$.  Define the reduction map $\varphi_d\colon A^r(d) \to (A/I)^r$; it is a homomorphism of finite additive groups.   By taking $d$ sufficiently large, we may assume that $\varphi_d$ is surjective.  Therefore, the cardinality of $\varphi_d^{-1}(b)$ is the same for all $b\in (A/I)^r$.  In particular, $|\varphi_d^{-1}(B)|/|A^r(d)|=|B|/|(A/I)^r|$ for all sufficiently large $d$.   The lemma is now immediate.
\end{proof}

\begin{thm} \label{T:new HIT}
The set of $u\in U(F)\cap A^r$ for which $\rho_u(\Gal_F)=G$ has density $1$.
\end{thm}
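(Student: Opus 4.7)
The plan is to reformulate the statement geometrically as a problem about $F$-points of finite covers of $U$, and then to prove the resulting counting statement by induction on $r$, reducing at the base case to the one-variable Hilbert irreducibility theorem for the function field $F = \FF_q(t)$.

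Let $\pi \colon V \to U$ denote the finite \'etale Galois cover with deck group $G$ corresponding to $\rho$. For $u \in U(F)$, the image $\rho_u(\Gal_F)$ is contained (up to conjugation in $G$) in a proper subgroup $H < G$ if and only if $u$ lifts to an $F$-point of $V/H$. Since $G$ has only finitely many subgroups, it suffices to fix a proper $H$ and show that the set of $u \in A^r \cap U(F)$ that so lift has density zero. Decomposing $V/H$ into geometrically irreducible components, only components whose field of constants equals $F$ contribute $F$-points. A degree-$1$ such component would be a section of $V/H \to U$, forcing $\rho$ to factor through a conjugate of $H$ and contradicting the surjectivity of $\rho$. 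Hence it suffices to prove: for any geometrically irreducible finite \'etale cover $W \to U$ of degree $d \geq 2$, the set of $u \in U(F) \cap A^r$ that lift to $W(F)$ has density zero.

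I would prove this by induction on $r$. The base case $r = 1$ is the classical Hilbert irreducibility theorem for $F$: applying Riemann--Hurwitz to the Galois closure of $W \to U$ together with the Weil bound on $\FF_{q^m}$-points of smooth curves gives an $O(q^{n/2})$ bound on $a \in A(n)$ whose specialization lifts to $W(F)$, which is $o(q^{n+1}) = o(|A(n)|)$. For $r \geq 2$, after applying an $\FF_q$-linear automorphism of $\AA^r$ (which preserves $A^r(n)$ as a set for every $n$), I may assume the cover $W \to U$ is nontrivial on axis-parallel lines varying only the last coordinate. A Bertini-type argument over $F$ then furnishes a proper closed subset $Z \subsetneq \AA^{r-1}_F$ such that for $(a_1, \ldots, a_{r-1}) \notin Z(F)$, the pullback of $W \to U$ along the line $\{a_1\} \times \cdots \times \{a_{r-1}\} \times \AA^1_F$ is a geometrically irreducible \'etale cover of degree $d$. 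Applying the $r=1$ bound with uniform constants to this family of slices, using Lemma~\ref{L:easy density} to handle $(a_1, \ldots, a_{r-1}) \in Z(F) \cap A^{r-1}$, and summing over $(a_1, \ldots, a_{r-1}) \in A^{r-1}(n)$, yields the claim.

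The main obstacles are the Bertini step and the uniformity in $(a_1, \ldots, a_{r-1})$ of the constants in the $r = 1$ bound. Geometric irreducibility passes to generic axis-parallel slices because the geometric monodromy of an \'etale cover is a transitive subgroup of the symmetric group $S_d$ and this transitivity is preserved under generic restriction of the geometric fundamental group. Uniformity of the $O(q^{n/2})$ bound across slices follows because the Galois closures of the slice covers fit into a flat family over a dense open subset of $\AA^{r-1}_F$, so the genera and degrees entering into Riemann--Hurwitz remain uniformly bounded.
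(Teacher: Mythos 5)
Your opening reduction is sound: passing to the cover $V\to U$ with group $G$, noting that $\rho_u(\Gal_F)$ is conjugate into $H$ exactly when $u$ lifts to an $F$-point of $V/H$, discarding components of $V/H$ with larger constant field, and excluding degree-one components via surjectivity of $\rho$ is all correct, and from there your route (induction on $r$, axis-parallel slicing, a quantitative one-variable bound) is genuinely different from the paper's, which stays in $r$ variables, reduces modulo primes $\p$ of $A$, applies an explicit Chebotarev/equidistribution theorem over $\FF_\p$ together with Jordan's lemma inside the \emph{geometric} monodromy group $G_g$, and sieves over infinitely many primes split completely in the field $F'$ cut out by $G/G_g$. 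The first genuine gap is your base case $r=1$: ``Riemann--Hurwitz plus the Weil bound gives $O(q^{n/2})$'' is an assertion, not a proof. A direct height count of $F$-points on $W$ gives nothing of the kind (for a degree-$2$ genus-$0$ cover the naive count of points over $A(n)$ is of order $q^{n}$), and the known proofs of quantitative one-variable Hilbert irreducibility over $\FF_q(t)$ proceed by reducing mod primes of $A$, invoking a Chebotarev/Lang--Weil estimate over the residue fields (this is where Weil's bound actually enters), and sieving --- precisely the machinery of the paper's proof. Moreover any such argument must face the arithmetic-versus-geometric monodromy issue that is the heart of the paper's proof and never appears in yours: Frobenius at $\p$ is confined to a prescribed coset of the geometric monodromy group, Jordan's lemma produces fixed-point-free elements only inside the geometric group, and one must therefore work at primes split in the relevant constant field. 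For instance, a geometrically irreducible cubic cover with geometric monodromy $\ZZ/3\ZZ$ and arithmetic monodromy $\mathfrak{S}_3$ (e.g.\ $y^3=u$ pulled back to $F$ when $3\nmid q-1$) has a rational point in the fiber over \emph{every} residue at primes whose Frobenius lies in the nontrivial coset, so a sieve that ignores the coset constraint yields nothing.

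The second gap is the Bertini step. Bertini-type irreducibility theorems exclude a proper closed locus of bad lines (or directions) over $\bar F$, but to preserve the boxes $A^r(n)$ you only allow $\GL_r(\FF_q)$, so you have just the finitely many $\FF_q$-rational directions at your disposal, and you give no argument that one of them lies outside the bad locus; the heuristic that ``transitivity is preserved under generic restriction of the geometric fundamental group'' is not a theorem for fibers of a fixed coordinate projection --- covers pulled back from the complementary coordinates restrict intransitively, which is exactly why some change of direction is needed, and in characteristic $p$ wild ramification makes any Lefschetz-type surjectivity statement delicate. In addition, your uniformity claim controls only geometric invariants (genus, degree) of the slices through flatness, whereas the constants in any $r=1$ sieve bound also involve arithmetic data of the slice cover that vary with $a'=(a_1,\dots,a_{r-1})$ (its bad primes, its own constant-field extension), which would need separate control; and Lemma~\ref{L:easy density} does not bound $\#\bigl(Z(F)\cap A^{r-1}(n)\bigr)$ for a closed subset $Z$ --- that requires a Schwartz--Zippel-type count, not a congruence count. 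So while the skeleton of your slicing strategy could in principle be carried out, as written the two load-bearing steps are missing, and filling in the base case would in any event require essentially the same Chebotarev-plus-Jordan-plus-sieve apparatus the paper uses directly in $r$ variables.
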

\begin{proof}
For a fixed algebraic closure $\bbar{F}$ of $F$, we define the group $G_g:=\rho(\pi_1(U_{\bbar{F}}))$; it is a closed and normal subgroup of $G$.  Let $F'/F$ be the minimal extension in $\bbar{F}$ for which $G_g=\rho(\pi_1(U_{F'}))$.   The extension $F'/F$ is Galois and we have a natural short exact sequence 
\[
1\to G_g \to G \xrightarrow{\pi} \Gal(F'/F) \to 1.
\]

Take any proper subgroup $H$ of $G$ and let $S$ be the set of $u\in U(F) \cap A^r$ for which $\rho_u(\Gal_F)$ is conjugate in $G$ to a subgroup of $H$.   We will prove that $S$ has density $0$.   This will prove the theorem since $G$ has only finitely many proper subgroups.   We have $\pi(\rho_u(\Gal_F))=\Gal(F'/F)$ for all $u\in U(F)$, so we may assume that $\pi(H)=\Gal(F'/F)$ since otherwise $S$ is empty.  We thus have $H\cap G_g\subsetneq G_g$ since $H$ is a proper subgroup of $G$.  Define $C:= \bigcup_{g\in G} g H g^{-1}$.   Since $G_g$ is a normal subgroup of $G$ and $\pi(H)=\Gal(F'/F)$, we have 
\[
C\cap G_g = \bigcup_{g\in G} g (H\cap G_g) g^{-1}=\bigcup_{g\in G_g} g (H\cap G_g) g^{-1}.
\]
Since $H \cap G_g$ is a proper subgroup of $G_g$, we have $C\cap G_g \subsetneq G_g$ by Jordan's lemma (\cite[Theorem~4']{MR1997347}).

There is a ring $R:=A[1/n]\subseteq F$ with a nonzero $n \in A$, an $R$-subscheme $\calU \subseteq \AA^r_{R}$, and a representation
\[
\varrho\colon \pi_1(\calU)\to G
\]
so that $\calU_F=U$ and base changing $\varrho$ by $F$ gives $\rho$.  Take any nonzero prime ideal $\p$ of $A$ that does not divide $n$.  We will also denote the prime ideal $\p R$ of $R$ by $\p$; we have $R/\p=A/\p=\FF_\p$.     For each $u\in \calU(\FF_\p)$, specialization gives a homomorphism $u_*\colon \Gal_{\FF_\p} \to \pi_1(\calU)$ and we denote by $\Frob_u$ the image of the $N(\p)$-power Frobenius.  Note that $\Frob_u$ in $\pi_1(\calU)$ is uniquely determined up to conjugacy.   Define the set 
\[
\Omega_\p:=\{ u \in \calU(\FF_\p) : \varrho(\Frob_u) \in G-C\}.
\]
Now take any $u\in S$.  We have $\rho_u(\Gal_F)\subseteq C$.  If $u$ modulo $\p$ lies in $\calU(\FF_\p)$, then $\rho_u$ is unramified at $\p$ and $\rho_u(\Frob_\p)$ lies in the same conjugacy class of $G$ as $\varrho(\Frob_u)$.   So if $u$ modulo $\p$ lies in $\Omega_\p$, then $\rho_u(\Frob_\p)$ lies in $G-C$ which contradicts that $\rho_u(\Gal_F)\subseteq C$.  Therefore, the image of $S$ modulo $\p$ lies in $\FF_\p^r-\Omega_\p$. By Lemma~\ref{L:easy density}, we have
\[
\bbar\delta(S) \leq \prod_{\p \in \calP} \frac{|\FF_\p^r-\Omega_\p|}{|\FF_\p^r|} = \prod_{\p \in \calP} \Big(1 - \frac{|\Omega_\p|}{N(\p)^r} \Big),
\]
where $\calP$ is any finite set of nonzero prime ideals of $A$ that do not divide $n$.  So to show that $S$ has denisty $0$ it suffices to prove that there is a positive constant $c<1$ such that $1-|\Omega_\p|/N(\p)^r<c$ for infinitely many prime ideals $\p$ of $A$.

Now take a nonzero prime ideal $\p$ of $A$ that splits completely in $F'$ and does not divide $n$.    Our assumption that $\p$ splits completely in $F'$ implies that $\varrho(\pi_1(\calU_{\FF_\p}))\subseteq G_g$.

We claim that $\varrho(\pi_1(\calU_{\FFbar_\p}))=G_g$ for all but finitely many such $\p$.  Let $R'$ be the integral closure of $R$ in $F'$.   We can base change $\varrho$ to get a surjective representation $\varrho'\colon \pi_1(\calU_{R'})\to G_g$.  To prove the claim, it suffices to show that $\varrho'(\pi_1((\calU_{R'})_{\FF_\P}))=\varrho'(\pi_1(\calU_{\FF_\P}))$ equals $G_g$ for all but finitely many nonzero prime ideals $\P$ of $R'$.    The representation $\varrho'$ corresponds to an \'etale cover $Y\to  \calU_{R'}$ of $R'$-schemes so that $Y_{F'}$ and $(\calU_{R'})_{F'}=\calU_{F'}$ are both geometrically irreducible.    For nonzero prime ideals $\P$ of $R'$, we have an \'etale cover $Y_{\FF_\P} \to  \calU_{\FF_\P}$ of degree $|G_g|$.   The claim follows from $Y_{\FF_\P}$ and $\calU_{\FF_\P}$ being geometrically irreducible for all but finitely many $\P$, cf. \cite[9.7.8]{MR217086}.

So by excluding a finite number of $\p$, we may assume that $\varrho(\pi_1(\calU_{\FF_\p}))=\varrho(\pi_1(\calU_{\FFbar_\p}))=G_g$.  Then an explicit equidistribution result like 
\cite[Theorem~3]{MR4285725} implies that 
\[
|\Omega_\p|=
|\{u \in \calU(\FF_\p) : \varrho(\Frob_u) \in G_g-(C\cap G_g)\}| = 
\tfrac{|G_g-(C \cap G_g)|}{|G_g|} N(\p)^r + O(N(\p)^{r-1/2}),
\]
where the implicit constant does not depend on $\p$.  To apply \cite[Theorem~3]{MR4285725} we are using that the ``complexity'' of $Y_{\FF_\P}\to \calU_{\FF_\P}$ can be bounded independent of the nonzero prime ideal $\P$ of $R'$ since they arise from a single morphism $Y\to \calU_{R'}$.

Therefore, $1-|\Omega_\p|/N(\p)^r = {|C \cap G_g|}/{|G_g|} + O(N(\p)^{-1/2})$.  Since $C\cap G_g \subsetneq G_g$, there is a constant $c<1$ such that $1-|\Omega_\p|/N(\p)^r<c$ holds for all but finitely many prime ideals $\p$ of $A$ that split completely in $F'$.  The result follows since there are infinitely many prime ideals $\p$ of $A$ that split completely in $F'$.
\end{proof}

\subsection{Proof of Theorem~\ref{T:HIT Drinfeld}}

Define the $F$-algebra $R:= F[b_1,\ldots, b_r, 1/b_r]$, where $b_1,\ldots, b_r$ are indeterminant variables over $F$.     Let $\phi\colon A\to R\{\tau\}$, $\alpha\mapsto \phi_\alpha$ be the homomorphism of $\FF_q$-algebras for which 
\begin{align}\label{E:generic Drinfeld}
\phi_t= t  + b_1 \tau + \cdots + b_{r-1} \tau^{r-1} + b_r \tau^r;
\end{align}
it is a Drinfeld $A$-module of rank $r$ over the scheme $U:=\Spec R$.  Note that $U$ is a nonempty open subscheme of $\AA_F^r=\Spec F[b_1,\ldots, b_r]$.   The $\aA$-torsion of $\phi$ gives rise to a representation
\[
\bbar\rho_{\phi,\aA} \colon \pi_1(U)\to \GL_r(A/\aA)
\]
like before.

Take any $a=(a_1,\ldots, a_r)\in U(F)\subseteq F^r$.   We have $a_r\neq 0$, so (\ref{E:generic Drinfeld}) with $b_i$ replaced by $a_i$ gives a Drinfeld $A$-module $\phi(a)\colon A\to F\{\tau\}$ of rank $r$.  Specializing $\bbar\rho_{\phi,\aA}$ at $a$  gives a representation $\Gal_F \to \GL_r(A/\aA)$ that is isomorphic to $\bbar\rho_{\phi(a),\aA}$. Theorem~\ref{T:HIT Drinfeld} will thus follow from Theorem~\ref{T:new HIT} and the following lemma.

\begin{lemma}
We have $\bbar\rho_{\phi,\aA}(\pi_1(U))=\GL_r(A/\aA)$.
\end{lemma}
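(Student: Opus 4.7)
I want to prove $\bbar\rho_{\phi,\aA}(\pi_1(U)) = \GL_r(A/\aA)$ for the universal rank-$r$ Drinfeld module $\phi$ over $U = \Spec F[b_1,\dots,b_r,1/b_r]$. By the Chinese Remainder Theorem combined with a Goursat-type argument---using that for distinct primes $\p_1 \neq \p_2$ the groups $\GL_r(A/\p_i^{e_i})$ have no common non-abelian simple quotients, and that common abelian quotients are controlled by the determinant character (handled below)---the problem reduces to showing $\bbar\rho_{\phi,\p^n}(\pi_1(U)) = \GL_r(A/\p^n)$ for each prime power level.

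I would then proceed by induction on $n$. For the base case $n = 1$, one must show that the finite \'etale cover of $U$ classifying full level-$\p$ structures on $\phi$ is geometrically connected once pulled back to a cover of $U$ trivializing the rank-$1$ ``determinant'' Drinfeld module $\psi$ attached to $\phi$ (by Anderson's construction, $\psi_t = t + c\, b_r \tau$ for an explicit sign $c$ depending on $r$). This is the classical irreducibility of the Drinfeld modular variety $M^r(\p)$ with full level-$\p$ structure, a consequence of Drinfeld's construction via the analytic uniformization by his upper half-space and strong approximation for $\SL_r$ over $F$. Combining with Gekeler's theorem (Theorem~\ref{T:Gekeler}) applied to $\psi$---after specializing $b_r$ to a polynomial $\Delta \in A$ with the degree and leading-coefficient hypotheses of Gekeler, so that $\bbar\rho_{\psi,\p}$ is surjective onto $(A/\p)^\times$---one concludes $\bbar\rho_{\phi,\p}(\pi_1(U)) = \GL_r(\FF_\p)$.

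For the induction step $n-1 \to n$ with $n \geq 2$, consider the kernel $K$ of reduction $\GL_r(A/\p^n) \to \GL_r(A/\p^{n-1})$: it is an additive group isomorphic to $M_r(\FF_\p)$ on which $\GL_r(\FF_\p)$ acts by conjugation. The intersection of the image $G$ with $K$ is a $\GL_r(\FF_\p)$-invariant additive subgroup of $M_r(\FF_\p)$, hence (by the higher-rank analogue of Lemma~\ref{L:PR adjoint}) one of a short list of explicit subspaces. To rule out all but $M_r(\FF_\p)$, I would specialize to a Drinfeld module with bad semistable reduction at some auxiliary prime: Proposition~\ref{P:main Tate new}(\ref{P:main Tate new iii}) supplies, inside the image of inertia, a non-scalar unipotent element of the first congruence subgroup that lies in $G \cap K$, forcing $G \cap K = K$ and completing the induction.

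The main obstacle is the base case---the geometric irreducibility of the Drinfeld modular variety $M^r(\p)$---which requires a nontrivial classical input from the theory of Drinfeld modular varieties. The Chinese Remainder reduction and the congruence-subgroup induction step are largely formal once the base case is secured.
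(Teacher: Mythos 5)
There is a genuine gap at the very first step, the CRT/Goursat reduction. You assert that for distinct primes $\p_1\neq\p_2$ the groups $\GL_r(A/\p_1^{e_1})$ and $\GL_r(A/\p_2^{e_2})$ have no common nonabelian simple quotients, but this is false precisely in the equal-norm case: the only nonabelian simple quotient of $\GL_r(A/\p^{e})$ is $\PGL_r(\FF_\p)$, which occurs whenever $\gcd(r,N(\p)-1)=1$ (then $\PGL_r(\FF_\p)=\PSL_r(\FF_\p)$ is simple), and for two distinct primes with $N(\p_1)=N(\p_2)$ these quotients are isomorphic. For example, with $r=2$ and $q$ even, any two distinct primes of the same degree $d\geq 2$ give the common simple quotient $\PSL_2(\FF_{2^d})$. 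So surjectivity at each prime-power level together with surjectivity of the determinant does \emph{not} formally yield surjectivity modulo $\aA$; this equal-norm obstruction is exactly the difficulty the paper must fight in conditions (\ref{T:criterion for computing commutator b}) and (\ref{T:criterion for computing commutator c}) of Theorem~\ref{T:criterion for computing commutator}, where it is ruled out by exhibiting an element that is a nontrivial $p$-element at one prime and trivial at the other. Your sketch supplies no such input, so the gluing step is broken as written.

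There are also scope problems with the tools you invoke. The lemma is stated for arbitrary rank $r\geq 2$, but Proposition~\ref{P:main Tate new} is proved in the paper only for rank $2$ (stable reduction of rank $1$), and its part (\ref{P:main Tate new iii}) moreover assumes $e\leq 1$, so it cannot be applied at level $\p^n$ with $n\geq 2$ (the paper itself has to combine parts (\ref{P:main Tate new i}) and (\ref{P:main Tate new ii}) for that, and only in rank $2$); likewise Lemma~\ref{L:PR adjoint} concerns $\gl_2$ only, and the higher-rank adjoint analysis (including the $p\mid r$ and small-field degeneracies) plus the determinant input at level $\p^n$ would all have to be imported from elsewhere. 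Finally, your base case rests on geometric irreducibility of the rank-$r$ Drinfeld moduli with full level structure, transferred to the coefficient space $U$ — an input of the same depth as what the paper actually uses. The paper's proof is a two-line reduction: specialize $b_r=1$ and cite Breuer's theorem \cite{MR3425215} (Abhyankar's generalized iteration conjecture), which gives $\Gal(K(\psi[\aA])/K)\cong\GL_r(A/\aA)$ for $K=F(b_1,\ldots,b_{r-1})$ directly, with no gluing or induction needed. So your plan is a genuinely different route, but as written it contains a false group-theoretic claim and leans on rank-$2$ results beyond their stated scope.
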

\begin{proof}

Let $V$ be the closed subvariety of $U$ defined by the equation $b_r=1$, i.e., corresponding to the prime ideal $\P=(b_r-1)$ of $R$.  Specialization $\bbar\rho_{\phi,\aA}$ at $\P$ gives a representation
\[
\varrho\colon \pi_1(V)\to \GL_r(A/\aA).
\]
It suffices to prove that $\varrho$ is surjective.    The representation $\varrho$ agrees with $\bbar\rho_{\psi,\aA}$ where $\psi\colon A\to R'\{\tau\}$, $\alpha\mapsto \psi_\alpha$ is the Drinfeld $A$-module with $\psi_t= t  + b_1 \tau + \cdots + b_{r-1} \tau^{r-1} + \tau^r$ and $R'=F[b_1,\ldots, b_{r-1}]$.  

Set $K:=F(b_1,\ldots, b_{r-1})$.  Viewing $\psi$ as a Drinfeld $A$-module over $K$, it thus suffices to show that $\Gal(K(\psi[\aA])/K) \cong \GL_r(A/\aA)$, where $\psi[\aA]$ is the $\aA$-torsion arising from $\psi$ in a fixed separable closure of $K$.   In \cite[Theorem 6]{MR3425215}, Breuer proves that $\Gal(K(\psi[\aA])/K) \cong \GL_r(A/\aA)$ which had been earlier conjectured by Abhyankar.
\end{proof}

\section{Sieving} \label{S:sieving}

Let $\calB$ be the set of $a=(a_1,a_2) \in A^2$ with $a_2\neq 0$ for which the following hold:
\begin{itemize}
\item $\rho_{\phi(a),\lambda}(\Gal_F)=\GL_2(A_\lambda)$ for all nonzero prime ideals $\lambda$ of $A$, 
\item the commutator subgroup of $\rho_{\phi(a)}(\Gal_F) \subseteq \GL_2(\widehat{A})$ is equal to $[\GL_2(\widehat{A}),\GL_2(\widehat{A})]$.
\end{itemize}

The main goal of this section is to prove the following theorem.   We will use it in \S\ref{SS:main new proof -main} to quickly prove Theorem~\ref{T:main new} in the case $q\neq 2$.   

\begin{thm} \label{T:almost main}
The set $\calB$ has density $1$.
\end{thm}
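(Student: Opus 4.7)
The plan is to combine Hilbert irreducibility at a single large finite level with an inertia-based analysis at primes of multiplicative reduction, sieving carefully through the infinitely many primes $\lambda$ of $A$.  I would fix an auxiliary integer $N_0$ (to be chosen sufficiently large) and let $\aA_0 := \prod_{N(\lambda) \leq N_0} \lambda^{2}$.  The primes $\lambda$ of $A$ would then be partitioned into the ``small'' ones with $N(\lambda) \leq N_0$ (handled by Hilbert's irreducibility theorem) and the ``large'' ones with $N(\lambda) > N_0$ (handled by the irreducibility-plus-inertia argument).

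First, I would establish four density-$1$ conditions on $a \in A^2$.  (a) $a_2 \neq 0$ and $\bbar\rho_{\phi(a), \aA_0}(\Gal_F) = \GL_2(A/\aA_0)$, by Theorem~\ref{T:HIT Drinfeld}.  (b) $\det \rho_{\phi(a),\lambda}(\Gal_F) = A_\lambda^\times$ for every $\lambda$, from Theorem~\ref{T:det image} together with a sieve via Lemma~\ref{L:easy density} ensuring that $a_2$ has enough primes of valuation $1$.  (c) There exist distinct primes $\p_1, \p_2$ with $N(\p_i) > N_0$ at which $\phi(a)$ has stable reduction of rank $1$ and $\gcd(v_{\p_i}(j_{\phi(a)}), q) = 1$; using $v_\p(j_{\phi(a)}) = (q+1)v_\p(a_1) - v_\p(a_2)$, the local condition $v_\p(a_1) = 0$ and $v_\p(a_2) = 1$ at $\p$ suffices, and Lemma~\ref{L:easy density} shows that the density of $a$ with fewer than two such primes is zero.  (d) $\phi(a)$ has good reduction at two primes of some bounded degree $d_0$, enabling the irreducibility bound of Proposition~\ref{L:irreducibility bound of deg lambda} for all $\lambda$ with $\deg \lambda > 2 n d_0$, where $n = (q-1)^2(q+1)$.

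For $a$ satisfying (a)--(d), I would verify $\rho_{\phi(a),\lambda}(\Gal_F) = \GL_2(A_\lambda)$ via Proposition~\ref{P:full GL2R criterion}.  If $\lambda \mid \aA_0$, condition (a) gives surjectivity modulo $\lambda^{2}$, which implies all the hypotheses of Proposition~\ref{P:full GL2R criterion}.  If $\lambda \nmid \aA_0$ (so $N(\lambda) > N_0 > 2$): (b) gives the determinant hypothesis; irreducibility of $\bbar\rho_{\phi(a),\lambda}$ from (d), combined with the $p$-Sylow subgroup of order $N(\lambda)$ obtained by applying Proposition~\ref{P:main Tate new}(iii) to $I_{\p_1}$ at level $\aA = \lambda$, forces $\bbar\rho_{\phi(a),\lambda}(\Gal_F) \supseteq \SL_2(\FF_\lambda)$ via Proposition~\ref{P:group contains SL2 condition}; and Proposition~\ref{P:main Tate new}(iii) applied to $I_{\p_1}$ at level $\aA = \lambda^{2}$ (valid since $\p_1 \neq \lambda$ makes $e = 0 \leq 1$) produces an element $I + \pi B$ in the image with $B$ nonscalar modulo $\pi$, giving the transvection hypothesis.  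The commutator equality would then follow from Theorem~\ref{T:criterion for computing commutator}: its hypothesis (a) is the $\lambda$-adic surjectivity just shown; its hypotheses (b)--(c) follow from Proposition~\ref{P:main Tate new}(iii) at level $\lambda_1 \lambda_2$ using inertia at whichever of $\p_1, \p_2$ is disjoint from $\{\lambda_1, \lambda_2\}$; and hypothesis (d) is a finite-level determinant condition implied by (b).

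The main obstacle will be the density-$1$ analysis for (c) and (d), both of which require controlling joint valuations $v_\p(a_1), v_\p(a_2)$ uniformly across infinitely many primes $\p$.  For (d) the subtlety is that for any fixed $d_0$ the density of $a$ whose $a_2$ avoids at least two primes of degree $\leq d_0$ is strictly less than $1$; one must either allow $d_0$ to depend on $a$ (rearranging the partition of primes accordingly, and absorbing any primes of intermediate size into the HIT level) or run a more refined sieve that exploits primes of growing degree.  This careful sieving --- simultaneously handling the Hilbert irreducibility tail, the determinant condition, and the existence of bad-reduction primes with $q$-coprime $j$-valuation --- constitutes the heart of the argument.
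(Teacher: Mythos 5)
Your outline follows the same route as the paper (Hilbert irreducibility at a finite level, inertia at a prime of rank-$1$ stable reduction via Proposition~\ref{P:main Tate new}, the resultant bound for irreducibility at large $\lambda$, and the two group-theoretic criteria), but the step you defer is precisely the content of the theorem, and it is not merely technical. For any \emph{fixed} cutoff ($N_0$ or $d_0$), the condition ``good reduction at two primes of degree at most $d_0$'' fails on a set of positive density, as does a condition you omit entirely: stable reduction at every $\lambda$ with $N(\lambda)>N_0$ (i.e.\ $a_1,a_2$ not both divisible by $\lambda$), which is a standing hypothesis of Lemma~\ref{L:basic unramified chi new} and hence of Proposition~\ref{L:irreducibility bound of deg lambda} --- good reduction at two small primes alone does not license the irreducibility bound. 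So no single choice of cutoff yields a density-one set, and letting the cutoff depend on $a$ runs into the fact that the exceptional set in Theorem~\ref{T:HIT Drinfeld} depends on the level $\aA$. The paper's resolution is to make the cutoff $m$ a parameter while the target set $\calB$ stays fixed: one bounds $\bbar\delta(A^2-\calB)$ by $\bbar\delta(A^2-\calR)+\bbar\delta(A^2-\calS_m)+\bbar\delta(A^2-\calT_m)+\bbar\delta(A^2-\calU_m)$, shows the first and last vanish for every $m$ (a divergent Euler product over $1-1/N(\p)$, resp.\ Theorem~\ref{T:HIT Drinfeld}), and shows the middle two are less than $\varepsilon$ once $m$ is large (tail estimates of the shape $\sum_{\deg\p>m}q^{-2\deg\p}$ and $2/q^{d(m)}$); since $\varepsilon$ is arbitrary, $\delta(A^2-\calB)=0$. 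Without this device (or an equivalent one) your conditions (c)--(d) do not assemble into a density-one statement.

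There are also concrete slips in the group-theoretic bookkeeping. Hypothesis (d) of Theorem~\ref{T:criterion for computing commutator} asks for $\det(G_\aA)=A_\aA^\times$ \emph{jointly} for the product $\aA$ of all primes of norm $q$; this is strictly stronger than your componentwise condition (b) and does not follow from it --- in the paper it comes from Proposition~\ref{P:det image} applied with the prime $\p$ of your condition (c), using $v_\p(a_2)=1$ and $\p\nmid\aA$. Likewise (b) itself should be derived from Proposition~\ref{P:det image} or Theorem~\ref{T:Gekeler}(i), whose gcd involves the valuations $v_\p(a_2)$ that your sieve controls, not from Theorem~\ref{T:det image}: its index $\gcd(d-1,(q-1)/e)$ is insensitive to the sieve and is typically bigger than $1$. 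Finally, for $\lambda\mid\aA_0$, surjectivity modulo $\lambda^2$ does \emph{not} imply hypotheses (a) and (e) of Proposition~\ref{P:full GL2R criterion}: the determinant of the image need not be all of $A_\lambda^\times$, and a lift to $G$ of a mod-$\lambda^2$ matrix of determinant $1$ need not lie in $\SL_2(A_\lambda)$. You must feed in your determinant condition and the inertia transvection at $\p_1$ (available at small $\lambda$ too, since $\p_1\neq\lambda$), which is exactly how the paper verifies (a), (c) and (e) uniformly at every $\lambda$.
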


\subsection{Proof of Theorem~\ref{T:almost main}}

Fix an integer $m\geq 2$.  
\begin{itemize}
\item
Let $\calR$ be the set of $(a_1,a_2)\in A^2$ for which there are at least two distinct nonzero prime ideals $\p$ of $A$ that satisfy $\deg \p >1$, $v_\p(a_1)=0$ and $v_\p(a_2)=1$.  
\item
Let $\calS_m$ be the set of $(a_1,a_2)\in A^2$ for which $a_1\not\equiv 0\pmod{\p}$ or $a_2\not\equiv 0\pmod{\p}$ for all nonzero prime ideals $\p$ of $A$ with $\deg \p > m$.
\item
Let $\calT_m$ be the set of $(a_1,a_2)\in A^2$ for which there are two distinct prime ideals $\p_1$ and $\p_2$ of $A$ of the same degree $d\leq m/(2(q-1)^2(q+1))$ for which $a_2\not\equiv 0 \pmod{\p_1}$ and $a_2\not\equiv 0\pmod{\p_2}$.
\item
Let $\calU_m$ be the set of $(a_1,a_2)\in A^2$ with $a_2\neq 0$ for which  $\bbar\rho_{\phi(a),\lambda^2}(\Gal_F)=\GL_2(A/\lambda^2)$ for all nonzero prime ideals $\lambda$ of $A$ with $\deg \lambda \leq m$.  
\end{itemize}

\begin{lemma} \label{L:sieving lambda-adic images}
For any $a\in \calR \cap \calS_m \cap \calT_m \cap \calU_m$ and nonzero prime ideal $\lambda$ of $A$, we have $\rho_{\phi(a),\lambda}(\Gal_F)=\GL_2(A_\lambda)$.
\end{lemma}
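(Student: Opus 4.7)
The plan is to verify the group-theoretic criterion of Proposition~\ref{P:full GL2R criterion} for the closed subgroup $G := \rho_{\phi(a),\lambda}(\Gal_F)$ of $\GL_2(A_\lambda)$, splitting the argument into two regimes according to whether $\deg\lambda\leq m$ or $\deg\lambda>m$. Condition (a), namely $\det(G)=A_\lambda^\times$, is the same in both regimes: because $a\in\calR$ there are at least two distinct primes $\p$ with $v_\p(a_2)=1$, so at least one of them is different from $\lambda$, and the gcd appearing in Proposition~\ref{P:det image} with $\aA=\lambda^n$ collapses to $1$ for every $n\geq 1$; passing to the inverse limit yields $\det(G)=A_\lambda^\times$.

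In the first regime $\deg\lambda\leq m$, the hypothesis $a\in\calU_m$ supplies $\bbar\rho_{\phi(a),\lambda^2}(\Gal_F)=\GL_2(A/\lambda^2)$, which directly gives conditions (b), (c), and (d) of Proposition~\ref{P:full GL2R criterion}. Condition (e) is only relevant when $|\FF_\lambda|=2$ (forcing $q=2$ and $\deg\lambda=1$): the idea is to lift the order-$2$ element $\left(\begin{smallmatrix}1&1\\0&1\end{smallmatrix}\right)\in\SL_2(A/\lambda^2)$ to some $g\in G$, note that $\det(g)\in 1+\lambda^2 A_\lambda$, and correct $g$ by an element of $G^2:=G\cap\ker(\GL_2(A_\lambda)\to\GL_2(A/\lambda^2))$ whose determinant is $\det(g)^{-1}$; the existence of such a corrector follows by combining $\det(G)=A_\lambda^\times$ with mod-$\lambda^2$ surjectivity to see that $\det(G^2)=1+\lambda^2 A_\lambda$.

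In the second regime $\deg\lambda>m$, conditions (d) and (e) are vacuous because $|\FF_\lambda|\geq q^{m+1}\geq 8$, so only (b) and (c) remain, and I would obtain them from inertia at an auxiliary prime. From $\calR$ pick a prime $\p\neq\lambda$ with $v_\p(a_1)=0$ and $v_\p(a_2)=1$; then $\phi(a)$ has stable reduction of rank $1$ at $\p$ with $v_\p(j_{\phi(a)})=-1$, in particular coprime to $q$. Applying Proposition~\ref{P:main Tate new}(\ref{P:main Tate new iii}) with $\aA=\lambda$ (so $e=0\leq 1$) then produces a subgroup of $\bbar\rho_{\phi(a),\lambda}(I_\p)$ conjugate to the upper unipotent subgroup of $\GL_2(\FF_\lambda)$, of order $|\FF_\lambda|$. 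Since $a\in\calS_m$ forces $\phi(a)$ to have stable reduction at $\lambda$ itself, and $a\in\calT_m$ together with Proposition~\ref{L:irreducibility bound of deg lambda} (applied with $n=(q-1)^2(q+1)$) forces $\bbar\rho_{\phi(a),\lambda}$ to be irreducible, Proposition~\ref{P:group contains SL2 condition} yields $G\bmod\lambda\supseteq\SL_2(\FF_\lambda)$, which together with (a) gives (b). Rerunning Proposition~\ref{P:main Tate new}(\ref{P:main Tate new iii}) with $\aA=\lambda^2$ produces a nonidentity element of $G\bmod\lambda^2$ of the form $I+\lambda B$ with $B\bmod\lambda$ conjugate to the rank-one nilpotent $\left(\begin{smallmatrix}0&1\\0&0\end{smallmatrix}\right)$, hence nonscalar; lifting to $G$ supplies (c).

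The main obstacle lies in the second regime: none of the sieving sets directly controls the mod-$\lambda^2$ image of $G$ once $\deg\lambda>m$, and the workaround is precisely the Tate inertia argument at the auxiliary prime $\p\in\calR$, which is why $\calR$ is engineered to furnish two such primes for every $\lambda$. A more routine but fiddly issue is the derivation of condition (e) in the first regime, where one has to convert abstract surjectivity of $\det$ and of the mod-$\lambda^2$ representation into the existence of a specific element of $G\cap\SL_2(A_\lambda)$.
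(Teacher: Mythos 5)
Your overall strategy is the paper's: verify Proposition~\ref{P:full GL2R criterion} for $G=\rho_{\phi(a),\lambda}(\Gal_F)$, get condition (\ref{P:full GL2R criterion a}) from $\calR$ via Proposition~\ref{P:det image}, treat $\deg\lambda>m$ with the inertia argument at an auxiliary prime $\p\in\calR$ together with $\calS_m$, $\calT_m$, Lemma~\ref{L:basic unramified chi new} and Proposition~\ref{L:irreducibility bound of deg lambda}, and treat $\deg\lambda\le m$ with $\calU_m$; all of that part matches the paper and is correct. The genuine gap is your derivation of condition (\ref{P:full GL2R criterion e}) when $\deg\lambda\le m$ and $N(\lambda)=2$. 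You assert that $\det(G)=A_\lambda^\times$ together with surjectivity of $G$ modulo $\lambda^2$ forces $\det(G^2)=1+\lambda^2A_\lambda$, so that a lift of $\left(\begin{smallmatrix}1&1\\0&1\end{smallmatrix}\right)$ can be corrected into $G\cap\SL_2(A_\lambda)$. That implication is false, and in fact conditions (\ref{P:full GL2R criterion a})--(\ref{P:full GL2R criterion d}) do not imply (\ref{P:full GL2R criterion e}): with $R=\FF_2[\![\pi]\!]$, choose a continuous character $\chi\colon R^\times\to\{\pm1\}$ with $\chi(1+\pi^3)=-1$ (possible since $1+\pi^3$ is not a square in $R^\times$), let $\mathrm{sgn}$ denote the composite $\GL_2(R)\to\GL_2(\FF_2)\cong\mathfrak{S}_3\to\{\pm1\}$, and set $G:=\{g\in\GL_2(R):\chi(\det g)=\mathrm{sgn}(g)\}$. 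This closed index-$2$ subgroup has $\det(G)=R^\times$ and maps onto $\GL_2(R/\pi^2)$ (correct any lift by $\mathrm{diag}(1+\pi^3,1)$, which is trivial mod $\pi^2$), yet every $g\in G^2$ has $\det(g)\in\ker\chi\cap(1+\pi^2R)\subsetneq 1+\pi^2R$, and every element of $G\cap\SL_2(R)$ reduces mod $\pi$ into the order-$3$ subgroup of $\SL_2(\FF_2)$, so no element as in (\ref{P:full GL2R criterion e}) exists. Since your argument for (e) uses only those two facts, it cannot be completed as written.

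The repair is already contained in your own regime-$2$ argument, and it is what the paper does: run the inertia argument at the auxiliary prime $\p\in\calR$ for \emph{every} $\lambda$, not only for $\deg\lambda>m$. Because $\deg\p>1$, we have $\p\neq\lambda$ even when $\deg\lambda=1$, and $v_\p(j_{\phi(a)})=-1$ is prime to $q$; Proposition~\ref{P:main Tate new} then gives, for every $i\geq1$ and after compatible choices of bases, the full unipotent group $\left\{\left(\begin{smallmatrix}1&b\\0&1\end{smallmatrix}\right):b\in A/\lambda^i\right\}$ inside $\bbar\rho_{\phi(a),\lambda^i}(\Gal_F)$, and by closedness of $G$ this produces $\left(\begin{smallmatrix}1&1\\0&1\end{smallmatrix}\right)\in G\cap\SL_2(A_\lambda)$, which is exactly condition (\ref{P:full GL2R criterion e}) (and yields condition (\ref{P:full GL2R criterion c}) uniformly as well, as in the paper). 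With that substitution your proof coincides with the paper's.
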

\begin{proof}
Take any $a=(a_1,a_2) \in \calR\cap \calS_m \cap \calT_m \cap \calU_m$ and any nonzero prime ideal $\lambda$ of $A$.   We have $a_2\neq 0$.   Define $G:=\rho_{\phi(a),\lambda}(\Gal_F)$; it is a closed subgroup of $\GL_2(A_\lambda)$.  With $R:=A_\lambda$, we will show that $G$ satisfies the conditions of Proposition~\ref{P:full GL2R criterion}.    Once we have verified that the conditions hold, Proposition~\ref{P:full GL2R criterion} will imply that $G=\GL_2(A_\lambda)$.  

Since $a\in \calR$, there is a nonzero prime ideal $\p\neq \lambda$ of $A$ such that $v_\p(a_1)=0$ and $v_\p(a_2)=1$.   In particular, $\phi(a)$ has stable reduction of rank $1$ at $\p$.    Define the $j$-invariant $j_{\phi(a)}:=a_1^{q+1}/a_2\in F$.  We have $v_\p(j_{\phi(a)})=-1$.     Since $v_\p(a_2)=1$ and $\p\neq \lambda$, $\det(G)=A_\lambda^\times$ by Proposition~\ref{P:det image}.   This verifies condition (\ref{P:full GL2R criterion a}) of Proposition~\ref{P:full GL2R criterion}.

 Let $I_{\p}$ be an inertia subgroup of $\Gal_F$ for the prime $\p$.  Take any $i\geq 1$.  Since $\p\neq \lambda$ and the denominator of $v_\p(j_{\phi(a)})/N(\lambda^i)=-1/N(\lambda)^i$ is $N(\lambda)^i$,  Proposition~\ref{P:main Tate new} implies that there is a subgroup of $\left\{ \left(\begin{smallmatrix} 1 & b \\0 & 1\end{smallmatrix}\right) : b\in A/\lambda^i \right\}$ of order $N(\lambda)^i$ that is conjugate in $\GL_2(A/\lambda^i)$ to a subgroup of $\bbar\rho_{\phi(a),\lambda^i}(I_\p)$.  So after choosing an appropriate basis for $\phi[\lambda^i]$ for all $i\geq 1$, we may assume that 
  \begin{align} \label{E:Tate consequence}
\left\{ \left(\begin{smallmatrix} 1 & b \\0 & 1\end{smallmatrix}\right) : b\in A/\lambda^i \right\}\subseteq  \bbar\rho_{\phi(a),\lambda^i}(\Gal_F).
  \end{align}  
We thus have $\left(\begin{smallmatrix} 1 & 1 \\0 & 1\end{smallmatrix}\right) \in G$ and hence condition (\ref{P:full GL2R criterion e}) of Proposition~\ref{P:full GL2R criterion} holds.   With $i=2$, (\ref{E:Tate consequence}) implies that condition (\ref{P:full GL2R criterion c}) of Proposition~\ref{P:full GL2R criterion} holds.  If $N(\lambda)=2$, then $\bbar\rho_{\phi(a),\lambda^2}(\Gal_F)=\GL_2(A/\lambda^2)$ since $a\in \calU_m$ and $m\geq 2$; this shows that condition (\ref{P:full GL2R criterion d}) of Proposition~\ref{P:full GL2R criterion} holds.
  
It remains to verify that condition (\ref{P:full GL2R criterion b}) of Proposition~\ref{P:full GL2R criterion} holds, i.e., show that $\bbar\rho_{\phi(a),\lambda}(\Gal_F)=\GL_2(\FF_\lambda)$.   Since $a\in \calU_m$, we may assume that $\deg \lambda > m$.   Since $a\in \calS_m$, we have $a_1\not\equiv 0\pmod{\lambda}$ or $a_2\not\equiv 0 \pmod{\lambda}$.  Therefore, $\phi(a)$ has stable reduction at $\lambda$.  Since $\bbar\rho_{\phi(a),\lambda}(\Gal_F)$ contains a subgroup of order $N(\lambda)$ by (\ref{E:Tate consequence}), Proposition~\ref{P:group contains SL2 condition} implies that $\bbar\rho_{\phi(a),\lambda}(\Gal_F)\supseteq \SL_2(\FF_\lambda)$ or $\bbar\rho_{\phi(a),\lambda}$ is reducible.  
 
 Suppose that $\bbar\rho_{\phi(a),\lambda}$ is reducible.   Since $a\in \calT_m$, there are two distinct prime ideals $\p_1$ and $\p_2$ of $A$ of the same degree $d \leq m/(2(q-1)^2(q+1))$ for which $\phi(a)$ has good reduction at both $\p_1$ and $\p_2$.   By Lemmas~\ref{L:basic unramified chi new} and \ref{L:irreducibility bound of deg lambda} with $n:=(q-1)^2(q+1)$, we have $\deg \lambda \leq 2 (q-1)^2(q+1) d\leq m$.  This is a contradiction since $\deg \lambda>m$.   
 
 Therefore, $\bbar\rho_{\phi(a),\lambda}$ is irreducible and hence $\bbar\rho_{\phi(a),\lambda}(\Gal_F)\supseteq \SL_2(\FF_\lambda)$.  Since $\det(G)=A_\lambda^\times$, we deduce that $\det(\bbar\rho_{\phi(a),\lambda}(\Gal_F))=\FF_\lambda^\times$ and hence $\bbar\rho_{\phi(a),\lambda}(\Gal_F)= \GL_2(\FF_\lambda)$.
\end{proof}

\begin{lemma}  \label{L:sieving adelic images}
Take any $a\in \calR \cap \calS_m \cap \calT_m \cap \calU_m$.  Then the commutator subgroup of $\rho_\phi(\Gal_F) \subseteq \GL_2(\widehat{A})$ is equal to $[\GL_2(\widehat{A}),\GL_2(\widehat{A})]$.
\end{lemma}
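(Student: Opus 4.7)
The plan is to apply Theorem~\ref{T:criterion for computing commutator} to the closed subgroup $G:=\rho_{\phi(a)}(\Gal_F)$ of $\GL_2(\widehat{A})$; its conclusion gives exactly the statement of the lemma. The task reduces to verifying hypotheses (a)--(d) of that theorem. Since $a\in\calR$, I would first fix a nonzero prime $\p$ of $A$ with $\deg\p>1$, $v_\p(a_1)=0$, and $v_\p(a_2)=1$. Then $\phi(a)$ has stable reduction of rank $1$ at $\p$ with $v_\p(j_{\phi(a)})=(q+1)v_\p(a_1)-v_\p(a_2)=-1$, so in particular $\gcd(v_\p(j_{\phi(a)}),q)=1$. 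This single prime will drive the verification of conditions (b), (c), and (d).

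Hypothesis (a) is immediate from Lemma~\ref{L:sieving lambda-adic images}. For hypothesis (b), given distinct $\lambda_1,\lambda_2$ with $N(\lambda_1)=N(\lambda_2)>3$, I would apply Proposition~\ref{P:main Tate new}(\ref{P:main Tate new iii}) to $\phi(a)$ over $F_\p$ with $\aA=\lambda_1\lambda_2$: regardless of whether $\p$ coincides with one of the $\lambda_i$, the $\p$-adic valuation of $\aA$ is at most $1$, so the hypothesis $e\le 1$ holds. The proposition exhibits a subgroup of $\bbar\rho_{\phi(a),\lambda_1\lambda_2}(I_\p)$ conjugate to $\{\left(\begin{smallmatrix}1&b\\0&1\end{smallmatrix}\right):b\in A/\lambda_1\lambda_2\}$, of cardinality $N(\lambda_1)^2$, sitting inside the image of $G$ modulo $\lambda_1\lambda_2$. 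For hypothesis (c), $N(\lambda_1)=N(\lambda_2)=2$ forces $q=2$ and both $\lambda_i$ to have degree $1$, so $\p\notin\{\lambda_1,\lambda_2\}$ automatically; applying Proposition~\ref{P:main Tate new}(\ref{P:main Tate new iii}) with $\aA=(\lambda_1\lambda_2)^n$ and $e=0$ for every $n\ge 1$, and using that the resulting unipotent subgroups come compatibly from the Tate uniformization at $\p$, I would pass to the inverse limit to obtain a subgroup of $G_{\lambda_1\lambda_2}\cap\SL_2(A_{\lambda_1\lambda_2})$ conjugate in $\GL_2(A_{\lambda_1\lambda_2})$ to $\{\left(\begin{smallmatrix}1&b\\0&1\end{smallmatrix}\right):b\in A_{\lambda_1\lambda_2}\}$. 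Finally, for hypothesis (d) (only relevant when $q\in\{2,3\}$, with $\aA$ the product of the degree-$1$ primes), I would use the determinant identity $\det\rho_{\phi(a)}=\rho_\psi$ coming from the Weil-pairing, where $\psi_t=t-a_2\tau$, together with Theorem~\ref{T:Gekeler}(\ref{T:Gekeler i}) applied to each power $\aA^n$; since $v_\p(a_2)=1$ while $\p\nmid\aA^n$ (as $\deg\p>1$ but $\aA^n$ is supported on degree-$1$ primes), the gcd defining the index collapses to $1$, and the inverse limit over $n$ yields $\det(G_\aA)=A_\aA^\times$.

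The main technical obstacle is hypothesis (c): Proposition~\ref{P:main Tate new}(\ref{P:main Tate new iii}) produces a unipotent subgroup only modulo a fixed ideal $\aA$, whereas the criterion demands such a subgroup at the full local level $A_{\lambda_1\lambda_2}$. Passing to the inverse limit requires choosing bases of the $(\lambda_1\lambda_2)$-adic Tate module that remain coherent across levels $(\lambda_1\lambda_2)^n$, which in turn rests on the concrete Tate-uniformization construction underlying the proof of Proposition~\ref{P:main Tate new} rather than on its black-box statement.
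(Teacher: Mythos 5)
Your proposal is correct and follows essentially the same route as the paper: both verify hypotheses (a)--(d) of Theorem~\ref{T:criterion for computing commutator} for $G=\rho_{\phi(a)}(\Gal_F)$ using the prime $\p$ supplied by $\calR$ (with $v_\p(j_{\phi(a)})=-1$), taking (a) from Lemma~\ref{L:sieving lambda-adic images}, (b)--(c) from Proposition~\ref{P:main Tate new} applied to inertia at $\p$ (the paper uses parts (\ref{P:main Tate new i})--(\ref{P:main Tate new ii}) where you use (\ref{P:main Tate new iii}), an immaterial difference), and (d) from Gekeler's determinant result, which is exactly what Proposition~\ref{P:det image} packages. The inverse-limit/coherent-basis issue you flag in (c) is treated by the paper in the same informal way (``after choosing appropriate bases for $\phi[\lambda_1^i\lambda_2^i]$ for all $i\geq 1$''), and is indeed justified by the Galois-stable filtration coming from the Tate uniformization (or by a compactness argument), so it is not a gap in your approach.
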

\begin{proof}
Define $G:=\rho_\phi(\Gal_F)$; it is a closed subgroup of $\GL_2(\widehat{A})$.   We will now verify that $G$ satisfies all the conditions of Theorem~\ref{T:criterion for computing commutator}.   For any nonzero prime ideal $\lambda$ of $A$, we have $G_\lambda = \GL_2(A_\lambda)$ by Lemma~\ref{L:sieving lambda-adic images}.  This verifies condition (\ref{T:criterion for computing commutator a}) of Theorem~\ref{T:criterion for computing commutator}.

Now take any distinct nonzero prime ideals $\lambda_1$ and $\lambda_2$ of $A$.   Since $a\in \calR$, there is a nonzero prime ideal $\p$ of $A$ such that $v_\p(a_1)=0$, $v_\p(a_2)=1$, and $\deg \p>1$.   In particular, $\phi(a)$ has stable reduction of rank $1$ at $\p$.    We have $v_\p(j_{\phi(a)})=-1$.     

 Let $I_{\p}$ be an inertia subgroup of $\Gal_F$ for the prime $\p$.  Take any $i\geq 1$.  The denominator of $v_\p(j_{\phi(a)})/N(\lambda_1 \lambda_2)=-1/N(\lambda_1 \lambda_2)$ is $N(\lambda_1)N( \lambda_2)$ and hence $\bbar\rho_{\phi(a),\lambda_1\lambda_2}(I_\p)$ contains a subgroup of cardinality $N(\lambda_1)N(\lambda_2)$ by Proposition~\ref{P:main Tate new}(\ref{P:main Tate new ii}).  In particular, the image of $G$ modulo $\lambda_1\lambda_2$ contains a subgroup of cardinality $N(\lambda_1)N(\lambda_2)$.  This verifies that condition (\ref{T:criterion for computing commutator b}) of Theorem~\ref{T:criterion for computing commutator} holds.
 
 Now suppose that $N(\lambda_1)=N(\lambda_2)=2$.  We have $\p\notin \{\lambda_1,\lambda_2\}$ since $\deg \p>1$. Take any $i\geq 1$.  Proposition~\ref{P:main Tate new} implies that $\bbar\rho_{\phi(a),\lambda_1^i\lambda_2^i}(I_\p)$ is conjugate in $\GL_2(A/(\lambda_1^i\lambda_2^i))$ to a subgroup of $\left\{ \left(\begin{smallmatrix} 1 & b \\0 & 1\end{smallmatrix}\right) : b\in A/(\lambda_1^i \lambda_2^i) \right\}$ and that $\bbar\rho_{\phi(a),\lambda_1^i\lambda_2^i}(I_\p)$ has cardinality divisible by $N(\lambda_1^i\lambda_2^i)$.   So after choosing  appropriate bases for $\phi[\lambda_1^i \lambda_2^i]$ for all $i\geq 1$, we may assume that 
  \begin{align*} 
\left\{ \left(\begin{smallmatrix} 1 & b \\0 & 1\end{smallmatrix}\right) : b\in A_{\lambda_1\lambda_2} \right\}\subseteq  \rho_{\phi(a),\lambda_1\lambda_2}(\Gal_F).
  \end{align*}  
 This verifies that condition (\ref{T:criterion for computing commutator c}) of Theorem~\ref{T:criterion for computing commutator} holds.
 
Now suppose that $q\in \{2,3\}$ and let $\aA$ be the ideal that is the product of the prime ideals of $A$ of degree $1$.  We have $\deg \p > 1$.   Since $v_\p(a_2)=1$ and $\p\nmid \aA$, we have $\det(\bbar\rho_{\phi,\aA^i}(\Gal_F))=(A/\aA^i)^\times$ for all $i\geq 1$ by Proposition~\ref{P:det image}.  Condition (\ref{T:criterion for computing commutator d}) of Theorem~\ref{T:criterion for computing commutator} is now immediate.

We have verified the conditions of Theorem~\ref{T:criterion for computing commutator} and hence $G$ and $\GL_2(\widehat{A})$ have the same commutator subgroup.
\end{proof}

By Lemmas~\ref{L:sieving lambda-adic images} and \ref{L:sieving adelic images}, we have an inclusion
$\calR \cap \calS_m \cap \calT_m \cap \calU_m \subseteq \calB$ and hence
\begin{align}\label{E:calB inclusion}
A^2 - \calB \subseteq (A^2 -\calR) \cup (A^2 -\calS_m) \cup (A^2 - \calT_m) \cup (A^2 -\calU_m).
\end{align}
We now bounds the upper densities of the sets in the right-hand side of (\ref{E:calB inclusion}).

\begin{lemma} \label{L:density R}
We have $\delta(A^2-\calR)=0$.
\end{lemma}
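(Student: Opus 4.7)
The plan is to run a Borel--Cantelli style argument over the primes of $A$ of degree $>1$, combining Lemma~\ref{L:easy density} (to pass from density to a counting problem in finite quotients) with the Chinese remainder theorem (to assert that the local conditions at distinct primes are independent). For each nonzero prime ideal $\p$ of $A$ with $\deg \p > 1$, let $E_\p \subseteq A^2$ denote the set of pairs $(a_1, a_2)$ with $v_\p(a_1) = 0$ and $v_\p(a_2) = 1$. Counting residues modulo $\p^2$ and applying Lemma~\ref{L:easy density} yields
\[
\delta(E_\p) = p_\p := \frac{(N(\p)-1)^2}{N(\p)^3}.
\]
More generally, for any finite set $\Sigma$ of such primes and any subset $\Sigma_0 \subseteq \Sigma$, the set
$\bigcap_{\p \in \Sigma_0} E_\p \cap \bigcap_{\p \in \Sigma \setminus \Sigma_0}(A^2 - E_\p)$
is defined by congruence conditions modulo $\prod_{\p \in \Sigma}\p^2$, and CRT together with Lemma~\ref{L:easy density} gives its density as $\prod_{\p \in \Sigma_0} p_\p \prod_{\p \in \Sigma \setminus \Sigma_0}(1 - p_\p)$.

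Next, I would observe that if $(a_1, a_2) \in A^2 - \calR$, then by definition $(a_1, a_2)$ lies in $E_\p$ for at most one prime $\p$ of degree $>1$, and a fortiori for at most one $\p \in \Sigma$. Hence
\[
A^2 - \calR \; \subseteq \; \bigcap_{\p \in \Sigma}(A^2 - E_\p) \;\cup\; \bigcup_{\p_0 \in \Sigma}\Bigl(E_{\p_0} \cap \bigcap_{\p \in \Sigma - \{\p_0\}}(A^2 - E_\p)\Bigr),
\]
which, by the product formula above, has density
\[
D(\Sigma) \;=\; \Bigl(1 + \sum_{\p_0 \in \Sigma} \frac{p_{\p_0}}{1 - p_{\p_0}}\Bigr) \prod_{\p \in \Sigma}(1 - p_\p).
\]
Therefore $\overline{\delta}(A^2 - \calR) \leq D(\Sigma)$ for every finite $\Sigma$ of primes of degree $>1$, and it suffices to produce a sequence of such $\Sigma$ along which $D(\Sigma) \to 0$.

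For the final step I would take $\Sigma_n$ to be the set of all primes of $A$ of degree between $2$ and $n$. Since $p_\p \sim 1/N(\p)$ as $N(\p) \to \infty$, and since the number of monic irreducibles in $A$ of degree $d$ is $\sim q^d/d$, we have $\sum_{\deg \p = d} p_\p \sim 1/d$, so $T_n := \sum_{\p \in \Sigma_n} p_\p \to \infty$ as $n \to \infty$. For all but finitely many $\p$ we have $p_\p \leq 1/2$ and hence $p_\p/(1-p_\p) \leq 2 p_\p$, so the factor in parentheses in $D(\Sigma_n)$ is $O(1 + T_n)$, while $\prod_{\p \in \Sigma_n}(1-p_\p) \leq \exp(-T_n)$. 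Thus $D(\Sigma_n) \ll (1+T_n) e^{-T_n} \to 0$, and we conclude $\delta(A^2 - \calR) = 0$.

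The argument is quite routine; the only point requiring mild care is the divergence $\sum_{\deg \p > 1} p_\p = \infty$, but this follows immediately from the function field prime counting estimate $|\{\p : \deg\p = d\}| = q^d/d + O(q^{d/2}/d)$. The restriction $\deg \p > 1$ in the definition of $\calR$ removes only finitely many primes and so does not affect divergence.
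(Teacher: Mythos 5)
Your argument is correct, and it reaches the conclusion by a slightly different combinatorial route than the paper. Both proofs run on the same two engines: Lemma~\ref{L:easy density} applied to congruence conditions modulo products of squares of primes (so that local conditions at distinct primes become independent via CRT), and the divergence of $\sum_{\deg \p \geq 2} 1/N(\p)$, which you get from the prime counting estimate and the paper gets from the zeta function identity $\prod_\p(1-T^{\deg\p})=1-qT$ at $T=1/q$. Where you differ is in how the condition ``at most one prime works'' is handled: you bound the density of the exceptional set directly by the standard Borel--Cantelli-type expression $\bigl(1+\sum_{\p\in\Sigma}\tfrac{p_\p}{1-p_\p}\bigr)\prod_{\p\in\Sigma}(1-p_\p)$, keeping track of the ``exactly one'' events explicitly, and then let $\Sigma$ grow. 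The paper instead splits the primes of degree $\geq 2$ into two disjoint infinite sets $\calP_1,\calP_2$, each with $\prod_{\p\in\calP_i}(1-1/N(\p))=0$, and observes that failing to be in $\calR$ forces membership in $S_1\cup S_2$, where $S_i$ is the event that no prime of $\calP_i$ works; each $S_i$ is then killed by a ``zero events occur'' estimate alone. The paper's splitting trick avoids ever computing the exactly-one term (at the cost of introducing the two auxiliary prime sets and a bound $\alpha_\p\leq(1-\tfrac1{N(\p)})(1+\tfrac{c}{N(\p)^2})$), while your version is more direct and yields the explicit quantitative decay $D(\Sigma_n)\ll(1+T_n)e^{-T_n}$. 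One very minor point of care in your write-up: the identity $\delta(E_\p)=p_\p$ and the product formula should be phrased, as you indicate, via Lemma~\ref{L:easy density} for the single modulus $\prod_{\p\in\Sigma}\p^2$, since the lemma gives exact densities only for sets defined by congruences modulo one fixed ideal; as stated this is exactly what you do, so the proof stands.
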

\begin{proof}
Recall that the reciprocal of the zeta function of $\AA^1_{\FF_q}=\Spec A$ is the power series $\prod_{\p}(1-T^{\deg \p})$, where the product is over nonzero prime ideals of $A$, and it is equal to $1-qT$.  By considering $T=1/q$, and hence $T^{\deg \p}=1/N(\p)$, we find that $\lim_{d\to + \infty}\prod_{\p, \, \deg\p\leq d} (1-\tfrac{1}{N(\p)})=0$.   We can choose two disjoint sets $\calP_1$ and $\calP_2$ of nonzero prime ideals of $A$ with degree at least $2$ such that 
\begin{align} \label{E:euler product}
\lim_{d\to + \infty}\prod_{\p\in \calP_i, \, \deg\p\leq d} (1-\tfrac{1}{N(\p)})=0
\end{align}
for $1\leq i \leq 2$. 

Take any $1\leq i \leq 2$ and let $S_i$ be the set of $(a_1,a_2)\in A^2$ such that $(v_\p(a_1),v_\p(a_2))\neq (0,1)$ for all $\p\in \calP_i$.  Take any $\p\in \calP_i$ and let $\Omega_\p$ be the set of $(b_1,b_2)\in (A/\p^2)^2$ for which $b_1\not\equiv 0\pmod{\p}$, $b_2\equiv 0 \pmod{\p}$ and $b_2\not\equiv0 \pmod{\p^2}$.   Define 
\[
\alpha_\p:=\tfrac{|(A/\p^2)^2-\Omega_\p|}{|(A/\p^2)^2|} = 1-\tfrac{|\Omega_\p|}{|(A/\p^2)^2|}= 1 - (1-\tfrac{1}{N(\p)})(\tfrac{1}{N(\p)}-\tfrac{1}{N(\p)^2}) \leq (1-\tfrac{1}{N(\p)})(1+\tfrac{c}{N(\p)^2}),
\]
where $c\geq 1$ is an absolute constant.   Note that the image of $S_i$ modulo $\p^2$ lies in $(A/\p^2)^2-\Omega_\p$.  For any positive integer $d$, Lemma~\ref{L:easy density} implies that
\[
\bbar\delta(S_i) \leq \prod_{\p\in \calP_i \, \deg\p\leq d} \alpha_\p \leq \prod_{\p \in \calP_i, \, \deg \p \leq d} (1-\tfrac{1}{N(\p)})(1+\tfrac{c}{N(\p)^2}).
\]
Using the zeta function of $\AA^1_{\FF_q}$, we find that $\prod_{\p, \, \deg \p \leq d} (1+\tfrac{c}{N(\p)^2})$ can be bounded independent of $d$.  So there is a positive constant $C$ such that 
\[
\bbar\delta(S_i) \leq C \prod_{\p \in \calP_i, \, \deg \p \leq d} (1-\tfrac{1}{N(\p)})
\]
holds for any $d\geq 1$.  By (\ref{E:euler product}), we deduce that $\bbar\delta(S_i)=0$ and hence $\delta(S_i)=0$.

We have $\calR\supseteq A^2-(S_1 \cup S_2)$ since the sets $\calP_1$ and $\calP_2$ are disjoint.  Since $\delta(S_1)=\delta(S_2)=0$, we conclude that $\delta(\calR)=1$.
\end{proof}

\begin{lemma} \label{L:density S}
For any $\varepsilon>0$, we have $\bbar{\delta}(A^2-\calS_m)<\varepsilon$ for all large enough $m\geq 1$.
\end{lemma}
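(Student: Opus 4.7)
The plan is a direct union bound. Observe that $A^2 - \calS_m$ is precisely the set of pairs $(a_1,a_2)$ for which some nonzero prime ideal $\p$ of $A$ with $\deg \p > m$ divides both $a_1$ and $a_2$; that is,
\[
A^2 - \calS_m \;=\; \bigcup_{\deg \p > m} E_\p, \qquad E_\p := \{(a_1,a_2) \in A^2 : \p \mid a_1,\ \p \mid a_2\}.
\]
The key observation is that a nonzero polynomial in $A(d)$ has degree at most $d$ and therefore no prime factor of degree exceeding $d$.  Consequently, for $d > m$, any $(a_1,a_2) \in A^2(d) \cap E_\p$ other than $(0,0)$ forces $\deg \p \leq d$, while $(0,0)$ itself lies in $E_\p$ for every $\p$; so on the truncation $A^2(d)$ the infinite union effectively becomes a finite one.

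Next I would count explicitly.  For a prime $\p$ with $\deg \p \leq d$, the multiples of $\p$ inside $A(d)$ form an $\FF_q$-subspace of dimension $d+1-\deg\p$, giving $|A^2(d) \cap E_\p| = q^{2(d+1-\deg\p)} = q^{2(d+1)}/N(\p)^2$.  Applying the union bound and dividing by $|A^2(d)| = q^{2(d+1)}$ yields
\[
\frac{|A^2(d) \cap (A^2 - \calS_m)|}{|A^2(d)|} \;\leq\; \sum_{m < \deg \p \leq d} \frac{1}{N(\p)^2}
\]
for all $d > m$; taking $\limsup$ as $d \to \infty$ gives $\bbar\delta(A^2 - \calS_m) \leq \sum_{\deg \p > m} 1/N(\p)^2$.

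Finally, this tail tends to $0$ as $m \to \infty$.  Indeed, by the prime polynomial theorem the number of monic irreducibles in $A$ of degree exactly $k$ is at most $q^k/k$, so
\[
\sum_{\deg \p > m} \frac{1}{N(\p)^2} \;\leq\; \sum_{k > m} \frac{1}{k\,q^k},
\]
which tends to $0$.  Equivalently, this convergence is immediate from the absolute convergence at $s = 2$ of the Euler product $\prod_\p (1 - N(\p)^{-s})^{-1} = (1 - q^{1-s})^{-1}$ for the zeta function of $\Spec A$.  Given $\varepsilon > 0$, choosing $m$ large enough to make this bound smaller than $\varepsilon$ proves the lemma.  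The argument is essentially elementary; the only point worth highlighting is the reduction of the a priori infinite union $\bigcup_{\deg \p > m} E_\p$ to the finite subunion over primes of degree in $(m, d]$ at each truncation level, which is what makes the union bound applicable.
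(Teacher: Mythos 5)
Your proof is correct and follows essentially the same route as the paper: truncate to $A^2(d)$, observe that (apart from $(0,0)$) only primes of degree in $(m,d]$ can divide both coordinates, count the multiples of each such $\p$ as an $\FF_q$-subspace of size $q^{2(d+1-\deg\p)}$, and bound the resulting sum by the convergent tail $\sum_{\deg\p>m} N(\p)^{-2}$. Your explicit tail estimate via the prime polynomial theorem is just a slightly more detailed version of the paper's appeal to absolute convergence of that series.
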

\begin{proof}
Define $\calC:=A^2-\calS_m$, i.e., the set of $(a_1,a_2)\in A^2$ for which $a_1\equiv 0 \pmod{\p}$ and $a_2\equiv 0 \pmod{\p}$ for some prime ideal $\p$ of $A$ with $\deg \p >m$.

Fix an integer $d>m$.  Let $\calP(d)$ be the set of $(a_1,a_2)\in A^2$ with $\deg(a_1)\leq d$ and $\deg(a_2)\leq d$.    Define $\calC(d):= \calC \cap \calP(d)$.   

Take any $(a_1,a_2) \in \calP(d) -\{(0,0)\}$ with $a_1\equiv 0 \pmod{\p}$ and $a_2 \equiv 0 \pmod{\p}$ for some nonzero prime ideal $\p$ of $A$.   Fix an $1\leq i \leq 2$ for which $a_i\neq 0$.    We have $a_i\equiv 0\pmod{\p}$ and hence $\deg \p \leq \deg a_i \leq d$.  

Therefore,
\begin{align} \label{E:calC bound series}
|\calC(d)| \leq \sum_{\p,\, m<\deg \p \leq d} \beta_\p(d),
\end{align}
where $\beta_\p(d)$ is the number of $(a_1,a_2)\in \calP(d)$ such that $a_1\equiv 0 \pmod{\p}$ and $a_2 \equiv 0 \pmod{\p}$. Take any nonzero prime ideal $\p$ of $A$ with $m<\deg(\p)\leq d$.   
  The reduction modulo $\p$ map $\calP(d)\to \FF_\p^2$ is a surjective homomorphism of $\FF_q$-vector spaces and hence the kernel has dimension $2(d+1)-2\deg\p$.  Therefore, $\beta_\p(d)=q^{2(d+1-\deg\p)}$.  Using (\ref{E:calC bound series}), we deduce that
\[
\frac{|\calC(d)|}{|\calP(d)|} \leq \sum_{\p,\, m< \deg \p \leq d} q^{-2\deg \p} \leq \sum_{\p,\, \deg \p >m} q^{-2\deg \p},
\]
where we note that the series converges absolutely.   So by taking $m\geq 1$ large enough, we will have $|\calC(d)|/|\calP(d)|<\varepsilon$ for all $d>m$.  Therefore, $\bbar\delta(A^2-\calS_m)=\bbar\delta(\calC)<\varepsilon$.
\end{proof}

\begin{lemma} \label{L:density T}
For any $\varepsilon>0$, we have $\bbar{\delta}(A^2-\calT_m) < \varepsilon$ for all large enough $m\geq 1$.   
\end{lemma}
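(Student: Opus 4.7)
The plan is to bound $\bbar{\delta}(A^2 - \calT_m)$ by exploiting a single sufficiently large degree $d \leq M := \lfloor m/(2(q-1)^2(q+1)) \rfloor$, noting that $M$ tends to infinity with $m$. First I would unwind the definition of the complement: $(a_1, a_2) \in A^2 - \calT_m$ if and only if for every $d' \leq M$ and every pair of distinct monic irreducibles $\p_1, \p_2$ of degree $d'$, at least one of them divides $a_2$. Equivalently, for each $d' \leq M$, at most one monic irreducible of degree $d'$ fails to divide $a_2$. In particular the condition depends only on $a_2$, and I only need to exploit the constraint at a single chosen $d$.

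Now fix $d \leq M$, let $N_d$ be the number of monic irreducibles of degree $d$ in $A$, and enumerate them as $\pi_1, \ldots, \pi_{N_d}$. Setting $g_i := \prod_{j \neq i} \pi_j$, a polynomial of degree $(N_d-1)d$, the set $\calE_d$ of $a \in A$ satisfying the above condition at degree $d$ is contained in $\bigcup_{i=1}^{N_d} \{a \in A : g_i \mid a\}$. A direct count (the multiples of a polynomial of degree $\Delta \leq D$ inside $A(D)$ number $q^{D-\Delta+1}$) gives $|\calE_d \cap A(D)| \leq N_d\, q^{D-(N_d-1)d+1}$ once $D \geq (N_d-1)d$. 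Combined with $A^2 - \calT_m \subseteq A \times \calE_d$ and $|A^2(D)| = q^{2(D+1)}$, this yields
\[
\frac{|A^2(D) \cap (A^2 - \calT_m)|}{|A^2(D)|} \;\leq\; \frac{|\calE_d \cap A(D)|}{q^{D+1}} \;\leq\; \frac{N_d}{q^{(N_d-1)d}}
\]
for all sufficiently large $D$, and hence $\bbar{\delta}(A^2 - \calT_m) \leq N_d/q^{(N_d-1)d}$.

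The choice of $d$ and $m$ is then straightforward. The prime polynomial theorem gives $N_d = q^d/d + O(q^{d/2}/d)$, so for $d$ large we have $(N_d-1)d \geq q^d/2$, and hence $N_d/q^{(N_d-1)d} \leq q^d/q^{q^d/2}$, which tends to $0$ extremely fast. Given $\varepsilon > 0$, I would first choose $d$ large enough that this bound lies below $\varepsilon$, and then choose $m$ large enough that $M \geq d$. There is no serious obstacle beyond ordering the quantifiers correctly; the argument is essentially a divisibility count combined with the prime polynomial theorem, and even the mild bookkeeping case $D < (N_d-1)d$ is harmless because it forces $\calE_d \cap A(D) = \{0\}$ and disappears in the limsup.
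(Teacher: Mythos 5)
Your proof is correct, and at its core it runs on the same observation as the paper's: non-membership in $\calT_m$ forces $a_2$ to be divisible by prescribed primes of a single chosen degree, and one then just counts. The execution differs, though. The paper fixes only \emph{one} pair of distinct primes $\p_1,\p_2$ of the maximal allowed degree $d(m)$, notes that $a\notin\calT_m$ forces $\p_1\mid a_2$ or $\p_2\mid a_2$, and applies Lemma~\ref{L:easy density} with modulus $\p_1\p_2$ to get $\bbar\delta(A^2-\calT_m)\leq 2/q^{d(m)}$, with the degree tied to $m$ so that the bound tends to $0$ automatically. You instead use \emph{all} $N_d$ irreducibles of a degree $d$ chosen in terms of $\varepsilon$, conclude that $a_2$ is a multiple of a product of $N_d-1$ of them, and count multiples directly in $A(D)$, getting the (vastly stronger, but unneeded) bound $N_d/q^{(N_d-1)d}$; the quantifier order ``first $d$ from $\varepsilon$, then $m$ large enough that $d\leq m/(2(q-1)^2(q+1))$'' is a perfectly valid alternative to the paper's. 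Two small remarks: your appeal to the prime polynomial theorem is overkill --- all you need is that $N_d\geq 2$ for large $d$ (which is also exactly what the paper needs to produce its pair $\p_1,\p_2$), since already $N_d\geq 2$ gives a bound $\leq 2/q^d$; and your direct count of multiples could equally have been packaged through Lemma~\ref{L:easy density}, which is the only ingredient the paper uses. So the two arguments buy the same conclusion, yours with a sharper numerical bound, the paper's with slightly lighter bookkeeping.
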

\begin{proof}
Let $d(m)$ be the largest integer for which $d(m)\leq m/(2(q-1)^2(q+1))$.  By taking $m$ large enough, we may assume that there are distinct nonzero prime ideals $\p_1$ and $\p_2$ of $A$ with $\deg \p_1 = \deg \p_2 =d(m)$.  Let $\Omega$ be the set of $b\in A/(\p_1\p_2)$ for which $b\equiv 0 \pmod{\p_1}$ or $b\equiv 0 \pmod{\p_2}$.  The image of $A^2-\calT_m $ modulo $\p_1\p_2$ lies in $\Omega$.  By Lemma~\ref{L:easy density}, we have $\bbar\delta(A^2-\calT_m ) = |\Omega|/|A/(\p_1\p_2)| \leq 1/N(\p_1) + 1/N(\p_2)=2/q^{d(m)}$.  Since $d(m)\to \infty$ as $m\to \infty$,  we conclude that $\bbar\delta(A^2-\calT_m )<\varepsilon$ for large enough $m$.
\end{proof}

\begin{lemma} \label{L:density U}
We have have $\delta(A^2-\calU_m)=0$.
\end{lemma}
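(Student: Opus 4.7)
The plan is to reduce the statement to a finite union of instances of the Hilbert-irreducibility result already established in Theorem~\ref{T:HIT Drinfeld}. The key observation is that, although the definition of $\calU_m$ involves every nonzero prime ideal $\lambda$ of $A$ with $\deg\lambda\leq m$, there are only finitely many such $\lambda$, so we only need uniform control over a finite family of ideals.

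First I would unwind the definition of $\calU_m$ to write
\[
A^2-\calU_m \;\subseteq\; \{(a_1,a_2)\in A^2 : a_2=0\} \;\cup\; \bigcup_{\deg\lambda\leq m} E_\lambda,
\]
where $\lambda$ ranges over nonzero prime ideals of $A$ of degree at most $m$, and $E_\lambda$ is the set of $a=(a_1,a_2)\in A^2$ with $a_2\neq 0$ such that $\bbar\rho_{\phi(a),\lambda^2}(\Gal_F)$ is a proper subgroup of $\GL_2(A/\lambda^2)$. The set of pairs with $a_2=0$ clearly has density $0$ (it lies in a proper $\FF_q$-subspace of $A^2(d)$ of codimension $d+1$), so it suffices to show that each $E_\lambda$ has density $0$.

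Next I would fix a prime $\lambda$ with $\deg\lambda\leq m$ and apply Theorem~\ref{T:HIT Drinfeld} with $r=2$ and the ideal $\aA:=\lambda^2$. That theorem states that the set of $a\in A^2$ with $a_2\neq 0$ for which $\bbar\rho_{\phi(a),\lambda^2}(\Gal_F)=\GL_2(A/\lambda^2)$ has density $1$. Equivalently, $\delta(E_\lambda)=0$.

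Finally, since the set $\{\lambda : \deg\lambda \leq m\}$ of nonzero prime ideals of $A$ is finite, the union $\bigcup_{\deg\lambda\leq m} E_\lambda$ is a finite union of density-zero sets, which therefore has density $0$. Combining this with the fact that the locus $a_2=0$ also has density $0$, we conclude that $\delta(A^2-\calU_m)=0$, as required. No step is particularly delicate here; the lemma really is just a repackaging of Theorem~\ref{T:HIT Drinfeld} together with the finiteness of the set of primes of bounded degree.
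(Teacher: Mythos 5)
Your proof is correct and is essentially the paper's argument: the paper simply cites Theorem~\ref{T:HIT Drinfeld}, and your write-up just makes explicit the reduction to finitely many primes $\lambda$ of degree at most $m$ (one could equally apply the theorem once with $\aA=\prod_{\deg\lambda\leq m}\lambda^2$, since surjectivity modulo $\aA$ implies surjectivity modulo each $\lambda^2$).
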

\begin{proof}
This follows from Theorem~\ref{T:HIT Drinfeld}.
\end{proof}

From the inclusion (\ref{E:calB inclusion}), we have
\[
\bbar\delta(A^2-\calB)\leq  \bbar\delta(A^2-\calR) +\bbar\delta(A^2-\calS_m) +\bbar\delta(A^2-\calT_m)+\bbar\delta(A^2-\calU_m).
\]
Take any $\varepsilon>0$.   Using Lemmas~\ref{L:density R}, \ref{L:density S}, \ref{L:density T} and \ref{L:density U}, we deduce that $\bbar\delta(A^2-\calB)<\varepsilon$ for all sufficiently large integers $m$.  Since $\varepsilon>0$ was arbitrary, we have $\bbar\delta(A^2-\calB)=0$ and hence $\delta(A^2-\calB)=0$.  Therefore, $\calB$ has density $1$.

\subsection{Proof of Theorem~\ref{T:main new} when $q\neq 2$} \label{SS:main new proof -main}

We assume throughout that $q\neq 2$.  

Take any $a\in \calB$. We have $\rho_{\phi(a),\lambda}(\Gal_F)=\GL_2(A_\lambda)$ for all nonzero prime ideals $\lambda$ of $A$ by our definition of $\calB$ and hence $a\in S_3$.    	We have $[\GL_2(\widehat{A}),\GL_2(\widehat{A})]=\SL_2(\widehat{A})$ by Proposition~\ref{P:commutator GL2(R)} and our assumption that $q\neq 2$.    By our definition of $\calB$, the commutator subgroup of $\rho_{\phi(a)}(\Gal_F)$ is $\SL_2(\widehat{A})$ and in particular we have $\rho_{\phi(a)}(\Gal_F)\supseteq \SL_2(\widehat{A})$.   The index $[\GL_2(\widehat{A}): \rho_{\phi(a)}(\Gal_F)]$ thus agrees with $[\widehat{A}^\times: \det(\rho_{\phi(a)}(\Gal_F))]$ which divides $q-1$ by Theorem~\ref{T:Gekeler}(\ref{T:Gekeler ii}).  Therefore, $a \in S_2$.    Since $a$ was an arbitrary element of $\calB$, we have $S_2\supseteq \calB$ and $S_3\supseteq \calB$.  Theorem~\ref{T:almost main}  implies that $S_2$ and $S_3$ have density $1$.

Let $\calP$ be the set of $(a_1,a_2) \in A^2$ with $a_2\neq 0$ for which the leading coefficient of the polynomial $(-1)^{\deg a_2+1} a_2$ generates the cyclic group $\FF_q^\times$.  The set $\calP$ has positive density; moreover, it has density $\varphi(q-1)/(q-1)$, where $\varphi$ is Euler's totient function.  For $a\in \calP$, we have $[\widehat{A}^\times: \det(\rho_{\phi(a)}(\Gal_F))]=1$ by Theorem~\ref{T:det image}.   

We have $S_1 \supseteq \calP\cap \calB$ since $\rho_{\phi(a)}(\Gal_F)\supseteq \SL_2(\widehat{A})$ and $\det(\rho_{\phi(a)}(\Gal_F))=\widehat{A}^\times$ for all $a\in \calP \cap \calB$.     We have $\delta(\calP\cap\calB)=\delta(\calP)>0$ since $\calB$ has density $1$ by Theorem~\ref{T:almost main}.  Therefore, $S_1$ has a subset with positive density.

\section{$q=2$ and wild ramification at $\infty$} \label{S:wild}

Throughout \S\ref{S:wild}, we assume that $q=2$.  The goal of this section is to give a condition that ensures $\rho_\phi(\Gal_F)$ equals $\GL_2(\widehat{A})$ assuming it contains its commutator subgroup. Let $v_\infty\colon F^\times \twoheadrightarrow \ZZ$ be the valuation satisfying $v_\infty(a)=-\deg(a)$ for all nonzero $a\in A$.

\begin{prop} \label{P:final char 2 conditions}
Let $\phi\colon A\to F\{\tau\}$ be a Drinfeld $A$-module of rank $2$ for which $v_\infty(j_\phi)$ is odd and $v_\infty(j_\phi)\leq -5$.  Then the homomorphism
\[
\Gal_F \to \GL_2(\widehat{A})/[\GL_2(\widehat{A}),\GL_2(\widehat{A})]
\]
obtained by composing $\rho_\phi$ with the obvious quotient map is surjective.
\end{prop}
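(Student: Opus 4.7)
My strategy is to identify the abelianization of $\GL_2(\widehat{A})$ explicitly for $q=2$, reduce the problem to hitting a specific $(\ZZ/2\ZZ)^2$-quotient, and then produce the necessary surjectivity from the inertia at the infinite place using an analogue of Proposition~\ref{P:main Tate new}.

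\emph{Computing the target abelianization.} Using $\GL_2(\widehat{A}) = \prod_\lambda \GL_2(A_\lambda)$ together with Proposition~\ref{P:commutator GL2(R)}, I would first identify
\[
\GL_2(\widehat A)/[\GL_2(\widehat A),\GL_2(\widehat A)] \,\cong\, \widehat{A}^\times \times \ZZ/2\ZZ \times \ZZ/2\ZZ,
\]
where the two $\ZZ/2\ZZ$ factors are indexed by the only two norm-$2$ primes $\lambda_1 = (t)$ and $\lambda_2 = (t+1)$ and each is realized by the sign character of $\GL_2(\FF_{\lambda_i}) = S_3$. The map to $\widehat A^\times$ is $\det\circ\rho_\phi$, and Theorem~\ref{T:det image} gives index $\gcd(d-1,(q-1)/e) = 1$ for $q=2$, so it is surjective unconditionally. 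The proposition therefore reduces to showing that the composition
\[
\Gal_F \xrightarrow{\bbar\rho_{\phi,\lambda_1\lambda_2}} \GL_2(\FF_{\lambda_1}) \times \GL_2(\FF_{\lambda_2}) \xrightarrow{\mathrm{sgn}\times\mathrm{sgn}} \ZZ/2\ZZ \times \ZZ/2\ZZ
\]
is surjective.

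\emph{Producing the surjection via inertia at $\infty$.} The key claim is that $\bbar\rho_{\phi,\lambda_1\lambda_2}(I_\infty)$ is conjugate in $\GL_2(A/\lambda_1\lambda_2)$ to the unipotent subgroup $U := \bigl\{\bigl(\begin{smallmatrix}1 & b\\ 0 & 1\end{smallmatrix}\bigr) : b \in A/\lambda_1\lambda_2\bigr\}$ of order $4$. Once this is in hand, the Chinese remainder theorem $A/\lambda_1\lambda_2 \cong \FF_{\lambda_1} \times \FF_{\lambda_2}$ shows that the four elements of $U$ project to $(I,I), (T_1, I), (I, T_2), (T_1, T_2)$ in $\GL_2(\FF_{\lambda_1}) \times \GL_2(\FF_{\lambda_2})$, where each $T_i = \bigl(\begin{smallmatrix}1 & 1\\ 0 & 1\end{smallmatrix}\bigr)$ is a transposition in $S_3$ with sign $-1$; so $\mathrm{sgn}\times\mathrm{sgn}$ sends $U$ surjectively onto $(\ZZ/2\ZZ)^2$ as required. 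To produce $U$, I would run the analogue of Proposition~\ref{P:main Tate new} at the infinite place $\infty$: the hypothesis $v_\infty(j_\phi) < 0$ guarantees (after $F_\infty$-rescaling $\phi$ by a $c$ with $v_\infty(c) = v_\infty(a_1)$) a Tate-style uniformization at $\infty$ with rank-$1$ lattice, and the ideal $\aA := \lambda_1 \lambda_2 \subseteq A$ is coprime to $\infty$, so in the notation of that proposition $e = 0$ and $\aA' = \aA$. Part~(\ref{P:main Tate new i}) then forces $\bbar\rho_{\phi,\aA}(I_\infty)$ into $U$ (the $\FF_q^\times$-diagonal block is trivial for $q=2$, and the condition $a\equiv 1 \pmod{\aA'}$ pins the other diagonal to $1$), while part~(\ref{P:main Tate new ii}) gives $|\bbar\rho_{\phi,\aA}(I_\infty)|$ divisible by the denominator in lowest terms of $v_\infty(j_\phi)/N(\aA) = v_\infty(j_\phi)/4$; since $v_\infty(j_\phi)$ is odd, this denominator is $4 = |U|$, giving the equality.

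\emph{Main obstacle.} The substantive technical point is proving the $\infty$-analogue of Proposition~\ref{P:main Tate new}. The proof of that proposition uses reduction of $\phi_t$ modulo $\m$ (both to put the Drinfeld module in stable form and to show that the rank-$1$ $\psi$ appearing in the uniformization has good reduction), but this step breaks down at $\infty$ because $t \notin \OO_\infty$. One must instead construct the rank-$1$ Drinfeld module $\psi$, the $\psi$-lattice $\Gamma$, and the uniformizing series $u = \tau^0 + \sum_i a_i \tau^i$ analytically using the Drinfeld uniformization over $F_\infty$, and verify that the key identity $v_\infty(z) = v_\infty(j_\phi)/((q-1)N(\aA))$ of Lemma~\ref{L:vz}, together with the Riemann--Hurwitz/tame-extension argument behind Proposition~\ref{P:main Tate new}(\ref{P:main Tate new ii}), survives this adaptation. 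The numerical margin $v_\infty(j_\phi)\leq -5$ --- rather than the weaker $v_\infty(j_\phi)<0$ --- is what guarantees that the analytic construction remains well-defined and that the resulting tame extension $K^t(z)/K^t$ achieves the full ramification degree $4$, avoiding degenerate behavior in the low cases $v_\infty(j_\phi)\in\{-1,-3\}$.
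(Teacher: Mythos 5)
Your identification of the abelianization (Lemma~\ref{L:commutator q=2}) matches the paper, but after that the argument has two genuine gaps. First, the claimed reduction is unjustified: surjectivity of $\det\circ\rho_\phi$ onto $\widehat{A}^\times$ together with surjectivity of the sign-times-sign map onto $\{\pm1\}\times\{\pm1\}$ does \emph{not} imply surjectivity onto the product $\widehat{A}^\times\times\{\pm1\}\times\{\pm1\}$; since $\widehat{A}^\times$ has many $\ZZ/2\ZZ$-quotients when $q=2$, the two pieces could be correlated. The paper closes exactly this gap with a Goursat argument: $\det\circ\rho_\phi$ is \emph{tamely} ramified at $\infty$ (Lemma~\ref{L:tame ramification Carlitz}, via the Carlitz module), while the $\{\pm1\}\times\{\pm1\}$-quotient is totally and hence \emph{wildly} ramified at $\infty$ (Lemma~\ref{L:totally ramified and surjective}), so no nontrivial common quotient exists. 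You never address this independence.

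Second, your key claim --- that $\bbar\rho_{\phi,\lambda_1\lambda_2}(I_\infty)$ is conjugate to the full unipotent group $U$ of order $4$, deduced from an $\infty$-analogue of Proposition~\ref{P:main Tate new} --- is not only unproven (you flag it as the main obstacle) but is false in the form you invoke it. Your mechanism (denominator of $v_\infty(j_\phi)/N(\lambda_1\lambda_2)$, which equals $4$ for every odd negative $v_\infty(j_\phi)$) would apply equally to $v_\infty(j_\phi)=-3$; yet in that case the two sign characters agree on the whole decomposition group at $\infty$: by the resolvent computation (Lemma~\ref{L:explicit quadratic resolvent}) their ``difference'' is the Artin--Schreier class of $x^2-x+j_\phi/(t(t+1))^2$, whose constant term has valuation $v_\infty(j_\phi)+4>0$, so it splits over $F_\infty$ and the image of $I_\infty$ under $\mathrm{sgn}\times\mathrm{sgn}$ is contained in the diagonal, of order at most $2$. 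So the hypothesis $v_\infty(j_\phi)\leq -5$ must enter the proof of the inertia claim itself (it is precisely what makes the third quadratic $F(\alpha_0+\alpha_1)$ ramified at $\infty$), not merely the ``well-definedness of the analytic construction''; moreover, Proposition~\ref{P:main Tate new} genuinely does not transfer verbatim to $\infty$, where $\iota(t)\notin\OO_\infty$, there is no reduction theory, and the uniformization is by a rank-$2$ lattice, so the Borel shape and the valuation formula of Lemma~\ref{L:vz} need replacement (the shifts $v_\infty(j_\phi)+2$ and $v_\infty(j_\phi)+4$ appearing in the paper show the naive formula $v(j_\phi)/N(\aA)$ is the wrong invariant at $\infty$). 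The paper avoids all of this by computing the two quadratic resolvent fields explicitly in characteristic $2$ and checking Artin--Schreier valuations at $\infty$; that elementary computation is the content your proposal leaves missing.
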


\subsection{Maximal abelian quotient} \label{SS:maximal abelian quotient}

For each $i\in \FF_2$, define the prime ideal $\lambda_i=(t+i)$ of $A$.  Define the homomorphism
\[
\gamma_i \colon \GL_2(\widehat{A}) \to \GL_2(\FF_{\lambda_i})=\GL_2(\FF_2) \to \GL_2(\FF_2)/[\GL_2(\FF_2),\GL_2(\FF_2)] \cong \{\pm 1\},
\]
where we are composing reduction modulo $\lambda_i$ with the quotient map.  We obtain a surjective continuous homomorphism 
\[
\beta\colon \GL_2(\widehat{A}) \to  \widehat{A}^\times \times \{\pm 1\} \times \{\pm 1\},\quad B\mapsto (\det(B), \gamma_0(B), \gamma_1(B)).
\]

\begin{lemma} \label{L:commutator q=2}
The kernel of $\beta$ is $[\GL_2(\widehat{A}), \GL_2(\widehat{A})]$.
\end{lemma}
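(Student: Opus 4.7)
The plan is to prove the equality by identifying the kernel component by component using the product decomposition $\widehat{A}=\prod_\lambda A_\lambda$, and then invoking Proposition~\ref{P:commutator GL2(R)} at each prime.

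First I would note that the codomain of $\beta$ is abelian, so the inclusion $[\GL_2(\widehat{A}),\GL_2(\widehat{A})] \subseteq \ker\beta$ is automatic (and since $\ker\beta$ is closed, this also contains the closure of the commutator subgroup, which is how we are interpreting $[\GL_2(\widehat{A}),\GL_2(\widehat{A})]$). The substantive direction is $\ker\beta \subseteq [\GL_2(\widehat{A}),\GL_2(\widehat{A})]$.

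Next I would use the isomorphism $\GL_2(\widehat{A}) = \prod_\lambda \GL_2(A_\lambda)$ to rewrite the closed commutator subgroup as $\prod_\lambda [\GL_2(A_\lambda),\GL_2(A_\lambda)]$. This identification holds because commutators are computed componentwise and any element of the product can be approximated by elements supported on finitely many coordinates, which belong to the (algebraic) commutator subgroup of a finite sub-product. Then I split the product into two parts: primes $\lambda$ with $\deg\lambda \geq 2$ (where $|\FF_\lambda| > 2$) and the two primes $\lambda_0, \lambda_1$ of degree $1$ (where $|\FF_\lambda| = 2$, since $q=2$).

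For $\deg \lambda \geq 2$, Proposition~\ref{P:commutator GL2(R)}(\ref{P:commutator GL2(R) i}) gives $[\GL_2(A_\lambda),\GL_2(A_\lambda)] = \SL_2(A_\lambda)$, i.e., the commutator component at $\lambda$ is cut out precisely by $\det = 1$. For $\lambda\in\{\lambda_0,\lambda_1\}$, Proposition~\ref{P:commutator GL2(R)}(\ref{P:commutator GL2(R) ii}) says $[\GL_2(A_\lambda),\GL_2(A_\lambda)]$ is the set of $B \in \SL_2(A_\lambda)$ whose reduction modulo $\lambda$ lies in $[\GL_2(\FF_2),\GL_2(\FF_2)]$, i.e., cut out by $\det = 1$ together with the condition $\gamma_i(B) = 1$.

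Assembling these descriptions across all primes, $[\GL_2(\widehat{A}),\GL_2(\widehat{A})]$ is exactly the set of $B\in \GL_2(\widehat{A})$ satisfying $\det(B) = 1$ and $\gamma_0(B) = \gamma_1(B) = 1$, which by definition is $\ker\beta$. I do not foresee a real obstacle: the only subtlety is the passage from commutator subgroups of factors to the commutator subgroup of an infinite product, but this is handled by taking topological closures and using that each $[\GL_2(A_\lambda),\GL_2(A_\lambda)]$ is already closed by Proposition~\ref{P:commutator GL2(R)}.
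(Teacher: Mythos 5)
Your proof is correct and follows essentially the same route as the paper: decompose $\GL_2(\widehat{A})=\prod_\lambda \GL_2(A_\lambda)$, identify the commutator subgroup with $\prod_\lambda[\GL_2(A_\lambda),\GL_2(A_\lambda)]$, and apply Proposition~\ref{P:commutator GL2(R)} at each prime to match this with $\ker\beta$. The only difference is that you spell out the closure argument for the product identification, which the paper leaves implicit.
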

\begin{proof}
We have $\GL_2(\widehat{A})=\prod_{\lambda} \GL_2(A_\lambda)$, where the product is over the nonzero prime ideals of $A$.  Therefore, the commutator subgroup of $\GL_2(\widehat{A})$ is equal to $\prod_{\lambda} [\GL_2(A_\lambda),\GL_2(A_\lambda)]$.   Using Proposition~\ref{P:commutator GL2(R)}, we obtain a description of $[\GL_2(\widehat{A}),\GL_2(\widehat{A})]$ that agrees with the kernel of $\beta$.
\end{proof}

\subsection{Cubic polynomials} \label{SS:cubic}

Consider a separable polynomial $f(x)=x^3+bx+c \in K[x]$, where $K$ is a field.  Let $r_1$, $r_2$ and $r_3$ be the distinct roots of $f(x)$ in some separable closure of $K$ and define the splitting field $K':=K(r_1,r_2,r_3)$.   Using the numbering of the $r_i$, we have an injective homomorphism $\iota\colon \Gal(K'/K)\hookrightarrow \mathfrak{S}_3$ to the symmetric group on $3$ letters.   Let $\varepsilon \colon \Gal(K'/K)\to \{\pm 1\}$ be the homomorphism obtained by composing $\iota$ with the parity character.   

Let $L/K$ be the subfield of $K'$ fixed by the kernel of $\varepsilon$.   We want to explicitly describe the extension $L/K$.   When $K$ has odd characteristic, $L$ is obtained by adjoining to $K$ a square root of the discriminant of $f(x)$.   This is not good enough for our application which concerns fields of characteristic $2$.  The following material comes from \cite{conrad} and is straightforward to prove.
 
Define the polynomial
\[
R_2(x):=(x-(r_1^2 r_2 + r_2^2 r_3 +r_3^2 r_1))(x-(r_2^2 r_1 + r_1^2 r_3 +r_3^2 r_2)).
\]
When expanded out, the coefficients of $R_2(x)$ are symmetric polynomials in $r_1,r_2,r_3$ and hence are polynomials in $b$ and $c$.   A direct computation shows that
\[
R_2(x)=x^2-3c x +(b^3+9c^2).
\]
One can then verify that the $f(x)$ and $R_2(x)$ have the same discriminant.  In particular, $R_2(x)$ is separable since $f(x)$ is separable.    Note that $r_1^2r_2+r_2^2r_3+r_3^2 r_1$ and $r_2^2 r_1 + r_1^2 r_3 +r_3^2 r_2$ are both fixed by any even permutation of the $r_i$ but are swapped by any odd permutation of the $r_i$.  Therefore, $L$ is the splitting field of $R_2(x)$ in $K'$.

\subsection{Proof of Proposition~\ref{P:final char 2 conditions}}

By Lemma~\ref{L:commutator q=2}, it suffices to prove that $\beta\circ \rho_\phi\colon \Gal_F \to \widehat{A}^\times \times \{\pm 1\} \times \{\pm 1\}$ is surjective.

Take any $i\in \FF_2$.  With $\gamma_i$ as in \S\ref{SS:maximal abelian quotient}, we let $L_i$ be the subfield of $F^\sep$ fixed by the kernel of the homomorphism $\gamma_i \circ \rho_{\phi} \colon \Gal_F \to \{\pm 1\}$.  

\begin{lemma} \label{L:explicit quadratic resolvent}
We have $L_i=F(\alpha)$ with $\alpha$ a root of the polynomial \[x^2-x + j_\phi/(t+i)^2+1 \in F[x].\]
\end{lemma}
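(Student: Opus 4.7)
The plan is to identify $\gamma_i \circ \rho_\phi$ with the sign character of a concrete permutation action, then cut out its fixed field using the characteristic-$2$ cubic resolvent developed in \S\ref{SS:cubic}.

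First I would note that for $q=2$ we have $\FF_{\lambda_i} = \FF_2$, and the action of $\GL_2(\FF_2)$ on the three nonzero vectors of $\FF_2^2$ gives an isomorphism $\GL_2(\FF_2) \cong \mathfrak{S}_3$. Under this isomorphism the abelianization map $\GL_2(\FF_2)\to \GL_2(\FF_2)/[\GL_2(\FF_2),\GL_2(\FF_2)]\cong \{\pm 1\}$ becomes the sign character on $\mathfrak{S}_3$. Thus $\gamma_i\circ\rho_\phi$ is precisely the sign of the permutation that $\Gal_F$ induces on the three nonzero $\lambda_i$-torsion points of $\phi$, and $L_i$ is the corresponding quadratic subextension of $F$ inside the field generated by those three points.

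Next I would write out the torsion explicitly. With $\phi_t = t + a_1\tau + a_2\tau^2$, one has $\phi_{t+i}(x) = (t+i)x + a_1 x^2 + a_2 x^4$, so the three nonzero $\lambda_i$-torsion points are the roots of the cubic $f(x) = x^3 + bx + c$ with $b = a_1/a_2$ and $c = (t+i)/a_2$. The polynomial is separable since $c\neq 0$ (a direct check: $f'(x)=x^2+b$, and $\gcd(f,f')$ divides $c$). Hence $L_i$ is exactly the field denoted $L$ in \S\ref{SS:cubic} for this cubic.

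Invoking the resolvent construction of \S\ref{SS:cubic}, $L_i$ is the splitting field over $F$ of $R_2(x) = x^2 - 3cx + (b^3 + 9c^2)$, which in characteristic $2$ simplifies to $x^2 + cx + b^3 + c^2$. Since $c\neq 0$, the substitution $x = cy$ (and dividing by $c^2$) turns this into the Artin--Schreier polynomial $y^2 + y + b^3/c^2 + 1$, with the same splitting field. A direct computation gives
\[
b^3/c^2 = \frac{a_1^3/a_2^3}{(t+i)^2/a_2^2} = \frac{a_1^3}{a_2(t+i)^2} = \frac{j_\phi}{(t+i)^2},
\]
so $L_i = F(\alpha)$ for $\alpha$ a root of $x^2 - x + j_\phi/(t+i)^2 + 1$ (using $-1=1$ in characteristic $2$), as claimed.

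No step presents a real obstacle, since the heavy lifting (the identity for $R_2$, and verifying that it defines the index-$2$ subfield) has already been recorded in \S\ref{SS:cubic}; the argument here is mostly bookkeeping. The only point requiring care is the characteristic-$2$ simplification of $R_2$ and the rescaling $x\mapsto cy$ to reach Artin--Schreier form, both of which are elementary.
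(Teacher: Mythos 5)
Your proposal is correct and follows essentially the same route as the paper: identify $\gamma_i\circ\rho_\phi$ with the sign character of the $\Gal_F$-action on the three nonzero $\lambda_i$-torsion points (roots of $x^3+(a_1/a_2)x+(t+i)/a_2$), invoke the resolvent $R_2$ from \S\ref{SS:cubic}, and rescale by $x=\tfrac{t+i}{a_2}y$ to reach the Artin--Schreier polynomial $y^2-y+j_\phi/(t+i)^2+1$. The only cosmetic difference is your explicit separability check, which the paper gets for free from $\phi[\lambda_i]\cong\FF_2^2$.
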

\begin{proof}
We have $\phi_t=t+a_1\tau + a_2 \tau^2$ for some $a_1\in F$ and $a_2\in F^\times$ and hence $\phi_{t+i}=(t+i)+a_1 \tau + a_2 \tau^2$.  We have $\phi[\lambda_i] \cong \FF_2^2$, so $\phi[\lambda_i] =\{0,r_1,r_2,r_3\}$ for distinct nonzero $r_1,r_2,r_3 \in F^\sep$.  The values $r_1,r_2,r_3$ are the distinct roots in $F^\sep$ of the polynomial 
\[
f(x):=x^3+ (a_1/a_2) x + (t+i)/a_2 = a_2^{-1}x^{-1}( (t+i)x + a_1 x^2 + a_2 x^4) \in F[x].
\]  
We have a character $\varepsilon \colon \Gal_F\to \{\pm 1\}$ corresponding to $f(x)$ as in \S\ref{SS:cubic}.

With respect to a basis of $\phi[\lambda_i]\cong \FF_2^2$, the action of the group $\GL_2(\FF_2)$ on $\{r_1,r_2,r_3\}$ is faithful and transitive and induces an isomorphism $\GL_2(\FF_2)\xrightarrow{\sim} \mathfrak{S}_3$.  Using this, we find that $\varepsilon$ agrees with $\gamma_i\circ \rho_{\phi}$.  Therefore, $L_i$ is the subfield of $F^\sep$ fixed by the kernel of $\varepsilon$.  From \S\ref{SS:cubic}, we find that $L_i \subseteq F^\sep$ is the splitting field over $F$ of the polynomial
\[
R_2(x)=x^2-3\big(\tfrac{t+i}{a_2}\big) x + \big(\tfrac{a_1}{a_2}\big)^3+9\big(\tfrac{t+i}{a_2}\big)^2 \in F[x].
\]
Setting $y=a_2/(t+i) \, x$ and using that our field has characteristic $2$, we find that $L_i$ is the splitting field over $F$ of the polynomial $y^2-y + {a_1^3}/({a_2(t+i)^2}) + 1 = y^2-y + {j_\phi}/{(t+i)^2} +1$.
\end{proof}

\begin{lemma} \label{L:totally ramified and surjective}
The homomorphism 
\begin{align*} 
\beta'\colon \Gal_F \to \{\pm 1\} \times \{\pm 1\},\quad \sigma\mapsto (\gamma_0(\rho_{\phi}(\sigma)), \gamma_1(\rho_{\phi}(\sigma)))
\end{align*}
is surjective and is totally ramified at the place $\infty$ of $F$.
\end{lemma}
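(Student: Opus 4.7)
My plan is to identify $L_0$ and $L_1$ via Lemma~\ref{L:explicit quadratic resolvent} and to read off both claims from valuations at $\infty$ of the Artin--Schreier data. Set $u_i := j_\phi/(t+i)^2 + 1$, so that $L_i = F(\alpha_i)$ with $\alpha_i^2 - \alpha_i = u_i$, let $\wp(x) := x^2 - x$, and put $N := L_0 L_1$. The homomorphism $\beta'$ factors as an injection through $\Gal(N/F)$, so surjectivity of $\beta'$ is equivalent to $[N : F] = 4$, which in turn is equivalent to the three elements $u_0$, $u_1$, and $u_0 + u_1 = u_0 - u_1$ all having nonzero class in $F/\wp(F)$.

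The engine is a standard characteristic $2$ observation: for any $b$ in a field $K$ of characteristic $2$ equipped with a discrete valuation $v$, the value $v(\wp(b))$ is either nonnegative or equals $2v(b)$; in particular it is never a negative odd integer. Applied with $K$ equal to $F$ or its completion $F_\infty$, this shows that any $u \in F$ with $v_\infty(u) < 0$ odd satisfies $u \notin \wp(F)$ and moreover defines a quadratic Artin--Schreier extension that is totally ramified at $\infty$.

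The relevant valuations are now immediate from the hypotheses. Since $v_\infty(1) = 0 > v_\infty(j_\phi/(t+i)^2) = v_\infty(j_\phi) + 2$, we have $v_\infty(u_i) = v_\infty(j_\phi) + 2$, which is odd and at most $-3$. Using the characteristic $2$ identity $(t+1)^2 - t^2 = 1$, one computes
\[
u_0 - u_1 = \frac{j_\phi}{t^2 (t+1)^2}, \qquad v_\infty(u_0 - u_1) = v_\infty(j_\phi) + 4,
\]
which is odd and at most $-1$ (this last bound is precisely what forces the hypothesis $v_\infty(j_\phi) \le -5$ rather than merely $\le -3$). By the previous paragraph, $u_0$, $u_1$, and $u_0 - u_1$ all have nonzero class in $F/\wp(F)$, so $\beta'$ is surjective.

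For the ramification claim, let $I \subseteq \Gal(N/F) \cong \FF_2 \times \FF_2$ be the inertia subgroup at a place of $N$ above $\infty$. Each $L_i/F$ is totally ramified at $\infty$ by the valuation computation, so $I$ surjects onto both coordinate factors of $\FF_2 \times \FF_2$. The only subgroups of $\FF_2 \times \FF_2$ with this property are the whole group and the diagonal $\{(0,0),(1,1)\}$, whose fixed field is the third intermediate quadratic extension $F(\alpha_2)$ with $\wp(\alpha_2) = u_0 - u_1$. Since that extension is itself totally ramified at $\infty$ by the same valuation argument, it cannot equal the fixed field of $I$. Hence $I = \FF_2 \times \FF_2$, i.e., $\beta'$ is totally ramified at $\infty$. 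The only subtle step is the valuation of $u_0 - u_1$, which pivots on the characteristic $2$ identity above; this is what makes the specific bound in the hypothesis sharp.
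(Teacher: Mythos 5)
Your proof is correct and follows essentially the same route as the paper: the same identification of $L_0$, $L_1$ (and the third quadratic $F(\alpha_0+\alpha_1)$) via Lemma~\ref{L:explicit quadratic resolvent}, the same valuation computations $v_\infty(j_\phi)+2$ and $v_\infty(j_\phi)+4$ being negative and odd, and the same conclusion that all three quadratic subextensions are ramified at $\infty$, hence $[L_0L_1:F]=4$ and the inertia group is everything. Your Artin--Schreier packaging (classes in $F/\wp(F)$, listing the possible inertia subgroups of $\FF_2\times\FF_2$) is just a tidier bookkeeping of the paper's argument, including the observation that the bound $v_\infty(j_\phi)\le -5$ is exactly what the third extension requires.
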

\begin{proof}
Let $L$ be the subfield of $F^\sep$ fixed by the kernel of $\beta'$.  We have $L=L_0L_1$.  For $i\in \FF_2$, Lemma~\ref{L:explicit quadratic resolvent} implies that $L_i=F(\alpha_i)$ with $\alpha_i$ a root of the polynomial $f_i(x):=x^2-x +j_\phi/(t+i)^2+1$. 

We claim that each $L_i/F$ is a quadratic extension that is totally ramified at the place $\infty$.    Since we are in characteristic $2$, the roots of $f_i(x)$ in $L$ are $\alpha_i$ and $\alpha_i+1$.     In particular, $\alpha_i(\alpha_i+1) = j_\phi/(t+i)^2+1$.   We have $v_\infty(j_\phi/(t+i)^2) = v_\infty(j_\phi) + 2 <0$, where we have used our assumption that $v_\infty(j_\phi)\leq -5$.  Therefore,
\[
v_\infty(\alpha_i(\alpha_i+1))= v_\infty(j_\phi/(t+i)^2+1)=v_\infty(j_\phi/(t+i)^2)=v_\infty(j_\phi) + 2.
\]
Since $v_\infty(j_\phi)$ is odd by assumption, we find that $v_\infty(\alpha_i(\alpha_i+1))$ is a negative odd integer.  After extending $v_\infty$ to a $\QQ$-valued valuation on $F^\sep$, we find that one of the rational numbers  $v_\infty(\alpha_i)$ or $v_\infty(\alpha_i+1)$ is negative and hence $v_\infty(\alpha_i)=v_\infty(\alpha_i+1)<0$.   Therefore, $2v_\infty(\alpha_i)=v_\infty(\alpha_i(\alpha_i+1))$ is an odd integer and hence $v_\infty(\alpha_i)\notin \ZZ$.  We deduce that $L_i=F(\alpha_i)$ is a nontrivial extension of $F$ that is ramified at $\infty$.   The claim follows since  $[L_i:F]\leq 2$.

Define $\alpha:=\alpha_0+\alpha_1$; it is a root of the polynomial
\begin{align} \label{E:f2}
x^2-x+(j_\phi/t^2+1)+ (j_\phi/(t+1)^2+1)= x^2-x + j_\phi/(t(t+1))^2.
\end{align}
We claim that $F(\alpha)$ is a quadratic extension of $F$ that totally ramified at the place $\infty$.   The roots of (\ref{E:f2}) are $\alpha$ and $\alpha+1$, so $v_\infty(\alpha(\alpha+1))=v_\infty(j_\phi/(t(t+1))^2)=v_\infty(j_\phi)+4$ From our assumptions on $v_\infty(j_\phi)$,  we deduce that $v_\infty(\alpha(\alpha+1))$ is a negative odd integer.  Therefore, $v_\infty(\alpha)=v_\infty(\alpha+1)$ and $2v_\infty(\alpha)$ is an odd integer.  We have $v_\infty(\alpha)\notin \ZZ$ and hence $F(\alpha)/F$ is a nontrivial extension ramified at $\infty$.  The claim follows since $[F(\alpha):F]\leq 2$.

We now show that $\beta'$ is surjective.  It suffices to show that $[L:F]=4$.  Since $L=L_0L_1$ with $[L_i:F]=2$, it suffices to show that $L_0 \neq L_1$.   If $L_0=L_1$, then for any $\sigma\in \Gal_F$, we have $\sigma(\alpha_i)=\alpha_i$ for both $i\in \FF_2$ or $\sigma(\alpha_i)=\alpha_i+1$ for both $i\in \FF_2$.   So if $L_0=L_1$, then $\alpha=\alpha_0+\alpha_1$ is fixed by $\Gal_F$ and hence $\alpha\in F$.   Since $F(\alpha)/F$ is a nontrivial extension, we deduce that $L_0\neq L_1$.   This completes the proof that $\beta'$ is surjective.

Suppose that $L/F$ is not totally ramified at $\infty$.   Since $\beta'$ induces an isomorphism $\Gal(L/F)\cong \{\pm 1\}\times \{\pm 1\}$, one of the three quadratic extensions of $F$ in $L$ must be unramified at $\infty$.  However, the three quadratic extensions of $F$ in $L$ are $L_0$, $L_1$ and $F(\alpha)$, and we have already shown that they are all ramified at $\infty$.  Therefore, $L/F$ is totally ramified at $\infty$.
\end{proof}

\begin{lemma} \label{L:tame ramification Carlitz}
The homomorphism $\det\circ \rho_\phi\colon \Gal_F \to \widehat{A}^\times$ is surjective and tamely ramified at the place $\infty$ of $F$.  
\end{lemma}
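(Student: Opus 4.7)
The plan is to exploit the identity $\det\rho_\phi = \rho_\psi$ recorded in (\ref{E:Weil pairing consequence}), where $\psi\colon A\to F\{\tau\}$ is the rank $1$ Drinfeld module with $\psi_t = t - a_2\tau$ and $\phi_t = t + a_1\tau + a_2\tau^2$; this reduces both claims to the analogous statements for $\rho_\psi$. Surjectivity is then immediate from Theorem~\ref{T:Gekeler}(\ref{T:Gekeler ii}) applied with $\Delta := -a_2$: the index $[\widehat{A}^\times : \rho_\psi(\Gal_F)]$ equals $\gcd(\deg(a_2)-1,\,(q-1)/e)$, which is $1$ because $q = 2$ forces $(q-1)/e = 1$ regardless of $e$.

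For tame ramification at $\infty$, the key first step is to identify $\psi$ with the Carlitz module $C$ (with $C_t = t+\tau$) already over $F$. Using the relation $c\tau c^{-1} = c^{1-q}\tau$ in $F\{\tau\}$, the choice $c := a_2 \in F^\times$ yields
\[
c\,\psi_t\, c^{-1} \,=\, t - a_2 \cdot a_2^{-1}\,\tau \,=\, t + \tau \,=\, C_t,
\]
where we used $q=2$ and characteristic $2$. Thus $\psi$ and $C$ are isomorphic as Drinfeld $A$-modules over $F$, so $F(\psi[\aA]) = F(C[\aA])$ for every nonzero ideal $\aA$ of $A$, and it suffices to show that $\rho_C \colon \Gal_F \to \widehat{A}^\times$ is tamely ramified at $\infty$.

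This last claim is a classical consequence of the explicit class field theory for $F$ due to Carlitz and Hayes: inside each finite layer $\Gal(F(C[\aA])/F) \cong (A/\aA)^\times$, the inertia subgroup at $\infty$ is exactly the diagonal image of $\FF_q^\times$, tame of order dividing $q-1$. Passing to the inverse limit, the image of inertia at $\infty$ under $\rho_C$ lies in the diagonal copy of $\FF_q^\times$ inside $\widehat{A}^\times$. Since $q=2$ makes this subgroup trivial, $\rho_C$, and therefore $\det\rho_\phi$, is in fact \emph{unramified} at $\infty$, which is strictly stronger than the claimed tameness.

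The only step that requires real attention is supplying a clean reference for the $\infty$-ramification of the Carlitz cyclotomic extensions, since most standard presentations emphasize ramification at the finite primes dividing the conductor. However, once the $F$-isomorphism $\psi\cong C$ is in hand the remaining input is well-documented (e.g.\ in Rosen's \emph{Number Theory in Function Fields}, Ch.~12, or Hayes's original papers), and when $q=2$ the tameness collapses to the trivial, unramified case, so no further calculation is needed.
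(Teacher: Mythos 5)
Your proof is correct and follows essentially the same route as the paper: reduce to the rank~$1$ module $\psi_t=t-a_2\tau$ via $\det\rho_\phi=\rho_\psi$, conjugate by $a_2$ (using $q=2$) to identify $\psi$ with the Carlitz module over $F$, and invoke Hayes's explicit class field theory for the behaviour at $\infty$. The only cosmetic differences are that you get surjectivity from Theorem~\ref{T:Gekeler}(\ref{T:Gekeler ii}) rather than from Hayes's isomorphism $\Gal(F(C[\aA])/F)\cong(A/\aA)^\times$, and you observe that for $q=2$ the inertia at $\infty$ (of order $q-1$) is trivial, so the character is in fact unramified there — a harmless strengthening of the tameness the paper needs.
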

\begin{proof}
We have $\phi_t=t+a_1\tau+a_2\tau^2$ with $a_1\in F$ and $a_2\in F^\times$.   Let $\psi\colon A\to F\{\tau\}$ be the rank $1$ Drinfeld module for which $\psi_t=t-a_2\tau=t+a_2\tau$.  By Corollary 4.6 in \cite{MR1223025}, we have $\det \rho_{\phi} = \rho_{\psi}$.   

So it suffices to show that $\rho_\psi\colon \Gal_F \to \widehat{A}^\times$ is surjective and tamely ramified at $\infty$.   We have $a_2 \psi_t a_2^{-1} = t + \tau$.  So after replacing $\psi$ by an isomorphic Drinfeld module, we may assume that $\psi$ is the Carlitz module, i.e., $\psi_t=t+\tau$. 
For any nonzero ideal $\aA$ of $A$, the extension $F(\phi[\aA])/F$ is tamely ramified at $\infty$ and $\Gal(F(\phi[\aA])/F)\cong (A/\aA)^\times$, cf.~\cite[Theorems~2.3 and 3.1]{MR0330106}.  The lemma is now immediate.
\end{proof}

We need to show that 
\[
\beta\circ \rho_\phi\colon \Gal_F  \to  \widehat{A}^\times \times \{\pm 1\} \times \{\pm 1\}
\]
 is surjective.  Composing $\beta\circ \rho_\phi$ with the projection to $\widehat{A}^\times$ gives the homomorphism $\det\circ \rho_\phi$ which is surjective and tamely ramified at $\infty$ by Lemma~\ref{L:tame ramification Carlitz}.    Composing $\beta\circ \rho_\phi$ with the projection to $\{\pm 1\} \times \{\pm 1\}$ gives a homomorphism $\beta'$ which is surjective by Lemma~\ref{L:totally ramified and surjective}.  

Suppose $\beta\circ \rho_\phi$ is not surjective. Goursat's lemma (\cite[Lemma 5.2.1]{MR457455}) implies that there is a continuous and surjective homomorphism $\varphi\colon \Gal_F \to Q$, with $Q\neq 1$ a finite group, that factors through both $\det\circ \rho_\phi$ and $\beta'$.     The homomorphism $\varphi$ is tamely ramified at $\infty$ since it factors through $\det\circ \rho_\phi$.  However, since $\varphi$ factors through $\beta'$, Lemma~\ref{L:totally ramified and surjective} implies that  $\varphi$ is wildly ramified at $\infty$.  This gives a contradiction since $\varphi\neq 1$ and thus $\beta\circ \rho_\phi$ is surjective.

\subsection{Proof of Theorem~\ref{T:main new} when $q= 2$} \label{SS:main new proof -special}

We assume $q=2$.  Let $\calC$ be the set of $(a_1,a_2)\in A^2$ with $a_1 a_2\neq 0$ and $\deg(a_1)=\deg(a_2)-1$.  For any integer $d\geq 1$, we have
\[
|\calC(d)| = \sum_{i=1}^d (q-1)q^{i-1}\cdot (q-1)q^i=  (q-1)^2 q (q^{2d}-1)/(q^2-1).
\]
Therefore,
\[
\delta(\calC) = \lim_{d\to \infty} \frac{(q-1)^2 q (q^{2d}-1)/(q^2-1)}{q^{2(d+1)}} =  \frac{(q-1)^2 q}{q^2(q^2-1)} = \frac{1}{6}.
\]

Let $\calB$ be the set from \S\ref{S:sieving}; it has density $1$ by Theorem~\ref{T:almost main}.  We have $S_3 \supseteq \calB$ and hence $S_3$ has density $1$.

Take any $a\in \calB$.  Theorem~\ref{T:Gekeler}(\ref{T:Gekeler ii}) and $q=2$ implies that $\det( \rho_{\phi(a)}(\Gal_F))=\widehat{A}^\times$.  Since $\rho_{\phi(a)}(\Gal_F)\supseteq [\GL_2(\widehat{A}),\GL_2(\widehat{A})]$ and $\det( \rho_{\phi(a)}(\Gal_F))=\widehat{A}^\times$, Lemma~\ref{L:commutator q=2} implies that $[\GL_2(\widehat{A}): \rho_{\phi(a)}(\Gal_F)]$ divides $4$.  In particular, $a\in S_2$.  We have shown that $S_2\supseteq \calB$ and hence $S_2$ has density $1$.  

Now take any $a\in \calC\cap S_2$.   We have $j_{\phi(a)}=a_1^{q+1}/a_2=a_1^3/a_2$ and hence
\[
v_\infty(j_{\phi(a)}) = -v_\infty(a_2)+3v_\infty(a_1)= \deg(a_2)-3\deg(a_1)= -2 \deg(a_2)+3,
\] 
where the last equality uses that $\deg(a_1)=\deg(a_2)-1$.   Therefore, $v_\infty(j_{\phi(a)})$ is an odd integer and $v_\infty(j_{\phi(a)}) \leq -5$ when $\deg(a_2) \geq 4$.   So for any $a\in \calC\cap \calS_2$ with $\deg(a_2)\geq 4$, Proposition~\ref{P:final char 2 conditions} and $\rho_{\phi(a)}(\Gal_F)\supseteq [\GL_2(\widehat{A}),\GL_2(\widehat{A})]$ implies that $\rho_{\phi(a)}(\Gal_F)=\GL_2(\widehat{A})$.  The set $\calC\cap S_2$ has density $1/6$ and contains only finitely many $a$ with $\deg(a_2)<4$.  Therefore, $S_1$ has a subset with positive density.

\section{Proof of Theorem~\ref{T:example}}
\label{S:proof of examples}

\subsection{Proof of Theorem~\ref{T:example} when $q\neq 2$}

Fix a prime power $q>2$ and let $\phi\colon A\to F\{\tau\}$ be the Drinfeld $A$-module for which 
\[
\phi_t = t + \tau -t^{q-1} \tau^2.
\]
We will show that $\rho_\phi(\Gal_F)=\GL_2(\widehat{A})$.     

Define the prime ideal $\p:=(t)$ of $A$ and let $I_\p$ be an inertia subgroup of $\Gal_F$ at $\p$.  Observe that $\p$ is the only nonzero prime ideal of $A$ for which $\phi$ has bad reduction.   We have $j_\phi = -1/t^{q-1}$ and hence $v_\p(j_\phi)=-(q-1)$.   In particular, $\gcd(v_\p(j_\phi),q)=1$.  
 
 \begin{lemma} \label{L:example det unramified}
 For any nonzero ideal $\aA$ of $A$, the character $\det\circ \bbar\rho_{\phi,\aA}\colon \Gal_F \to (A/\aA)^\times$ is surjective and is unramified at all nonzero prime ideals $\q\nmid \aA$ of $A$.
 \end{lemma}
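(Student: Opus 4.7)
The plan is to reduce this to standard facts about the Carlitz module $C$ with $C_t=t+\tau$.

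First I would invoke the Weil-pairing-type identity $\det\circ\rho_\phi=\rho_\psi$ from (\ref{E:Weil pairing consequence}), where $\psi\colon A\to F\{\tau\}$ is the rank~$1$ Drinfeld module determined by $\psi_t=t-a_2\tau$. Since $a_2=-t^{q-1}$ (and the sign is immaterial when $q$ is even), we have $\psi_t = t + t^{q-1}\tau$, and consequently $\det\circ\bbar\rho_{\phi,\aA}=\bbar\rho_{\psi,\aA}$.

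Next I would observe that $\psi$ is in fact isomorphic to $C$ already over $F$, without any field extension. Using the commutation $\tau c = c^q\tau$ one computes $u\tau u^{-1}=u^{1-q}\tau$ for $u\in F^\times$, and conjugation by $u=t$ yields
\[
t\,\psi_t\, t^{-1} = t + t^{q-1}\cdot t^{1-q}\tau = t+\tau = C_t.
\]
Hence $\bbar\rho_{\psi,\aA}\cong \bbar\rho_{C,\aA}$ as $\Gal_F$-representations.

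Finally I would invoke the classical theorems of Carlitz and Hayes (used already in Lemma~\ref{L:tame ramification Carlitz} and cited there as \cite[Theorems~2.3 and 3.1]{MR0330106}): for every nonzero ideal $\aA$ of $A$, the extension $F(C[\aA])/F$ is unramified at every nonzero prime ideal $\q\nmid\aA$ of $A$, and the Galois action on $C[\aA]$ induces an isomorphism $\Gal(F(C[\aA])/F)\xrightarrow{\sim}(A/\aA)^\times$. Both assertions of the lemma follow at once. There is no substantive obstacle; the only conceptually interesting point is the over-$F$ isomorphism $\psi\cong C$, which is available precisely because $t$ is a unit in $F$.
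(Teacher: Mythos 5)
Your proposal is correct and matches the paper's own argument essentially step for step: reduce via $\det\rho_\phi=\rho_\psi$ with $\psi_t=t+t^{q-1}\tau$, note $t\psi_t t^{-1}=t+\tau$ so $\psi$ is $F$-isomorphic to the Carlitz module, and conclude from Hayes's explicit class field theory for the Carlitz module. The only difference is the precise result numbers cited from Hayes, which is immaterial.
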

 \begin{proof}
 Let $\psi\colon A\to F\{\tau\}$ be the rank $1$ Drinfeld module for which $\psi_t=t-(-t^{q-1})\tau=t+t^{q-1} \tau$.  By Corollary 4.6 in \cite{MR1223025}, we have $\det \rho_{\phi} = \rho_{\psi}$ and hence also $\det \bbar\rho_{\phi,\aA} = \bbar\rho_{\psi.\aA}$.  We have $t \psi_t t^{-1} = t+ \tau$ and hence $\psi$ is isomorphic to the Carlitz module.  The lemma is now an immediate consequence of \cite[Proposition~2.2 and Theorem~2.3]{MR0330106}.
\end{proof}
 
 \begin{lemma} \label{L:example irreducibility}
For any nonzero prime ideal $\lambda$ of $A$, $\bbar\rho_{\phi,\lambda}$ is irreducible.
 \end{lemma}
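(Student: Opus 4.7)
The plan is to argue by contradiction. Assume $\bbar\rho_{\phi,\lambda}$ is reducible and, in a suitable basis, write it in upper triangular form with diagonal characters $\chi_1, \chi_2 \colon \Gal_F \to \FF_\lambda^\times$. The argument splits on whether $\lambda$ equals the unique prime $\p = (t)$ of bad reduction.

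When $\lambda = \p$, reducibility forces every unramified $\Frob_\q$ to have both eigenvalues in $\FF_\p^\times = \FF_q^\times$, so $P_{\phi,\q}(x)$ splits modulo $\p$ for every $\q \neq \p$. By Lemma~\ref{L:traces for degree 1 primes new}(\ref{L:traces for degree 1 primes new i}), for $\q = (t-c)$ with $c \in \FF_q^\times$ this reduction equals $x^2-x-c \in \FF_q[x]$. Since $q \neq 2$, some $c \in \FF_q^\times$ renders this polynomial irreducible over $\FF_q$: when $q$ is odd, pick $c$ so that the discriminant $1+4c$ is a nonsquare in $\FF_q$; when $q = 2^n \geq 4$, pick $c$ outside the Artin--Schreier image $\{y^2+y : y \in \FF_q\}$, which is a proper $\FF_2$-subspace of $\FF_q$. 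This is the desired contradiction.

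For $\lambda \neq \p$ I would first pin down the ramification of $\chi_1, \chi_2$. Since $v_\p(j_\phi) = -(q-1)$ is coprime to $q$, Proposition~\ref{P:main Tate new}(\ref{P:main Tate new iii}) places a full unipotent subgroup of $\GL_2(\FF_\lambda)$ inside $\bbar\rho_{\phi,\lambda}(I_\p)$; its unique fixed line must coincide with the reducibility line, forcing $\chi_1|_{I_\p} = 1$ in the basis of Proposition~\ref{P:main Tate new}(\ref{P:main Tate new i}). Since $\chi_1\chi_2 = \det\bbar\rho_{\phi,\lambda}$ is unramified at $\p$ by Lemma~\ref{L:example det unramified}, $\chi_2|_{I_\p} = 1$ as well. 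Good reduction handles every $\q \notin \{\p,\lambda\}$. At $\lambda$, Proposition~\ref{P:good inertia}(\ref{P:good inertia b}) would act irreducibly via a fundamental character, contradicting reducibility, so Proposition~\ref{P:good inertia}(\ref{P:good inertia a}) holds and one of $\chi_1|_{I_\lambda}, \chi_2|_{I_\lambda}$ is trivial. Call the resulting character $\chi$; it is unramified at every finite place and takes values in $\FF_\lambda^\times$, a group of order coprime to $q$. The Riemann--Hurwitz argument from the proof of Lemma~\ref{L:riemann-hurwitz applied} then yields $\eta \in \FF_\lambda^\times$ with $\chi(\Frob_\q) = \eta^{\deg\q}$ for every good $\q \neq \lambda$.

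Specializing to $\q = (t-c)$ with $c \in \FF_q^\times$ and $(t-c) \neq \lambda$, $\eta$ becomes a root of $x^2 - x + (t-c)$ modulo $\lambda$, yielding $t \equiv c + \eta - \eta^2 \pmod{\lambda}$. Two distinct valid $c$'s would force $c_1 \equiv c_2 \pmod{\lambda}$, impossible since $c_1 - c_2 \in \FF_q^\times$ cannot lie in a prime ideal of $A$. This closes the proof whenever at least two distinct $c \in \FF_q^\times$ satisfy $(t-c) \neq \lambda$, covering $q \geq 4$ and $q = 3$ with $\deg\lambda \geq 2$. The residual cases $q = 3$ with $\lambda \in \{(t-1),(t-2)\}$ are the main obstacle and must be dispatched by hand: for $\lambda = (t-1)$ the lone $c = 2$ forces $\eta^2 - \eta + 2 \equiv 0 \pmod{3}$, which has no root in $\FF_3^\times$; for $\lambda = (t-2)$ the lone $c = 1$ forces $\eta = 2$, and then Lemma~\ref{L:traces for degree 1 primes new}(\ref{L:traces for degree 1 primes new ii}) applied at the degree-$2$ prime $(t^2+t+2)$ requires $\eta^2 = 1$ to be a root of $x^2 + 2x + 2 \pmod{3}$, but $1 + 2 + 2 \equiv 2 \not\equiv 0 \pmod{3}$.
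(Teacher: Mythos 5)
Your proof is correct and follows essentially the same route as the paper: the same case split at $\lambda=(t)$ via the polynomials $x^2-x-c$, the same inertia/determinant arguments (Propositions~\ref{P:main Tate new} and \ref{P:good inertia}, Lemma~\ref{L:example det unramified}) reducing to the Riemann--Hurwitz lemma with $n=1$, and the same contradiction from the degree-one Frobenius polynomials, with the residual $q=3$, $\deg\lambda=1$ case settled using the computed polynomial at $(t^2+t+2)$. The only (harmless) difference is bookkeeping in that residual case: the paper notes $\zeta^{2}=1$ and uses the degree-two prime uniformly for both $\lambda=(t-1)$ and $\lambda=(t-2)$, whereas you treat the two primes separately, and your route to the unramifiedness of a diagonal character at $(t)$ goes through part~(\ref{P:main Tate new iii}) of Proposition~\ref{P:main Tate new} rather than directly through part~(\ref{P:main Tate new i}).
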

 \begin{proof}
We will prove the lemma by contradiction. Suppose that $\bbar\rho_{\phi,\lambda}$ is reducible for some $\lambda$.    
 
First suppose that $\lambda=\p=(t)$.  For each nonzero $c\in \FF_q$, $\phi$ has good reduction at $(t-c)$.  We have $P_{\phi,(t-c)}(x)= x^2-x+t-c$ by Lemma~\ref{L:traces for degree 1 primes new}(\ref{L:traces for degree 1 primes new i}).  Therefore, $\bbar\rho_{\phi,\p}(\Gal_F) \subseteq \GL_2(\FF_\p)$ contains an element whose characteristic polynomial is $x^2-x - c\in \FF_\p[x]=\FF_q[x]$.  Since $\bbar\rho_{\phi,\lambda}$ is reducible, we find that the polynomial $x^2-x - c$ in $\FF_q[x]$ is reducible for all $c\in \FF_q$.   When $q$ is even, this is impossible since $\FF_q$ has a quadratic extension which must be given by a polynomial of the form $x^2-x - c\in \FF_q[x]$.  When $q$ is odd, this is also impossible since otherwise $(-1)^2-4(-c)=1+4c$ would be a square in $\FF_q$ for all $c\in \FF_q$.  Therefore, $\lambda\neq \p$.

After conjugating $\bbar\rho_{\phi,\lambda}$, we may assume that (\ref{E:upper triangular chis}) holds with characters $\chi_1,\chi_2\colon \Gal_F \to \FF_\lambda^\times$.    Since $\phi$ has good reduction away from $\p$, $\chi_1$ and $\chi_2$ are unramified at all nonzero prime ideals of $A$ except perhaps $\p$ and $\lambda$.  Proposition~\ref{P:main Tate new}(\ref{P:main Tate new i}), with $\aA:=\lambda$ and using $\lambda\neq \p$, implies that one of the characters $\chi_1$ or $\chi_2$ is unramified at $\p$.   Since $\det\circ \bbar\rho_{\phi,\lambda}=\chi_1\chi_2$ is unramified at $\p$ by Lemma~\ref{L:example det unramified}, we deduce that $\chi_1$ and $\chi_2$ are both unramified at $\p$.   Since $\bbar\rho_{\phi,\lambda}$ is reducible and $\phi$ has good reduction at $\lambda$, Proposition~\ref{P:good inertia} implies that $\chi_1$ or $\chi_2$ is unramified at $\lambda$.

We have verified that $\chi_1$ and $\chi_2$ satisfy (\ref{L:basic unramified chi new i}) and (\ref{L:basic unramified chi new ii}) of Lemma~\ref{L:basic unramified chi new} with $n:=1$.  By Lemma~\ref{L:riemann-hurwitz applied}, there is a $\zeta\in \FF_\lambda^\times$ such that $P_{\phi,\q}(\zeta^{\deg \q})= 0$ holds in $\FF_\lambda$ for all nonzero prime ideals $\q\notin \{\p,\lambda\}$ of $A$.

Assume that $q>3$ or $\deg \lambda >1$.  Then there are distinct nonzero $c_1,c_2\in \FF_q$ with $\lambda \in \{(t-c_1),(t-c_2)\}$.  Using Lemma~\ref{L:traces for degree 1 primes new}(\ref{L:traces for degree 1 primes new i}), we have 
\[
c_2-c_1=P_{\phi,(t-c_1)}(\zeta)-P_{\phi,(t-c_2)}(\zeta) \equiv  0 + 0 \equiv 0 \pmod{\lambda}
\]
which contradicts that $c_1$ and $c_2$ are distinct elements of $\FF_q$.

It remains to consider the case where $q=3$ and $\deg \lambda=1$.  In particular, $\lambda=(t-b)$ with $b\in \FF_3^\times$ and $\FF_\lambda=\FF_3$.  Define the prime ideal 
$\q:=(t^2 + t + 2)$ of $A$.   We have $\zeta^{\deg \q}=1$ since $|\FF_\lambda^\times|=2=\deg \q$.  Therefore, $P_{\phi,\q}(1) \equiv 0 \pmod{\lambda}$.   We have $P_{\phi,\q}(x)=x^2 + 2x + t^2 + t + 2$ by Lemma~\ref{L:traces for degree 1 primes new}(\ref{L:traces for degree 1 primes new ii}).  Therefore,
\[
0\equiv P_{\phi,\q}(1) \equiv 1^2 + 2\cdot 1 + b^2 + b + 2 \equiv b^2 + b + 2 \pmod{\lambda}
\]
and hence $b^2 + b + 2=0$ since $b\in \FF_3$.  However, this is a contradiction since $x^2+x+2$ is irreducible in $\FF_3[x]$.
\end{proof}
 
\begin{lemma}
\label{L:example lambda-adic surjectivity}
For any nonzero prime ideal $\lambda$ of $A$, we have $\rho_{\phi,\lambda}(\Gal_F)=\GL_2(A_\lambda)$.
\end{lemma}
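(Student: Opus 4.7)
The plan is to apply Proposition~\ref{P:full GL2R criterion} to the closed subgroup $G := \rho_{\phi,\lambda}(\Gal_F)$ of $\GL_2(A_\lambda)$. Since $q > 2$, the residue field $\FF_\lambda$ has cardinality greater than $2$, so only conditions (\ref{P:full GL2R criterion a}), (\ref{P:full GL2R criterion b}), and (\ref{P:full GL2R criterion c}) of that proposition need verification. Condition (\ref{P:full GL2R criterion a}) is immediate from Lemma~\ref{L:example det unramified}. Throughout, we use that $\phi$ has stable reduction of rank $1$ at the unique prime of bad reduction $\p = (t)$, with $v_\p(j_\phi) = -(q-1)$ coprime to $q$.

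For condition (\ref{P:full GL2R criterion b}), combine Lemma~\ref{L:example irreducibility} with the inertia action at $\p$. Writing $\lambda = \p^e \aA'$ with $\p \nmid \aA'$, we have $e \leq 1$ in all cases, so Proposition~\ref{P:main Tate new}(\ref{P:main Tate new iii}) produces inside $\bbar\rho_{\phi,\lambda}(I_\p)$ a unipotent subgroup of order $N(\lambda) = |\FF_\lambda|$. Irreducibility of $\bbar\rho_{\phi,\lambda}$ together with Proposition~\ref{P:group contains SL2 condition} then yields $\bbar\rho_{\phi,\lambda}(\Gal_F) \supseteq \SL_2(\FF_\lambda)$, and combining with (\ref{P:full GL2R criterion a}) establishes (\ref{P:full GL2R criterion b}).

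For condition (\ref{P:full GL2R criterion c}), we must exhibit an element of $G$ of the form $I + \pi B$ with $B \in M_2(A_\lambda)$ nonscalar modulo $\lambda$. When $\lambda \neq \p$, apply Proposition~\ref{P:main Tate new}(\ref{P:main Tate new iii}) with $\aA = \lambda^2$ (now $e = 0$) to obtain the full unipotent subgroup of order $N(\lambda)^2$ inside $\bbar\rho_{\phi,\lambda^2}(I_\p)$; selecting an off-diagonal entry of $\lambda$-valuation exactly one gives the required element. The main obstacle is the case $\lambda = \p$, where $\aA = \p^2$ has $e = 2$ and part (\ref{P:main Tate new iii}) is unavailable; here one must combine parts (\ref{P:main Tate new i}) and (\ref{P:main Tate new ii}). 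After conjugation, $\bbar\rho_{\phi,\p^2}(\Gal_{F_\p})$ sits inside a Borel subgroup of $\GL_2(A/\p^2)$ whose unique $p$-Sylow subgroup is $P = \left\{\left(\begin{smallmatrix}1 + \pi\alpha & b\\0 & 1\end{smallmatrix}\right) : \alpha \in \FF_q,\, b \in A/\p^2\right\}$ of order $q^3$, and the $p$-Sylow $S$ of $\bbar\rho_{\phi,\p^2}(I_\p)$ has order at least $N(\p^2) = q^2$ and lies in $P$. The subset $U := P \cap (I + \pi M_2(A_\p/\p^2))$ is a subgroup of $P$ of order $q^2$ whose nonidentity elements all have the form $I + \pi\left(\begin{smallmatrix}\alpha & \beta \\ 0 & 0\end{smallmatrix}\right)$ with $(\alpha,\beta) \in \FF_q^2 - \{0\}$, each of which is nonscalar modulo $\p$. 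The elementary bound $|S \cap U| \geq |S|\cdot|U|/|P| \geq q$ forces $S \cap U$ to contain a nonidentity element, and any lift to $G$ provides the desired $I + \pi B$, completing the verification.
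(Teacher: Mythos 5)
Your proof is correct and follows essentially the same route as the paper: apply Proposition~\ref{P:full GL2R criterion} to $G=\rho_{\phi,\lambda}(\Gal_F)$, verifying (\ref{P:full GL2R criterion a}) by Lemma~\ref{L:example det unramified}, (\ref{P:full GL2R criterion b}) by Lemma~\ref{L:example irreducibility} together with Proposition~\ref{P:group contains SL2 condition} and the inertia subgroup at $\p=(t)$ coming from Proposition~\ref{P:main Tate new}, and (\ref{P:full GL2R criterion c}) by working modulo $\lambda^2$. The only difference is cosmetic: for (\ref{P:full GL2R criterion c}) the paper treats $\lambda=\p$ and $\lambda\neq\p$ uniformly via parts (\ref{P:main Tate new i}) and (\ref{P:main Tate new ii}), noting that a $p$-Sylow subgroup of $\GL_2(\FF_\lambda)$ has order only $N(\lambda)$, whereas you dispatch $\lambda\neq\p$ with part (\ref{P:main Tate new iii}) and handle $\lambda=\p$ by the count $|S\cap U|\geq |S|\,|U|/|P|$, which amounts to the same Sylow-kernel argument.
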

\begin{proof}
Take any nonzero prime ideal $\lambda$ of $A$.   Set $G:=\rho_{\phi,\lambda}(\Gal_F) \subseteq \GL_2(A_\lambda)$.   Using Lemma~\ref{L:example det unramified} with $\aA=\lambda^i$ and $i\geq 1$, we find that $\det(G)=A_\lambda^\times$. 
 
Since $\gcd(v_\p(j_\phi),q)=1$, Proposition~\ref{P:main Tate new}(\ref{P:main Tate new iii}) implies that $\bbar\rho_{\phi,\lambda}(I_\p)$, and hence also $\bbar\rho_{\phi,\lambda}(\Gal_F)$, contains a subgroup of cardinality $N(\lambda)$.  The group $\bbar\rho_{\phi,\lambda}(\Gal_F)$ acts irreducibly on $\FF_\lambda^2$ by Lemma~\ref{L:example irreducibility}.  Proposition~\ref{P:group contains SL2 condition} implies that $\bbar\rho_{\phi,\lambda}(\Gal_F)\supseteq \SL_2(\FF_\lambda)$. Since $\det(G)=A_\lambda^\times$, we deduce that the image of $G$ modulo $\lambda$ is $\GL_2(\FF_\lambda)$.
 
Fix a generator $\pi$ of the ideal $\lambda$.   Let $P$ be a $p$-Sylow subgroup of $\bbar\rho_{\phi,\lambda^2}(I_\p)$, where $p$ is the prime dividing $q$.  Proposition~\ref{P:main Tate new}(\ref{P:main Tate new ii}) and $\gcd(v(j_\phi),q)=1$ implies that the cardinality of $P$ is divisible by $N(\lambda)^2$.   Since a $p$-Sylow subgroup of $\GL_2(\FF_\lambda)$ has cardinality $N(\lambda)$, we deduce that there is a $g\in P$ such that $g\equiv I + \pi B \pmod{\lambda^2}$ with $B\in M_2(A_\lambda)$ satisfying $B\not\equiv 0 \pmod{\lambda}$.     After conjugating our representation $\bbar\rho_{\phi,\lambda^2}$, we may assume by Proposition~\ref{P:main Tate new}(\ref{P:main Tate new ii}) that 
\[
\bbar\rho_{\phi,\lambda^2}(I_\p)\subseteq \left\{ \left(\begin{smallmatrix} a & b \\0 & c\end{smallmatrix}\right) : a\in (A/\lambda^2)^\times, b\in A/\lambda^2, c\in \FF_q^\times \right\}.
\]
Therefore, $P \subseteq \{  \left(\begin{smallmatrix} a & b \\0 & 1\end{smallmatrix}\right) : a\in (A/\lambda^2)^\times \text{ with }a\equiv 1 \pmod{\lambda}, b\in A/\lambda^2 \}$ and hence $B$ modulo $\lambda$ is of the form $ \left(\begin{smallmatrix} * & * \\0 & 0\end{smallmatrix}\right)$.  Since $B\not\equiv 0 \pmod{\lambda}$, we find that $B$ modulo $\lambda$ is not a scalar matrix.

We have now verified the conditions of Proposition~\ref{P:full GL2R criterion} with $q>2$ and hence $G=\GL_2(A_\lambda)$.
\end{proof}
 
Set $G:=\rho_{\phi}(\Gal_F)$.   We have $\det(G)=\widehat{A}^\times$ by Lemma~\ref{L:example det unramified}, so it remains to prove that $\SL_2(\widehat{A})$ is a subgroup of $G$.  For each nonzero prime ideal $\lambda$ of $A$, $\SL_2(A_\lambda)$ is a subgroup of $G_\lambda= \rho_{\phi,\lambda}(\Gal_F)$ by Lemma~\ref{L:example lambda-adic surjectivity}.   Take any two distinct nonzero prime ideals $\lambda_1$ and $\lambda_2$ of $A$.  Proposition~\ref{P:main Tate new}(\ref{P:main Tate new iii}) implies that $\bbar\rho_{\phi,\lambda_1\lambda_2}(I_\p)$ has a subgroup of cardinality $N(\lambda_1)N(\lambda_2)$.   In particular, $G$ modulo $\lambda_1\lambda_2$ has a subgroup of cardinality $N(\lambda_1)N(\lambda_2)$.
 Using Theorem~\ref{T:criterion for computing commutator} and $q>2$, we deduce that the commutator subgroup of $G$ is $\SL_2(\widehat{A})$ and hence $G\supseteq \SL_2(\widehat{A})$ as desired.

 \subsection{Proof of Theorem~\ref{T:example} when $q=2$}
 
 With $q=2$, let $\phi\colon A\to F\{\tau\}$ be the Drinfeld $A$-module for which 
\[
\phi_t=t+t^3\tau +(t^2+t+1)\tau^2.
\]  
We will show that $\rho_\phi(\Gal_F)=\GL_2(\widehat{A})$.    Since $q=2$, we have $\det(\rho_{\phi}(\Gal_F))=\widehat{A}^\times$ by Theorem~\ref{T:Gekeler}(\ref{T:Gekeler i}).

 Define the ideal $\p:=(t^2+t+1)$ of $A$.  Observe that $\p$ is the only nonzero prime ideal of $A$ for which $\phi$ has bad reduction.  Moreover, $\phi$ has stable reduction of rank $1$ at $\p$. We let $I_\p$ be an inertia subgroup of $\Gal_F$ for the prime $\p$.   We have $j_\phi = (t^3)^{q+1}/(t^2+t+1)=t^9/(t^2+t+1)$ and hence $v_\p(j_\phi)=-1$. 
 
\begin{lemma} \label{L:last q=2 example}
The homomorphism $\Gal_F \to \GL_2(\widehat{A})/[ \GL_2(\widehat{A}), \GL_2(\widehat{A})]$ obtained by composing $\rho_\phi$ with the obvious quotient map is surjective.
\end{lemma}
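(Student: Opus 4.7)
The plan is to reduce the claim directly to Proposition~\ref{P:final char 2 conditions}. That proposition gives exactly the surjectivity statement we want, provided the $j$-invariant of our Drinfeld module satisfies the two conditions: $v_\infty(j_\phi)$ is odd and $v_\infty(j_\phi)\leq -5$. So the only work is an explicit $v_\infty$-computation.

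First, I would compute $j_\phi$ from the definition $j_\phi = a_1^{q+1}/a_2$ with $q=2$, $a_1 = t^3$ and $a_2 = t^2+t+1$, giving
\[
j_\phi = \frac{(t^3)^{3}}{t^2+t+1} = \frac{t^9}{t^2+t+1}.
\]
Next, I would compute $v_\infty$ by recalling that $v_\infty(a) = -\deg(a)$ for nonzero $a\in A$, so
\[
v_\infty(j_\phi) = v_\infty(t^9) - v_\infty(t^2+t+1) = -9 - (-2) = -7.
\]
Since $-7$ is odd and $-7 \leq -5$, both hypotheses of Proposition~\ref{P:final char 2 conditions} hold.

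Applying Proposition~\ref{P:final char 2 conditions} to $\phi$ yields exactly the desired surjectivity of the composition $\Gal_F \xrightarrow{\rho_\phi} \GL_2(\widehat{A}) \twoheadrightarrow \GL_2(\widehat{A})/[\GL_2(\widehat{A}),\GL_2(\widehat{A})]$, completing the proof. There is no real obstacle here: the lemma is essentially a verification that the specific explicit Drinfeld module in Theorem~\ref{T:example} for $q=2$ falls within the hypothesis range of the general criterion already established in \S\ref{S:wild}. The genuine content lies in Proposition~\ref{P:final char 2 conditions}, which was proved separately using the analysis of quadratic resolvents and wild ramification at $\infty$.
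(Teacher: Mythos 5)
Your proposal is correct and is exactly the paper's argument: the paper proves this lemma by noting $v_\infty(j_\phi)=-7$ (odd and $\leq -5$) and invoking Proposition~\ref{P:final char 2 conditions}. Your explicit computation of $j_\phi=t^9/(t^2+t+1)$ and of $v_\infty$ just spells out the same verification.
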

\begin{proof}
This follows from Proposition~\ref{P:final char 2 conditions} since $v_\infty(j_\phi)=-7$.
\end{proof}

 \begin{lemma} \label{L:example q=2 image mod}
We have $\bbar\rho_{\phi,\lambda}(\Gal_F)=\GL_2(\FF_\lambda)$ for all nonzero prime ideals $\lambda$ of $A$.
\end{lemma}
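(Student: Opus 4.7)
The plan is to mirror the structure of the proof of Lemma~\ref{L:example lambda-adic surjectivity} from the $q \neq 2$ case: I will show that $\bbar\rho_{\phi,\lambda}$ acts irreducibly on $\FF_\lambda^2$ and that $\bbar\rho_{\phi,\lambda}(I_\p)$ contains a subgroup of cardinality $N(\lambda)$. Proposition~\ref{P:group contains SL2 condition} then gives $\bbar\rho_{\phi,\lambda}(\Gal_F) \supseteq \SL_2(\FF_\lambda)$, and surjectivity of the determinant (immediate from Theorem~\ref{T:Gekeler}(\ref{T:Gekeler ii}) since $\FF_q^\times$ is trivial) upgrades this to $\GL_2(\FF_\lambda)$. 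The unipotent subgroup of cardinality $N(\lambda)$ inside $\bbar\rho_{\phi,\lambda}(I_\p)$ is furnished by Proposition~\ref{P:main Tate new}(\ref{P:main Tate new iii}), whose hypotheses hold because $v_\p(j_\phi) = -1$ is coprime to $q = 2$.

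For $\lambda = \p$, a direct Frobenius computation handles irreducibility. The Frobenius polynomial $P_{\phi,(t+1)}(x) = x^2 + x + t + 1$ from Lemma~\ref{L:traces for degree 1 primes new}(\ref{L:traces for degree 1 primes new iii}) reduces modulo $\p$ to $x^2 + x + \alpha + 1 \in \FF_4[x]$, where $\alpha \in \FF_4 = \FF_\p$ is the image of $t$ (so $\alpha^2 = \alpha + 1$). The Artin--Schreier criterion for irreducibility of $x^2 + x + c$ over $\FF_4$ requires $\Tr_{\FF_4/\FF_2}(c) \neq 0$; here
\[
\Tr_{\FF_4/\FF_2}(\alpha + 1) = (\alpha + 1) + (\alpha + 1)^2 = (\alpha + 1) + \alpha = 1,
\]
so the polynomial is irreducible, and $\bbar\rho_{\phi,\p}(\Frob_{(t+1)})$ acts irreducibly on $\phi[\p]$.

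For $\lambda \neq \p$, suppose for contradiction that $\bbar\rho_{\phi,\lambda}$ is reducible, and let $\chi_1, \chi_2\colon \Gal_F \to \FF_\lambda^\times$ denote its semisimplified diagonal characters. Applying Proposition~\ref{P:main Tate new}(\ref{P:main Tate new i}) at $\p$ with $\aA = \lambda = \aA'$ puts $\bbar\rho_{\phi,\lambda}(\Gal_{F_\p})$ inside the unipotent subgroup (the constraints $a \equiv 1 \pmod \lambda$ and $c \in \FF_q^\times = \{1\}$); comparing traces and determinants and using that $|\FF_\lambda^\times|$ is odd then forces $\chi_1|_{\Gal_{F_\p}} = \chi_2|_{\Gal_{F_\p}} = 1$, so in particular $\chi_i(\Frob_\p) = 1$. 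Since reducibility rules out case~(\ref{P:good inertia b}) of Proposition~\ref{P:good inertia} at $\lambda$, at least one character (say $\chi_1$) is unramified at $\lambda$; combined with triviality on $\Gal_{F_\p}$ and good reduction of $\phi$ at all other primes, $\chi_1$ is unramified at every nonzero prime of $A$. Since $\chi_1$ has image in the cyclic group $\FF_\lambda^\times$ of odd order, it is tame, and the Riemann--Hurwitz argument from the proof of Lemma~\ref{L:riemann-hurwitz applied} shows $\chi_1$ factors through $\Gal(\FFbar_q F / F) \cong \Zhat$. Writing $\chi_1(\Frob_\q) = \zeta^{\deg \q}$ for some $\zeta \in \FF_\lambda^\times$, the relation $\zeta^2 = \chi_1(\Frob_\p) = 1$ and the oddness of $|\FF_\lambda^\times|$ force $\zeta = 1$, so $\chi_1$ is trivial. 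Evaluating $P_{\phi,(t+1)}(x) = x^2 + x + (t+1)$ at $\Frob_{(t+1)}$ modulo $\lambda$ then gives $\chi_2(\Frob_{(t+1)}) = 1 - \chi_1(\Frob_{(t+1)}) = 0$, contradicting $\chi_2(\Gal_F) \subseteq \FF_\lambda^\times$.

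The main obstacle is arranging a uniform irreducibility argument for all $\lambda \neq \p$. The key feature that makes this work is $\deg \p = 2$: combined with the oddness of $|\FF_\lambda^\times|$ in characteristic $2$, the condition $\chi_1(\Frob_\p) = \zeta^2 = 1$ forces $\zeta = 1$, collapsing the constant field extension argument into an immediate contradiction via the Frobenius polynomial at $(t+1)$.
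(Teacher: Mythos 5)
Your overall strategy is sound and genuinely different from the paper's in the case $\lambda\neq\p$: the treatment of $\lambda=\p$ via the irreducibility of $P_{\phi,(t+1)}(x)=x^2+x+t+1$ modulo $\p$ is a correct alternative, and the chain ``decomposition group at $\p$ is unipotent by Proposition~\ref{P:main Tate new}(\ref{P:main Tate new i}) $\Rightarrow$ $\chi_i(\Frob_\p)=1$; one character unramified everywhere; Riemann--Hurwitz gives $\chi_1(\Frob_\q)=\zeta^{\deg\q}$; $\zeta^2=\chi_1(\Frob_\p)=1$ and odd $|\FF_\lambda^\times|$ give $\zeta=1$'' is correct. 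However, there is a genuine gap at $\lambda=(t+1)$. Your concluding step evaluates the Frobenius polynomial at the prime $(t+1)$ inside $\GL_2(\FF_\lambda)$, but the congruence $P_{\phi,\q}(x)\equiv\det(xI-\bbar\rho_{\phi,\lambda}(\Frob_\q))\pmod{\lambda}$ requires $\q$ to be coprime to $\lambda$ (and $\bbar\rho_{\phi,\lambda}$ to be unramified at $\q$), so it is unavailable when $\lambda=\q=(t+1)$; for the $\lambda$-torsion the representation is typically ramified at $\lambda$ and there is no well-defined Frobenius there. Nor can you substitute the only other computed polynomial, since $P_{\phi,(t)}(x)=x^2+t\equiv(x+1)^2\pmod{t+1}$, which is exactly what a reducible representation with trivial characters would produce, so no contradiction results. (This is not an artifact of your method: the paper's own resultant of $P_{\phi,(t)}$ and $P_{\phi,(t+1)}$ equals $t+1$, which is why the paper also cannot treat $\lambda=(t+1)$ by Frobenius data at degree-one primes.)

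To close the gap you need extra arithmetic input for $\lambda=(t+1)$ (and the same mechanism would also cover $(t)$): the paper argues that if $\bbar\rho_{\phi,\lambda}$ were reducible with $\deg\lambda=1$, then since $\FF_2^\times$ is trivial there would be a nonzero $\lambda$-torsion point rational over $F$, i.e.\ the cubic $(t+i)+t^3x+(t^2+t+1)x^3$ would have a root in $F$ and hence a root modulo every prime of good reduction, and a direct computation shows it is irreducible modulo $(t+1)$ or $(t^3+t+1)$, a contradiction. Alternatively, computing one more Frobenius polynomial, say $P_{\phi,(t^3+t+1)}$, and checking its reduction modulo $(t+1)$ is not $(x+1)^2$ would let your trace argument go through; but as written, with only the data of Lemma~\ref{L:traces for degree 1 primes new}(\ref{L:traces for degree 1 primes new iii}), the case $\lambda=(t+1)$ is not proved.
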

\begin{proof}
Assume that $\bbar\rho_{\phi,\lambda}(\Gal_F)\neq \GL_2(\FF_\lambda)$ for some $\lambda$.
Proposition~\ref{P:main Tate new}(\ref{P:main Tate new iii}) implies that $\bbar\rho_{\phi,\lambda}(I_\p)$ contains a subgroup of order $N(\lambda)$.   The representation $\bbar\rho_{\phi,\lambda}$ is thus reducible by Proposition~\ref{P:group contains SL2 condition}.  After conjugating $\bbar\rho_{\phi,\lambda}$, we may assume that 
\[
\bbar\rho_{\phi,\lambda}(\sigma) = \left(\begin{smallmatrix} \chi_1(\sigma) & * \\ 0 & \chi_2(\sigma) \end{smallmatrix}\right)
\]
for all $\sigma\in \Gal_F$, where $\chi_1,\chi_2\colon \Gal_F \to \FF_\lambda^\times$ are characters.

First assume that $\deg \lambda=1$ and hence $\lambda=(t+i)$ for some $i\in \FF_2$.  The nonzero elements of $\phi[\lambda]$ are the roots of the separable polynomial
\[
Q(x):=(t+i)+t^3x +(t^2+t+1)x^3 \in A[x].
\]  We have $\chi_1=1$ since $\FF_\lambda=\FF_2^\times$ and hence there is a nonzero element of $\phi[\lambda]$ lying in $F$.    In particular, $Q(x)$ has a root in $F$.  Therefore, the image of $Q(x)$ in $\FF_\q[x]$ has a root in $\FF_\q$ for all nonzero prime ideals $\q\neq (t^2+t+1)$ of $A$.  However, a computation shows that $Q(x)$ is irreducible modulo $\q$ for some prime $\q\in \{(t+1),(t^3 + t + 1)\}$.

Therefore, $\deg \lambda > 1$.    Since $\phi$ has good reduction away from $\p$, we find that $\chi_1$ and $\chi_2$ are unramified at all nonzero prime ideals of $A$ except perhaps $\p$ and $\lambda$.   When $\lambda=\p$, Proposition~\ref{P:main Tate new}(\ref{P:main Tate new i}) and $\FF_q^\times=\{1\}$ imply that $\chi_1$ or $\chi_2$ is unramified at $\p$.   When $\lambda\neq \p$, Proposition~\ref{P:main Tate new}(\ref{P:main Tate new i}) and $\FF_q^\times=\{1\}$ imply that both $\chi_1$ and $\chi_2$ are unramified at $\p$.    When $\lambda\neq \p$, $\phi$ has good reduction at $\lambda$ and so Proposition~\ref{P:good inertia}, with the reducibility of $\bbar\rho_{\phi,\lambda}$, implies that $\chi_1$ or $\chi_2$ is unramified at $\lambda$.   In particular, we have verified that parts (\ref{L:basic unramified chi new i}) and (\ref{L:basic unramified chi new ii}) of Lemma~\ref{L:basic unramified chi new} hold with $n:=1$.  

By Lemma~\ref{L:riemann-hurwitz applied} with $n=1$, there is a $\zeta\in \FF_\lambda^\times$ such that $P_{\phi,\q}(\zeta^{\deg \q})=0$ in $\FF_\lambda$ for all nonzero prime ideals $\q\neq \lambda$ of $A$ for which $\phi$ has good reduction.  Since $\deg \lambda >1$, we find that $P_{\phi,(t)}(x)$ and $P_{\phi,(t+1)}(x)$ have a common root modulo $\lambda$. Therefore,  the resultant $r\in A$ of $P_{\phi,(t)}(x)$ and $P_{\phi,(t+1)}(x)$ is divisible by $\lambda$.   The polynomials $P_{\phi,(t)}(x)$ and $P_{\phi,(t+1)}(x)$ where computed in Lemma~\ref{L:traces for degree 1 primes new}(\ref{L:traces for degree 1 primes new iii}) and one finds that $r=t+1$.   Therefore, $r=t+1 \equiv 0 \pmod{\lambda}$ which is a contradiction since $\deg \lambda>1$.
\end{proof}

\begin{lemma} \label{L:example q=2 adic surj}
We have $\rho_{\phi,\lambda}(\Gal_F)=\GL_2(A_\lambda)$ for all nonzero prime ideals $\lambda$ of $A$.
\end{lemma}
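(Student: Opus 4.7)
The plan is to verify the conditions of Proposition~\ref{P:full GL2R criterion} for $G := \rho_{\phi,\lambda}(\Gal_F) \subseteq \GL_2(A_\lambda)$, for each nonzero prime ideal $\lambda$ of $A$. Condition~(\ref{P:full GL2R criterion a}), that $\det(G) = A_\lambda^\times$, follows from Theorem~\ref{T:Gekeler}(\ref{T:Gekeler ii}) applied to the rank-$1$ Drinfeld module $\psi_t = t + a_2 \tau$ with $a_2 = t^2+t+1$: we have $d = 2$, the leading coefficient of $-a_2$ has order $e = 1$ in $\FF_q^\times$, and $\gcd(d-1, (q-1)/e) = 1$. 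Condition~(\ref{P:full GL2R criterion b}) is Lemma~\ref{L:example q=2 image mod}. When $|\FF_\lambda| = 2$, condition~(\ref{P:full GL2R criterion e}) follows from Proposition~\ref{P:main Tate new}(\ref{P:main Tate new iii}) with $\aA = \lambda$: inertia at $\p$ supplies a unipotent element of $G \cap \SL_2(A_\lambda)$ whose reduction has order $2$ in $\SL_2(\FF_2)$.

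Condition~(\ref{P:full GL2R criterion c}), required when $|\FF_\lambda| > 2$ (so $\deg\lambda \geq 2$, including the case $\lambda = \p$), is verified exactly as in Lemma~\ref{L:example lambda-adic surjectivity}: since $v_\p(j_\phi) = -1$ is coprime to $q$, Proposition~\ref{P:main Tate new}(\ref{P:main Tate new ii}) with $\aA = \lambda^2$ shows that a $p$-Sylow of $\bbar\rho_{\phi,\lambda^2}(I_\p)$ has order at least $N(\lambda)^2$, while a $p$-Sylow of $\GL_2(\FF_\lambda)$ has order only $N(\lambda)$, so its intersection with the kernel of reduction modulo $\lambda$ contains a nonidentity element. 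By the upper-triangular shape from Proposition~\ref{P:main Tate new}(\ref{P:main Tate new i}), such an element has the form $I + \pi \left(\begin{smallmatrix} \alpha & \beta \\ 0 & 0 \end{smallmatrix}\right)$ with $(\alpha,\beta)\neq (0,0)$, which is nonscalar modulo $\lambda$.

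The main obstacle is condition~(\ref{P:full GL2R criterion d}) for $\lambda \in \{(t), (t+1)\}$: that $G_{\lambda^2} = \GL_2(A/\lambda^2)$, a group of order $96$. Let $K \subseteq M_2(\FF_2)$ denote the image of $\ker(G_{\lambda^2} \twoheadrightarrow G_\lambda)$ under $I + \pi B \leftrightarrow B$; it is a $\GL_2(\FF_2)$-invariant $\FF_2$-subspace, and the only such subspaces of $M_2(\FF_2)$ are $0$, $\FF_2 \cdot I$, $\sl_2(\FF_2)$, and $M_2(\FF_2)$. Proposition~\ref{P:main Tate new}(\ref{P:main Tate new iii}) applied at $\p$ with $\aA = \lambda^2$ (and $e = 0$ since $\lambda \neq \p$) places a nonscalar nilpotent in $K$, so $K \supseteq \sl_2(\FF_2)$; it remains to produce a trace-$1$ element of $K$. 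Equivalently, I seek $g \in G_{\lambda^2}$ whose reduction $\bar g$ lies in $[\GL_2(\FF_2), \GL_2(\FF_2)]$ with odd order and whose determinant is nontrivial in $(A/\lambda^2)^\times$; then $g^{|\bar g|} \in K$ has nontrivial determinant $(\det g)^{|\bar g|}$, which forces the corresponding $B$ to have trace $1$. For $\lambda = (t)$: Lemma~\ref{L:traces for degree 1 primes new}(\ref{L:traces for degree 1 primes new iii}) gives $M := \bbar\rho_{\phi,\lambda^2}(\Frob_{(t+1)})$ with characteristic polynomial $x^2 + x + t + 1$, so $\bar M$ has order $3$ in $\GL_2(\FF_2)$ and $\det M = 1 + t$ is nontrivial; take $g = M$. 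For $\lambda = (t+1)$: if $\overline{\Frob_{(t)}} = I$ take $g = \Frob_{(t)}$, otherwise $\overline{\Frob_{(t)}}$ is a transposition and take $g := \Frob_{(t)} \cdot u$, where $u$ is any non-identity element of the inertia-at-$\p$ unipotent subgroup of $G_{\lambda^2}$ (then $\bar u$ is a transposition, so $\bar g$ is a product of two transpositions, hence lies in the alternating subgroup $A_3 = [\GL_2(\FF_2), \GL_2(\FF_2)]$); in both subcases $\det g = t$ is the nontrivial element of $(A/(t+1)^2)^\times$. In each case $K = M_2(\FF_2)$, giving $|G_{\lambda^2}| = 96$.
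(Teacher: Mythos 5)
Your proposal follows the paper's overall skeleton (verify the hypotheses of Proposition~\ref{P:full GL2R criterion} for $G=\rho_{\phi,\lambda}(\Gal_F)$, with conditions (\ref{P:full GL2R criterion a}), (\ref{P:full GL2R criterion b}), (\ref{P:full GL2R criterion c}) handled via Gekeler's theorem, Lemma~\ref{L:example q=2 image mod}, and inertia at $\p$ through Proposition~\ref{P:main Tate new}), but on the decisive point it takes a genuinely different route. For condition (\ref{P:full GL2R criterion d}) at the two degree-one primes the paper invokes Proposition~\ref{P:final char 2 conditions}, i.e.\ the wild ramification at $\infty$ coming from $v_\infty(j_\phi)=-7$, and then runs an index count against the subgroup $S\subseteq\SL_2(A/\lambda^2)$; you instead analyze the kernel $K$ of $G_{\lambda^2}\to\GL_2(\FF_2)$ directly, using the unipotent inertia subgroup at $\p$ together with the computed Frobenius polynomials of Lemma~\ref{L:traces for degree 1 primes new}(\ref{L:traces for degree 1 primes new iii}) (determinant $t+1$ at $(t+1)$ for $\lambda=(t)$, determinant $t$ at $(t)$ for $\lambda=(t+1)$) to manufacture a trace-one element of $K$. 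Your route is more explicit and makes this lemma independent of \S\ref{S:wild}, at the cost of needing the Frobenius data; the paper's route is softer and reuses an input it needs anyway for the adelic statement.

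Three repairs are needed. The main one: your classification of $\GL_2(\FF_2)$-stable $\FF_2$-subspaces of $M_2(\FF_2)$ is false. Besides $0$, $\FF_2 I$, $\sl_2(\FF_2)$, $M_2(\FF_2)$ there is also $\FF_2[c]=\{0,I,c,c+I\}$ with $c=\left(\begin{smallmatrix}0&1\\1&1\end{smallmatrix}\right)$: the order-three elements centralize it and the involutions act on it by the Frobenius of $\FF_4$, so it is conjugation-stable, does not contain $\sl_2(\FF_2)$, and is not contained in the scalars (this is precisely why Lemma~\ref{L:PR adjoint} is stated only for $|\FF|>2$). Hence ``$K$ contains a nonscalar element'' does not force $K\supseteq\sl_2(\FF_2)$. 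Your argument survives because the element you place in $K$ is a nonzero nilpotent, and neither $\FF_2 I$ nor $\FF_2[c]$ contains a nonzero nilpotent, so $K\in\{\sl_2(\FF_2),M_2(\FF_2)\}$ still holds --- but this must be said. Second, for condition (\ref{P:full GL2R criterion e}), Proposition~\ref{P:main Tate new}(\ref{P:main Tate new iii}) with $\aA=\lambda$ only controls the mod-$\lambda$ image; to get an element of $G\cap\SL_2(A_\lambda)$ you should either apply it at all levels $\lambda^i$ and use that $G$ is closed (as the paper does), or observe via Proposition~\ref{P:main Tate new}(\ref{P:main Tate new i}) that for $q=2$ and $\lambda\neq\p$ the whole $\lambda$-adic image of $I_\p$ has determinant $1$. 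Third, in the case $\lambda=(t+1)$ the auxiliary element $u$ must be chosen with nontrivial reduction modulo $\lambda$; an arbitrary non-identity element of the order-four unipotent inertia subgroup may reduce to $I$. With these adjustments your proof is complete and correct.
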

\begin{proof}
Set $G:=\rho_{\phi,\lambda}(\Gal_F)$.   We have $\det(G)=A_\lambda^\times$ since $\det(\rho_{\phi}(\Gal_F))=\widehat{A}^\times$.  The image of $G$ modulo $\lambda$ is equal to $\GL_2(\FF_\lambda)$ by Lemma~\ref{L:example q=2 image mod}.  

By Proposition~\ref{P:main Tate new}(\ref{P:main Tate new i}) and (\ref{P:main Tate new ii}), $\bbar\rho_{\phi,\lambda^2}(I_\p)$ has a subgroup of cardinality $N(\lambda)^2$ that is conjugate in $\GL_2(A/\lambda^2)$ to a subgroup of $\left\{ \left(\begin{smallmatrix} a & b \\0 & 1\end{smallmatrix}\right) : a\in (A/\lambda^2)^\times,   b\in A/\lambda^2\right\}$.  So with a fixed uniformizer $\pi$ of $A_\lambda$, we find that $G$ contains a matrix of the form $I+\pi B$ with $B \in M_2(A_\lambda)$ and $B$ modulo $\lambda$ not a scalar matrix.   When $\deg \lambda > 1$ and hence $|\FF_\lambda|>2$, Proposition~\ref{P:full GL2R criterion} implies that $G=\GL_2(A_\lambda)$.

We now assume that $\deg \lambda=1$ and hence $\FF_\lambda=\FF_2$.  Since $\lambda\neq \p$ and $\gcd(v(j_\phi),q)=1$, Proposition~\ref{P:main Tate new}(\ref{P:main Tate new iii}) implies that for any $i\geq 1$, $\bbar\rho_{\phi,\lambda^i}(I_\p)$ contains a subgroup that is conjugate in $\GL_2(A/\lambda^i)$ to $\left\{ \left(\begin{smallmatrix} 1 & b \\0 & 1\end{smallmatrix}\right) : b\in A/\lambda^i\right\}$.  Using that $G$ is closed, we find that $G$ has an element that is conjugate in $\GL_2(A_\lambda)$ to a matrix of the form $\left(\begin{smallmatrix} 1 & b \\0 & 1\end{smallmatrix}\right)$ with $b\not\equiv 0 \pmod{\lambda}$.  In particular, $G$ has an element with determinant $1$ whose image modulo $\lambda$ has order $2$.  

We will now show that $\bbar G:= \bbar\rho_{\phi,\lambda^2}(\Gal_F)$ is equal to $\GL_2(A/\lambda^2)$.  Let $S$ be the subgroup of $\SL_2(A/\lambda^2)$ consisting of matrices whose image modulo $\lambda$ is equal to $[\GL_2(\FF_\lambda),\GL_2(\FF_\lambda)]$.  Since $\FF_\lambda=\FF_2$, we have $|S|=2^3\cdot 3$ and $[\GL_2(A/\lambda^2): S]=2^2$.  The quotient $\GL_2(\FF_\lambda)/S$ is abelian and hence the quotient map $\bbar G \to \GL_2(\FF_\lambda)/S$ is surjective by Proposition~\ref{P:final char 2 conditions}.   In particular, $[\GL_2(A/\lambda^2): \bbar G] = [S: S\cap \bbar G]$.   We know that $\bbar G$ contains a matrix $g$ that is conjugate in $\GL_2(A/\lambda)$ to some $\left(\begin{smallmatrix} 1 & b \\0 & 1\end{smallmatrix}\right)$ with $b\not\equiv 0 \pmod{\lambda}$.   Also $g$ is not stable under conjugation by $\bbar{G}$ since the image of $\bbar{G}$ modulo $\lambda$ is $\GL_2(\FF_\lambda)$.  Therefore, $|S\cap \bbar G|$ is divisible by $4$.   The group $S\cap \bbar G$ contains an element of order $3$ since the image of $\bbar G$ modulo $\lambda$ is $\GL_2(\FF_\lambda)=\GL_2(\FF_2)$ and $[\GL_2(A/\lambda^2): S]=2^2$.  Since $|S|=2^3\cdot 3$, we find that $[\GL_2(A/\lambda^2): \bbar G] = [S: S\cap \bbar G]$ is equal to $1$ or $2$.  If $[\GL_2(A/\lambda^2): \bbar G]=2$, then $\bbar G$ is a normal subgroup of $\GL_2(A/\lambda^2)$ with nontrivial abelian quotient; this would contradict Proposition~\ref{P:final char 2 conditions}.  Therefore, we have index $1$, i.e., $\bbar G=\GL_2(A/\lambda^2)$.

We have now verified the conditions need to apply Proposition~\ref{P:full GL2R criterion} with $|\FF_\lambda|=2$ to show that $G=\GL_2(A_\lambda)$.
\end{proof}

\begin{lemma} \label{L:example q=2 contains commutator subgroup}
We have $\rho_\phi(\Gal_F)\supseteq [\GL_2(\widehat{A}),\GL_2(\widehat{A})]$.
\end{lemma}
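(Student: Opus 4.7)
My plan is to verify that the group $G := \rho_\phi(\Gal_F)$ satisfies the four hypotheses of Theorem~\ref{T:criterion for computing commutator}, which then immediately yields $[G,G]=[\GL_2(\widehat{A}),\GL_2(\widehat{A})]$ and hence the desired inclusion.

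First, condition (\ref{T:criterion for computing commutator a}) is free: by Lemma~\ref{L:example q=2 adic surj} we in fact have $G_\lambda=\GL_2(A_\lambda)\supseteq \SL_2(A_\lambda)$ for every nonzero prime ideal $\lambda$ of $A$. Condition (\ref{T:criterion for computing commutator d}) is also immediate: the unique prime ideals of norm $q=2$ are $(t)$ and $(t+1)$, both distinct from $\p=(t^2+t+1)$, and since $\det(\rho_\phi(\Gal_F))=\widehat{A}^\times$ (noted in the text just before Lemma~\ref{L:last q=2 example} via Theorem~\ref{T:Gekeler}(\ref{T:Gekeler i})), projecting to $A_\aA^\times$ gives $\det(G_\aA)=A_\aA^\times$ for $\aA=(t)(t+1)$.

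The substantive work is verifying (\ref{T:criterion for computing commutator b}) and (\ref{T:criterion for computing commutator c}), and both come from the Tate uniformization at the unique bad prime $\p=(t^2+t+1)$, where $\phi$ has stable reduction of rank $1$ with $v_\p(j_\phi)=-1$. In particular $\gcd(v_\p(j_\phi),q)=1$. For (\ref{T:criterion for computing commutator b}), take distinct nonzero primes $\lambda_1\neq \lambda_2$ with $N(\lambda_1)=N(\lambda_2)$; these are automatically different from $\p$ (as $\deg \p=2$ means if either equals $\p$, so does the other, contradicting distinctness only if $N(\lambda_1)\neq N(\lambda_2)$)—actually both could fail to equal $\p$ trivially since $\lambda_1\neq\lambda_2$. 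Write $\aA=\lambda_1\lambda_2$, so $e=0\leq 1$ in the notation of Proposition~\ref{P:main Tate new}(\ref{P:main Tate new iii}). That proposition produces a subgroup of $\bbar\rho_{\phi,\aA}(I_\p)$, and hence of the reduction of $G$ modulo $\lambda_1\lambda_2$, that is conjugate in $\GL_2(A/\aA)$ to $\{(\begin{smallmatrix}1&b\\0&1\end{smallmatrix}):b\in A/\aA\}$, which has cardinality $N(\lambda_1)N(\lambda_2)=N(\lambda_1)^2$.

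For condition (\ref{T:criterion for computing commutator c}), apply the same proposition with $\aA=(\lambda_1\lambda_2)^i$ for every $i\geq 1$; each time $e=0$ so Proposition~\ref{P:main Tate new}(\ref{P:main Tate new iii}) gives a subgroup of $\bbar\rho_{\phi,(\lambda_1\lambda_2)^i}(I_\p)$ conjugate to $\{(\begin{smallmatrix}1&b\\0&1\end{smallmatrix}):b\in A/(\lambda_1\lambda_2)^i\}$. Choosing bases of $\phi[(\lambda_1\lambda_2)^i]$ compatibly as $i$ varies (as is done in the proof of Lemma~\ref{L:sieving adelic images}), we may assume these unipotent subgroups are coherent, so in the inverse limit $G_{\lambda_1\lambda_2}\cap \SL_2(A_{\lambda_1\lambda_2})$ contains a subgroup conjugate in $\GL_2(A_{\lambda_1\lambda_2})$ to $\{(\begin{smallmatrix}1&b\\0&1\end{smallmatrix}):b\in A_{\lambda_1\lambda_2}\}$, exactly as required.

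The only mildly delicate point is the basis-compatibility in (\ref{T:criterion for computing commutator c}); but this is standard once one notes that the Tate uniformization gives a $\Gal_F$-equivariant filtration $0\to \psi[\aA]\to \phi[\aA]\to \Gamma/\aA\Gamma\to 0$ that is compatible as $\aA$ varies through powers of $\lambda_1\lambda_2$, so a unipotent subgroup of $\bbar\rho_{\phi,\aA}(I_\p)$ coming from the sub $\psi[\aA]$ is preserved under inverse limits. With all four conditions verified, Theorem~\ref{T:criterion for computing commutator} finishes the proof.
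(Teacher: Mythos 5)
Your proposal is correct and follows essentially the same route as the paper: both verify the four hypotheses of Theorem~\ref{T:criterion for computing commutator} using Lemma~\ref{L:example q=2 adic surj} for (\ref{T:criterion for computing commutator a}), the equality $\det(\rho_\phi(\Gal_F))=\widehat{A}^\times$ for (\ref{T:criterion for computing commutator d}), and Proposition~\ref{P:main Tate new}(\ref{P:main Tate new iii}) applied to the inertia group at the bad prime $\p=(t^2+t+1)$ (where $v_\p(j_\phi)=-1$) for (\ref{T:criterion for computing commutator b}) and (\ref{T:criterion for computing commutator c}), including the same passage to the inverse limit at the two degree-one primes. Your parenthetical about $\lambda_1,\lambda_2$ being automatically distinct from $\p$ in the verification of (\ref{T:criterion for computing commutator b}) is muddled, but harmless, since $e\leq 1$ holds in any case (and the paper simply applies the proposition without excluding $\p$).
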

\begin{proof}
Define $G:=\rho_\phi(\Gal_F)$; it is a closed subgroup of $\GL_2(\widehat{A})$.  We have already observed that $\det(G)=\widehat{A}^\times$.  The group $G_\lambda=\rho_{\phi,\lambda}(\Gal_F)$ is equal to $\GL_2(A_\lambda)$ for all nonzero prime ideals $\lambda$ of $A$ by Lemma~\ref{L:example q=2 adic surj}.

Take any two distinct nonzero prime ideals $\lambda_1$ and $\lambda_2$ of $A$ with $\deg \lambda_1=\deg \lambda_2$.     By Proposition~\ref{P:main Tate new}(\ref{P:main Tate new iii}), $\bbar\rho_{\phi,\lambda_1\lambda_2}(I_\p)\subseteq \bbar\rho_{\phi,\lambda_1\lambda_2}(\Gal_F)$ has a subgroup of cardinality $N(\lambda_1)N(\lambda_2)=N(\lambda_1)^2$.   

Now suppose that $\deg \lambda_1=\deg \lambda_2=1$ and hence $\p\notin \{\lambda_1,\lambda_2\}$.  For any integer $i\geq 1$, Proposition~\ref{P:main Tate new}(\ref{P:main Tate new iii}) implies that $\bbar\rho_{\phi,\lambda_1^i \lambda_2^i}(I_\p)\subseteq \bbar\rho_{\phi,\lambda_1^i \lambda_2^i}(\Gal_F)$ contains a subgroup conjugate in $\GL_2(A/(\lambda_1^i \lambda_2^i))$ to $\left\{ \left(\begin{smallmatrix}1 & b \\0 & 1\end{smallmatrix}\right) : b \in A/(\lambda_1^i\lambda_2^i) \right\}.$  Therefore, the closed group $G_{\lambda_1\lambda_2}=\rho_{\phi,\lambda_1\lambda_2}(\Gal_F)$ contains a subgroup that is conjugate in $\GL_2(A_{\lambda_1\lambda_2})$ to $\left\{ \left(\begin{smallmatrix}1 & b \\0 & 1\end{smallmatrix}\right) : b \in A_{\lambda_1\lambda_2} \right\}$. We have $\det(\rho_{\phi,\lambda_1\lambda_2}(\Gal_F))=A_{\lambda_1\lambda_2}^\times$ since $\det(\rho_\phi(\Gal_F))=\widehat{A}^\times$.

We have verified the conditions of Theorem~\ref{T:criterion for computing commutator} for $G$ and hence $G\supseteq [G,G]=[\GL_2(\widehat{A}),\GL_2(\widehat{A})]$.
\end{proof}
 
Lemmas~\ref{L:last q=2 example} and \ref{L:example q=2 contains commutator subgroup} now imply that $\rho_\phi(\Gal_F)=\GL_2(\widehat{A})$.

\DefineSimpleKey{bib}{primaryclass}{}
\DefineSimpleKey{bib}{archiveprefix}{}

\BibSpec{arXiv}{%
  +{}{\PrintAuthors}{author}
  +{,}{ \textit}{title}
  +{}{ \parenthesize}{date}
  +{,}{ arXiv }{eprint}
  +{,}{ primary class }{primaryclass}
}

\bibliographystyle{plain}

\begin{bibdiv}
\begin{biblist}

\bib{MR3425215}{article}{
   author={Breuer, Florian},
   title={Explicit Drinfeld moduli schemes and Abhyankar's generalized
   iteration conjecture},
   journal={J. Number Theory},
   volume={160},
   date={2016},
   pages={432--450},
   issn={0022-314X},
   review={\MR{3425215}},
   doi={10.1016/j.jnt.2015.08.021},
}

\bib{2403.15109}{arXiv}{
   author={Chen, Chien-Hua},
  title={Natural density of rank-$2$ Drinfeld modules with big Galois image},
  date={2024},
  eprint={2403.15109},
  archiveprefix={arXiv},
  primaryclass={math.NT},
}

\bib{MR4415990}{article}{
   author={Chen, Chien-Hua},
   title={Exceptional cases of adelic surjectivity for Drinfeld modules of
   rank 2},
   journal={Acta Arith.},
   volume={202},
   date={2022},
   number={4},
   pages={361--377},
   issn={0065-1036},
   review={\MR{4415990}},
   doi={10.4064/aa210405-23-11},
}

\bib{MR4410021}{article}{
   author={Chen, Chien-Hua},
   title={Surjectivity of the adelic Galois representation associated to a
   Drinfeld module of rank 3},
   journal={J. Number Theory},
   volume={237},
   date={2022},
   pages={99--123},
   issn={0022-314X},
   review={\MR{4410021}},
   doi={10.1016/j.jnt.2020.06.004},
}

\bib{conrad}{article}{
	author={Conrad, Keith},
	title={Galois groups of cubics and quartics in all characteristics}, 	
	note={ (unpublished expository article) \url{https://kconrad.math.uconn.edu/blurbs/galoistheory/cubicquarticallchar.pdf}},
}

\bib{MR902591}{incollection}{
      author={Deligne, Pierre},
      author={Husemoller, Dale},
       title={Survey of {D}rinfel$'$d modules},
        date={1987},
   booktitle={Current trends in arithmetical algebraic geometry ({A}rcata,
  {C}alif., 1985)},
      series={Contemp. Math.},
      volume={67},
   publisher={Amer. Math. Soc.},
     address={Providence, RI},
       pages={25\ndash 91},
}

\bib{MR0384707}{article}{
      author={Drinfel$'$d, V.~G.},
       title={Elliptic modules},
        date={1974},
     journal={Mat. Sb. (N.S.)},
      volume={94(136)},
       pages={594\ndash 627, 656},
}

\bib{MR4285725}{article}{
   author={Entin, Alexei},
   title={Monodromy of hyperplane sections of curves and decomposition
   statistics over finite fields},
   journal={Int. Math. Res. Not. IMRN},
   date={2021},
   number={14},
   pages={10409--10441},
   issn={1073-7928},
   review={\MR{4285725}},
   doi={10.1093/imrn/rnz120},
}

\bib{MR3459573}{article}{
   author={Gekeler, Ernst-Ulrich},
   title={The Galois image of twisted Carlitz modules},
   journal={J. Number Theory},
   volume={163},
   date={2016},
   pages={316--330},
   issn={0022-314X},
   review={\MR{3459573}},
   doi={10.1016/j.jnt.2015.11.021},
}

\bib{MR2366959}{article}{
      author={Gekeler, Ernst-Ulrich},
       title={Frobenius distributions of {D}rinfeld modules over finite
  fields},
        date={2008},
     journal={Trans. Amer. Math. Soc.},
      volume={360},
      number={4},
       pages={1695\ndash 1721},
}

\bib{MR1423131}{book}{
      author={Goss, David},
       title={Basic structures of function field arithmetic},
      series={Ergebnisse der Mathematik und ihrer Grenzgebiete (3)},
   publisher={Springer-Verlag},
     address={Berlin},
        date={1996},
      volume={35},
}

\bib{MR217086}{article}{
   author={Grothendieck, A.},
   title={\'{E}l\'{e}ments de g\'{e}om\'{e}trie alg\'{e}brique. IV. \'{E}tude locale des sch\'{e}mas et
   des morphismes de sch\'{e}mas. III},
   journal={Inst. Hautes \'{E}tudes Sci. Publ. Math.},
   number={28},
   date={1966},
   pages={255},
   issn={0073-8301},
   review={\MR{217086}},
}

\bib{MR1223025}{article}{
   author={Hamahata, Yoshinori},
   title={Tensor products of Drinfeld modules and $v$-adic representations},
   journal={Manuscripta Math.},
   volume={79},
   date={1993},
   number={3-4},
   pages={307--327},
   issn={0025-2611},
   review={\MR{1223025}},
   doi={10.1007/BF02568348},
}
\bib{MR0330106}{article}{
      author={Hayes, D.~R.},
       title={Explicit class field theory for rational function fields},
        date={1974},
     journal={Trans. Amer. Math. Soc.},
      volume={189},
       pages={77\ndash 91},
}

\bib{MR2563740}{article}{
   author={Jones, Nathan},
   title={Almost all elliptic curves are Serre curves},
   journal={Trans. Amer. Math. Soc.},
   volume={362},
   date={2010},
   number={3},
   pages={1547--1570},
   issn={0002-9947},
   review={\MR{2563740}},
   doi={10.1090/S0002-9947-09-04804-1},
}

\bib{MR2488548}{article}{
      author={Lehmkuhl, Thomas},
       title={Compactification of the {D}rinfeld modular surfaces},
        date={2009},
     journal={Mem. Amer. Math. Soc.},
      volume={197},
      number={921},
       pages={xii+94},
}

\bib{MR2499412}{article}{
      author={Pink, Richard},
      author={R{\"u}tsche, Egon},
       title={Adelic openness for {D}rinfeld modules in generic
  characteristic},
        date={2009},
     journal={J. Number Theory},
      volume={129},
      number={4},
       pages={882\ndash 907},
}

\bib{MR2499411}{article}{
      author={Pink, Richard},
      author={R{\"u}tsche, Egon},
       title={Image of the group ring of the {G}alois representation associated
  to {D}rinfeld modules},
        date={2009},
     journal={J. Number Theory},
      volume={129},
      number={4},
       pages={866\ndash 881},
}

\bib{MR4755197}{article}{
   author={Ray, Anwesh},
   title={The $T$-adic Galois representation is surjective for a positive
   density of Drinfeld modules},
   journal={Res. Number Theory},
   volume={10},
   date={2024},
   number={3},
   pages={Paper No. 56, 12},
   issn={2522-0160},
   review={\MR{4755197}},
   doi={10.1007/s40993-024-00541-6},
}

\bib{2407.14264}{arXiv}{
  author={Ray, Anwesh},
  title={Galois representations are surjective for almost all Drinfeld modules},
  date={2024},
  eprint={2407.14264},
  archiveprefix={arXiv},
  primaryclass={math.NT},
}

\bib{MR457455}{article}{
   author={Ribet, Kenneth A.},
   title={Galois action on division points of Abelian varieties with real
   multiplications},
   journal={Amer. J. Math.},
   volume={98},
   date={1976},
   number={3},
   pages={751--804},
   issn={0002-9327},
   review={\MR{457455}},
   doi={10.2307/2373815},
}

\bib{MR2020270}{article}{
      author={Rosen, Michael},
       title={Formal {D}rinfeld modules},
        date={2003},
     journal={J. Number Theory},
      volume={103},
      number={2},
       pages={234\ndash 256},
}

\bib{MR0387283}{article}{
      author={Serre, Jean-Pierre},
       title={Propri\'et\'es galoisiennes des points d'ordre fini des courbes
  elliptiques},
        date={1972},
     journal={Invent. Math.},
      volume={15},
      number={4},
       pages={259\ndash 331},
}

\bib{MR450380}{book}{
   author={Serre, Jean-Pierre},
   title={Linear representations of finite groups},
   series={Graduate Texts in Mathematics, Vol. 42},
   note={Translated from the second French edition by Leonard L. Scott},
   publisher={Springer-Verlag, New York-Heidelberg},
   date={1977},
   pages={x+170},
   isbn={0-387-90190-6},
   review={\MR{450380}},
}

\bib{MR1997347}{article}{
      author={Serre, Jean-Pierre},
       title={On a theorem of {J}ordan},
        date={2003},
        ISSN={0273-0979},
     journal={Bull. Amer. Math. Soc. (N.S.)},
      volume={40},
      number={4},
       pages={429\ndash 440 (electronic)},
}

\bib{MR2562037}{book}{
   author={Wilson, Robert A.},
   title={The finite simple groups},
   series={Graduate Texts in Mathematics},
   volume={251},
   publisher={Springer-Verlag London, Ltd., London},
   date={2009},
   pages={xvi+298},
   isbn={978-1-84800-987-5},
   review={\MR{2562037}},
   doi={10.1007/978-1-84800-988-2},
}

\bib{1110.4365}{arXiv}{
  author={Zywina, David},
  title={Drinfeld modules with maximal Galois action on their torsion points
},
  date={2011},
  eprint={1110.4365},
  archiveprefix={arXiv},
  primaryclass={math.NT},
}

\end{biblist}
\end{bibdiv}

\end{document}